\newcommand{\dima}[1]{{\color{red}{#1}}}
\renewcommand{\dima}[1]{{\color{black}{#1}}}
\title{Algorithmic Barriers from Phase Transitions}
\author{
Dimitris Achlioptas\inst{1}\thanks{Supported by NSF CAREER award CCF-0546900 and an Alfred P. Sloan Fellowship.}
\and Amin Coja-Oghlan\inst{2}\thanks{Supported by the Deutsche Forschungsgemeinschaft (DFG CO 646)}
}
\date{\today}
\institute{
UC Santa Cruz, Santa Cruz, CA 95064, USA,
\email{optas@cs.ucsc.edu}
\and
University of Edinburgh, UK
\email{acoghlan@inf.ed.ac.uk}
}
\newcommand\eps{\varepsilon}
\newcommand\Erw{\mathrm{E}}
\newcommand\ex{\mathbf{E}}
\newcommand\pr{\mathrm{Pr}}
\newcommand\EE{\mathcal{E}}
\newcommand\IS{\Lambda}
\newcommand{\bink}[2]
    {{{#1}\choose {#2}}}
\renewcommand{\bink}[2]
    {\binom{#1}{#2}}
\newcommand\NNN{\mathcal{N}}
\newcommand\SSS{\mathcal{S}}
\newcommand\bc[1]{\left({#1}\right)}
\newcommand\cbc[1]{\left\{{#1}\right\}}
\newcommand\bcfr[2]{\bc{\frac{#1}{#2}}}
\newcommand\brk[1]{\left\lbrack{#1}\right\rbrack}
\newcommand\abs[1]{\left|{#1}\right|}
\newcommand{\Whp}{W.h.p.}
\newcommand{\whp}{w.h.p.}
\newcommand{\wupp}{w.u.p.p.}
\newcommand{\dist}{\mbox{dist}}
\newcommand{\gnp}{G(n,p)}
\newcommand{\gnm}{G_{n,m}}
\newcommand{\clusteroid}{region}
\newcommand{\clusteroids}{regions}
\begin{document}

\spnewtheorem{Algo}[theorem]{Algorithm}{\bfseries}{}

\maketitle

\begin{abstract}

For many random Constraint Satisfaction Problems, by now, we have asymptotically tight estimates of the largest constraint density for which they have solutions. At the same time, all known polynomial-time algorithms for many of these problems already completely fail to find solutions at much smaller densities. For example, it is well-known that it is easy to color a random graph using twice as many colors as its chromatic number. Indeed, some of the simplest possible coloring algorithms already achieve this goal. Given the simplicity of those algorithms, one would expect there is a lot of room for improvement. Yet, to date, no algorithm is known that uses $(2-\epsilon) \chi$ colors, in spite of efforts by numerous researchers over the years. 

In view of the remarkable resilience of this factor of 2 against every algorithm hurled at it, we believe it is natural to inquire into its origin. We do so by analyzing the evolution of the set of $k$-colorings of a random graph, viewed as a subset of $\{1,\ldots,k\}^{n}$, as edges are added. We prove that the factor of 2 corresponds in a precise mathematical sense to a phase transition in the geometry of this set. Roughly, the set of $k$-colorings looks like a giant ball for $k \ge 2 \chi$, but like an error-correcting code for $k \le (2-\epsilon) \chi$. We prove that a completely analogous phase transition also occurs both in random $k$-SAT and in random hypergraph 2-coloring. And that for each problem, its location corresponds precisely with the point were all known polynomial-time algorithms fail. To prove our results we develop a general technique that allows us to prove rigorously much of the celebrated 1-step Replica-Symmetry-Breaking hypothesis of statistical physics for random CSPs.

\medskip
\emph{Key words:} algorithms, random structures, constraint satisfaction problems, phase transitions
\end{abstract}

\newpage

\section{Introduction}

For many random Constraint Satisfaction Problems (CSP), such as random graph coloring, random $k$-SAT, random Max $k$-SAT, and hypergraph 2-coloring, by now, we have asymptotically tight estimates for the largest constraint density for which typical instances have solutions (see~\cite{ANP}). At the same time, all known efficient algorithms for each problem fair very poorly, i.e., they stop finding solutions at constraint densities \emph{much} lower than those for  which we can prove that solutions exist. Adding insult to injury, the best known algorithm for each problem asymptotically fairs no better than certain extremely naive algorithms for the problem.

For example, it has been known for nearly twenty years~\cite{ChFrGUC} that the following very simple algorithm will find a satisfying assignment of a random $k$-CNF formula with $m=rn$ clauses for $r = O(2^k/k)$: if there is a unit clause satisfy it; otherwise assign a random value to a random unassigned variable. While it is known that random $k$-CNF remain satisfiable for $r = \Theta(2^{k})$, no polynomial-time algorithm is known to find satisfying assignments for $r = (2^k/k)\cdot \omega(k)$ for some function $\omega(k) \to \infty$. 

Similarly, for all $k\ge 3$, the following algorithm~\cite{mcgrim,focs} will $k$-color a random graph with average degree $d \le k \ln k$: select a random vertex with fewest available colors left and assign it a random available color. While it is known that random graphs remains $k$-colorable for $d \sim 2 \, k \ln k$, no polynomial-time algorithm is known that can $k$-color a random graph of average degree $(1+\epsilon) k \ln k$ for some fixed $\epsilon >0$ and arbitrarily large $k$.  Equivalently, while it is trivial to color a random graph using twice as many colors as its chromatic number, no polynomial-time algorithm is known that can get by with $(2-\epsilon) \chi$ colors, for some fixed $\epsilon > 0$.\medskip

Random $k$-SAT and random graph coloring are not alone. In fact, for nearly every random CSP of interest, the known results establish a completely analogous state of the art:
\begin{enumerate}
\item\label{U}
There is a trivial upper bound on the largest constraint density for which solutions exist. 
\item\label{L}
There is a non-constructive proof, usually via the second moment method, that the bound from~(\ref{U})  is essentially tight, i.e., that solutions do exist for densities nearly as high as the trivial upper bound.
\item\label{S}
Some simple algorithm finds solutions up to a constraint density much below the one from (\ref{L}).
\item\label{A}
No polynomial-time algorithm is known to succeed for a density asymptotically greater than that in (\ref{S}).
\end{enumerate}

In this paper we prove that this is not a coincidence. Namely, for random graph coloring, random $k$-SAT, and random hypergraph 2-coloring, we prove that the point where all known algorithms stop is precisely the point where the geometry of the space of solutions undergoes a dramatic change. This is known as a ``dynamical'' phase transition in statistical physics and our results establish rigorously for random CSPs a large part of the ``1-step Replica Symmetry Breaking'' hypothesis~\cite{pnas}. Roughly speaking, this hypothesis asserts that while the set of solutions for low densities looks like a giant ball, at some critical point this ball shatters into exponentially many pieces that are far apart from one another and separated by huge ``energy barriers''. Algorithms (even extremely simple ones) have no problem finding solutions in the ``ball'' regime, but no algorithm is known that can find solutions in the ``error-correcting code'' regime.

We believe that the presence of dynamical phase transitions in random CSPs is a very general phenomenon, whose qualitative characteristics should be problem-independent, i.e., {\em universal}. The fact that we can establish the exact same qualitative picture for a problem with binary constraints over $k$-ary variables (random graph $k$-coloring) and a problem with $k$-ary constraints over binary variables (hypergraph 2-colorability) certainly lends support to this notion. That said, we wish to emphasize that determining for each random CSP the location of its dynamical phase transition (as we do in this paper for the three problems mentioned, in order to show that the transition coincides with the demise of all known algorithms)  requires non-trivial, problem-specific ideas and computations.

Perhaps the following is an intuitive model of how a dynamical phase transition comes about. In random graph coloring, rather than thinking of the number of available colors as fixed and the constraint density (number of edges) as increasing, imagine that we keep the constraint density fixed, but we keep decreasing the number of available colors. If we start with $q$ available colors where $q \gg \chi$, it is reasonable to imagine that the set of valid $q$-colorings, viewed as a subset of $\{1,2,\ldots,q\}^{n}$, has a nice ``round'' shape, the rounder the greater $q$ is relative to $\chi$. By the same token, when we restrict our attention to the set of those $q$-colorings that only use colors $\{1,2,\ldots,q-1\}$, we are taking a ``slice' of the set of $q$-colorings. With each slicing the connectivity of the set at hands deteriorates, until at some point the set shatters. For example, slicing the 2-dimensional unit sphere through the origin yields a circle, but slicing the circle, yields a pair of points.

We conclude the introduction with a few words about the technical foundation for our work. To prove the existence (and determine the location) of a dynamical phase transition one needs access to statistical properties of the uniform measure over solutions. A geometric way of thinking about this is as follows. Given a CSP instance, say a $k$-CNF formula with $m$ clauses chosen uniformly at random, consider the function $H$ on $\{0,1\}^{n}$ that assigns to each truth assignment the number of clauses it violates. In this manner, $H$ defines a ``landscape'' in which satisfying assignments correspond to valleys at sea-level. Understanding statistical properties of the uniform measure over solutions amounts to understanding ``the view'' one enjoys from such a valley, a probabilistically  formidable task. As we discuss in Section~\ref{departure}, we can establish the following: the number of solutions of a random CSP is sufficiently concentrated around its exponentially large expectation for the view from a random sea-level valley to be  ``the same'' as the view from an ``artificial'' valley.  That is, from the valley that results by first selecting a random $\sigma \in \{0,1\}^{n}$ and then forming a random \mbox{$k$-CNF} formula, also with $m$ clauses, but now chosen uniformly among the clauses satisfied by $\sigma$, i.e., the view from the \emph{planted} satisfying assignment in the planted model. This is a \emph{much} easier view to understand and we believe that the ``transfer'' theorems we establish in this paper will significantly aid in the analysis of random CSPs.

\section{Statement of Results}

To present our results in a uniform manner we need to introduce some common notions. Let $V$ be a set of $n$ variables, all with the same domain $D$, and let $C$ be an arbitrary set of constraints over the variables in $V$. A CSP instance is a subset of $C$. We let $\dist(\sigma,\tau)$ denote the Hamming distance between $\sigma,\tau \in D^{n}$ and we turn $D^n$ into a graph by saying that $\sigma,\tau$ are adjacent if $\dist(\sigma,\tau)=1$. For a given instance $I$, we let $H=H_{I} : D^n \to \mathbb{N}$ be the function counting the number of constraints of $I$ violated by each $\sigma \in D^{n}$. 

\begin{definition}
The \textbf{height} of a path $\sigma_0,\sigma_1,\ldots,\sigma_t \in D^n$ is $\max_i H(\sigma_i)$. We say that $\sigma \in D^{n}$ is a solution of an instance $I$, if $H(\sigma)=0$. We will denote by $\SSS(I)$ the set of all solutions of an instance $I$. The \textbf{clusters} of an instance $I$ are the connected components of $\SSS(I)$. A \textbf{\clusteroid} is a non-empty union of clusters. 
\end{definition}

\begin{remark}
The term cluster comes from physics. Requiring $\dist(\sigma,\tau)=1$ to say that $\sigma,\tau$ are adjacent is somewhat arbitrary (but conceptually simplest) and a number of our results hold if one replaces 1 with $o(n)$.
\end{remark}


We will be interested in distributions of CSP instances as the number of variables $n$ grows. The set $C=C_n$ will typically consist of all possible constraints of a certain type, e.g., the set of all $\binom{n}{k}$ possible hyperedges in the problem of 2-coloring random $k$-uniform hypegraphs. We let $I_{n,m}$ denote the set of all CSP instances with precisely $m$ distinct constraints from $C_n$ and we let $\mathcal{I}_{n,m}$ denote the uniform distribution on the set of all instances $I_{n,m}$. We will say that a sequence of events $\mathcal{E}_{n}$ holds {\em with high probability\/} (w.h.p.) if $\lim_{n\to\infty} \Pr[\mathcal{E}_{n}] = 1$ and {\em with uniformly positive probability\/} (w.u.p.p.) if  $\liminf_{n\to\infty} \Pr[\mathcal{E}_{n}] > 0$. As per standard practice in the study of random structures, we will take the liberty of writing $\mathcal{I}_{n,m}$ to denote the underlying random variable and, thus, write things like ``The probability that  $\SSS(\mathcal{I}_{n,m})$..."

\subsection{Shattering}

\begin{definition}\label{Def_icy}
We say that the set of solutions of $\mathcal{I}_{n,m}$ \textbf{shatters} if there exist constants $\beta,\gamma,\zeta,\theta>0$ such that \whp\ $\SSS(\mathcal{I}_{n,m})$ can be partitioned into \clusteroids\ so that:
\begin{enumerate}
\item
The number of \clusteroids\ is at least $e^{\beta n}$.
\item
Each \clusteroid\ contains at most an $e^{-\gamma n}$ fraction of all solutions.
\item
The Hamming distance between any two \clusteroids\ is at least $\zeta n$.
\item
Every path between vertices in distinct \clusteroids\ has height at least $\theta n$.
\end{enumerate}
\end{definition}

Our first main result asserts that the space of solutions for random graph coloring, random $k$-SAT, and random hypergraph 2-colorability shatters and that this shattering occurs just above the largest density for which any polynomial-time algorithm is known to find solutions for the corresponding problem. Moreover, we prove that the space remains shattered until, essentially, the CSP's satisfiability threshold. More precisely:
\medskip

\noindent -- A random graph with average degree $d$, i.e., $m=dn/2$, is \whp\ $k$-colorable for $d  \le (2-\gamma_{k}) k \ln k$,  where $\gamma_{k} \to 0$. The best poly-time $k$-coloring algorithm \whp\ fails for $d \ge (1+\delta_{k}) k \ln k$, where $\delta_{k} \to 0$. 
\begin{theorem}\label{Thm_icy_col}
There exists a sequence $\epsilon_{k} \to 0$, such that the space of $k$-colorings of a random graph with average degree $d$ shatters for all
\begin{equation}\label{col_range}
(1+\epsilon_{k}) k\ln k \le d \le (2-\epsilon_{k}) k \ln k \enspace .
\end{equation}
\end{theorem}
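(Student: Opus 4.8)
\medskip\noindent{\it Proof strategy.}
The plan is to reduce the whole statement to a first-moment computation in the \emph{planted} model, which is legitimate because for $d$ in the stated range the second moment method for proper $k$-colourings succeeds. Write $Z(G)$ for the number of proper $k$-colourings of $G=\gnm$ and put $c=c(k,d)=\ln k+\tfrac d2\ln(1-1/k)$, so that $\Erw[Z]=\exp(cn(1+o(1)))$ and $c$ is a positive constant for each fixed $k$ in this range (with $c\to0$ only as $d\to2k\ln k$; we pick the sequence $\eps_k\to0$ large enough that also $\eps_k\ln k\to\infty$). \emph{Step 1 (concentration and transfer).} A standard second moment computation gives $\Erw[Z^2]=O(\Erw[Z]^2)$ throughout the stated range; combined with Paley--Zygmund and with the fact that $\ln Z$ is $O(1)$-Lipschitz under edge modifications (so Azuma applies), this yields $\ln Z=cn-o(n)$ \whp. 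From the (up to an $e^{o(n)}$ factor, exact) identity $\Erw_{\gnm}[\,\#\{\sigma\colon\sigma\text{ proper},\ \sigma\in\EEE\}\,]=\Erw[Z]\cdot\pr_{\mathrm{pl}}[\hat\sigma\in\EEE]$, where $(\hat\sigma,\hat G)$ is the planted pair (a balanced $\hat\sigma\in[k]^n$ together with $m$ random $\hat\sigma$-bichromatic edges), one gets the transfer bound
\[
\pr_{\gnm,\ \mathrm{unif.}\ \sigma}[\sigma\in\EEE]\ \le\ e^{o(n)}\,\pr_{\mathrm{pl}}[\hat\sigma\in\EEE]\ +\ o(1),
\]
and, running the same computation on the high-probability event $\{Z\ge\Erw[Z]e^{-o(n)}\}$ and applying Markov, the sharper conclusion that if $\pr_{\mathrm{pl}}[\hat\sigma\in\EEE]\le e^{-\Omega(n)}$ then \whp\ at most an $e^{-\Omega(n)}$ fraction of proper colourings of $\gnm$ lie in $\EEE$.

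\emph{Step 2 (the planted view).} Let $\varphi(\rho)$ be the exponential rate of $\Erw_{\mathrm{pl}}[\,\#\{\tau\colon\dist(\hat\sigma,\tau)=\rho n,\ \tau\text{ proper for }\hat G\}\,]$; a first moment count, optimising over the overlap matrix between the colour classes of $\hat\sigma$ and those of $\tau$, gives an explicit $\varphi$ with $\varphi(\rho)\to0$ as $\rho\to0$, with $\varphi(1-1/k)=c>0$, and, for small $\rho$, with $\varphi(\rho)\approx\rho\big(\ln\tfrac1\rho+1+\ln(k-1)-\tfrac d{k-1}\big)$. This last expression is negative on a non-degenerate interval precisely when $d\ge(1+\eps_k)k\ln k$ with $\eps_k\ln k\to\infty$, and in that case one checks $\varphi<0$ on a whole interval $(\alpha_1,\alpha_2)\subseteq(0,1-1/k)$ with $\alpha_1\le k^{-\Theta(\eps_k)}\to0$ and $\alpha_2-\alpha_1=\Omega(1)$. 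Passing to the shrunk interval $(\alpha_1',\alpha_2'):=(\alpha_1+\eta_k,\alpha_2-\eta_k)$ with $\eta_k=\Theta(\eps_k)$ makes $\max_{[\alpha_1',\alpha_2']}\varphi$ a genuine negative constant. Finally one proves the ``energy'' strengthening: the rate of $\Erw_{\mathrm{pl}}[\,\#\{\tau\colon\dist(\hat\sigma,\tau)\in(\alpha_1'n,\alpha_2'n),\ H_{\hat G}(\tau)\le\theta n\}\,]$ stays negative for a small constant $\theta>0$, because the number of $\hat\sigma$-bichromatic edges turned monochromatic by such a $\tau$ is a binomial with mean $\Omega(dn)=\omega(n\ln k)$, so a Chernoff bound comfortably beats the entropy factor $e^{O(n\ln k)}$. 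Feeding these two facts into Step 1: \whp\ over $\gnm$, all but an $e^{-\Omega(n)}$ fraction of proper colourings are \emph{good}, meaning that no proper colouring lies at Hamming distance in $(\alpha_1'n,\alpha_2'n)$ from them and every colouring at such distance violates at least $\theta n$ edges.

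\emph{Step 3 (reading off the geometry).} Everything now follows combinatorially. Since a path inside $\SSS(\gnm)$ changes one coordinate at a time, its distance from a fixed good colouring $\tau$ cannot jump across $(\alpha_1'n,\alpha_2'n)$; hence the cluster of $\tau$ lies in the Hamming ball $B(\tau,\alpha_1'n)$, which contains at most $e^{g(\alpha_1')n}$ colourings for $g(\rho)=\tfrac1n\ln\big(\binom n{\rho n}(k-1)^{\rho n}\big)\to0$, and $g(\alpha_1')<c$ throughout the range (this is where $\eta_k$ must be a small enough multiple of $\eps_k$; the binding case is $d$ near $2k\ln k$, where $c\approx\tfrac{\eps_k}2\ln k$). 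Clusters containing no good colouring consist entirely of the $e^{-\Omega(n)}Z$ exceptional colourings, so together they hold an $e^{-\Omega(n)}$ fraction of all solutions; with $Z=e^{cn-o(n)}$ this already yields at least $e^{\beta n}$ ``good'' clusters, each an $\le e^{-\gamma n}$ fraction of all colourings, $\beta=\gamma=\tfrac12(c-g(\alpha_1'))>0$. To build the \clusteroids, declare two good clusters equivalent when some pair of their colourings is within distance $\alpha_1'n$, and adjoin to each resulting class every non-good cluster within $\alpha_1'n$ of it; a short induction using the gap and the triangle inequality (valid since $\alpha_1'\ll\alpha_2'$) shows each class together with its adjoined clusters sits inside one ball $B(\tau,\alpha_1'n)$ about a good $\tau$. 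Hence: still at least $e^{\beta n}$ classes, each an $\le e^{-\gamma n}$ fraction (conditions 1, 2); distinct classes with good centres $\tau_1,\tau_2$ have $\dist(\tau_1,\tau_2)\ge\alpha_2'n$ (else they would have merged), so they are at distance $\ge(\alpha_2'-2\alpha_1')n=:\zeta n$ (condition 3); and any path from one class to another passes, by the one-step argument applied to $\tau_1$, through a colouring at distance in $(\alpha_1'n,\alpha_2'n)$ from $\tau_1$, which violates $\ge\theta n$ edges (condition 4). The remaining non-good clusters form one extra \clusteroid\ of size $e^{-\Omega(n)}Z$, at distance $\ge\zeta n$ from every other \clusteroid\ because each of its solutions lies at distance from any good centre that both avoids $(\alpha_1'n,\alpha_2'n)$ and exceeds $\alpha_1'n$. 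This partitions all of $\SSS(\gnm)$ and verifies the four conditions (with $k,d$-dependent constants, as the definition allows).

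\emph{Main obstacle.} The crux is Step 2: determining the sign of $\varphi$ on the entire interval $(0,1-1/k)$ amounts to maximising an entropy-minus-energy functional over all overlap matrices, and it is exactly this problem-specific calculation that pins the onset of the gap at $(1+\eps_k)k\ln k$ and its persistence up to $\sim 2k\ln k$. A second, more routine, difficulty is $\Erw[Z^2]=O(\Erw[Z]^2)$ near $2k\ln k$ and the upgrade to exponential concentration of $\ln Z$. The handling of the $e^{-\Omega(n)}$ exceptional colourings in Step 3 is merely fiddly: being close to a good colouring already forces a whole solution, hence its entire cluster, into a small ball, so the exceptional set cannot glue distinct \clusteroids\ together.
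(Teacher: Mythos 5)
There is a genuine gap, and it sits exactly where your ``Main obstacle'' locates the crux, in Step 2. You take the Hamming distance to the reference colouring as the order parameter and claim a first-moment computation gives a window $(\alpha_1 n,\alpha_2 n)$ of \emph{constant} width ($\alpha_2-\alpha_1=\Omega(1)$, $\alpha_2=\Omega(1)$) containing no proper, or even low-energy, colourings. This is false because of the colour-permutation symmetry: in the planted model, applying any permutation of the colours to $\hat\sigma$ yields a proper colouring of $\hat G$ with zero violated edges, and a permutation moving $j$ of the (near-balanced) colour classes sits at Hamming distance $\approx jn/k$. As $j$ ranges over $2,\ldots,k$ these distances are $n/k$-dense in $\brk{2n/k,n}$, so \emph{every} interval of width larger than $n/k$ inside that range contains zero-energy solutions deterministically; in particular your exponential rate $\varphi$ is nonnegative on a $1/k$-dense set of overlaps and cannot be negative on an interval of constant width, and your small-$\rho$ approximation for $\varphi$ silently excludes exactly these permutation-type overlap matrices. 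With the annulus claim gone, Step 3 collapses: the one-coordinate-at-a-time argument no longer confines a cluster to $B(\tau,\alpha_1'n)$, and neither the separation $\zeta n$ nor the energy barrier $\theta n$ follows. This is precisely why the paper abandons Hamming distance as the similarity measure and works instead with the overlap matrix $M_{\sigma,\tau}$ and its squared Frobenius norm $f_\sigma(\tau)$ (Lemmas~\ref{Lemma_cluster_col} and~\ref{Lemma_cluster_col_pl}): permuted copies of $\sigma$ have $f_\sigma(\tau)\approx k^{-1}$, i.e.\ they are counted as maximally similar and land inside the region $\mathcal{C}_\sigma$, the forbidden band is a band of $f$-values $(y_1,y_2)\subset(k^{-2},k^{-1})$ established by a first moment over overlap matrices (using the optimisation from~\cite{kcol}), and linear Hamming separation of the regions is only recovered afterwards via the continuity of $f_\sigma$ with respect to $n^{-1}\times$Hamming distance. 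Your argument might conceivably be salvaged by restricting the window strictly below $2n/k$ or by quotienting out the symmetry, but that is a different (and not obviously easier) computation than the one you describe.

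A second, independent gap is in Step 1: you upgrade the Paley--Zygmund bound to ``$\ln Z=cn-o(n)$ \whp'' by asserting that $\ln Z$ is $O(1)$-Lipschitz under edge modifications, so that Azuma applies. It is not: adding a single edge $\{u,v\}$ removes all colourings with $\sigma(u)=\sigma(v)$, which can be a constant fraction of $\SSS(G)$ or even all of it (e.g.\ when the graph already forces $u,v$ to share a colour), so there is no worst-case bounded-difference constant. Closing exactly this hole is a substantial part of the paper: the wupp second-moment bound is boosted to a \whp\ statement not by martingale concentration but by a sharp-threshold argument for the property ``fewer than $\xi^n$ $k$-colourings'', based on Friedgut's criterion (Lemmas~\ref{Lemma_FriedgutColor}--\ref{Lemma_sharp} and Appendix~\ref{Apx_sharp}). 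Without some replacement for this step your transfer inequality only holds with uniformly positive probability, which is not enough to conclude the \whp\ statements of the theorem.
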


\noindent -- A random $k$-CNF formula with $n$ variables and $rn$ clauses is \whp\ satisfiable for $r \le 2^{k} \ln 2 -k$.  
The best poly-time satisfiability algorithm \whp\ fails for $r > 2^{k+1}/k$. In~\cite{MontanariGibbs}, non-rigorous, but mathematically sophisticated evidence is given that a different algorithm succeeds for $r=\Theta((2^{k}/k) \ln k)$, but not higher.

\begin{theorem}\label{Thm_icy_sat}
There exists a sequence $\epsilon_{k} \to 0$ such that the space of satisfying assignments of a random $k$-CNF formula with $rn$ clauses shatters for all
\begin{equation}\label{sat_range}
(1+\epsilon_{k}) \, \frac{2^{k}}{k} \, \ln k \le r \le (1-\epsilon_{k}) 2^{k} \ln 2 \enspace .
\end{equation} 
\end{theorem}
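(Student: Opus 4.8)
The plan is to reduce every assertion to a computation in the \emph{planted model}, using the transfer machinery sketched in Section~\ref{departure}. Write $Z=|\SSS(\mathcal{I}_{n,rn})|$. The first step is a concentration estimate: for $r\le(1-\epsilon_k)2^k\ln2$ a weighted second moment argument in the spirit of Achlioptas--Peres --- summing over satisfying assignments a weight of the form $\gamma^{\#\{\text{satisfied literal occurrences}\}}$ with $\gamma<1$, which suppresses the ``unbalanced'' assignments responsible for the variance blow-up --- yields $\Erw[Z^2]\le e^{o(n)}\Erw[Z]^2$, hence $Z\ge e^{-o(n)}\Erw[Z]$ \wupp; since $\log Z$ is Lipschitz in the clause sequence, an Azuma concentration argument then upgrades this to $Z=e^{(s_k+o(1))n}$ \whp, where $s_k=\ln2+r\ln(1-2^{-k})>0$ for $r<2^k\ln2$. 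By the transfer theorem, concentration of $Z$ means that any property of (formula, solution) pairs that holds \whp\ in the planted model --- pick $\sigma^\ast\in\{0,1\}^n$ uniformly, then $rn$ clauses uniformly among those satisfied by $\sigma^\ast$ --- also holds \whp\ for a uniformly random solution of $\mathcal{I}_{n,rn}$, and, more usefully, any family of ``forbidden configurations'' whose expected number in the planted model is $e^{-\Omega(n)}$ is \whp\ absent from $\mathcal{I}_{n,rn}$.

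The heart of the argument is an \emph{overlap gap}. For $\alpha\in(0,1)$ let $\Psi(\alpha)$ be $n^{-1}\log$ of the expected number of solutions $\tau$ with $\dist(\sigma^\ast,\tau)=\alpha n$ in the planted model; a direct computation gives $\Psi(\alpha)=h(\alpha)+r\ln\frac{1-2^{1-k}+2^{-k}(1-\alpha)^k}{1-2^{-k}}$, where $h$ is the binary entropy. The key elementary --- but delicate --- calculus fact is that $\Psi(\alpha)<0$ on some interval $[\alpha_1,\alpha_2]\subset(0,1)$ with $\alpha_1=o_k(1)$ and $\alpha_2$ bounded away from $0$ \emph{precisely} when $r\ge(1+\epsilon_k)\frac{2^k}{k}\ln k$: optimising $h(\alpha)/q(\alpha)$, where $q(\alpha)>0$ is the probability that a random planted clause is violated by an assignment at distance $\alpha n$ from $\sigma^\ast$, the minimum is of order $\frac{2^k}{k}\ln k$ (attained at $\alpha$ of order $1/k$ up to logarithmic factors), and this is where the density $\frac{2^k}{k}\ln k$ enters. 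A companion, considerably more laborious computation --- the ``planted second moment'' --- shows that the same conclusion survives when $\sigma^\ast$ is replaced by an \emph{arbitrary} solution $\sigma$ and $\tau$ by an arbitrary $\tau\in\SSS$, i.e.\ $\Erw_{\mathrm{planted}}[\#\{(\sigma,\tau)\in\SSS^2:\dist(\sigma,\tau)/n\in[\alpha_1,\alpha_2]\}]=e^{-\Omega(n)}$; hence, after transfer, \whp\ \emph{no} two solutions of $\mathcal{I}_{n,rn}$ lie at normalised distance in $[\alpha_1,\alpha_2]$ (equivalently, a linear number of variables become frozen inside every component). Finally, inserting into the count the large-deviations factor $\Pr[\mathrm{Bin}(rn,q(\alpha))\le\theta n]$ shows that for a small enough $\theta=\theta_k>0$ one also has $\Erw_{\mathrm{planted}}[\#\{(\sigma,\rho):\sigma\in\SSS,\ \dist(\sigma,\rho)/n\in[\alpha_1,\alpha_2],\ H(\rho)\le\theta n\}]=e^{-\Omega(n)}$, so \whp\ no near-solution sits at a forbidden distance from an actual solution.

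It remains to assemble the four properties of Definition~\ref{Def_icy} from these three \whp\ facts and from $Z=e^{(s_k+o(1))n}$. Declare $\sigma,\tau\in\SSS$ equivalent iff $\dist(\sigma,\tau)<\alpha_1 n$; because there are no solutions at distance in $[\alpha_1 n,\alpha_2 n]$ and (by the calculus step) $2\alpha_1<\alpha_2$, this relation is transitive, hence an equivalence relation, and its classes are the desired \clusteroids: each is a union of clusters (adjacent solutions are equivalent), and each has diameter $<\alpha_1 n$, so it lies in a Hamming ball of radius $\alpha_1 n$ and thus contains at most $e^{(h(\alpha_1)+o(1))n}$ solutions. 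Since $h(\alpha_1)=o_k(1)$ while $s_k\ge\epsilon_k\ln2-o(1)$, choosing $\epsilon_k\to0$ slowly enough that $s_k$ dominates all the $o_k(1)$ and $o(1)$ error terms throughout $(1+\epsilon_k)\frac{2^k}{k}\ln k\le r\le(1-\epsilon_k)2^k\ln2$, Property~2 holds with $\gamma=s_k-h(\alpha_1)-o(1)>0$, and dividing $Z$ by the maximal \clusteroid\ size gives Property~1 with $\beta=\gamma$. Property~3 is immediate with $\zeta=\alpha_2$: two solutions in distinct \clusteroids\ are at distance $\ge\alpha_1 n$, and by the overlap gap therefore $>\alpha_2 n$. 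For Property~4, take any path $\rho_0,\rho_1,\ldots,\rho_t$ with $\rho_0,\rho_t$ in distinct \clusteroids; since $\dist(\rho_0,\rho_i)$ changes by at most $1$ per step (here we use that adjacency means Hamming distance $1$) and $\dist(\rho_0,\rho_t)>\alpha_2 n$, some $\rho_i$ satisfies $\dist(\rho_0,\rho_i)=\lceil\alpha_1 n\rceil$, and by the last \whp\ fact (applied to the solution $\rho_0$) $H(\rho_i)>\theta n$, so the path has height $\ge\theta n$. This proves the theorem; the argument for random graph colouring in \Thm~\ref{Thm_icy_col} is structurally identical, only the planted-model computations change.

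The main obstacle is the pair of ``global'' planted-model estimates in the second paragraph: passing from the one-variable conditional first moment around a \emph{typical} solution to a bound ruling out \emph{every} offending pair $(\sigma,\tau)$ or $(\sigma,\rho)$ forces one to control $\Erw_{\mathrm{planted}}[\,\cdot\,]$ over configurations parametrised by the full triangle of distances between $\sigma^\ast$, $\sigma$ and the second point, and to verify that the resulting rate function is negative on the whole forbidden band and for all relative positions of the two solutions. This planted-model overlap computation, together with the sharp form of the calculus fact locating the onset of the gap at exactly $\frac{2^k}{k}\ln k$, is where essentially all the problem-specific difficulty lies; the weighted second moment required for the initial concentration (and its tightness up to $(1-\epsilon_k)2^k\ln2$) is the other technically heavy ingredient.
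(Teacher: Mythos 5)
Your overall architecture (planted first moment with the violated-clause large-deviation factor, the onset of the negative band at $(2^k/k)\ln k$ with $\alpha$ of order $1/k$ up to logs, and the assembly of Definition~\ref{Def_icy} from an overlap gap plus a count bound) is the right one, and parts of it coincide with the paper's Lemma~\ref{Lemma_Xalpha} and Corollary~\ref{Cor_Xalpha}. But two of your load-bearing steps are genuinely wrong for $k$-SAT. First, the concentration claim $Z=e^{(s_k+o(1))n}$ \whp\ with $s_k=\ln 2+r\ln(1-2^{-k})$ is false: for random $k$-SAT the ratio $\Erw[Z^2]/\Erw[Z]^2$ is exponentially large at \emph{every} density (this is exactly why Achlioptas--Peres weight the count; the weighted second moment controls the weighted sum, not $Z$, and yields a lower bound that falls short of $\Erw[Z]$ by a factor $e^{-\Theta_k(n)}$), and ``$\log Z$ is Lipschitz in the clause sequence'' fails since a single clause can destroy almost all remaining solutions, so Azuma does not apply. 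The paper proves the weaker, and essentially optimal, bound $Z\geq \Erw[Z]\exp(-k2^{3-k}n)$ \whp\ (Lemma~\ref{Lemma_numberkSAT}), via the weighted second moment \emph{plus} a Friedgut sharp-threshold argument, and consequently its transfer theorem (Theorem~\ref{Thm_SATExchange}) demands planted failure probabilities $o(\exp(-k2^{3-k}n))$ --- not merely ``\whp\ in the planted model'' or ``$e^{-\Omega(n)}$'' with an unchecked constant. Your transfer step as stated (planted \whp\ $\Rightarrow$ uniform \whp) is exactly what fails here; every exponential rate in your planted estimates must be checked against $k2^{3-k}$.

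Second, and more fundamentally, your construction of the regions as equivalence classes under $\dist<\alpha_1 n$ rests on an \emph{all-pairs} overlap gap: ``\whp\ no two solutions of $\mathcal{I}_{n,rn}$ lie at normalised distance in $[\alpha_1,\alpha_2]$,'' to be obtained from a planted expectation over all pairs $(\sigma,\tau)\in\SSS^2$. That every-solution statement is precisely what the paper argues cannot be pushed below $r=\Theta(2^k)$ (it is the union-bound-over-all-solutions regime of the earlier work it cites, believed to be tight), so the planted pair expectation will not be exponentially small at $r\approx(2^k/k)\ln k$, and your equivalence relation (and the barrier argument applied to an arbitrary $\rho_0$) has no support in the stated range. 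The paper sidesteps this: it proves the annulus property and the ball-count bound only \emph{around the planted solution}, transfers them to a \emph{typical} solution of a random instance (so that a $1-o(1)$ fraction of solutions are ``good''), and then builds the regions greedily as balls $\mathcal{C}_\sigma=\{\tau\in\SSS(F):\dist(\sigma,\tau)<\alpha_2 n\}$ around iteratively chosen good centers; separation and height follow because every point of such a ball is within $\alpha_1 n$ of its good center while everything outside is at distance $\geq\alpha_2 n$ from it, and the region-size bound comes from a first-moment bound over all $\alpha\leq 1/3$ (Lemma~\ref{Lemma_XalphaUpper}) compared against the lower bound on $Z$, rather than from a Hamming-ball volume count.
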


\noindent -- A random $k$-uniform hypergraph with $n$ variables and $rn$ edges is \whp\ 2-colorable for $r \le 2^{k-1} \ln 2 - \frac{3}{2}$.  The best poly-time 2-coloring algorithm \whp\ fails for $r > 2^{k}/k$. In~\cite{MontanariGibbs}, non-rigorous, but mathematically sophisticated evidence is given that a different algorithm succeeds for $r=\Theta((2^{k}/k) \ln k)$, but not higher.

\begin{theorem}\label{Thm_icy_hyper}
There exists a sequence $\epsilon_{k} \to 0$ such that the space of 2-colorings of a random $k$-uniform hypergraph with $rn$ edges shatters for all
\begin{equation}\label{hyper_range}
(1+\epsilon_{k}) \, \frac{2^{k-1}}{k} \, \ln k \le r \le (1-\epsilon_{k}) 2^{k-1} \ln 2 \enspace .
\end{equation} 
\end{theorem}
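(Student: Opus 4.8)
\medskip
\noindent\textbf{Proof proposal for Theorem~\ref{Thm_icy_hyper}.} The plan mirrors the proofs of Theorems~\ref{Thm_icy_col} and~\ref{Thm_icy_sat}. Write $m=rn$, and for two $2$-colourings $\sigma,\tau$ of $[n]$ with $\dist(\sigma,\tau)=\alpha n$ let $q(\alpha)$ be the conditional probability that a uniformly random $k$-subset of $[n]$ is monochromatic under $\tau$ given that it is bichromatic under $\sigma$; a direct count gives $q(\alpha)\approx 2^{1-k}(1-(1-\alpha)^k-\alpha^k)$, so $q(0)=0$ and $q(\alpha)\approx 2^{1-k}k\alpha$ near $0$. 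Two external inputs drive the argument. The first is the second moment method for hypergraph $2$-colourability: for $r\le(1-\epsilon_k)2^{k-1}\ln 2$ it yields $\Erw[|\SSS(\mathcal{I}_{n,m})|^2]\le e^{o(n)}\Erw[|\SSS(\mathcal{I}_{n,m})|]^2$, and hence, via an edge-exposure concentration of $\ln|\SSS|$, that $|\SSS(\mathcal{I}_{n,m})|=e^{\Phi_kn+o(n)}$ \whp\ with $\Phi_k:=\ln 2+r\ln(1-2^{1-k})>0$. The second is the transfer theorem of Section~\ref{departure}: since the second moment is tight, for any event $A$ on pairs (instance, solution) one has $\Erw[\#\{\sigma\in\SSS(\mathcal{I}_{n,m}):(\mathcal{I}_{n,m},\sigma)\in A\}]\le e^{o(n)}\Erw[|\SSS(\mathcal{I}_{n,m})|]\cdot\pr_{\mathrm{pl}}[(\mathcal{I}^{*}_{n,m},\sigma^{*})\in A]$, where $\mathcal{I}^{*}_{n,m}$ is the planted model (plant a balanced colouring $\sigma^{*}$, then add $m$ random edges bichromatic under $\sigma^{*}$). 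It therefore suffices to understand the neighbourhood of $\sigma^{*}$ in $\mathcal{I}^{*}_{n,m}$ and to absorb an $e^{-\Omega(n)}$ fraction of atypical solutions.

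That neighbourhood is governed by the annealed overlap profile
\[
\widehat f(\alpha)=H(\alpha)+r\ln(1-q(\alpha)),\qquad H(\alpha)=-\alpha\ln\alpha-(1-\alpha)\ln(1-\alpha),
\]
which equals, up to $o(n)$ in the exponent, the log of the expected number of solutions of $\mathcal{I}^{*}_{n,m}$ at Hamming distance $\alpha n$ from $\sigma^{*}$. One has $\widehat f(0)=0$, $\widehat f(\tfrac12)>0$, and, writing $\lambda:=r\,2^{1-k}k$, near $0$ one computes $\widehat f(\alpha)\approx\alpha(1-\ln\alpha-\lambda)$: so $\widehat f$ climbs to a harmless bump of height $M_k\approx e^{-\lambda}=o(1)$ at $\alpha\approx e^{-\lambda}$ and then turns negative, producing a forbidden window $\mathcal{B}_k=[\alpha_k^-,\alpha_k^+]\subset(0,\tfrac12)$ with $\widehat f<0$. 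The key asymptotic statement, and the origin of the lower endpoint of~(\ref{hyper_range}), is that $\mathcal{B}_k$ is present with $\alpha_k^-\to0$ and $\alpha_k^+>2\alpha_k^-$ exactly once $\lambda$ exceeds $\ln k$ by a sufficient additive constant — equivalently $r\ge(1+\epsilon_k)\tfrac{2^{k-1}}{k}\ln k$ with $\epsilon_k\to0$ — and that $M_k<\Phi_k$ throughout the range. Fixing a sub-window $\mathcal{B}_k'=\{\alpha:\widehat f(\alpha)\le-c_k\}=[\alpha_k^{\prime-},\alpha_k^{\prime+}]\subset\mathcal{B}_k$ (with $c_k>0$) on which still $2\alpha_k^{\prime-}<\alpha_k^{\prime+}$ and $\max_{[0,2\alpha_k^{\prime-}]}\widehat f<\Phi_k$, two first-moment estimates in the planted model follow: $\Erw_{\mathrm{pl}}[\#\{\tau\in\SSS:\dist(\sigma^{*},\tau)/n\in\mathcal{B}_k'\}]\le e^{-\Omega(n)}$, and for every sufficiently small constant $\theta>0$, $\Erw_{\mathrm{pl}}[\#\{\rho\in\{0,1\}^n:\dist(\sigma^{*},\rho)/n\in\mathcal{B}_k',\ H(\rho)\le\theta n\}]\le e^{-\Omega(n)}$ — the latter because at $\theta=0$ the large-deviation rate of $\{\Bin(m,q(\alpha))=0\}$ equals $-r\ln(1-q(\alpha))$, which $\widehat f<0$ says dominates $H(\alpha)$, and this rate is continuous in $\theta$.

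To assemble the partition, push these two estimates through the transfer theorem and Markov's inequality: \whp\ all but an $e^{-\Omega(n)}$ fraction of solutions of $\mathcal{I}_{n,m}$ are \emph{good}, meaning that no other solution, and no assignment $\rho$ with $H(\rho)\le\theta n$, lies at Hamming distance in $\mathcal{B}_k'n$ of them. Let the clusteroids be the connected components of the graph on $\SSS(\mathcal{I}_{n,m})$ that joins solutions at distance $<\alpha_k^{\prime-}n$, after attaching each component containing no good solution to a fixed component that does. Because a good $\sigma$ has no solution at distance in $[\alpha_k^{\prime-}n,\alpha_k^{\prime+}n]$ and $2\alpha_k^{\prime-}<\alpha_k^{\prime+}$, an induction along a joining path shows that any two solutions lying in a common component, at least one of which is good, are at distance $<\alpha_k^{\prime-}n$; hence $\sum_i|\mathcal{C}_i|^2\le e^{(\Phi_k+M_k)n+o(n)}+e^{(2\Phi_k-\Omega(1))n}$, and with $M_k<\Phi_k$ this gives properties~1 and~2 with $\beta,\gamma>0$ of order $c_k$. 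Property~3 holds with $\zeta=\alpha_k^{\prime-}$, since solutions in distinct clusteroids always lie in distinct components of the base graph and hence are at distance $\ge\alpha_k^{\prime-}n$. Property~4 holds with barrier $\theta$, because any path in $\{0,1\}^n$ between distinct clusteroids crosses an assignment $\rho$ at Hamming distance in $\mathcal{B}_k'n$ from a good solution, and the second estimate above forces $H(\rho)\ge\theta n$.

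\medskip
\noindent I expect the main obstacle to be the problem-specific asymptotic analysis of $\widehat f$ in the second step: proving that the forbidden window $\mathcal{B}_k$ not only exists but is wide enough — so that the sub-window $\mathcal{B}_k'$ with $2\alpha_k^{\prime-}<\alpha_k^{\prime+}$ and $\max_{[0,2\alpha_k^{\prime-}]}\widehat f<\Phi_k$ can be chosen — precisely above $r=(1+\epsilon_k)\tfrac{2^{k-1}}{k}\ln k$ requires controlling $q(\alpha)$ (through $(1-\alpha)^k$ and $\alpha^k$) across the regimes $\alpha\sim 1/k$ and $\alpha=\Theta(1)$, together with a matching expansion of $\Phi_k$ near the satisfiability threshold. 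Everything else — the second moment bound, the transfer theorem, and the surgery absorbing the $e^{-\Omega(n)}$ fraction of atypical solutions — is carried out by the general machinery of Section~\ref{departure} and its analogues, and the proofs of Theorems~\ref{Thm_icy_col} and~\ref{Thm_icy_sat} follow the identical scheme with $\widehat f$ replaced by the corresponding annealed overlap profile.
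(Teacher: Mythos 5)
Your proposal is correct and follows essentially the same route as the paper, which omits the hypergraph case precisely because it is ``similar to but simpler than'' its $k$-SAT argument: concentration of $\ln|\SSS|$ via the second moment method plus a Friedgut-type sharp threshold, the transfer theorem to the planted model, a planted first-moment analysis exhibiting a forbidden annulus of Hamming distances both for solutions and for assignments with $H\le\theta n$, and then a decomposition into well-separated regions around typical (``good'') solutions. Your component-based definition of the \clusteroids\ with the $2\alpha_k^{\prime-}<\alpha_k^{\prime+}$ chaining step and the Cauchy--Schwarz count is only a cosmetic variant of the paper's iterative removal of the balls $\mathcal{C}_\sigma$, and the problem-specific verification of the window for $r\ge(1+\epsilon_k)\frac{2^{k-1}}{k}\ln k$ that you flag is indeed the analogue of Lemma~\ref{Lemma_Xalpha} and Corollary~\ref{Cor_Xalpha}.
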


\begin{remark}
As the notation in~Theorems~\ref{Thm_icy_col},\ref{Thm_icy_sat},\ref{Thm_icy_hyper} is asymptotic in $k$, the stated intervals may be empty for small values of $k$. In this extended abstract we have not optimized the proofs to deliver the smallest values of $k$ for which the intervals are non-empty. Quick calculations suggest $k \ge 6$ for hypergraph 2-colorability, $k\geq 8$ for $k$-SAT, and $k \ge 20$ for $k$-coloring.
\end{remark}

\subsection{Rigidity}

The \clusteroids\ mentioned in~Theorems~\ref{Thm_icy_col}, \ref{Thm_icy_sat} and \ref{Thm_icy_hyper} can be thought of as forming an error-correcting code in the solution-space of each problem. To make this precise we need to introduce the following definition and formalize the notion of  ``a random solution of a random instance''. 
\begin{definition}
Given an instance $I$, a solution $\sigma \in \SSS(I)$ and a variable $v \in V$, we say that $v$ in $(I,\sigma)$:
\begin{itemize}
\item[--]
Is $f(n)$-\textbf{rigid},  if every $\tau \in \SSS(I)$ such that $\tau(v) \neq \sigma(v)$ has $\dist(\sigma,\tau) \ge f(n)$. 
\item[--]
Is $f(n)$-\textbf{loose}, if for every $j \in D$, there exists $\tau \in \SSS(I)$ such that $\tau(v) = j$ and $\dist(\sigma,\tau) \le f(n)$.
\end{itemize}
\end{definition}

We will prove that while before the phase transition, in a typical solution, every variable is loose, after the phase transition nearly every variable is rigid. To formalize the notion of a random/typical solution, recall that $I_{n,m}$ denotes the set of all instances with $m$ constraints over $n$ variables and let $\IS=\IS_{n,m}$ denote the set of all instance--solution pairs, i.e., $\IS_{n,m} = \{(I,\sigma): I\in {I}_{n,m},\,\sigma\in\SSS(I)\}$. We let $\mathcal{U}=\mathcal{U}_{n,m}$ be the probability distribution induced on $\IS_{n,m}$ by the following:
\begin{center}
Choose an instance $I\in I_{n,m}$ uniformly at random.\\ 
If $\SSS(I) \neq \emptyset$, select $\sigma\in\SSS(I)$ uniformly at random. 
\end{center}
We will refer to instance-solution pairs generated according to $\mathcal{U}_{n,m}$ as \textbf{uniform} instance-solution pairs. We note that although the definition of uniform pairs allows for $\SSS(I)$ to be typically empty, i.e., to be in the typically unsatisfiable regime, we will only employ the definition for constraint densities such that \whp\ $\SSS(I)$ contains exponentially many solutions. Hence, our liberty in also using the term a ``typical'' solution.

\begin{theorem}\label{thm:froz}
Let $(I,\sigma)$ be a uniform instance-solution pair where:
\begin{itemize}
\item
$I$ is a graph with $dn/2$ edges, where $d$ is as in~\eqref{col_range}, and $\sigma$ is a $k$-coloring of $I$, or,
\item
$I$ is a $k$-CNF formula with $rn$ clauses, where $r$ is as in~\eqref{sat_range}, and $\sigma$ is a satisfying assignment of $I$, or,
\item
$I$ is a $k$-uniform hypergraph with $rn$ edges, where $r$ is as in~\eqref{hyper_range}, and $\sigma$ is a 2-coloring of $I$. 
\end{itemize}
\Whp\ the number of rigid variables in $(I,\sigma)$ is at least $\gamma_{k}n$, for some sequence $\gamma_{k} \to 1$.\end{theorem}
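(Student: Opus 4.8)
\smallskip
\noindent\emph{Reduction to the planted model.}
Exactly as in the proofs of Theorems~\ref{Thm_icy_col}--\ref{Thm_icy_hyper}, we first use the transfer machinery of Section~\ref{departure}: being rigid is a property of the pair $(I,\sigma)$, and the planted law and $\mathcal U_{n,m}$ on such pairs differ only by reweighting $I$ proportionally to $|\SSS(I)|$, so the concentration of $|\SSS(I)|$ established there lets us argue instead in the \emph{planted} model, where $\sigma$ is fixed first and then $m$ random constraints satisfied by $\sigma$ are added. By symmetry we may take $\sigma$ to be the all-true assignment (for $k$-SAT), a uniformly random proper $k$-colouring (for colouring), or a uniformly random $2$-colouring (for hypergraph $2$-colouring). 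Since rigidity is symmetric in the variables, it suffices to show that one fixed variable $v$ is non-rigid with probability only $o_k(1)$; then $\Erw[\#\text{non-rigid}]=o_k(n)$ and Markov's inequality finishes the proof. We take the rigidity parameter $f(n)$ to be a suitable linear function: larger than the diameter of the cluster of $\sigma$ (which the second-moment geometry underlying Theorems~\ref{Thm_icy_col}--\ref{Thm_icy_hyper} pins down at $o_k(n)$, as \whp\ $\SSS(I)$ contains no solutions at Hamming distances intermediate between $o_k(n)$ and the cluster separation $\zeta n$, while any path in $\SSS(I)$ leaving $\sigma$ visits every intermediate distance), but smaller than $\zeta n$.

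\smallskip
\noindent\emph{The local picture.}
Call $v$ \emph{free} in $(I,\sigma)$ if it carries no ``critical support'': for $k$-SAT, no clause has $v$'s literal as its only satisfied literal; for colouring, some colour $j\neq\sigma(v)$ is absent from the neighbourhood of $v$; for hypergraph $2$-colouring, no edge is monochromatic under $\sigma$ except at $v$. A one-line first moment shows that in each case $v$ is free with probability $o_k(1)$ (the relevant Poisson-type local statistic --- the number of critically supporting clauses, or the number of colours absent at $v$, and so on --- has mean tending to $\infty$ or to $0$ appropriately fast as $k\to\infty$). A free variable can be flipped to a neighbouring solution, hence is non-rigid; the point is to prove the converse for non-free $v$. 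The structural fact is: if $\tau\in\SSS(I)$ with $\tau(v)\neq\sigma(v)$ and $B=\{u\colon\tau(u)\neq\sigma(u)\}$, then $B$ must \emph{close up}, in that for every $u\in B$ and every constraint critically supported by $u$, some other variable of that constraint also lies in $B$ --- otherwise $\tau$ would violate that constraint. Thus $B$ contains a closed set through $v$, whose size is at most $\dist(\sigma,\tau)$.

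\smallskip
\noindent\emph{Ruling out escape.}
It remains to bound $\pr[\,v\text{ non-free and non-rigid}\,]$ by $o_k(1)$. By the first paragraph the only escaping solutions $\tau$ that matter have $\dist(\sigma,\tau)=o_k(n)$, i.e.\ $B=B(\tau)$ is a small closed set containing $v$. We bound the probability such a $B$ exists by exploring outward from $v$: reveal the critical supports of $v$, then for each of them a variable forced into $B$, then the critical supports of those variables, and so on, stopping when everything is free. Since a non-free $v$ typically carries $\gg1$ critical supports, \emph{all} of which must be repaired, and a freshly met variable is free only with probability $o_k(1)$, this exploration closes up within $o_k(n)$ variables only with probability $o_k(1)$; summing the contributions of the successive closure depths, which decay geometrically, gives the bound. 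Working in this conditional, explorative way is essential: a naive first moment over candidate flip-sets $B$ diverges, because the exponentially many small closed sets through $v$ are strongly correlated --- they all already require $v$ to carry only a few critical supports, which has probability $o_k(1)$ --- so that first moment over-counts by a polynomial factor.

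\smallskip
\noindent\emph{Conclusion and main obstacle.}
Combining the two cases, $\pr[\,v\text{ non-rigid}\,]=o_k(1)$, so \whp\ at least $(1-o_k(1))n=:\gamma_k n$ variables are rigid with $\gamma_k\to 1$, and the transfer theorems carry this back to $\mathcal U_{n,m}$. The step I expect to be the real obstacle is the escape bound just sketched: one cannot union-bound over nearby solutions --- there are exponentially many, and indeed the expected number of nearby solutions flipping a fixed variable is exponentially large --- so one must track the whitening cascade out of $v$ and show quantitatively that it is subcritical. This computation, together with pinning down the constants, must be redone separately for $k$-colouring, $k$-SAT, and hypergraph $2$-colouring, since ``critical support'' is realised differently in each.
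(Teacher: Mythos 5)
Your opening move (transfer to the planted model) and your intuition about ``critical supports'' match the paper, but the argument as structured has two genuine gaps. First, the reduction step is quantitatively wrong for the transfer theorems. You propose to show that a fixed variable is non-rigid with probability $o_k(1)$ in the planted model and then apply Markov; this yields only that ``at least $\gamma_k n$ variables are rigid'' fails in the planted model with probability $o_k(1)$ --- a constant in $n$. But Theorem~\ref{Thm_ColorExchangeMain} requires the planted-model property to hold with probability $1-\exp(-f(n))$ with $f(n)\to\infty$, and the $k$-SAT transfer theorem (Theorem~\ref{Thm_SATExchange}) requires failure probability $o(\exp(-k2^{3-k}n))$, i.e.\ \emph{exponentially} small in $n$, because the uniform model reweights instances by the fluctuating number of solutions. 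Your per-variable expectation bound gives no route to such concentration: the number of non-rigid variables is a global functional of $\SSS(I)$, so bounded-differences or Chernoff-type arguments do not apply directly. This is exactly why the paper does not transfer ``rigidity'' itself; it transfers a \emph{local, constructively defined} core property (Lemma~\ref{Lemma_core}, Lemma~\ref{Lemma_kSATfrozpl}: a stripping process produces a set of $(1-\delta)n$ variables each with $\Omega(\ln k)$ critical supports internal to the set), whose planted-model failure probability is $\exp(-\Omega(n))$ by binomial/Chernoff estimates on the stripping process.

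Second, your ``ruling out escape'' step --- bounding the probability that a non-free $v$ admits a nearby flipping solution via a whitening-cascade exploration --- is precisely the crux, and you leave it as an acknowledged obstacle rather than proving it; as you note yourself, a naive first moment over flip-sets diverges, and you do not show the cascade is subcritical (indeed above the transition it is not obviously so, since a typical variable has many supports each of which must be repaired, but the repairs recursively generate further constrained variables). The paper avoids this per-variable probabilistic analysis altogether: once the core is transferred to the uniform model, it adds a \emph{whp} sparsity property of the instance (no set $S$ with $|S|\le \zeta n$ spanning $|S|\cdot\frac{\beta}{2}\ln k$ edges, resp.\ clauses with two variables in $S$), and then the rigidity of \emph{every} core variable follows by a purely deterministic counting contradiction: a solution $\tau$ differing from $\sigma$ on a nonempty set $Z$ of at most $\zeta n$ core variables would force $Z$ to span $|Z|\,\Omega(\ln k)$ constraints, violating sparsity. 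If you want to salvage your route, you would need both an exponential-concentration version of your per-variable bound and a completed cascade argument; the deterministic core-plus-expansion mechanism is the device the paper uses to make both issues disappear.
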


\begin{remark}
Theorem~\ref{thm:froz} is tight since for every finite constraint density, a random instance \whp\ has $\Omega(n)$ variables that are not bound by any constraint.
\end{remark}

The picture drawn by Theorem~\ref{thm:froz}, whereby nearly all variables are rigid in typical solutions above the dynamical phase transition, is in sharp contrast with our results for densities below the transition for graph coloring and hypergraph 2-colorability. While we believe that an analogous picture holds for $k$-SAT, see Conjecture~\ref{conj:sat}, for technical reasons we cannot establish this presently. (We discuss the additional difficulties imposed by random $k$-SAT in Section~\ref{departure}.)

\begin{theorem}\label{thm:loose}
Let $(I,\sigma)$ be a uniform instance-solution pair where:
\begin{itemize}
\item
$I$ is a graph with $dn/2$ edges, where $d \le (1-\epsilon_k)k \ln k$, and $\sigma$ is a $k$-coloring of $I$, or,
\item
$I$ is a $k$-uniform hypergraph with $rn$ edges, where $r \le (1-\epsilon_k) (2^{k-1}/k) \ln k$, and $\sigma$ is \mbox{a 2-coloring of $I$.} 
\end{itemize}
There exists a sequence $\epsilon_k\rightarrow 0$ such that \whp\ every variable in $(I,\sigma)$ is $o(n)$-loose.
\end{theorem}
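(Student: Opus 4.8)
The plan is to work entirely in the planted model, relying on the transfer theorem alluded to in the introduction: since the second moment method shows that the number of $k$-colorings (resp.\ $2$-colorings) is concentrated around its expectation for $d$ (resp.\ $r$) in the stated range, it suffices to prove the statement for a planted pair $(I,\sigma)$ — first fix a uniformly random $\sigma\in D^n$, then choose $I$ uniformly among instances with $m$ constraints satisfied by $\sigma$. So I would first invoke that transfer result to reduce to the planted model, and then prove looseness there.

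In the planted model the analysis is local. Fix a variable $v$ and a target color $j\neq\sigma(v)$; I want to exhibit $\tau\in\SSS(I)$ with $\tau(v)=j$ and $\dist(\sigma,\tau)=o(n)$. The natural construction is a bounded-depth recoloring: set $\tau(v)=j$, and then repair conflicts by walking outward along the constraint (hyper)graph, recoloring neighbors only when forced. First I would show that in the planted model, w.h.p.\ every variable $v$ has a color $j'\neq\sigma(v)$ that is ``free at $v$'' — i.e.\ no constraint containing $v$ forbids giving $v$ color $j'$ while keeping all other variables at $\sigma$. For graph coloring with $d\le(1-\eps_k)k\ln k$ this is the standard first-moment/Chernoff computation: the degree of $v$ is $\mathrm{Bin}(n-1,d/(n-1))$, concentrated around $d$, and the expected number of colors appearing on its neighborhood is roughly $k(1-e^{-d/k})<k$ with room to spare, so w.h.p.\ a uniformly positive fraction of colors are absent at every vertex simultaneously. (For hypergraph $2$-coloring, ``free'' means every hyperedge through $v$ already has a variable of the opposite color among its other $k-1$ members, which again holds w.h.p.\ for $r\le(1-\eps_k)(2^{k-1}/k)\ln k$ by a first-moment bound on the number of vertices whose incident hyperedges are all ``tight''.) Given this, to change $v$ to the specific target $j$ I would define a branching recoloring process: recolor $v$ to $j$; this may violate constraints with neighbors currently colored $j$; recolor each such neighbor $u$ to a color free at $u$ (which exists); iterate. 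The key point is that this process, viewed as a subtree of constraints, dies out quickly: each recolored variable spawns new recolorings only through constraints that were ``barely satisfied'', and the expected branching factor of this process is $o(1)$ (it is governed by the probability that a neighbor is colored exactly $j$ and has no slack, which is $O(1/k)$ times the degree, hence below $1$ for $d=(1-\eps_k)k\ln k$ — this is exactly where the factor separating $k\ln k$ from $2k\ln k$ enters). A subcritical branching process with $n$ possible seeds has all components of size $O(\log n)=o(n)$ w.h.p., so $\dist(\sigma,\tau)=o(n)$, giving $o(n)$-looseness of $v$ for color $j$; a union bound over the (constantly many) colors $j$ and over all $n$ variables finishes it.

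I expect the main obstacle to be making the branching-process domination rigorous in the presence of dependencies: once we recolor $u$, the set of colors free at $u$'s further neighbors is conditioned on the coloring so far, and the neighborhoods of the $O(\log n)$ vertices touched by the process are not independent. The standard fix is to expose the instance edge-by-edge along a breadth-first exploration and couple the exploration with a genuine Galton--Watson process with mean $<1$, controlling the error from ``collisions'' (an edge landing inside the already-explored set), which contributes only lower-order terms as long as the explored set stays of size $o(\sqrt n)$ — and $O(\log n)$ certainly is. A secondary technical point is choosing the slack constants: one must pick $\eps_k$ so that the branching mean is bounded away from $1$ uniformly, and simultaneously so that the density of ``non-free'' vertices is $o(1)$; both are governed by the same $k\ln k$ versus $2k\ln k$ gap, so a single sequence $\eps_k\to 0$ works, but the bookkeeping (especially the hypergraph case, where a hyperedge becomes ``dangerous'' only when it has exactly one variable of the minority color) needs care. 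The $k$-SAT case is omitted precisely because there the planted measure is badly skewed — the planted assignment tends to satisfy most clauses by large margins, but the uniform measure does not, and the transfer theorem is weaker — which is why the theorem is stated only for coloring and hypergraph $2$-coloring.
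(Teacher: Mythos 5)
Your overall strategy is the same as the paper's (transfer to the planted model, then a local-repair process dominated by a subcritical branching process), but as written it has two genuine gaps. First, the claim that \whp\ \emph{every} vertex simultaneously has a color absent from its neighborhood is false in the relevant range: for a fixed vertex the number of free colors is roughly Poisson with mean $k^{\Theta(\epsilon)}$, so the probability that a \emph{given} vertex has no free color is a constant (in $n$), and \whp\ there are linearly many such vertices. Consequently your repair process cannot stop after one ring of forced recolorings, and the branching really does propagate through the ``no-slack'' vertices; your stated branching mean (``$O(1/k)$ times the degree'') is $O(\ln k)$, not below $1$ --- the correct subcritical mean is (probability of having too few free colors, about $k^{-1}$) times (expected neighbors per chosen color class, about $\ln k$), which is what the paper computes. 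Second, and more importantly, your greedy sequential recoloring has a consistency problem you do not address: ``free at $u$'' is defined relative to $\sigma$, but by the time $u$ is recolored some of its neighbors may themselves have been recolored, so the chosen colors can clash with each other. The paper avoids this by \emph{deferring} the recoloring: it only collects a ``dead'' set $D$ of size at most $g(n)=o(n)$ together with a list of $q=5$ candidate colors per dead vertex (the process wakes all neighbors in the chosen classes, which guarantees no conflicts between $D$ and $V\setminus D$), and then invokes a deterministic property $\mathcal{D}$ of $G(n,m)$ --- every induced subgraph on at most $g(n)$ vertices is $(q-1)$-choosable --- to recolor $G[D]$ consistently from the lists. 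Some such global consistency step (choosability/degeneracy of the touched set, or an equivalent) is needed; a per-vertex free-color argument alone does not yield a proper coloring $\tau$.

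A further quantitative point: the transfer theorem does not convert ``\whp\ in the planted model'' into ``\whp\ in the uniform model''. It requires the planted-model event (conditioned on $\mathcal{D}$) to fail with probability at most $\exp(-f(n))$ for the specific $f(n)=o(n)$ coming from the concentration of $\ln|\SSS(I_{n,m})|$; this is why the paper proves that the dead set exceeds $g(n)=\sqrt{nf(n)}$ with probability at most $\exp(-f(n))$, rather than settling for $O(\log n)$ bounds holding merely \whp\ Your branching-process tail can deliver such an exponential bound (total progeny of a subcritical process has exponential tails), but your reduction as stated is too weak, and this is exactly the place where the $k$-SAT analogue breaks down (Conjecture~\ref{conj:sat}), so the strength of the required bound is not a cosmetic detail.
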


We note that in fact, for all $d$ and $r$ as in Theorem~\ref{thm:loose}, \wupp\ $(I,\sigma)$ is such that changing the color of any vertex to any color only requires changing the color of $O(\log n)$ other vertices.

\begin{conjecture}\label{conj:sat}
Let $(I,\sigma)$ be a uniform instance-solution pair where $I$ is a $k$-CNF formula with $rn$ clauses, where $r \le (1-\epsilon_{k})(2^{k}/k) \ln k$, and $\sigma$ is a satisfying assignment of $I$. There exists a sequence $\epsilon_k\rightarrow 0$ such that \whp\ every variable in $(I,\sigma)$ is $o(n)$-loose.
\end{conjecture}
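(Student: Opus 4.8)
\textbf{Proof proposal for Conjecture~\ref{conj:sat}.}

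The plan is to mirror the strategy behind Theorem~\ref{thm:loose}, but confront head-on the obstacle that makes $k$-SAT genuinely harder: a variable appearing (almost) exclusively in clauses where it is the unique satisfied literal under $\sigma$ can be locally frozen even at low densities. So the first step is to pass, via the transfer theorems of Section~\ref{departure}, from a uniform pair $(I,\sigma)\sim\mathcal U_{n,m}$ to a \emph{planted} pair: first fix $\sigma=\vecone$ (say), then draw $m=rn$ clauses uniformly among the $(2^k-1)\binom nk$ clauses satisfied by $\sigma$. Since for $r \le (1-\epsilon_k)(2^k/k)\ln k$ the second moment method gives that the number of satisfying assignments is concentrated around its exponential expectation (this is well within the second-moment-accessible regime, being below the $k$-SAT threshold by a factor $\Theta(k/\ln k)$), the transfer theorem lets us prove the looseness statement in the planted model instead, which is the ``much easier view'' advertised in the introduction.

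Second, I would set up the local structure around a fixed variable $v$ in the planted model. Under the planted distribution, a clause containing $v$ is, conditioned on containing $v$, equally likely to be any of the $2^k-1$ sign patterns satisfied by $\sigma$; in particular the pattern in which $v$ is the \emph{only} satisfied literal has probability $2^{-k}\cdot\frac1{1-2^{-k}}$ of its $k$ literal-slots, so given that $v$ has degree $\Theta(r)=\Theta((2^k/k)\ln k)$, the number of clauses making $v$ ``critical'' (unique satisfied literal, all other literals already forced by $\sigma$) is Binomial with mean $\Theta((\ln k)/k)\to 0$ — hence w.h.p.\ zero for all but a $o(1)$ fraction of variables, and more importantly, by a first-moment/union-bound argument over subsets, one shows that w.h.p.\ \emph{every} variable $v$ has only $O(1)$ (indeed, with a more careful tail bound, at most a constant) critical clauses. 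To flip $v$ I then need to repair each such critical clause by flipping one of its other $k-1$ literals; each such flip may in turn create new violated clauses, and I want to argue that this cascade dies out quickly.

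Third — the core of the argument — I would analyze the ``flip cascade'' as a branching-type process on the clause-variable incidence (hyper)graph. Starting from $v$, one builds a set $F$ of flipped variables by repeatedly adding, for each currently-violated clause, one of its other literals; the relevant quantity is the expected number of \emph{new} clauses a single flip of a variable $u$ turns from satisfied to violated, which (again by the planted-clause sign distribution) is the expected number of clauses in which $u$ is currently the unique satisfied literal. Because $u$'s degree is $\Theta((2^k/k)\ln k)$ and the probability that a given such clause is ``about to become violated'' involves a factor $2^{-(k-1)}$ from the sign pattern, this branching ratio is $\Theta((\ln k)/k)\cdot(1+o(1)) < 1$ once $k$ is large (choosing $\epsilon_k$ so that the $(1-\epsilon_k)$ slack keeps the mean bounded below $1$); the process is therefore subcritical, so w.h.p.\ $|F| = O(\log n)$ and in fact \wupp\ $O(\log n)$ with a good constant, giving the $o(n)$-looseness (and the $O(\log n)$ remark). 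The standard way to make this rigorous despite dependencies is the now-familiar coupling of the neighborhood exploration in the sparse random (hyper)graph with a Galton–Watson process up to distance $o(\log n)$, plus a crude expansion/union bound to rule out a large connected ``dense'' region where the cascade could blow up; this is essentially the local-weak-convergence toolkit used for Theorem~\ref{thm:loose}, adapted so that the relevant offspring distribution counts sign-induced criticality rather than mere adjacency.

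The main obstacle — and the reason this is stated only as a conjecture — is precisely that last step in the \emph{uniform} rather than \emph{planted} model: the transfer theorems of Section~\ref{departure} are designed to move \emph{global}, exponentially-large-count statistics (like shattering or the number of rigid variables) from planted to uniform, whereas $o(n)$-looseness of \emph{every single} variable is a conjunction of $n$ local events, and a rare bad event for one variable under the planted measure can be amplified by the reweighting implicit in conditioning on $\sigma\in\SSS(I)$. For coloring and hypergraph $2$-colorability the symmetry of the problem (no literal is ``privileged'') tames this; for $k$-SAT the asymmetry between the satisfied and falsified literals means the planted measure genuinely distorts the degree/sign profile of individual variables, and closing that gap — either by a stronger, variable-local transfer theorem or by a direct second-moment computation conditioned on a variable's local structure — is exactly what we cannot currently do, hence Conjecture~\ref{conj:sat} rather than a theorem.
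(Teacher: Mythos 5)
You are attempting to prove a statement that the paper itself leaves open: Conjecture~\ref{conj:sat} is explicitly \emph{not} proved, and your proposal does not close the gap either --- indeed you concede as much in your last paragraph, so what you have is a strategy sketch, not a proof. The decisive obstruction is exactly where you place it, the planted-to-uniform transfer, but your diagnosis of \emph{why} it fails is imprecise. It is not a matter of ``global versus local'' statistics: the coloring transfer theorem (Theorem~\ref{Thm_ColorExchangeMain}) handles the equally local ``every vertex is $o(n)$-loose'' statement of Theorem~\ref{thm:loose} without difficulty, because there the number of solutions is concentrated within an $\exp(o(n))$ factor of its mean, so a planted-model failure probability of $\exp(-f(n))$ with $f(n)=o(n)$ suffices (Lemma~\ref{Lemma_fewDeadMain} delivers precisely this). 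For $k$-SAT the number of satisfying assignments is \emph{not} concentrated in this sense --- the second moment exceeds the square of the first moment by an exponential factor at every density --- and the best available lower bound (Lemma~\ref{Lemma_numberkSAT}) loses a factor $\exp(k2^{3-k}n)$. Consequently Theorem~\ref{Thm_SATExchange} demands that the planted-model property fail with probability at most $\exp(-k2^{3-k}n-f(n))$, i.e.\ genuinely exponentially small in $n$. An ``every variable is $o(n)$-loose'' event can never be certified at that level: a subcritical cascade of size $g(n)=o(n)$ fails with probability no smaller than $\exp(-O(g(n)))=\exp(-o(n))$, which is far too large. (At best one could hope to transfer $\delta_k n$-looseness for some constant $\delta_k>0$, which is weaker than the conjecture.) This mismatch between the error tolerance of the $k$-SAT transfer theorem and the unavoidable $\exp(-o(n))$ failure rate of any single-variable looseness argument is the concrete reason the statement is a conjecture.

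Separately, your planted-model cascade analysis contains a quantitative error that would sink it even in the planted world. A variable appears in $\Theta(km/n)=\Theta(kr)=\Theta(2^k\ln k)$ clauses, not $\Theta(r)$; since a uniformly random $\sigma$-satisfied clause through $v$ has probability $(2^k-1)^{-1}$ of having $v$ as its unique satisfied literal, the expected number of clauses that $v$ \emph{supports} is $\approx kr2^{-k}=(1-\epsilon_k)\ln k$, not $\Theta((\ln k)/k)\to0$. So flipping a typical variable violates $\Theta(\ln k)$ clauses, and the naive cascade in which each violated clause is repaired by flipping an arbitrary other literal has branching ratio $\Theta(\ln k)>1$: it is supercritical, not subcritical. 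Subcriticality must instead be extracted from choice, as in the paper's proof of Theorem~\ref{thm:loose}: since each variable supports no clause with probability about $k^{\epsilon-1}$, among the $k-1$ repair candidates of a violated clause one expects about $k^{\epsilon}\gg1$ ``free'' variables whose flip causes no further damage, and the branching bound comes from the rare event that few such candidates exist (the analogue of the ``five available colors'' device). With that correction the planted-model step is plausible, but the transfer obstruction described above still stands, so the argument does not yield the conjecture.
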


\section{Background and Related Work}

\subsection{Algorithms}

Attempts for a ``quick improvement'' upon either of the naive algorithms mentioned in the introduction for satisfiability/graph coloring, stumble upon the following general fact. Given a CSP instance, consider the bipartite graph in which every variable is adjacent to precisely those constraints in which it appears, known as the factor graph of the instance. For random formulas/graphs, factor graphs are locally tree-like, i.e., for any arbitrarily large constant $D$, the depth-$D$ neighborhood of a random vertex is a tree \whp\ In other words, locally, random CSPs are trivial, e.g., random graphs of any finite average degree are locally 2-colorable. Moreover, as the constraint density is increased, the factor graphs of random CSPs get closer and closer to being biregular, so that degree information is not useful either. Combined, these two facts render all known algorithms impotent, i.e., as the density is increased, their asymptotic performance matches that of trivial algorithms.

In~\cite{MPZ},  M\'{e}zard, Parisi, and Zecchina proposed a new satisfiability algorithm called Survey Propagation (SP) which performs extremely well experimentally on instances of random 3-SAT. This was very surprising at the time and allowed for optimism that, perhaps, random $k$-SAT instances might not be so hard. Moreover, SP was generalized to other problems, e.g., $k$-coloring~\cite{spcol} and Max $k$-SAT~\cite{spmax}. An experimental evaluation of SP for values of $k$ even as small as 5 or 6 is already somewhat problematic, but to the extent it is reliable it strongly suggests that SP does not find solutions for densities as high as those for which solutions are known to exist. Perhaps more importantly, it can be shown that for densities at least as high as $2^k \ln2 - k$,  if SP can succeed at its main task (approximating the marginal probability distribution of the variables with respect to the uniform measure over satisfying assignments),  so can a much simpler algorithm, namely Belief Propagation (BP), i.e., dynamic programming on trees. 

The trouble is that to use either BP or SP to find satisfying assignments one sets variables iteratively. So, even if it is possible to compute approximately correct marginals at the beginning of the execution (for the entire formula), this can stop being the case after some variables are set. Concretely, in~\cite{MontanariGibbs}, Montanari et al.\ showed that (even within the relatively generous assumptions of statistical physics computations) the following Gibbs-sampling algorithm fails above the $(2^k/k) \ln k $ barrier, i.e., step~\ref{bp_comp} below fails to converge after only a small fraction of all variables have been assigned a value:
\begin{enumerate}
\item\label{begin}
Select a variable $v$ at random.
\item\label{bp_comp}
Compute the marginal distribution of $v$ using Belief Propagation.
\item
Set $v$ to $\{0,1\}$ according to the computed marginal distribution; simplify the formula; go to step~\ref{begin}.
\end{enumerate}
 
\subsection{Relating the Uniform and the Planted Model.}

The idea of deterministically embedding a property inside a random structure is very old and, in general, the process of doing this is referred to as ``planting'' the property. In our case, we plant a solution $\sigma$ in a random CSP, by only including constraints compatible with $\sigma$. Juels and Peinado~\cite{Juels} were perhaps the first to explore the relationship between the planted and the uniform model and they did so for the clique problem in dense random graphs $G_{n,1/2}$, i.e., where each edge appears independently with probability 1/2. They showed the distribution resulting from first choosing $G=G_{n,1/2}$ and then planting a clique of size $(1+\eps)\log_2n$ is very close to $G_{n,1/2}$ and suggested this as a scheme to obtain a one-way-function. Since the planted clique has size only $(1+\eps)\log_2n$, the basic argument in~\cite{Juels} is closely related to subgraph counting. In contrast, the objects under consideration in our work ($k$-colorings, satisfying assignments, etc.) have an immediate impact on the \emph{global} structure of the combinatorial object being considered, rather than just being local features, such as a clique on $O(\log n)$ vertices.

Coja-Oghlan, Krivelevich, and Vilenchik~\cite{KV1,KV2} proved that for constraint densities well above the threshold for the existence of solutions, the planted model for $k$-coloring and $k$-SAT is equivalent to the uniform distribution \emph{conditional} on the (exponentially unlikely) existence of at least one solution. In this conditional distribution as well as in the high-density planted model, the geometry of the solution space is very simple, as there is precisely one cluster of solutions.

\subsection{Solution-space Geometry}

In~\cite{fede,phys_clus_j} the first steps were made towards understanding the solution-space geometry of random $k$-CNF formulas by proving the existence of shattering and the presence of rigid variables for $r = \Theta(2^k)$. This was a far cry from the true $r  \sim (2^k/k) \ln k$ threshold for the onset of both phenomena, as we establish here. Besides the quantitative aspect, there is also a fundamentally important difference in the methods employed in~\cite{fede,phys_clus_j} vs.\ those employed here. In those works, properties were established by taking a union bound over all satisfying assignments. It is not hard to show that the derived results are best possible using those methods and, in fact, there is good reason to believe that the results are genuinely tight, i.e., that for densities $o(2^{k})$ the derived properties simply do not hold for \emph{all} satisfying assignments. Here, we instead establish a systematic connection between the planted model and the process of sampling a random solution of a random instance. This argument allows us to analyze ``typical'' solutions while allowing for the possibility that a (relatively small, though exponential) number of ``atypical'' solutions exist. Therefore, we are for the first time in a position to analyze the extremely complex energy landscape of below-threshold instances of random CSPs, and to estimate quantities that appeared completely out of reach prior to this work.

\section{Our Point of Departure: Symmetry, Randomness and Inversion}\label{departure}

As mentioned, the results in this paper are enabled by a set of technical lemmas that allow one to reduce the study of ``random solutions of random CSP instances'' to the study of ``planted CSP solutions''. The conceptual origin of these lemmas can be traced to the following humble  observation.

Let $M$ be an arbitrary $0$-$1$ matrix with the property that all its rows have the same number of 1s and all its columns have the same the number of 1s. A moment's reflection makes it clear that for such a matrix, both of the following methods select a uniformly random 1 from the entire matrix: 
\begin{enumerate}
\item Select a uniformly random column and then a uniformly random 1 in that column.
\item Select a uniformly random row and then a uniformly random 1 in that row.
\end{enumerate}

An example of how we employ this fact for random CSPs is as follows. Let $\mathcal{F}$ be the set of all $k$-CNF formulas with $n$ variables and $m$ distinct clauses (chosen among all $2^{k} \binom{n}{k}$ possible $k$-clauses). Say that $\sigma \in \{0,1\}^n$ NAE-satisfies a formula $F \in \mathcal{F}$ if under $\sigma$, every clause of $F$ has at least one satisfied and at least one falsified literal. Let $M$ be the $2^{n} \times |\mathcal{F}|$ matrix  where$M_{\sigma,F}=1$ iff $\sigma \in \{0,1\}^{n}$ NAE-satisfies $F$. By the symmetry of $\mathcal{F}$, it is clear that all rows of $M$ have the same number of 1s. Imagine, for a moment, that the same was true for all columns. Then, a uniformly random solution of a uniformly random instance would be distributed {\em exactly\/} as a ``planted'' instance-solution pair: first select $\sigma\in \{0,1\}^{n}$ uniformly at random; then select $m$ distinct clauses uniformly at random among all $2^{k-1} \binom{n}{k}$ clauses NAE-satisfied by $\sigma$.

Our contribution begins with the realization that exact row- and column-balance is not necessary. Rather, it is enough for the 1s in $M$ to be ``well-spread". More precisely, it is enough that the marginal distributions induced on the rows and columns of $M$ by  selecting a uniformly random 1 from the entire matrix are both ``reasonably close to'' uniform. For example, assume we can prove that $\Omega(|\mathcal{F}|)$ columns of $M$ have $\Theta(f(n))$ 1s, where $f(n)$ is the average number of 1s per column.
Indeed, this is precisely the kind of property  implied by the success of the second moment method for random NAE-$k$-SAT~\cite{nae}. Under this assumption, proving that a property holds \wupp\ for a uniformly random solution of a uniformly random instance, reduces to proving that it holds \whp\ for the planted solution of a planted instance, a dramatically simpler task.

There is a geometric intuition behind our transfer theorems which is more conveniently described when every constraint is included independently with the same probability $p$, i.e., we take $p=m/\left(2^{k} \binom{n}{k}\right)$. For all $k\ge 3$ and $m=rn$, it was shown in~\cite{nae} that the resulting instances \wupp\ have exponentially many solutions for $r \le 2^{k-1} \ln 2 - 3/2$. Consider now the following way of generating {\em planted\/} NAE $k$-SAT instances. First, select a formula $F$ by including each clause with probability $p$, exactly as above. Then, select $\sigma \in \{0,1\}^n$ uniformly at random and remove from $F$ all constraints violated by $\sigma$. Call the resulting instance $F'$. Our results say that as long as $q\equiv r(1-2^{-k+1}) \le 2^{k-1} \ln 2 - 3/2$, the instance $F'$ is  ``nearly indistinguishable" from a {\em uniform\/} instance created by including each clause with probability $q$. (We will make this statement precise shortly.)

To see how this happens, recall the function $H: \sigma \to \mathbb{N}$ counting the number of violated constraints under each assignment. Clearly, selecting $F$ specifies such a function $H_F$, while selecting $\sigma \in \{0,1\}^n$ and removing all constraints violated by $\sigma$ amounts to modifying $H_F$ so that $H_F(\sigma) = 0$. One can imagine that such a modification creates a gradient in the vicinity of $\sigma$, a ``crater" with $\sigma$ at its bottom. What we prove is that as long as $H_F$ already had an exponential number of craters and the number of craters  is concentrated, adding one more crater does not make a big difference. Of course, if the density is increased further, the opened crater becomes increasingly obvious, as it takes a larger and larger cone to get from the typical values of $H_F$ down to 0. Hence the ease with which algorithms solve planted instances of high density.\medskip

To prove our transfer theorems we instantiate this idea for random graph $k$-coloring, random $k$-uniform hypergraph $2$-coloring, and random $k$-SAT. For this, a crucial step is deriving a lower bound on the number of solutions of a random instance. For example, in the case of random graph $k$-coloring, we prove that the number of  $k$-colorings, $|\SSS(I_{n,m})|$, for a random graph with $n$ vertices and $m$ edges is ``concentrated'' around its expectation in the sense that \whp
	\begin{equation}\label{eqColLower}
	n^{-1}\; \left|\,\ln|\SSS(I_{n,m})|-\ln\ex\left(|\SSS(I_{n,m})|\right)\,\right|=o(1) \enspace .
	\end{equation}
To prove this, we use the upper bound on the second moment $\ex \brk{|\SSS(I_{n,m})|^2}$ from~\cite{kcol}
to show that \wupp\ $|\SSS(I_{n,m})| = \Omega(\ex |\SSS(I_{n,m})|)$. 
Then, we perform a sharp threshold analysis, using theorems of Friedgut~\cite{EhudHunting},
to prove that~(\ref{eqColLower}) holds, in fact,  with \emph{high} probability.
A similar approach applies to hypegraph $2$-coloring.

The situation for random $k$-SAT is more involved. Indeed, we can prove that the number of satisfying assignments is \emph{not} concentrated around its expectation in the sense of~(\ref{eqColLower}). This problem is mirrored by the fact that the second moment of the number of satisfying assignments exceeds the square of the first moment by an exponential factor (for any constraint density).
Nonetheless, letting $F_{k}(n,m)$ denote a uniformly random $k$-CNF formula with $n$ variables and $m$ clauses, combining techniques from~\cite{yuval} with a sharp threshold analysis, we can derive a lower bound on the number of satisfying assignments that holds \whp, namely
	$n^{-1}\ln|\SSS(F_k(n,m))|\geq n^{-1}\ln\ex|\SSS(F_k(n,m))|-\phi(k)$,
where $\phi(k)\rightarrow0$ exponentially with $k$.
This estimates allows us to approximate the uniform model by the planted model sufficiently
well in order to establish Theorems~\ref{Thm_icy_sat} and~\ref{thm:froz}.

\section{Proof sketches}
 
Due to the space constraints, in the remaining pages we give proof sketches of our results for $k$-coloring, to offer a feel of the transfer theorems and of the style of the arguments one has to employ given those theorems (actual proofs appear in the Appendix). The proofs for hypergraph 2-coloring are relatively similar, as it is also a ``symmetric'' CSP and the second moment methods works on its number of solutions. For $k$-SAT, though, a significant amount of additional work is needed, as properties must be established with exponentially small error probability to overcome the large deviations in the number of satisfying assignments (proofs appear in the Appendix).

\subsection{Transfer Theorem for Random Graph Coloring}

We consider a fixed number $\eps>0$ and assume that $k\geq k_0$ for some sufficiently large $k_0=k_0(\eps)$. We denote $\{1,\ldots,k\}$ as $\brk{k}$.
We are interested in the  probability distribution $ \mathcal{U}_{n,m}$ on $\IS_{n,m}$ resulting from
first choosing a random graph $G=G(n,m)$ and then a random $k$-coloring of $G$ (if one exists).
To analyze this distribution, we consider the distribution $\mathcal{P}_{n,m}$ on $\Lambda_{n,m}$ induced
by following expermient.
\begin{description}
\item[P1.] Generate a uniformly random $k$-partition $\sigma\in \brk{k}^n$.
\item[P2.] Generate a graph $G$ with $m$ edges chosen uniformly at random among the edges bicolored under $\sigma$.
\item[P3.] Output the pair $(G,\sigma)$.
\end{description}
The distribution $\mathcal{P}_{n,m}$ is known as the \emph{planted model}.

\begin{theorem}\label{Thm_ColorExchangeMain}
Suppose that $d=2m/n \le (2-\eps)k\ln k$.
There exists a function $f(n)=o(n)$ such that the following is true.
Let $\mathcal{D}$ be any graph property such that
$G(n,m)$ has $\mathcal{D}$ with probability $1-o(1)$,
and let $\EE$ be any property of pairs $(G,\sigma)\in\Lambda_{n,m}$.
If  for all sufficiently large $n$
	\begin{equation}\label{eqColorExchangeMain}
	\pr_{\mathcal{P}_{n,m}}\brk{(G,\sigma)\mbox{ has }\EE|G\mbox{ has }\mathcal{D}}
		\geq1-\exp(-f(n)),
	\end{equation}
then
	$\pr_{\mathcal{U}_{n,m}}\brk{(G,\sigma)\mbox{ has }\EE}=1-o(1).$
\end{theorem}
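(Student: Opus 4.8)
The plan is to turn the ``uniformly random $1$ in a row/column-balanced matrix'' heuristic into a quantitative change-of-density statement between the planted model $\mathcal P_{n,m}$ and $\mathcal U_{n,m}$ on $\Lambda_{n,m}$. For a $k$-partition $\tau\in\brk{k}^n$ write $B(\tau)$ for the number of vertex pairs bicolored under $\tau$ and set $B^{\ast}=\max_\tau B(\tau)$, attained (up to rounding) at a perfectly balanced partition. For a pair $(G,\sigma)\in\Lambda_{n,m}$ the two experiments assign it the \emph{exact} probabilities
	\begin{equation*}
	\pr_{\mathcal U_{n,m}}\brk{(G,\sigma)}=\binom{\binom{n}{2}}{m}^{-1}|\SSS(G)|^{-1}
	\qquad\text{and}\qquad
	\pr_{\mathcal P_{n,m}}\brk{(G,\sigma)}=k^{-n}\binom{B(\sigma)}{m}^{-1},
	\end{equation*}
where in $\mathcal U_{n,m}$ I tacitly condition on $G(n,m)$ being $k$-colorable --- which holds \whp\ at every density $d\le(2-\eps)k\ln k$ --- at a cost of a factor $1+o(1)$. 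Consequently, writing $Z=Z_{n,m}:=\Erw|\SSS(G(n,m))|=\binom{\binom{n}{2}}{m}^{-1}\sum_\tau\binom{B(\tau)}{m}$, the per-pair likelihood ratio $\pr_{\mathcal U_{n,m}}\brk{(G,\sigma)}/\pr_{\mathcal P_{n,m}}\brk{(G,\sigma)}$ equals $k^n\binom{B(\sigma)}{m}/(\binom{\binom{n}{2}}{m}\,|\SSS(G)|)$, which is at most $k^n\binom{B^{\ast}}{m}/(\binom{\binom{n}{2}}{m}\,|\SSS(G)|)$.

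First I would bound $k^n\binom{B^{\ast}}{m}/\binom{\binom{n}{2}}{m}$ by $Z\cdot n^{O(1)}$. With probability $1-o(1)$ a uniformly random $\tau$ has all color classes within $\sqrt{n\log n}$ of $n/k$; for such $\tau$ one computes $B^{\ast}-B(\tau)=O(n\log n)$, so (using $m=O(n)$ and $B^{\ast}=\Theta(n^2)$) $\binom{B(\tau)}{m}\ge\binom{B^{\ast}}{m}\exp(-O(m\log n/n))=\binom{B^{\ast}}{m}\,n^{-O(1)}$, and summing over these near-balanced $\tau$ yields the claim. Next I would fix a function $f(n)=o(n)$ that exceeds both $10\log n$ and ten times the $o(n)$ slack in~\eqref{eqColLower}, and let $\mathcal G$ be the set of graphs having $\mathcal D$ and satisfying $|\SSS(G)|\ge Z\exp(-f(n)/10)$; since the graph-marginal of $\mathcal U_{n,m}$ is $G(n,m)$ conditioned on colorability, \eqref{eqColLower} and the hypothesis on $\mathcal D$ give that it lies in $\mathcal G$ \whp. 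Combining these two facts, for every $(G,\sigma)$ with $G\in\mathcal G$ the likelihood ratio is at most $n^{O(1)}\exp(f(n)/10)\le\exp(f(n)/5)$. Finally, summing this bound over all bad pairs whose graph lies in $\mathcal G$, and using $\mathcal G\subseteq\{G\text{ has }\mathcal D\}$ together with~\eqref{eqColorExchangeMain},
	\begin{equation*}
	\pr_{\mathcal U_{n,m}}\brk{\neg\EE}\le\pr_{\mathcal U_{n,m}}\brk{G\notin\mathcal G}+\exp(f(n)/5)\,\pr_{\mathcal P_{n,m}}\brk{\neg\EE,\ G\text{ has }\mathcal D}\le o(1)+\exp(f(n)/5)\exp(-f(n))=o(1).
	\end{equation*}

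The main obstacle is not this bookkeeping but the estimate~\eqref{eqColLower}, which is the only place the hypothesis $d\le(2-\eps)k\ln k$ is used: it relies on the sharp second-moment bound of~\cite{kcol} (which breaks down once $d>2k\ln k$) followed by a Friedgut sharp-threshold argument to upgrade concentration from \wupp\ to \whp. In this sense the factor $2$ in the statement is inherited verbatim from the range of validity of the second moment method. The only genuinely delicate point internal to the argument above is the near-balance estimate $k^n\binom{B^{\ast}}{m}/\binom{\binom{n}{2}}{m}\le Z\cdot n^{O(1)}$, i.e.\ that the annealed count $Z$ is, up to a polynomial factor, carried by near-balanced colorings; this needs local-limit control on the class sizes of a random $k$-partition rather than a crude union bound. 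The same scheme proves the transfer theorem for hypergraph $2$-colorability with only cosmetic changes; for $k$-SAT it must be rerun with exponentially small error probabilities throughout, since there $|\SSS|$ is not concentrated around $\Erw|\SSS|$ and is controlled from below only up to a factor $\exp(\phi(k)n)$ with $\phi(k)\to0$.
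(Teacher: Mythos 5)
Your proposal is correct and is essentially the paper's own argument: both rest on (i) the near-balance of a uniformly random $k$-partition, so that the planted probability of every pair $(G,\sigma)$ is within a small (polynomial, resp.\ constant) factor of the typical value $k^{-n}\binom{B^\ast}{m}^{-1}$, and (ii) the \whp\ lower bound $|\SSS(G(n,m))|\geq \exp(-o(n))\,\Erw|\SSS(G(n,m))|$ obtained from the second moment bound of~\cite{kcol} upgraded by Friedgut's sharp-threshold machinery, which is where $d\le(2-\eps)k\ln k$ enters. You package this as a direct likelihood-ratio/change-of-measure bound while the paper argues by counting bad pairs and deriving a contradiction, but these are the same proof in different form (and, minor point, your balance estimate needs only a Chernoff/Chebyshev bound on multinomial class sizes, not local-limit control).
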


\subsection{Loose Variables Below the Transition}

Suppose that $d\leq (1-\eps)k\ln k$. Recall that a graph with vertex set $V$ is said to be $\zeta$-choosable if for any assignments of color lists of length at least $\zeta$ to the elements of $V$, there is a proper coloring in which every vertex receives a color from its list. To prove Theorem~\ref{thm:loose}, we consider the property $\EE$ that all vertices are $o(n)$-loose and the following condition $\mathcal{D}$:
\begin{quote}
For any set $S\subset V$ of size $|S|\leq g(n)$ the subgraph
induced on $S$ is $4$-choosable. 
\end{quote}
Here $g(n)$ is some function such that $f(n)\ll g(n)=o(n)$,
where $f(n)$ is the function from Theorem~\ref{Thm_ColorExchangeMain}.
A standard argument shows that a random graph $G(n,m)$, where $m=O(n)$, satisfies $\mathcal{D}$ \whp
	
By Theorem~\ref{Thm_ColorExchangeMain}, we are thus left to establish~(\ref{eqColorExchangeMain}). Let $\sigma\in\brk{k}^n$ be a uniformly random $k$-partition, and let $G$ be a random graph with $m$ edges such that $\sigma$ is a $k$-coloring of $G$. Since $\sigma$ is uniformly random, we may assume that the color classes $V_i=\sigma^{-1}(i)$
satisfy $|V_i|\sim n/k$. Let $v_0\in V$ be any vertex, and let $l\not=\sigma(v_0)$ be the ``target color'' for $v_0$. Our goal is to find a coloring $\tau$ such that $\tau(v_0)=l$ and $\dist(\sigma,\tau) \le g(n)$.

If $v$ has no neighbor in $V_l$, then we can just assign this color to $v_0$.
Otherwise, we run the following process.
In the course of the process, every vertex is either \emph{awake}, \emph{dead}, or \emph{asleep}.
Initially, all the neighbors of $v_0$ in $V_l$ are awake, $v$ is dead, and all other vertices are asleep.
In each step of the process, pick an awake vertex $w$ arbitrarily and declare it dead
(if there is no awake vertex, terminate the process).
If there are at least five colors $c_1(w),\ldots,c_5(w)$ available such that
$w$ has no neighbor in $V_{c_i(w)}$, then we do nothing.
Otherwise, we pick five colors $c_1(w),\ldots,c_5(w)$ randomly and
declare all asleep neighbors of $w$ in $V_{c_j(w)}$ awake for $1\leq j\leq 5$.

\begin{lemma}\label{Lemma_fewDeadMain}
With probability at least $1-\exp(-f(n))$ there are
at most $g(n)$ dead vertices when the process terminates.
\end{lemma}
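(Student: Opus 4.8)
\medskip
\noindent\emph{Proof plan.}
We prove the stronger bound $\pr[|D|\ge t]\le\exp(-\Omega(t/d))$ for every $t$, where $D$ is the set of dead vertices at termination and $d=2m/n$ (a constant as $n\to\infty$); since $g(n)\gg f(n)$, taking $t=g(n)$ then yields the lemma. Call a vertex $w$ \emph{blocked} if fewer than five colours are available at $w$, that is, if $|\{c:N(w)\cap V_c=\emptyset\}|\le 4$, and let $B$ be the set of blocked vertices. By construction a vertex wakes up new vertices only if it is blocked, and every vertex that ever wakes up is woken either by $v_0$ (the initially awake vertices being exactly $N(v_0)\cap V_l$) or by a blocked vertex that is awake at the time; tracing these ``woken-by'' relations back to $v_0$ shows that $D^{\ast}:=\{v_0\}\cup(D\cap B)$ is connected in $G$, via a tree all of whose non-root vertices lie in $B$ and all of whose edges lie in $G$, and moreover every dead vertex is in $D^{\ast}$ or is a $G$-neighbour of some vertex of $D^{\ast}$. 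Hence
\[
  |D|\ \le\ |D^{\ast}|+\sum_{w\in D^{\ast}}\deg_G(w),
\]
so it suffices to rule out, with probability $1-\exp(-\Omega(t/d))$, the existence of a connected subgraph $S\ni v_0$ of $G$ with $S\setminus\{v_0\}\subseteq B$ and $|S|+\sum_{w\in S}\deg_G(w)\ge t$.

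\smallskip
Next, blocked vertices are extremely rare. In the planted model a vertex $w$ has $(1+o(1))d$ neighbours whp, and, since $\sigma$ is a uniformly random balanced $k$-partition, these are spread essentially uniformly over the $k-1$ classes other than $\sigma(w)$; a balls-into-bins estimate gives that the expected number of available colours at $w$ is $(1+o(1))(k-1)\big(1-\tfrac1{k-1}\big)^{d}=\Theta(k^{\eps})$ when $d\le(1-\eps)k\ln k$. As the indicator events ``$N(w)\cap V_c=\emptyset$'' are negatively associated, a Chernoff bound for the lower tail gives $\pr[w\in B]\le\exp(-\Omega(k^{\eps}))=:q_k$, and the same bound survives conditioning on at most $\eps k/2$ prescribed edges at $w$ (deleting at most $\eps k/2$ classes still leaves expectation $\Omega(k^{\eps})=\omega(1)$). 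What matters is that $q_k$ is smaller than any fixed power of $1/k$, so that $d\sqrt{q_k}=o(1)$ as $k\to\infty$.

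\smallskip
Now run a first moment over candidate witness trees. Fix $t$ and sum over all trees with root $v_0$ and $s$ further labelled vertices, of which there are at most $\binom{n}{s}(s+1)^{s-1}$. For a fixed such tree $T$: the $s$ tree-edges are present in $G$ with probability at most $(O(d)/n)^s$; in any tree on $s+1$ vertices all but $O(s/k)$ vertices have tree-degree at most $\eps k/2$, so at least $s/2$ of its non-root vertices are blocked with conditional probability at most $O(q_k)$, contributing a factor $q_k^{\,s/2}$ (constants absorbed below); and, conditional on the tree-edges and the blockedness events, $\sum_{w\in V(T)}\deg_G(w)$ is dominated by a sum of $O(s)$ essentially independent variables with exponential tails and mean $O(sd)$, hence reaches $t-s-1$ with probability at most $\exp(-\Omega((t-s-1)/d))$ once $s\le t/(Cd)$, and at most $1$ for larger $s$. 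Multiplying these estimates and summing over all trees of size $s+1$ gives a bound $\big(O(d\sqrt{q_k})\big)^{s}$ times $\exp(-\Omega(t/d))$ when $s\le t/(Cd)$, and times $1$ otherwise. Since $O(d\sqrt{q_k})<\tfrac12$ for $k\ge k_0(\eps)$, summing the resulting geometric series over $s\ge 0$ yields $\pr[|D|\ge t]\le\exp(-\Omega(t/d))$, and $t=g(n)$ finishes the argument.

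\smallskip
The main obstacle is that the natural ``explore step by step and dominate by a subcritical Galton--Watson process'' argument does \emph{not} go through directly: conditioning on the exploration history can badly inflate the probability that a freshly reached vertex is blocked -- precisely when it has many already-explored neighbours -- and there is no a~priori bound on how often this happens, since the explored region need not be tree-like. Passing to the static witness tree $D^{\ast}$ and running a first moment over trees circumvents this. Two points still need care, and both are absorbed comfortably because $q_k$ is superpolynomially small in $k$: (i) the $O(s/k)$ vertices of large tree-degree, whose blockedness is not damped by $q_k$; and (ii) vertices of anomalously large $G$-degree, which would inflate $\sum_{w\in D^{\ast}}\deg_G(w)$ -- dealt with by folding the degree sum into the first moment itself and using that a vertex of degree $\gg d$ is exponentially unlikely.
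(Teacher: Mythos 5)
Your argument is correct in outline, but it takes a genuinely different route from the paper. The paper's proof couples the exploration directly with a subcritical Galton--Watson process: a processed vertex fails to have five free colours with probability at most $k^{-1}$, and in that case it wakes, in each of the five chosen classes, a number of neighbours with mean $O(\ln k)$, so the expected number of newly awake vertices per dead vertex is $O(k^{-1}\ln k)<1$; stochastic domination and Chernoff-type bounds for the total progeny of a subcritical process then give failure probability $\exp(-\Omega(g(n)))\leq\exp(-f(n))$. You instead replace the dynamic domination by a static object --- the tree of blocked dead vertices rooted at $v_0$, with the remaining dead vertices charged to the degrees of its vertices --- and run a first moment over all candidate witness trees. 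This buys immunity from the conditioning-on-the-exploration-history issue you raise, but it costs you the much stronger blockedness estimate $\pr\brk{w\in B}\leq\exp(-\Omega(k^{\eps}))$ (the paper's bound $k^{-1}$ would be far too weak against the $(O(d))^{s}$ entropy of labelled trees), plus extra bookkeeping that your sketch only gestures at: blockedness of a tree vertex can also be caused by non-tree $G$-edges inside the tree vertex set, and the near-independence claims are cleanest after passing to the binomial planted model as in Lemma~\ref{Lemma_independentEdges} (the $O(\sqrt n)$ loss is harmless against $\exp(-\Omega(g(n)))$); both points are routine but should be spelled out. Finally, your claim that the branching-process route ``does not go through directly'' is overstated: blockedness requires roughly $k$ occupied colour classes, so conditioning on the few revealed edges a freshly processed vertex has into the explored region (which has size $o(n)$, so having many such edges is itself exponentially unlikely) shifts the blockedness probability only negligibly --- which is why the paper's shorter argument is sound; your version is a more robust, if heavier, alternative.
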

The proof of Lemma~\ref{Lemma_fewDeadMain} is based on relating our process to a subcritical branching process.
The basic insight here is that when $d<(1-\eps)k\ln k$ it is very likely
that a vertex $w$ has five immediately available colors.
More precisely, for any $w$ the number of neighbors in any class $V_i$
with $i\not=\sigma(w)$ is asymptotically Poisson with mean
$(1+o(1))\frac{2m}{(k-1)n}\leq(1-\eps+o(1))\frac{k\ln k}{k-1}$.
Hence, the probability that $w$ does \emph{not} have a neighbor in
$V_i$ is about $k^{\eps-1}$. As there are $k$ colors in total, we expect about $(k-1)^{\eps}$
colors available for $w$, i.e., a lot.

To obtain a new coloring $\tau$ in which $v_0$ takes color $l$
we consider the set $D$ of all dead vertices.
We let $\tau(u)=\sigma(u)$ for all $u\in V\setminus D$.
Moreover, conditioning on the event $\mathcal{D}$,
we can assign to each $w\in D$ a color from the list
$\{c_1(w),\ldots,c_5(w)\}\setminus\{l\}$.
Thus, the new coloring $\tau$ differs from $\sigma$ on at most $|D|\leq g(n)=o(n)$ vertices.

\subsection{Rigid Variables Above the Transition}

Suppose that $d\geq (1+\eps)k\ln k$.
To prove Theorem~\ref{thm:froz} for coloring we apply Theorem~\ref{Thm_ColorExchange} as follows. 
We let $\alpha,\beta>0$ be sufficiently small numbers and denote by
$\EE$ the following property of  a pair $(G,\sigma)\in\Lambda_{n,m}$:
	\begin{equation}\label{eqG*Main}
	\parbox{15cm}{
	There is a subgraph $G_*\subset G$ of size $|V(G_*)|\geq(1-\alpha)n$
	such that for every vertex $v$ of $G_*$ and each color $i\not=\sigma(v)$ there are
	at least $\beta\ln k$ vertices vertex $w$ in $G_*$ that are adjacent to $v$
	such that $\sigma(w)=i$.}
	\end{equation}
Also, we let $\mathcal{D}$ be the property that the maximum degree is at most $(\ln n)^{2}$.

We shall prove that for a pair $(G,\sigma)$ chosen from $\mathcal{U}_{n,m}$
a subgraph $G_*$ as in~(\ref{eqG*Main}) exists \whp\
If that is so, then every vertex in $G_*$ has at least one neighbor in every color class other than its own.
Therefore, it is impossible to just assign a different color to any vertex in $G_*$.
In fact, since all vertices in $G_*$ have \emph{a lot} (namely, at least $\beta\ln k$)
of neighbors with every other color, the expansion properties of the random graph $G(n,m)$ imply that
recoloring any vertex $v$ in $G_*$ necessitates the recoloring of at least
$n/(k\ln k)$ further vertices. Loosely speaking, the conflicts resulting from recoloring $v$ spread so rapidly
that we necessarily end up recoloring a huge number of vertices. Thus, all vertices in $G_*$ are $n/(k\ln k)$-rigid. Note that we can not hope for much better, as we can always recolor $v$ by swapping two color classes, i.e., $\sim 2n/k$ vertices.\smallskip

To prove the existence of the subgraph $G_*$, we establish the following.
\begin{lemma}\label{Lemma_coreMain}
Condition~(\ref{eqColorExchangeMain}) holds for $\mathcal{D}$ and $\EE$ as above.
\end{lemma}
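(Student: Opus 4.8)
The plan is to obtain the subgraph $G_*$ of~\eqref{eqG*Main} as a \emph{core}. Starting from $G$, run the peeling process that, as long as possible, deletes any vertex $v$ for which, in the currently surviving subgraph, there is a colour $i\neq\sigma(v)$ such that $v$ has fewer than $\beta\ln k$ surviving neighbours of colour $i$. What remains is exactly the largest induced subgraph of $G$ satisfying~\eqref{eqG*Main}, so $\EE$ fails if and only if the peeling deletes at least $\alpha n$ vertices. By Theorem~\ref{Thm_ColorExchangeMain} it therefore suffices to verify~\eqref{eqColorExchangeMain}, and we will in fact establish the stronger bound
\[
\pr_{\mathcal P_{n,m}}\brk{\mbox{the peeling deletes at least }\alpha n\mbox{ vertices}}\leq\exp(-\Omega(n)),
\]
which lies well below $\exp(-f(n))$ for any $f(n)=o(n)$; conditioning on $\mathcal D$ will be seen to be harmless.

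The local statistics of the planted model make the peeling \emph{subcritical}. Since $\sigma$ is a uniformly random (hence balanced) $k$-partition and $G$ consists of $m=dn/2$ edges chosen uniformly among the bichromatic pairs with $d\geq(1+\eps)k\ln k$, for every vertex $v$ and every colour $i\neq\sigma(v)$ the number of colour-$i$ neighbours of $v$ is asymptotically Poisson with mean $d/(k-1)\geq(1+\eps)\ln k$. Fix a small constant $\rho=\rho(\eps)>0$ with $\beta\ll\rho$. A Poisson lower-tail estimate shows that the probability that a fixed vertex has fewer than $\rho\ln k$ colour-$i$ neighbours for \emph{some} colour $i$ is at most $k\cdot k^{-(1+\eps)(1-o_\rho(1))}=k^{-\Omega(\eps)}=:\delta_0$, and $\delta_0\to0$ as $k\to\infty$; call the (few) vertices with this defect $B$. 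Since $|B|$ changes by at most $2$ when one edge of $G$ is altered, a bounded-difference inequality gives $|B|\leq 2\delta_0 n$ with probability $1-\exp(-\Omega(n))$. The decisive point is that a vertex $v\notin B$ has at least $\rho\ln k$ neighbours of \emph{every} colour, hence can be deleted by the peeling only after at least $\rho\ln k-\beta\ln k\geq\frac12\rho\ln k$ of its neighbours of one single colour have already been deleted.

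Consequently the set $T$ of deleted vertices behaves like the occupied set of a subcritical bootstrap/branching process seeded at $B$, and it can be controlled exactly as in the proof of Lemma~\ref{Lemma_fewDeadMain}. Conditioning on the bounded-degree event $\mathcal D$ (every degree at most $(\ln n)^2$), explore from each seed in $B$ the vertices whose deletion it can help cause: triggering the deletion of a vertex $v\notin B$ requires at least $\frac12\rho\ln k$ of its colour-$i$ neighbours to lie in the set deleted so far, whereas that set meets each colour class in only a $(\delta_0+o(1))$-fraction, so the expected number of fresh deletions caused per deleted vertex is of order $\delta_0\ln k\ll\rho\ln k$, i.e.\ each such event is a large deviation of probability $\exp(-\omega(1))$. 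A first-moment bound over ``deletion trees'' then shows that, with probability $1-\exp(-\Omega(n))$, the cascade enlarges $B$ by only a negligible amount, so $|T|<\alpha n$ once $k\geq k_0(\eps,\alpha)$. Finally $\pr_{\mathcal P_{n,m}}[\mathcal D]=1-o(1)$, because vertex degrees in $\mathcal P_{n,m}$ are binomials with the constant mean $d$; hence $\pr_{\mathcal P_{n,m}}[\neg\EE\mid\mathcal D]\leq\pr_{\mathcal P_{n,m}}[\neg\EE]/\pr_{\mathcal P_{n,m}}[\mathcal D]=\exp(-\Omega(n))\leq\exp(-f(n))$ for all large $n$, which is~\eqref{eqColorExchangeMain}.

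The only genuinely delicate step is the cascade estimate of the third paragraph: bounding how far the peeling can propagate — equivalently, showing that the associated exploration is subcritical and that large deletion trees are exponentially unlikely — is where essentially all the work lies, and a naive union bound over candidate ``peeled sets'' is too lossy, since it would also count spurious sets (for instance unions of colour classes) that the process never actually produces. The remaining ingredients — the Poisson estimates for the per-colour neighbour counts, the concentration of $|B|$, and the harmlessness of conditioning on $\mathcal D$ — are routine.
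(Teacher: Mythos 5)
Your overall strategy---pass to the planted model, isolate a small set $B$ of locally deficient vertices, and argue that the peeling cascade seeded at $B$ dies out so that the core witnesses~(\ref{eqG*Main})---is the same in spirit as the paper's stripping argument, but the step you yourself flag as ``where essentially all the work lies'' is not actually carried out, and the sketch you give in its place does not close. The assertion that the deleted set ``meets each colour class in only a $(\delta_0+o(1))$-fraction'' is precisely the conclusion to be proved, so using it to bound the propagation is circular; and the appeal to Lemma~\ref{Lemma_fewDeadMain} does not transfer, because there a dead vertex directly wakes its neighbours in a few chosen classes (an honest branching process with mean offspring about $2k^{-1}\ln k<1$), whereas here a vertex is deleted only after $\Theta(\ln k)$ of its neighbours of a \emph{single} colour have already been deleted---a bootstrap-type threshold dynamics for which ``expected fresh deletions per deleted vertex compared to $\rho\ln k$'' is not a meaningful subcriticality criterion. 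Moreover the natural way to make the cascade rigorous (at the first time the deleted set reaches size $t$ it spans at least $\frac{\rho}{4}t\ln k$ edges, and no such set exists w.h.p.) fails at exactly the relevant scale: your seed has density $\delta_0=k^{-\Omega(\eps)}$, and a set of $k^{-\Omega(\eps)}n$ vertices typically has internal average degree of order $d\cdot k^{-\Omega(\eps)}\gg\rho\ln k$, so neither the union bound over candidate peeled sets nor any count of undefined ``deletion trees'' is available there; you acknowledge the first point but supply no substitute. In addition, the large-deviation estimates you invoke are made after conditioning on $B$, which is a function of the same edges, so the independence they implicitly use has to be justified.

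What is missing is an intermediate \emph{contraction} step, and this is exactly where the paper's proof does its work. After removing the deficient set $W$ (your $B$), of density only $k^{-\Omega(1)}$ per class, the paper proves (Lemma~\ref{Lemma_Ubound}) that the set $U$ of vertices with more than $\frac{\gamma}{2}\ln k$ neighbours inside $W$ has size at most $nk^{-7}$---a genuinely smaller scale---and this requires handling the dependence between $W$ and the edges leaving it: the neighbours are split into those in $W_i\setminus W_{il}$, which are independent of the relevant edges, and those in $W_{il}$, which are treated by conditioning on the degree sequence and using the configuration model under the bounded-degree event $\mathcal{D}$ together with Azuma's inequality (with the fixed-$m$ model first replaced by independent edges via Lemma~\ref{Lemma_independentEdges}). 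Only after this contraction does a crude expansion bound finish the proof: the residual cascade is run with the \emph{constant} threshold $10$, and a first-moment argument (Lemma~\ref{Lemma_Zbound}) shows that no set of size at most $nk^{-6}$ spans $9$ edges per vertex---a statement that is true at that scale but false at the scale $k^{-\Omega(\eps)}n$ of your seed. As written, your proof therefore has a genuine gap at its central step; to repair it you would need an analogue of Lemma~\ref{Lemma_Ubound} (or some other mechanism reducing the seed's density by a large power of $k$ before any union bound is applied), including the conditional-independence analysis it requires.
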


To obtain Lemma~\ref{Lemma_coreMain},
let $(G,\sigma)\in\Lambda_{n,m}$ be a random pair chosen from the distribution $\mathcal{P}_{n,m}$.
We may assume that $|\sigma^{-1}(i)|\sim n/k$ for all $i$.
To obtain the graph $G_*$, we perform a ``stripping process''.
As a first step, we obtain a subgraph $H$
by removing from $G$ all vertices that have fewer than $\gamma\ln k$ neighbors in any color
class other than their own.
If $\gamma=\gamma(\eps)$ is sufficiently small, then the expected number
of vertices removed in this way is less than $nk^{-\delta}$ for a $\delta>0$,
because for each vertex $w$ the expected number of neighbors in another color class
is bigger than $(1+\eps)\ln k$.
Then, we keep removing vertices from $H$ that have ``a lot'' of neighbors outside of $H$.
Given the event $\mathcal{D}$, we then show that with probabiltiy $1-\exp(-\Omega(n))$ the final result of this
process is a subgraph $G_*$ that satisfies~(\ref{eqG*Main}).

\subsection{Proof of Theorem~\ref{Thm_icy_col}}

Theorem~\ref{Thm_icy_col} concerns the ``view'' from a random coloring $\sigma$ of $G(n,m)$.
Basically, our goal is to show that only a tiny fraction of all possible colorings 
are ``visible'' from $\sigma$, i.e., $\sigma$ lives in a small, isolated valley.
To establish the theorem, we need a way to measure how ``close'' two colorings $\sigma,\tau$ are.
The Hamming distance is inappropriate here because 
two colorings $\sigma,\tau$ can be at Hamming distance $n$,
although $\tau$ simply results
from permuting the color classes of $\sigma$, i.e.,
although $\sigma$ and $\tau$ are essentially identical.
Instead, we shall use the following concept.
Given two coloring $\sigma,\tau$, we let $M_{\sigma,\tau}=(M^{ij}_{\sigma,\tau})_{1\leq i,j\leq k}$ be the matrix with entries
	$$M^{ij}_{\sigma,\tau}=n^{-1}|\sigma^{-1}(i)\cap\tau^{-1}(j)|.$$
To measure how close $\tau$ is to $\sigma$ we let
	$$f_{\sigma}(\tau)=\|M_{\sigma,\tau}\|_F^2=\sum_{i,j=1}^k(M^{ij}_{\sigma,\tau})^2 \enspace ,$$
be the squared Frobenius norm of $M_{\sigma,\tau}$. Observe that this quantity reflects the probability that a single random edge is monochromatic under both $\sigma$ and $\tau$, i.e., the correlation of $\sigma$ and $\tau$, precisely as desired. 
Hence, $f_{\sigma}$ is a map from the set $\brk{k}^n$ of $k$-partitions to the interval $\brk{k^{-2},f_{\sigma}(\sigma)}$,
where $f_{\sigma}(\sigma)\geq k^{-1}$.
Thus, the \emph{larger} $f_{\sigma}(\tau)$, the more $\tau$ resembles $\sigma$.
Furthermore, for a fixed $\sigma\in\SSS(G)$ and a number $\lambda>0$ we let
	$$g_{\sigma,G,\lambda}(x)=|\{\tau\in[k]^n:f_\sigma(\tau)=x\wedge H(\tau)\leq\lambda n\}|.$$

In order to show that $\SSS(\gnm)$ with $m=rn$ decomposes into exponentially many regions, we employ the following lemma.

\begin{lemma}\label{Lemma_cluster_colMain}
Suppose that $r>(\frac12+\eps_k)k\ln k$.
There are numbers $k^{-2}<y_1<y_2<k^{-1}$ and $\lambda,\gamma>0$ such that
with high probability 
 a pair $(G,\sigma)\in\Lambda_{n,m}$ chosen from the distributoin $\mathcal{U}_{n,m}$
 has the following two properties.
\begin{enumerate}
\item For all $x\in\brk{y_1,y_2}$ we have $g_{\sigma,G,\lambda}(x)=0$.
\item The number of colorings $\tau\in\SSS(G)$ such that $f_{\sigma}(\tau)>y_2$ is at most $\exp(-\gamma n)\cdot|\SSS(G)|$.
\end{enumerate}
\end{lemma}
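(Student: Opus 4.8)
The plan is to prove Lemma~\ref{Lemma_cluster_colMain} via the Transfer Theorem~\ref{Thm_ColorExchangeMain}: it suffices to show that the two asserted properties hold with probability $1-\exp(-f(n))$ in the \emph{planted} model $\mathcal{P}_{n,m}$ (conditioned on a suitable likely graph property $\mathcal{D}$, e.g.\ maximum degree at most $(\ln n)^2$ together with standard expansion). So fix a uniformly random $k$-partition $\sigma$ with $|\sigma^{-1}(i)|\sim n/k$, and let $G$ be a uniformly random graph with $m$ edges among those bicolored under $\sigma$. The heart of the matter is a first-moment (union bound) computation over the ``forbidden annulus'' $x\in[y_1,y_2]$.

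For part~(1), I would bound the expected number of pairs $(G,\tau)$ with $H_G(\tau)\le\lambda n$ and $f_\sigma(\tau)=x$ for $x$ in the annulus. Writing $\tau$ via its overlap matrix $M=M_{\sigma,\tau}$ (a doubly-near-stochastic $k\times k$ matrix with row/column sums $\sim 1/k$ and $\|M\|_F^2=x$), the number of such $\tau$ is $\exp(n\cdot H(M)+o(n))$ where $H(M)$ is the entropy of the corresponding partition of the $V_i$'s. Given $\tau$, each of the $m$ planted edges lands inside $\sigma$-bichromatic pairs, and the probability it is also $\tau$-bichromatic, or more generally the probability that at most $\lambda n$ of them are $\tau$-monochromatic, is governed by the quantity $p(M)=\Pr[\text{random }\sigma\text{-bichromatic edge is }\tau\text{-monochromatic}]$, which one checks is an explicit function of the entries of $M$ and is minimized (roughly $\sim x-1/k^2$ up to lower order) precisely when $M$ is ``flat''. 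Combining, the exponential growth rate is $\Phi(x)=\max_{M:\|M\|_F^2=x}\bigl[H(M)+r\cdot(\text{entropy of }\mathrm{Bin}(\cdot) \text{ conditioned on }\le\lambda n\text{ monochromatic edges among }\sigma\text{-bichromatic ones})\bigr]$; the task is to show $\Phi(x)<0$ strictly for all $x$ in a nonempty subinterval $[y_1,y_2]\subset(k^{-2},k^{-1})$ when $r>(\tfrac12+\eps_k)k\ln k$. This is the same type of ``overlap function has a strictly negative dip just below the diagonal value $1/k$'' computation that powers the second-moment method for $k$-colorability (cf.~\cite{kcol}); the point is that at $r$ slightly above $\tfrac12 k\ln k$, the function $\Phi$ — which equals $0$ at $x=k^{-2}$ (the ``uncorrelated'' value, matching the first moment) — becomes negative on an interval before rising again near $x=k^{-1}$. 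I would locate $y_1,y_2,\lambda,\gamma$ by a careful but routine analysis of $\Phi$ near $x=k^{-2}$, using the asymptotics in $k$ to get the constant $\tfrac12$.

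For part~(2), I would show $\ex_{\mathcal{P}_{n,m}}|\{\tau\in\SSS(G):f_\sigma(\tau)>y_2\}|\le\exp(-\gamma n)\cdot\ex_{\mathcal{P}_{n,m}}|\SSS(G)|$, again by the overlap-matrix first moment: the growth rate of colorings $\tau$ with $f_\sigma(\tau)=x$ and $H_G(\tau)=0$ is $\Psi(x)=\max_{M:\|M\|_F^2=x}[H(M)+r\ln(1-p(M))]$ (each planted edge must avoid being $\tau$-monochromatic), and the claim is that $\sup_{x>y_2}\Psi(x)$ is strictly less than $\Psi$ evaluated at the ``diagonal'' contribution that dominates $|\SSS(G)|$ in the planted model (where $\tau$ is essentially a permutation of $\sigma$, contributing $\sim k!$ and overlap $x$ near $1$, plus the bulk near $x=k^{-1}$ coming from $\tau$'s uncorrelated with $\sigma$ but still proper). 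Since $|\SSS(G)|\ge\exp(\Omega(n))$ whp in this regime, and the planted $|\SSS(G)|$ concentrates (using~\eqref{eqColLower} and the second-moment input from~\cite{kcol}), a Markov bound upgrades the expectation estimate to the high-probability statement with exponentially small failure probability, as needed to feed Theorem~\ref{Thm_ColorExchangeMain}.

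The main obstacle I expect is the analysis of the function $\Phi(x)$ (equivalently, showing the overlap-entropy landscape has the required strictly negative dip on a \emph{nonempty} interval $(y_1,y_2)$ exactly down to density $\tfrac12 k\ln k$ rather than some worse constant): this requires (i) correctly identifying the maximizing overlap matrix $M$ at each $x$ — one must rule out ``lopsided'' $M$ that trade entropy for a smaller monochromatic-edge probability — and (ii) extracting sharp enough asymptotics in $k$ of the resulting one-variable function to pin the threshold constant. The $o(n)$ slack in $f(n)$ from the transfer theorem is generous, so exponentially small error probabilities are not themselves an issue here; the delicacy is purely in the extremal/asymptotic analysis of $\Phi$ and $\Psi$. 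I would handle it by reducing, via convexity/Lagrange-multiplier arguments, to matrices $M$ of a two-parameter form (one diagonal value, one off-diagonal value, with the remaining mass spread uniformly) and then checking the sign of $\Phi$ on that reduced family.
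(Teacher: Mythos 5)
Your overall architecture matches the paper's: pass to the planted model via Theorem~\ref{Thm_ColorExchangeMain}, bound by a first moment over overlap matrices the expected number of $\tau$ with $H(\tau)\le\lambda n$ and prescribed $f_\sigma(\tau)$, exhibit a window $(y_1,y_2)$ where the exponential rate is strictly negative, and finish with Markov; for part~(2) one compares against $|\SSS(G)|$, folding the concentration statement (Lemma~\ref{Lemma_FriedgutColor}) into the graph property $\mathcal{D}$, exactly as you suggest. Your worry~(i) about lopsided maximizers is handled in the paper not by a fresh Lagrange/convexity reduction but by importing Theorem~9 of~\cite{kcol} to identify the maximizing matrix.

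The genuine problem is your quantitative picture of the rate function, which would derail the execution. In the planted model the rate at the uncorrelated overlap $x\approx k^{-2}$ is $\ln k+r\ln(1-k^{-1})$, which is strictly positive (of order $\ln k$) throughout the regime at hand ($r$ between roughly $\tfrac12 k\ln k$ and $(1-\eps_k)k\ln k$); it is not $0$, and it cannot be made negative by any union bound, because exponentially many genuine colorings sit at essentially uncorrelated overlap with $\sigma$ --- that is precisely what part~(2) asserts. So ``locating $y_1,y_2$ by analyzing $\Phi$ near $x=k^{-2}$'' cannot work; the negative dip, and the constant $\tfrac12$, come from overlaps just \emph{below the diagonal value} $k^{-1}$ (the paper evaluates the maximized rate at $x=k^{-1}-2h$ with $h=k^{-3/2}$, trading the per-vertex entropy cost of flipping a $\Theta(kh)$ fraction of vertices against the $\approx 2hr$ gain in the edge factor). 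The same inversion appears in your part~(2): color-permuted copies of $\sigma$ sit at $f_\sigma\approx k^{-1}$, not ``near $1$'' (for balanced $\sigma$ one always has $f_\sigma(\tau)\le\max_{i,j}M^{ij}_{\sigma,\tau}\lesssim k^{-1}$), and the uncorrelated bulk of $\SSS(G)$ sits at $f_\sigma\approx k^{-2}$, not near $k^{-1}$. Part~(2) holds because the rate of colorings with $f_\sigma(\tau)>y_2$ (the planted cluster and its permuted copies, rate close to $0$) is exponentially below the rate $\ln k+r\ln(1-k^{-1})$ of the full count, which is bounded away from $0$ since $d\le(2-\eps_k)k\ln k$. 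With the window relocated to just below $k^{-1}$ and these roles of $k^{-2}$ versus $k^{-1}$ straightened out, your plan coincides with the paper's proof.
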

Let $G=\gnm$ be a random graph and call $\sigma\in\SSS(G)$ \emph{good} if both (1) and (2) hold.
Then Lemma~\ref{Lemma_cluster_colMain} states that \whp\
a $1-o(1)$-fraction of all $\sigma\in\SSS(G)$ are good.
Hence,  to decompose $\SSS(G)$ into regions, we proceed as follows.
For each $\sigma\in \SSS(G)$ we let
	$\mathcal{C}_\sigma=\{\tau\in\SSS(G):f_\sigma(\tau)>y_2\}.$
\dima{Then starting with the set $S=\SSS(G)$ and removing iteratively some $\mathcal{C}_\sigma$ for a good $\sigma\in S$ yields an exponential number of regions.}
Furthermore, each such region $\mathcal{C}_\sigma$ is separated by a linear Hamming distance
from the set $\SSS(G)\setminus \mathcal{C}_\sigma$, because $f_\sigma$ is ``continuous'' with respect to $n^{-1}\times$Hamming distance.
Thus, Theorem~\ref{Thm_icy_col} follows from Lemma~\ref{Lemma_cluster_colMain}.

Finally, by Theorem~\ref{Thm_ColorExchangeMain}, to prove Lemma~\ref{Lemma_cluster_colMain} it is sufficient to show the following.

\begin{lemma}\label{Lemma_cluster_col_plMain}
Suppose that $r>(\frac12+\eps_k)k\ln k$.
There are $k^{-2}<y_1<y_2<k^{-1}$ and $\lambda,\gamma>0$ such that
with probability at least $1-\exp(-\Omega(n))$
a pair $(G,\sigma)\in\Lambda_{n,m}$ chosen from the distributoin $\mathcal{P}_{n,m}$ has
the two properties stated in Lemma~\ref{Lemma_cluster_colMain}.
\end{lemma}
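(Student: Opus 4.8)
Since the reduction to the planted model has already been made, I work entirely with $\mathcal P_{n,m}$, and I first condition on the probability-$1-e^{-\Omega(n)}$ event that the planted partition $\sigma$ has all colour classes of size $(1+o(1))n/k$. Both items of Lemma~\ref{Lemma_cluster_colMain} then follow from a single first--moment calculation: fix such a $\sigma$ and, for a $k\times k$ overlap matrix $M$ (nonnegative, entries summing to $1$, row sums $1/k$), estimate the expected number of $\tau$ with $M_{\sigma,\tau}=M$ and $H(\tau)\le\lambda n$. Because under $\mathcal P_{n,m}$ the $m$ edges of $G$ are (essentially) independent uniform $\sigma$-bichromatic pairs, a fixed $\tau$ with $M_{\sigma,\tau}=M$ is a proper colouring with probability $(1+o(1))\rho(M)^m$, where $\rho(M)=\bigl(1-\sum_ip_i^2-\sum_jq_j^2+f_\sigma(\tau)\bigr)/\bigl(1-\sum_ip_i^2\bigr)$ with $p_i,q_j$ the class fractions of $\sigma,\tau$ and $f_\sigma(\tau)=\|M\|_F^2$, and the number of edges it violates is $\mathrm{Bin}(m,1-\rho(M))$. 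Combining with the (multinomial) count of $\tau$'s having a given overlap matrix, the expected number of such $\tau$ with $H(\tau)\le\lambda n$ is $\exp(n(\Phi_\lambda(M)+o(1)))$, where
\[
\Phi_\lambda(M)=-\ln k-\sum_{i,j}M^{ij}\ln M^{ij}-r\,I_\lambda(\rho(M)),
\]
$I_\lambda(\rho)$ being the Cram\'er rate for $\mathrm{Bin}(\cdot,1-\rho)$ to be at most a $\lambda/r$-fraction (so $I_0(\rho)=-\ln\rho$, $\Phi_0$ is the exponent for proper colourings, $\Phi_0(M_0)=\ln k+r\ln(1-1/k)=:s^{\ast}$ at the flat matrix $M_0^{ij}\equiv k^{-2}$, and $s^\ast=n^{-1}\ln\Erw_{\mathcal U}|\SSS(G(n,m))|+o(1)$). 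Note $\Phi_\lambda$ is continuous in $\lambda$ uniformly over the $M$'s with $\rho(M)$ bounded away from $0$, and $\Phi_\lambda\to\Phi_0$ as $\lambda\to0$; matrices with $\rho(M)$ near $0$ are irrelevant since there $\Phi_\lambda(M)\to-\infty$.

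\paragraph{The routine steps.} Granting the variational claim below, the two items are obtained by Markov's inequality. For item~1, pick $\lambda>0$ small enough that $\Phi_\lambda(M)\le -c/2$ whenever $\|M\|_F^2\in[y_1,y_2]$; then $\Erw_{\mathcal P}[g_{\sigma,G,\lambda}(x)\mid\sigma]\le e^{-cn/3}$ for each such $x$, and a union bound over the $O(n^{k^2})$ possible values of $f_\sigma$ together with Markov gives item~1 with probability $1-e^{-\Omega(n)}$. For item~2, the same estimate with $\lambda=0$ yields $\Erw_{\mathcal P}[\#\{\tau\in\SSS(G):f_\sigma(\tau)>y_2\}\mid\sigma]\le e^{(s^{\ast}-\gamma+o(1))n}$, so by Markov this count is $\le e^{(s^{\ast}-\gamma/2)n}$ with probability $1-e^{-\Omega(n)}$. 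It remains to show $|\SSS(G)|\ge e^{(s^{\ast}-\gamma/4)n}$ with probability $1-e^{-\Omega(n)}$ under $\mathcal P_{n,m}$; this is where size-biasing enters. After the conditioning on balance, the law of $G$ under $\mathcal P_{n,m}$ is, up to a Radon--Nikodym factor $e^{o(n)}$, that of a uniformly random $m$-edge graph size-biased by $|\SSS(\cdot)|$, i.e. $\Pr[\,\cdot\,]\propto|\SSS(G)|\,\Pr_{G(n,m)}[\,\cdot\,]$. Since the first--moment calculation gives $\Erw_{G(n,m)}|\SSS(G)|=e^{(s^{\ast}+o(1))n}$ while $\Erw_{G(n,m)}\bigl[|\SSS(G)|\cdot\mathbf 1\{|\SSS(G)|<e^{(s^{\ast}-\gamma/4)n}\}\bigr]\le e^{(s^{\ast}-\gamma/4)n}$, the size-biased probability of $\{|\SSS(G)|<e^{(s^{\ast}-\gamma/4)n}\}$ is at most $e^{-\gamma n/4+o(n)}$, hence $e^{-\Omega(n)}$ under $\mathcal P_{n,m}$ as well. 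Intersecting the two estimates gives item~2 with $\gamma$ replaced by $\gamma/4$, and intersecting with item~1 finishes the proof. (This last argument, incidentally, uses only the trivial first moment in the uniform model, not~\eqref{eqColLower}.)

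\paragraph{The obstacle.} The heart of the matter, and the step I expect to be genuinely hard, is the purely analytic assertion that for $r$ in the range of interest (equivalently $d=2r$ as in~\eqref{col_range}) there are $k^{-2}<y_1<y_2<f_\sigma(\sigma)$ and constants $c,\gamma>0$ with
\[
\max\{\Phi_0(M):\|M\|_F^2\in[y_1,y_2]\}\le -c<0
\qquad\text{and}\qquad
\max\{\Phi_0(M):\|M\|_F^2>y_2\}\le s^{\ast}-\gamma .
\]
Because $\langle M_0,M-M_0\rangle=0$ for every overlap matrix $M$, one has $\|M\|_F^2=k^{-2}+\|M-M_0\|_F^2$, so these two conditions concern an annulus around $M_0$ and, respectively, the complement of a ball around $M_0$. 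The second condition is precisely the statement that $M_0$ is the strict global maximiser of $\Phi_0$, which for $d\le(2-\eps_k)k\ln k$ is a reformulation of the second-moment estimate of~\cite{kcol} (via $\Erw|\SSS(G(n,m))|^2=\Erw|\SSS(G(n,m))|\cdot\sum_Me^{n(\Phi_0(M)+o(1))}$); compactness then supplies the gap $\gamma$. The first condition is the new ingredient: one must show that once $d\ge(1+\eps_k)k\ln k$ the entropy surplus $-\sum M^{ij}\ln M^{ij}+\ln k$ gained by moving from $M_0$ towards a permutation-type matrix is, for a whole interval of values of $\|M\|_F^2$ strictly between $k^{-2}$ and $f_\sigma(\sigma)$, strictly outweighed by the penalty $r\ln\rho(M)<0$, so that $\Phi_0$ dips below $0$ there. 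This calls for optimising $\Phi_0$ over \emph{all} overlap matrices of prescribed Frobenius norm -- not merely the doubly-stochastic ones, since unbalanced $\tau$ must be handled (very unbalanced $\tau$ being harmless, as $\rho(M)\to0$ kills them) -- and the bound is tightest near the clustering threshold $r=(\tfrac12+\eps_k)k\ln k$, where $[y_1,y_2]$ only just becomes nonempty: letting $\eps_k\to0$ while keeping $c$ bounded away from $0$ is the delicate part of the estimate. Once this is established, passing from $\lambda=0$ to a small $\lambda>0$ in item~1 is immediate from the uniform continuity of $\Phi_\lambda$ in $\lambda$.
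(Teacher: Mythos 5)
Your setup coincides with the paper's: condition on a balanced planted $\sigma$, parameterize candidate colorings $\tau$ by the overlap matrix, write the expected number of $\tau$ with $H(\tau)\le\lambda n$ and prescribed $\|M\|_F^2$ as $\exp(n(\Phi_\lambda(M)+o(1)))$, and finish with a union bound over the polynomially many values of $f_\sigma$ and Markov's inequality. Your treatment of item~2 is a genuine (and correct) variant worth noting: rather than bounding only the absolute number of near-$\sigma$ colorings in the planted model and importing the lower bound on $|\SSS(G)|$ from the uniform model, you observe that the marginal of $G$ under $\mathcal P_{n,m}$ (restricted to balanced $\sigma$) is, up to an $e^{o(n)}$ density, the $|\SSS|$-size-biased version of $G(n,m)$, so that $\Pr_{\mathcal P}[|\SSS(G)|<t]\le e^{o(n)}t/\Erw_{G(n,m)}|\SSS|$; this gives the relative statement of item~2 with exponentially small failure probability inside the planted model itself, using only the trivial first moment and bypassing the sharp-threshold machinery of Lemma~\ref{Lemma_FriedgutColor} for this step.

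However, there is a genuine gap, and it sits exactly where you flag it: you never establish the variational facts that $\Phi_0(M)\le -c<0$ on an annulus $\|M\|_F^2\in[y_1,y_2]$ and $\Phi_0(M)\le s^*-\gamma$ for $\|M\|_F^2>y_2$, for all $r>(\tfrac12+\eps_k)k\ln k$ with $\eps_k\to0$. This constrained optimization over all row-sum-$k^{-1}$ matrices (not just doubly stochastic ones) is the entire substance of the lemma; without it the Markov/union-bound scaffolding proves nothing. The paper closes precisely this step not by a fresh optimization but by invoking Theorem~9 of~\cite{kcol}, which (in the course of the second-moment analysis of the number of $k$-colorings) identifies the maximizer of $\mathcal F$ on the slice $\sum_{i,j}a_{ij}^2=x$ as the one-parameter matrix $a_{ii}=k^{-1}-h+o(h)$, $a_{ij}\sim h/(k-1)$ for $i\ne j$; choosing $h=k^{-3/2}$, i.e.\ $x=k^{-1}-2h$, the problem collapses to evaluating the exponent at this single matrix, an explicit computation which is strictly negative once $r>(\tfrac12+\eps_k)k\ln k$ and $\lambda$ is small, and continuity in $x$ then furnishes the interval $[y_1,y_2]$ (so you do not need uniform negativity on a whole annulus a priori, only at one point). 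The same maximizer characterization handles the regime $\|M\|_F^2>y_2$ needed for item~2. So the missing idea is concrete: reduce your ``obstacle'' to the already-known structure of the constrained maximizer from the Achlioptas--Naor second-moment computation, rather than attempting the optimization from scratch; as written, your proposal acknowledges but does not supply the estimate on which the whole lemma rests.
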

The proof of Lemma~\ref{Lemma_cluster_col_plMain} is based on the ``first moment method''.
That is, for any $k^{-2}<y<k^{-1}$ we compute the \emph{expected} number
of assignments $\tau\in\brk{k}^n$ such that $f_{\sigma}(\tau)=y$ and $H(\tau)\leq\lambda n$.
This computation is feasible in the planted model and yields
similar expressions as encountered in~\cite{kcol} in the course of computing the second
moment of the number of $k$-colorings.
Therefore, we can show that the expected number of such assignments $\tau$
is exponentially small for a regime $y_1<y<y_2$, whence Lemma~\ref{Lemma_cluster_col_pl} follows from Markov's inequality.

\newpage

\newpage

\begin{appendix}

In this appendix we present the proofs of our results for graph coloring and random $k$-SAT, thereby presenting the most important techniques. We omit the proofs for hypergraph 2-coloring, as these are similar to but simpler
than the $k$-SAT proofs, due to the fact that the transfer theorem for hypergraph 2-coloring is as strong as that for $k$-coloring. We generally assume that $n$ is sufficiently large.

\section{Graph coloring}

\subsection{The planted model}

In this section we consider a fixed number $\eps>0$ and assume that $k\geq k_0$
for some sufficiently large $k_0=k_0(\eps)$.
We are interested in the  probability distribution $ \mathcal{U}_{n,m}$ on $\IS_{n,m}$.
To analyze this distribution, we consider the distribution $\mathcal{P}_{n,m}$ on $\Lambda_{n,m}$ induced
by following expermient (``planted model'').
\begin{description}
\item[P1.] Generate a uniformly random $k$-partition $\sigma \in \brk{k}^n$.
\item[P2.] Generate a graph $G$ with $m$ edges chosen uniformly at random among the edges bicolored under $\sigma$.
\item[P3.] Output the pair $(G,\sigma)$.
\end{description} 
\begin{theorem}\label{Thm_ColorExchange}
Suppose that $d=2m/n<(2-\eps)k\ln k$.
There exists a function $f(n)=o(n)$ such that the following is true.
Let $\mathcal{D}$ be any graph property such that
$G(n,m)$ has $\mathcal{D}$ with probability $1-o(1)$,
and let $\EE$ be any property of pairs $(G,\sigma)\in\Lambda_{n,m}$.
If  for all sufficiently large $n$ we have
	\begin{equation}\label{eqColorExchange}
	\pr_{\mathcal{P}_{n,m}}\brk{(G,\sigma)\mbox{ has }\EE|G\mbox{ has }\mathcal{D}}
		\geq1-\exp(-f(n)),
	\end{equation}
then
	$\pr_{\mathcal{U}_{n,m}}\brk{(G,\sigma)\mbox{ has }\EE}=1-o(1).$
\end{theorem}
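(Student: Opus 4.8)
The argument is a change of measure from the planted model $\mathcal P_{n,m}$ to the uniform model $\mathcal U_{n,m}$, powered by one non-trivial external fact: for $d=2m/n<(2-\eps)k\ln k$ the number of $k$-colourings is concentrated around its (exponentially large) expectation, i.e.\ \whp\ $n^{-1}\abs{\ln\abs{\SSS(G(n,m))}-\ln\Erw\abs{\SSS(G(n,m))}}=o(1)$, cf.~\eqref{eqColLower}. This in turn comes from the second-moment bound of Achlioptas and Naor~\cite{kcol}, which yields $\abs{\SSS(G(n,m))}=\Omega(\Erw\abs{\SSS(G(n,m))})$ \wupp, boosted to a \whp\ statement via Friedgut's sharp-threshold theorem~\cite{EhudHunting}. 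A routine diagonalisation turns this into a deterministic $\phi(n)=o(n)$, depending only on $k$ and $\eps$, with $\pr_{G(n,m)}\brk{\abs{\SSS(G(n,m))}<\exp(-\phi(n))\Erw\abs{\SSS(G(n,m))}}=o(1)$. We also record that, since $d$ is below the $k$-colourability threshold, $G(n,m)$ is $k$-colourable \whp; consequently, under $\mathcal U_{n,m}$ the law of $G$ alone is exactly that of $G(n,m)$ conditioned on the probability-$(1-o(1))$ event $\SSS(G)\neq\emptyset$ (so any failure of normalisation of $\mathcal U_{n,m}$ only costs a $1+o(1)$ factor).

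\textbf{The likelihood ratio.} Write $N=\binom n2$ and, for $\sigma\in\brk kn$, let $b(\sigma)=N-\sum_{i=1}^k\binom{\abs{\sigma^{-1}(i)}}{2}$ be the number of edges bichromatic under $\sigma$; by convexity $b(\sigma)\le\bar b:=N-k\binom{n/k}{2}$, with equality for perfectly balanced $\sigma$. Directly from the definitions, for any $(G,\sigma)\in\Lambda_{n,m}$ with $\sigma\in\SSS(G)$,
\[
\pr_{\mathcal P_{n,m}}\brk{(G,\sigma)}=k^{-n}\binom{b(\sigma)}{m}^{-1}\qquad\text{and}\qquad\pr_{\mathcal U_{n,m}}\brk{(G,\sigma)}=\binom{N}{m}^{-1}\abs{\SSS(G)}^{-1},
\]
so the likelihood ratio equals $L(G,\sigma)=k^n\binom{b(\sigma)}{m}\big/\bc{\binom{N}{m}\abs{\SSS(G)}}$, and $\pr_{\mathcal U_{n,m}}\brk A=\Erw_{\mathcal P_{n,m}}\brk{L\cdot\mathbf 1_A}$ for every event $A$. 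Two crude bounds suffice. First, $b(\sigma)\le\bar b$ always. Second, since all but a $o(1)$-fraction of $k$-partitions have every colour class of size $(1+o(1))n/k$ and hence $b(\sigma)=\bar b-o(n^2)$, one has $k^n\binom{\bar b}{m}\le\exp(\psi(n))\binom{N}{m}\Erw\abs{\SSS(G(n,m))}$ for a suitable $\psi(n)=o(n)$ depending only on $k$. Hence, on the event
\[
\mathcal G=\cbc{G\text{ has }\mathcal D}\cap\cbc{\abs{\SSS(G)}\geq\exp(-\phi(n))\Erw\abs{\SSS(G(n,m))}}
\]
we have $L(G,\sigma)\leq\exp(\phi(n)+\psi(n))$.

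\textbf{Putting it together.} Let $\EE,\mathcal D$ satisfy~\eqref{eqColorExchange}, and let $A$ be the event that $(G,\sigma)$ fails $\EE$. Splitting according to $\mathcal G$,
\begin{align*}
\pr_{\mathcal U_{n,m}}\brk A
&=\Erw_{\mathcal P_{n,m}}\brk{L\cdot\mathbf 1_A\mathbf 1_{\mathcal G}}+\Erw_{\mathcal P_{n,m}}\brk{L\cdot\mathbf 1_A\mathbf 1_{\overline{\mathcal G}}}\\
&\leq\exp(\phi(n)+\psi(n))\cdot\pr_{\mathcal P_{n,m}}\brk{A\text{ and }G\text{ has }\mathcal D}+\pr_{\mathcal U_{n,m}}\brk{\overline{\mathcal G}}.
\end{align*}
The first term is at most $\exp(\phi(n)+\psi(n))\cdot\pr_{\mathcal P_{n,m}}\brk{A\mid G\text{ has }\mathcal D}\leq\exp(\phi(n)+\psi(n)-f(n))$ by~\eqref{eqColorExchange}. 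In the second term, $\overline{\mathcal G}$ lies in the union of ``$G$ fails $\mathcal D$'' — which has $\mathcal U_{n,m}$-probability $o(1)$, because $\mathcal D$ holds \whp\ for $G(n,m)$ and $G$ is $k$-colourable \whp\ — and ``$\abs{\SSS(G)}$ is abnormally small'', which has $\mathcal U_{n,m}$-probability $o(1)$ by the concentration estimate above (again using colourability has probability $1-o(1)$). It remains to fix $f$: taking $f(n):=\phi(n)+\psi(n)+g(n)$ for any fixed $g(n)\to\infty$ with $g(n)=o(n)$ gives $f(n)=o(n)$ and makes the first term $\exp(-g(n))=o(1)$. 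Since $\phi,\psi$ (hence $f$) depend only on $k,\eps$, this single $f$ works uniformly over all $\EE,\mathcal D$, which proves the theorem.

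\textbf{Main obstacle.} Everything above is elementary once the concentration input~\eqref{eqColLower} is available, and that input is the crux: showing that $\ln\abs{\SSS(G(n,m))}$ is within $o(n)$ of its mean \whp\ is precisely where the hard work lies — the second-moment computation of~\cite{kcol}, which is sharp and degrades exactly as $d\to 2k\ln k$ (this is what forces the restriction $d<(2-\eps)k\ln k$), together with the non-trivial upgrade from positive probability to high probability via sharp-threshold machinery. The only remaining subtlety is bookkeeping: checking that a single $f(n)=o(n)$ absorbs all the $\exp(o(n))$ discrepancies between the two models — the $\phi$ from the spread of $\abs{\SSS(G)}$, the $\psi$ from the spread of $b(\sigma)$, and the loss in conditioning on colourability.
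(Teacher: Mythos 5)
Your proposal is correct and takes essentially the same route as the paper: the crux in both is the w.h.p.\ lower bound $|\SSS(G(n,m))|\geq\exp(-o(n))\,\Erw|\SSS(G(n,m))|$ obtained from the second moment of~\cite{kcol} boosted by Friedgut's sharp-threshold machinery (the paper's Lemma~\ref{Lemma_FriedgutColor}). Your likelihood-ratio bound $L\leq\exp(\phi(n)+\psi(n))$ on the good event is just a repackaging of the paper's pair-counting argument, where each pair has planted probability at least $k^{-n}\lambda^{-1}$ and $k^n\lambda=\exp(o(n))\binom{N}{m}\Erw|\SSS(G(n,m))|$ (its Corollary~\ref{Cor_lambda} and Lemma~\ref{Lemma_plantedBound}).
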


For a given assignment $\sigma\in\brk{k}^n$ we let $G(\sigma)$ be the set of all graphs
with $m$ edges for which $\sigma$ is a proper coloring.
Then it is immediate that
	$$|G(\sigma)|=\bink{\sum_{1\leq i<j\leq k}|\sigma^{-1}(i)|}{m}
		=\bink{(n^2-\sum_{i=1}^k|\sigma^{-1}(i)|^2)/2}{m}.$$
Hence,
	$$\lambda=\max_{\sigma}|G(\sigma)|\leq\bink{(1-1/k)\bink{n}2}{m}.$$

\begin{lemma}
There is a constant $\rho>0$ such that the following is true.
Let $\sigma\in\brk{k}^n$ be chosen uniformly at random.
Then $\pr\brk{|G(\sigma)|\geq\rho\lambda}\geq\rho.$
\end{lemma}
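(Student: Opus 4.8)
The plan is to show that the random variable $|G(\sigma)|$ is, with constant probability, within a constant factor of its maximum $\lambda$. Since $|G(\sigma)| = \binom{N(\sigma)}{m}$ where $N(\sigma) = \tfrac12\big(n^2 - \sum_i |\sigma^{-1}(i)|^2\big)$ is the number of bichromatic pairs, and since $\lambda = \binom{N_{\max}}{m}$ with $N_{\max} = (1-1/k)\binom{n}{2}$ attained at the perfectly balanced partition, it suffices to show that with constant probability $N(\sigma) \ge N_{\max} - O(n)$ — equivalently that $\sum_i |\sigma^{-1}(i)|^2 \le n^2/k + O(n)$ — and then to compare the two binomial coefficients.

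First I would analyze the quadratic statistic $Q(\sigma) = \sum_{i=1}^k |\sigma^{-1}(i)|^2$ for a uniformly random $\sigma \in [k]^n$. Writing $|\sigma^{-1}(i)| = n/k + X_i$ where $(X_1,\dots,X_k)$ is a centered multinomial vector, we have $Q(\sigma) = n^2/k + \sum_i X_i^2$ (the cross term $\tfrac{2n}{k}\sum_i X_i$ vanishes since $\sum_i X_i = 0$). A direct second-moment computation gives $\Erw[\sum_i X_i^2] = \Var\big(\sum_i |\sigma^{-1}(i)|\big)$-type quantity $= (k-1) \cdot \frac{n}{k}\cdot(1-\tfrac1k) \cdot \frac{k}{k-1}$, i.e. $\Erw[\sum_i X_i^2] = \Theta(n)$; more precisely it is $n(1-1/k)$. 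Hence $\Erw[Q(\sigma)] = n^2/k + n(1-1/k)$, and by Markov's inequality applied to the nonnegative quantity $Q(\sigma) - n^2/k$, with probability at least $1/2$ we have $Q(\sigma) \le n^2/k + 2n(1-1/k)$, so $N(\sigma) \ge N_{\max} - C n$ for an absolute constant $C$. This identifies the event on which we work; call it $\mathcal{B}$, and note $\pr[\mathcal{B}] \ge 1/2$.

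Next, on the event $\mathcal{B}$, I would compare $\binom{N(\sigma)}{m}$ with $\binom{N_{\max}}{m}$. Since $m = dn/2 \le (2-\eps)(k\ln k)\, n/2 = O(n)$ while $N_{\max} = \Theta(n^2)$, the ratio is
\begin{equation*}
\frac{\binom{N(\sigma)}{m}}{\binom{N_{\max}}{m}} = \prod_{j=0}^{m-1}\frac{N(\sigma)-j}{N_{\max}-j} \ge \left(\frac{N(\sigma)-m}{N_{\max}}\right)^{m} \ge \left(1 - \frac{Cn+m}{N_{\max}}\right)^m \ge \exp\!\left(-\frac{2(Cn+m)m}{N_{\max}}\right),
\end{equation*}
using $1-x \ge e^{-2x}$ for small $x$. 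Because $m = O(n)$ and $N_{\max} = \Theta(n^2)$, the exponent is $-O(n^2/n^2) = -O(1)$, so the ratio is at least some absolute constant $\rho_0 > 0$. Setting $\rho = \min\{1/2,\rho_0\}$ gives $\pr[|G(\sigma)| \ge \rho\lambda] \ge \pr[\mathcal{B}] \ge \rho$, which is the claim.

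I do not expect a serious obstacle here: the only thing to be careful about is bookkeeping the constants so that $Cn + m$ stays $O(n)$ (which uses $d \le (2-\eps)k\ln k$, a fixed constant once $k$ is fixed) and verifying that dividing $\binom{N(\sigma)}{m}$ by $\binom{N_{\max}}{m}$ really does lose only a constant factor rather than an $n^{\Theta(1)}$ factor — this is exactly where $m = O(n) \ll \sqrt{N_{\max}}$ is used. One subtlety worth a line: the $o(n)$ fluctuations of the class sizes $|\sigma^{-1}(i)|$ that are invoked elsewhere in the paper ("we may assume $|\sigma^{-1}(i)| \sim n/k$") are not assumed here; instead we need precisely the sharper $O(n)$-in-$Q(\sigma)$ control above, which a plain Markov bound on the chi-square-type statistic delivers.
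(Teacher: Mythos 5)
Your proof is correct and takes essentially the same route as the paper: show that with constant probability the partition is balanced enough that the number of bichromatic pairs $N(\sigma)$ is within $O(n)$ of its maximum, and then observe that since $m=O(n)$ while the maximum is $\Theta(n^2)$, the binomial coefficient $\binom{N(\sigma)}{m}$ loses only a constant factor. The only differences are cosmetic — you obtain the balance via Markov's inequality on $\sum_i X_i^2$ and bound the ratio of binomial coefficients by a direct product estimate, whereas the paper uses per-class multinomial concentration and Stirling's formula — so the substance matches.
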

\begin{proof}
Let $\gamma>0$ be sufficiently small.
Moreover, let $n_i=|\sigma^{-1}(i)|$, $\delta_i=n_i-n/k$, and $N=(1-k^{-1})\bink{n}2$.
Then $\sum_i\delta_i=0$.
Therefore,
	$$\sum_in_i^2=\frac{n^2}k+\sum_i\delta_i^2.$$
Since for a random $\sigma$ the numbers $n_i$ are multinomially distributed,
with probability $\Omega(1)$ we have $|n_i-\frac{n}k|<\sqrt{\gamma n/k}$.
Hence, 
letting $N(\sigma)=\sum_{i<j}n_in_j$, we conclude that
there is a constant $\rho>0$ such that
	$$\pr\brk{N(\sigma)\geq N-\gamma n}\geq\rho.$$
Thus, by Stirling's formula with probability at least $\rho$ we have
	\begin{eqnarray*}
	\bink{N(\sigma)}m\bink{N}m^{-1}
		&\geq&\frac12\cdot\bcfr{N(\sigma)}{N}^m\bcfr{1-m/N}{1-m/N(\sigma)}^{N(\sigma)-m}(1-m/N)^{N(\sigma)-N}.
	\end{eqnarray*}
Since $N=\Omega(n^2)$ and $m=O(n)$, in the case $N(\sigma)\geq N-\gamma n$ we have
	\begin{eqnarray*}
	\bcfr{N(\sigma)}{N}^m&\geq&(1-\gamma n/N)^m\geq\Omega(1),\\
	\frac{1-m/N}{1-m/N(\sigma)}&\geq&1,\\
	(1-m/N)^{N(\sigma)-N}&\geq&\Omega(1).
	\end{eqnarray*}
Hence, choosing $\rho>0$ sufficiently small,
we can ensure that $\pr\brk{|G(\sigma)|\geq\rho\lambda}\geq\rho.$
\qed\end{proof}

\begin{corollary}\label{Cor_lambda}
We have $|\Lambda_{n,m}|\geq\rho^{2}k^n\lambda$.
\end{corollary}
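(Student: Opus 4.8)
The plan is to rewrite $|\Lambda_{n,m}|$ as a sum over colorings and then feed in the Lemma just proved. By definition $\Lambda_{n,m}=\{(G,\sigma):G\in I_{n,m},\ \sigma\in\SSS(G)\}$, and a pair $(G,\sigma)$ lies in $\Lambda_{n,m}$ exactly when $G$ has $m$ edges and $\sigma$ is a proper $k$-coloring of $G$, i.e.\ when $G\in G(\sigma)$. Grouping the pairs of $\Lambda_{n,m}$ by their second coordinate therefore gives the bookkeeping identity
$$|\Lambda_{n,m}|=\sum_{\sigma\in\brk{k}^n}|G(\sigma)|.$$

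Next I would discard all but the ``heavy'' colorings. The preceding Lemma says that for a uniformly random $\sigma\in\brk{k}^n$ we have $\pr\brk{|G(\sigma)|\geq\rho\lambda}\geq\rho$, where $\lambda=\max_\sigma|G(\sigma)|$; since there are exactly $k^n$ colorings in total, the set $T=\{\sigma\in\brk{k}^n:|G(\sigma)|\geq\rho\lambda\}$ has $|T|\geq\rho k^n$. Restricting the sum above to $\sigma\in T$ and bounding each surviving term from below by $\rho\lambda$ yields
$$|\Lambda_{n,m}|\geq\sum_{\sigma\in T}|G(\sigma)|\geq|T|\cdot\rho\lambda\geq\rho k^n\cdot\rho\lambda=\rho^2k^n\lambda,$$
which is the assertion. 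No fresh constant is needed: the same $\rho$ supplied by the Lemma works, and it can still be shrunk should a later argument require it.

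There is essentially no obstacle here. The only point meriting a moment's care is the identity $|\Lambda_{n,m}|=\sum_\sigma|G(\sigma)|$, namely the observation that every instance--solution pair is counted exactly once, indexed by its solution coordinate $\sigma$, so that summing $|G(\sigma)|$ over all $k^n$ colorings recovers $|\Lambda_{n,m}|$ precisely; everything else is a one-line averaging step off the Lemma.
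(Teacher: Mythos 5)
Your proof is correct and is exactly the argument the paper intends (the paper states this as an immediate corollary of the preceding lemma without writing out the details): group the pairs of $\Lambda_{n,m}$ by their coloring coordinate, note that the lemma gives at least $\rho k^n$ colorings $\sigma$ with $|G(\sigma)|\geq\rho\lambda$, and sum. Nothing further is needed.
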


\begin{lemma}\label{Lemma_plantedBound}
Suppose that there is a number $\zeta>0$ such that
	$\pr_{\mathcal{P}_{n,m}}\brk{\EE|\mathcal{C}}<\exp(-\zeta n).$
Then
	$$\abs{\cbc{(G,\sigma)\in\Lambda_{n,m}\cap\EE:G\in\mathcal{C}}}
		\leq\rho^{-2}\exp(-\zeta n)|\Lambda_{n,m}|.$$
\end{lemma}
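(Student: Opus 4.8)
The plan is to translate the hypothesis about the conditional probability under $\mathcal{P}_{n,m}$ into a statement about counting in $\Lambda_{n,m}$, exploiting the fact that the planted model is exactly a two-step sampling scheme whose first step (choosing $\sigma$) is uniform over $[k]^n$ and whose second step (choosing $G$) is uniform over $G(\sigma)$. Under the event $\mathcal{C}$ (that $\SSS(G)$ is exponentially large — whatever precise meaning $\mathcal{C}$ has been assigned), we have $\pr_{\mathcal{P}_{n,m}}[\EE\mid\mathcal{C}]<\exp(-\zeta n)$. The first step is to write this conditional probability as a ratio of unconditional probabilities, $\pr_{\mathcal{P}_{n,m}}[\EE\cap\mathcal{C}]\le\pr_{\mathcal{P}_{n,m}}[\EE\mid\mathcal{C}]\le\exp(-\zeta n)$, so that it suffices to bound $\pr_{\mathcal{P}_{n,m}}[\EE\cap\mathcal{C}]$ from below by a constant times $|\{(G,\sigma)\in\Lambda_{n,m}\cap\EE:G\in\mathcal{C}\}|/|\Lambda_{n,m}|$.

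Next I would expand $\pr_{\mathcal{P}_{n,m}}[(G,\sigma)\in\EE,\ G\in\mathcal{C}]$ explicitly: choosing $\sigma$ uniformly among $k^n$ partitions and then $G$ uniformly among the $|G(\sigma)|$ graphs properly colored by $\sigma$ gives
\[
\pr_{\mathcal{P}_{n,m}}\brk{(G,\sigma)\in\EE,\ G\in\mathcal{C}}
=\frac1{k^n}\sum_{\sigma\in[k]^n}\frac{|\{G\in G(\sigma):(G,\sigma)\in\EE,\ G\in\mathcal{C}\}|}{|G(\sigma)|}.
\]
Since every term in the sum is a count divided by $|G(\sigma)|\le\lambda$, each term is at least $|\{G\in G(\sigma):(G,\sigma)\in\EE,\ G\in\mathcal{C}\}|/\lambda$, so the whole expression is at least $\frac1{k^n\lambda}\cdot|\{(G,\sigma)\in\Lambda_{n,m}\cap\EE:G\in\mathcal{C}\}|$. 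Combining with the hypothesis and with Corollary~\ref{Cor_lambda}, which gives $k^n\lambda\le\rho^{-2}|\Lambda_{n,m}|$, yields
\[
|\{(G,\sigma)\in\Lambda_{n,m}\cap\EE:G\in\mathcal{C}\}|
\le k^n\lambda\exp(-\zeta n)
\le\rho^{-2}\exp(-\zeta n)|\Lambda_{n,m}|,
\]
which is the claim.

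The only genuinely delicate point — and the place I would be most careful — is the direction of the inequality $|G(\sigma)|\le\lambda$: it is exactly what makes the per-$\sigma$ lower bound on each summand valid, and it is the reason $\lambda$ was defined as a maximum rather than an average. (If instead one needed a \emph{lower} bound on $|G(\sigma)|$ one would have to invoke the preceding lemma that $|G(\sigma)|\ge\rho\lambda$ with probability $\ge\rho$, but here the maximum bound suffices and is clean.) Everything else is bookkeeping: unwinding the definition of conditional probability and of the planted experiment, and applying Corollary~\ref{Cor_lambda}. No concentration or second-moment input is needed for this particular lemma; it is a purely combinatorial counting step that repackages the planted-model probability bound as a bound on the number of bad pairs in $\Lambda_{n,m}$.
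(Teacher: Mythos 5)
Your proof is correct and follows essentially the same route as the paper: expand $\pr_{\mathcal{P}_{n,m}}\brk{\EE\wedge\mathcal{C}}$ as an average over $\sigma\in\brk{k}^n$ of counts normalized by $|G(\sigma)|$, lower-bound each term via $|G(\sigma)|\leq\lambda$, and finish with Corollary~\ref{Cor_lambda}; your step $\pr_{\mathcal{P}_{n,m}}\brk{\EE\wedge\mathcal{C}}\leq\pr_{\mathcal{P}_{n,m}}\brk{\EE|\mathcal{C}}$ is just the paper's dropping of the factor $\pr_{\mathcal{P}_{n,m}}\brk{\mathcal{C}}^{-1}\geq1$. Your remark about the direction of the bound $|G(\sigma)|\leq\lambda$ is exactly the right point of care, and no further input is needed.
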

\begin{proof}
We have
	\begin{eqnarray*}
	\exp(-\zeta n)&\geq&\pr_{\mathcal{P}_{n,m}}\brk{\EE|\mathcal{C}}
		=\frac{\pr_{\mathcal{P}_{n,m}}\brk{\EE\wedge\mathcal{C}}}{\pr_{\mathcal{P}_{n,m}}\brk{\mathcal{C}}}\\
		&=&k^{-n}\pr_{\mathcal{P}_{n,m}}\brk{\mathcal{C}}^{-1}
			\sum_{\sigma\in\brk{k}^n}\frac{\abs{\cbc{G\in G(\sigma):(G,\sigma)\in\EE\wedge G\in\mathcal{C})}}}{|G(\sigma)|}\\
		&\geq&\lambda^{-1}k^{-n}\pr_{\mathcal{P}_{n,m}}
			\abs{\cbc{(G,\sigma)\in\Lambda_{n,m}\cap\EE: G\in\mathcal{C})}},
	\end{eqnarray*}
because $|G(\sigma)|\leq\lambda$ for all $\sigma$.
Hence, Corollary~\ref{Cor_lambda} yields
	$$\exp(-\zeta n)\geq\frac{\rho^2}{|\Lambda_{n,m}|}\cdot \abs{\cbc{(G,\sigma)\in\Lambda_{n,m}\cap\EE: G\in\mathcal{C})}},$$
as desired.
\qed\end{proof}

Let $\mu=\Erw(|\SSS(\gnm)|)$ be the expected number of $k$-colorings of $\gnm$.
Combining the second moment argument from~\cite{kcol} with arguments from~\cite{AchFried},
we obtain the following result (see Appendix~\ref{Sec_FriedgutColor}).

\begin{lemma}\label{Lemma_FriedgutColor}
There is a function $f(n)=o(n)$ such that
$|\SSS(\gnm)|\geq\exp(-f(n))\mu$ with high probability.
\end{lemma}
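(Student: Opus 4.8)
The plan is to combine the second-moment bound for the number of $k$-colorings of $\gnm$ with a sharp-threshold argument à la Friedgut, as follows. First I would use the result of Achlioptas and Naor~\cite{kcol}: for $d < (2-\eps)k\ln k$ one has $\Erw\brk{|\SSS(\gnm)|^2} = O(\mu^2)$. By the Paley–Zygmund inequality this immediately gives that $|\SSS(\gnm)| \geq \tfrac12\mu$ with uniformly positive probability; equivalently, writing $Z_n = n^{-1}\ln|\SSS(\gnm)|$ and $\zeta_n = n^{-1}\ln\mu$ (both $O(1)$), we have $\pr\brk{Z_n \geq \zeta_n - o(1)} = \Omega(1)$. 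The goal is to boost this ``\wupp'' lower bound on $Z_n$ to a ``\whp'' one, i.e.\ to show $Z_n \geq \zeta_n - o(1)$ with probability $1-o(1)$, which is exactly the statement with $f(n) = o(n)$.

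The mechanism for this boosting is a concentration/sharp-threshold statement for the random variable $|\SSS(\gnm)|$ viewed as a monotone (decreasing) graph property in a thresholded sense. Concretely, for any constant $c>0$ consider the event $A_c = \{\, n^{-1}\ln|\SSS(\gnm)| \geq \zeta_n - c \,\}$. This is a decreasing graph property (adding edges only destroys colorings). Friedgut's machinery~\cite{EhudHunting,Ehud} — in the ``coarse threshold implies a bounded core'' form — tells us that a monotone property of $\gnm$ with $m = \Theta(n)$ either has a sharp threshold (in $d$) or else its occurrence is explained, up to $o(1)$ probability, by the appearance of a bounded-size subgraph. In our setting, the ``explanation by a bounded subgraph'' alternative can be ruled out: whether $n^{-1}\ln|\SSS(\gnm)|$ is large is a genuinely global quantity, not triggered by any $O(1)$-size subgraph (one can verify, e.g.\ by a direct first/second moment computation or by noting that deleting any fixed bounded subgraph changes $n^{-1}\ln|\SSS|$ by $o(1)$, that no such local witness exists). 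Hence $A_c$ has a sharp threshold in $d$: there is $d_c$ such that $\pr_{\gnm}\brk{A_c} \to 1$ for $d < d_c - \eta$ and $\to 0$ for $d > d_c + \eta$. Since $\pr\brk{A_c} = \Omega(1)$ at our $d$ (from the second moment bound above), the sharp threshold forces $d < d_c$, and therefore $\pr\brk{A_c} = 1 - o(1)$ at our $d$. Applying this for a sequence $c = c_j \to 0$ slowly enough (a diagonalization over $j$, choosing $c_j$ to decay slowly relative to the rate of convergence of $\pr\brk{A_{c_j}} \to 1$) produces a single function $f(n) = o(n)$ with $\pr\brk{|\SSS(\gnm)| \geq \exp(-f(n))\mu} = 1 - o(1)$, as claimed.

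The main obstacle is the middle step: rigorously certifying that the property $A_c$ is not ``local'', i.e.\ carrying out the Friedgut dichotomy and excluding the bounded-subgraph alternative. Two technical points need care here. First, $|\SSS(\gnm)|$ is not a $0/1$ property but a real-valued random variable, so one works with the family of threshold events $A_c$ and must ensure the Friedgut framework applies uniformly; this is standard but must be stated for $\gnm$ with $m$ linear in $n$ rather than for $G(n,p)$, using the $m \leftrightarrow p$ transfer. Second, the exclusion of a bounded-size ``booster'' subgraph relies on the fact that at densities $d < (2-\eps)k\ln k$ the colorings are spread over an exponentially large configuration space with no local obstruction — the very same regime where the second moment method of~\cite{kcol} succeeds — so the argument that ``adding a bounded subgraph perturbs $n^{-1}\ln|\SSS|$ by only $o(1)$'' must be made quantitative. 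Once these are in place, combining the \wupp\ lower bound from the second moment with the sharp threshold is routine, and the diagonalization to extract $f(n) = o(n)$ is immediate.
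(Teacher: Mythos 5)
Your overall route is the same as the paper's: the second moment bound of~\cite{kcol} plus Paley--Zygmund gives the \wupp\ lower bound (this is Lemma~\ref{Lemma_FriedgutColor2ndmoment}), and a Friedgut-type sharp-threshold statement is then used to boost the \wupp\ bound to a \whp\ bound. The genuine gap sits exactly at the point you yourself flag as ``the main obstacle'' and then dispose of in one sentence: ruling out the bounded booster in Friedgut's criterion. Friedgut's theorem does not ask whether a bounded subgraph can \emph{witness} the property; it asks whether inserting a random copy of some fixed graph $R$ can raise $\pr\brk{\mathcal{A}_\xi}$ from $1-t$ to $1-t/3$, and your justification --- ``inserting a bounded subgraph changes $n^{-1}\ln|\SSS|$ by $o(1)$, hence no local witness exists'' --- neither proves that quantitative statement nor, even if it were proved in expectation, suffices pointwise. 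Inserting a (w.l.o.g.\ uniquely $k$-colorable) copy of $R$ amounts, via its boundary edges, to forbidding prescribed colors at $M=O(1)$ random vertices, and a priori such constraints could destroy all but an exponentially small fraction of the colorings if those vertices are effectively frozen to particular colors in most colorings --- note that in part of the relevant density range the paper's own Theorem~\ref{thm:froz} says most vertices \emph{are} rigid, so this scenario cannot be dismissed as implausible on the grounds that the quantity is ``global''. The paper's proof of Lemma~\ref{Lemma_sharp} invests its main work precisely here: it first argues that $R$ may be taken ($k$-colorable and in fact) uniquely $k$-colorable, translates the insertion of $R$ into $M$ color constraints at random vertices, and then proves the constraint-removal statement Lemma~\ref{Lemma_constraints} / Corollary~\ref{Cor_constraints} (following~\cite{AchFried}), whose case analysis --- adding $\omega^{10}$ extra random edges between ``good'' vertices and absorbing them into the original distribution up to $o(1)$ total variation --- is what actually contradicts the coarse-threshold assumption. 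Without an argument of this kind, your step ``hence $A_c$ has a sharp threshold'' is unsupported.

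A secondary, fixable, issue: your events $A_c=\{n^{-1}\ln|\SSS(\gnm)|\geq\zeta_n-c\}$ have a cutoff $\zeta_n=n^{-1}\ln\mu$ that depends on the edge density $m/n$, so as $m$ varies you are not tracking a single monotone graph property, which is what the sharp-threshold machinery requires. The paper instead fixes $\xi$ and works with the genuine monotone property $\mathcal{A}_\xi$ (``fewer than $\xi^n$ $k$-colorings''), recovering the density dependence afterwards via continuity of $\xi(k,r)$ in $r$ and a small slack in the density, exactly as carried out for $k$-SAT in the proof of Lemma~\ref{Thm_Amin}. Your diagonalization over $c_j\to0$ to extract a single $f(n)=o(n)$ is fine once the property is set up this way.
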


\smallskip\noindent
\emph{Proof of Theorem~\ref{Thm_ColorExchange}.}
Assume that a random pair $(G,\sigma)$ chosen
according to the uniform model has $\EE$ with probability at least $2\zeta$,
while $\pr_{\mathcal{P}_{n,m}}\brk{\EE|\mathcal{C}}\leq\exp(-\zeta n)$ for an arbitrarily small $\zeta>0$.
Since $G_{n,m}$ has the property $\mathcal{C}$ \whp,
we conclude that
	$$\pr_{\mathcal{U}_{n,m}}\brk{\EE|\mathcal{C}}\geq\zeta.$$
Therefore, Lemma~\ref{Lemma_FriedgutColor} entails that
	$$b=\abs{\cbc{(G,\sigma)\in\Lambda_{n,m}\cap\EE:G\in\mathcal{C}}}
		\geq\zeta\exp(-f(n))|\Lambda_{n,m}|.$$
Since $f(n)=o(n)$, this contradicts Lemma~\ref{Lemma_plantedBound}.
\qed

\subsection{Proof of Lemma~\ref{Lemma_FriedgutColor}}\label{Sec_FriedgutColor}

To prove Lemma~\ref{Lemma_FriedgutColor}, we combine the second moment argument from~\cite{kcol}
with a sharp threshold argument.
Let $G=G(n,m)$ be a random graph and let $X$ be the number of \emph{balanced} colorings of $G$,
i.e., colorings $\sigma\in\brk{k}^n$ such that $|\sigma^{-1}(i)-n/k|\leq1$ for all $1\leq i\leq k$.
Recall that $\SSS(G)$ denotes the set of all $k$-colorings of $G$.
A direct computation involving Stirling's formula shows that
	$$\Erw(X)\geq\Omega(n^{-k/2})\Erw|\SSS(G)|.$$
In addition, \cite[Section~3]{kcol}  shows that there is a constant $C=C(k)$ such that
	$$\Erw(X^2)\leq C(k)\Erw(X)^2.$$
Applying the Paley-Zigmund inequality, we thus conclude that there is a number $\alpha=\alpha(k)>0$ such that
	$$\Pr\brk{X>\alpha\Erw(X)}\geq\alpha,$$
whence
	$$\Pr\brk{|\SSS(G)|\geq\Omega(n^{-k/2})\Erw|\SSS(G)|}\geq\alpha.$$
In addition, $\Erw|\SSS(G)|$ is easily computed: we have
	$$n^{-1}\ln\Erw|\SSS(G)|=\ln k+r\ln(1-k^{-1})+o(1),$$
where $r=m/n$.
Thus, we obtain the following.

\begin{lemma}\label{Lemma_FriedgutColor2ndmoment}
Let $\xi(k,r)=\ln k+r\ln(1-k^{-1})$.
Then $\pr\brk{n^{-1}\ln |\SSS(G(n,m))|\geq\xi(k,r)-o(1)}\geq\alpha$.
\end{lemma}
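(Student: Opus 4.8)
\medskip\noindent\emph{Proof proposal.}
The plan is a second-moment argument on a \emph{restricted} count, exactly in the spirit of~\cite{kcol}. Rather than working with $|\SSS(\gnm)|$ directly, I would let $X$ be the number of \emph{balanced} $k$-colorings of $G=\gnm$, i.e.\ the number of proper colorings $\sigma\in\brk{k}^n$ with $\big||\sigma^{-1}(i)|-n/k\big|\le1$ for all $i$. The reason to pass to balanced colorings is the usual one: among all $k$-partitions of $[n]$, the balanced ones maximize $\sum_{i<j}|\sigma^{-1}(i)|\,|\sigma^{-1}(j)|$, hence maximize $\pr[\sigma\text{ is proper}]$ for $\gnm$, and this makes the second moment concentrated. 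First, a short Stirling computation gives the two first-moment facts one needs: on the one hand, since there are $\Omega(n^{-(k-1)/2})k^n$ balanced partitions (local CLT for the multinomial) and each has above-average probability of being proper, $\Erw(X)\ge\Omega(n^{-k/2})\Erw|\SSS(G)|$; on the other hand, for a balanced $\sigma$ the number of edges bicolored under $\sigma$ is $(1-k^{-1})\bink{n}{2}+O(n)$, so with $m=O(n)$ one gets $\pr[\sigma\text{ proper}]=(1-k^{-1})^m\eul^{O(1)}$, whence $n^{-1}\ln\Erw|\SSS(G)|=\ln k+r\ln(1-k^{-1})+o(1)=\xi(k,r)+o(1)$.

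The technical heart is the upper bound on the second moment, and here I would simply invoke the computation of~\cite[Section~3]{kcol}: for $d=2r$ below the second-moment threshold used there (which covers all densities $d<(2-\eps)k\ln k$ considered in this section, in particular all densities in~\eqref{col_range}), there is a constant $C=C(k)$ with $\Erw(X^2)\le C(k)\Erw(X)^2$. This is the step where one analyzes the overlap matrices $M_{\sigma,\tau}$ and optimizes the associated entropy functional to show that the dominant pairs $(\sigma,\tau)$ are those with $\tau$ essentially uncorrelated with $\sigma$; I would not redo it, and I expect this imported estimate to be the main obstacle in the sense that the whole statement rests on it. Granting it, the Paley--Zygmund inequality produces a constant $\alpha=\alpha(k)>0$ with $\pr\brk{X>\alpha\Erw(X)}\ge\alpha$.

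It then only remains to combine. On the event $\{X>\alpha\Erw(X)\}$, which has probability at least $\alpha$, we have $|\SSS(G)|\ge X>\alpha\Erw(X)\ge\alpha\cdot\Omega(n^{-k/2})\,\Erw|\SSS(G)|=\Omega(n^{-k/2})\exp\!\big(n\,\xi(k,r)+o(n)\big)$. Taking logarithms, dividing by $n$, and noting that the polynomial prefactors contribute only $O(n^{-1}\ln n)=o(1)$, we obtain $n^{-1}\ln|\SSS(\gnm)|\ge\xi(k,r)-o(1)$ with probability at least $\alpha$, which is the claim. The only bookkeeping that is genuinely new relative to~\cite{kcol} is tracking the polynomial loss incurred by restricting from all colorings to balanced ones, since downstream this lemma feeds the sharp-threshold boosting of Lemma~\ref{Lemma_FriedgutColor} and then the transfer theorem, and there a $\mathrm{poly}(n)$ discrepancy is harmless.
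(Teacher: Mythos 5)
Your proposal matches the paper's argument essentially verbatim: the same restriction to balanced colorings, the same Stirling estimate $\Erw(X)\geq\Omega(n^{-k/2})\Erw|\SSS(G)|$, the same imported second-moment bound $\Erw(X^2)\leq C(k)\Erw(X)^2$ from \cite[Section~3]{kcol}, Paley--Zygmund, and the computation $n^{-1}\ln\Erw|\SSS(G)|=\ln k+r\ln(1-k^{-1})+o(1)$, with the polynomial prefactor absorbed into the $o(1)$. It is correct as written and requires no changes.
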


To complete the proof of Lemma~\ref{Lemma_FriedgutColor}, we combine Lemma~\ref{Lemma_FriedgutColor2ndmoment}
with a sharp threshold result.
Let $\mathcal{A}_\xi$ be the property that
a graph $G$ on $n$ vertices has less than $\xi^n$ $k$-colorings.

\begin{lemma}\label{Lemma_sharp}
For any fixed $\xi>0$ the property
$\mathcal{A}_\xi$ has a sharp threshold.
That is, there is a sequence $r_n$ such that for any $\eps>0$ we have
	$$\lim_{n\rightarrow\infty}\pr\brk{G(n,(1-\eps)\lceil r_nn\rceil)\mbox{ does not have }\mathcal{A}}
		=1-\lim_{n\rightarrow\infty}\pr\brk{G(n,(1+\eps)\lceil r_nn\rceil)\mbox{ has }\mathcal{A}}=0.$$
\end{lemma}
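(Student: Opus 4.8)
The plan is to invoke Friedgut's sharp-threshold machinery \cite{EhudHunting,Ehud}, which applies to monotone graph properties that are not ``approximable by local structure''. Note that $\mathcal{A}_\xi$ is monotone: adding edges can only decrease the number of $k$-colorings, so once a graph has fewer than $\xi^n$ colorings it continues to. Thus there is a critical density $r_n=r_n(\xi,k)$ at which $\pr[G(n,r_nn)\text{ has }\mathcal{A}_\xi]=1/2$, and the only question is whether the transition window around $r_n$ has width $o(r_n)$. First I would set up the contrapositive used in Friedgut's criterion: if $\mathcal{A}_\xi$ had a \emph{coarse} threshold, then there is a constant $\delta>0$, a bounded-size graph $H$, and an interval of densities $[(1-\delta)r^*,(1+\delta)r^*]$ on which the probability of $\mathcal{A}_\xi$ stays bounded away from $0$ and $1$, such that adding a ``sprinkled'' copies of $H$ (equivalently, a small $\Theta(1)$-density random graph) boosts the probability of $\mathcal{A}_\xi$ by $\Omega(1)$, whereas adding the same number of purely random edges boosts it only by $o(1)$.

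The heart of the argument is to rule out such an $H$. The key point is that $\mathcal{A}_\xi$ is a ``global'' property: whether a graph has at least $\xi^n$ colorings depends on the entropy of the coloring measure, not on the presence of any fixed subgraph. Concretely, I would argue that planting $o(n)$ disjoint copies of a fixed graph $H$ into $G(n,m)$ changes $n^{-1}\ln|\SSS(G)|$ by only $o(1)$ (each copy removes at most a bounded-factor's worth of colorings, and there are $o(n)$ of them), so such sprinkling cannot be responsible for an $\Omega(1)$ jump in $\pr[\mathcal{A}_\xi]$ unless the unsprinkled probability was already close to the threshold value in a way that makes the coarse scenario self-contradictory. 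This is the standard ``the booster must be a constant number of random edges'' step: Friedgut's theorem reduces a coarse threshold to the existence of a \emph{finite} graph $H$ whose addition at density $r^*$ has a non-negligible effect, and one shows any such effect is also achieved (up to $o(1)$) by adding a comparable number of random edges, contradicting that random edges have a negligible effect at a point interior to the alleged coarse window. For the technical inputs I would use that $G(n,m)$ and $G(n,p)$ with $p=m/\binom n2$ are interchangeable here, that $|\SSS(G)|$ is concentrated enough in the exponential scale (which, in the range of densities relevant to Lemma~\ref{Lemma_FriedgutColor}, follows from the second-moment estimate of \cite{kcol} feeding the Paley--Zygmund bound of Lemma~\ref{Lemma_FriedgutColor2ndmoment}), and that $\Erw|\SSS(G(n,m))|$ is a smooth, explicitly known function of $r=m/n$ given by $\xi(k,r)=\ln k+r\ln(1-1/k)$.

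The main obstacle I anticipate is verifying Friedgut's hypothesis that $\mathcal{A}_\xi$ is not ``local'': one must show no bounded-size subgraph $H$ can carry the transition, and the cleanest way is via the ``two-coloring/symmetry'' style argument that appears in the analogous sharp-threshold proof for $k$-colorability \cite{AchFried}. There, the property ``$G$ is $k$-colorable'' is shown to have a sharp threshold by contradicting coarseness: a small random perturbation that destroys colorability with $\Omega(1)$ probability would have to be realizable by few random edges, contradicting that $o(n)$ random edges destroy colorability with probability $o(1)$ in the interior of the window. I would adapt that template with ``colorable'' replaced by ``has at least $\xi^n$ colorings'': the perturbation argument is essentially the same, except the quantity being tracked is $n^{-1}\ln|\SSS(G)|$ rather than a $0/1$ indicator, so I need the concentration of this log-count (again from \cite{kcol}) to conclude that a $o(n)$-edge perturbation shifts it by $o(1)$ and hence cannot flip $\mathcal{A}_\xi$ with constant probability at an interior density. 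Once coarseness is excluded, Friedgut's theorem yields the sharp threshold claimed in Lemma~\ref{Lemma_sharp}, and combining it with Lemma~\ref{Lemma_FriedgutColor2ndmoment} (which locates the threshold at $\xi=\exp(\xi(k,r))$ up to $o(1)$) gives Lemma~\ref{Lemma_FriedgutColor}.
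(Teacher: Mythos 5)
Your framework is the right one and matches the paper's point of departure (monotonicity of $\mathcal{A}_\xi$, Friedgut's coarse-threshold criterion from \cite{EhudHunting}, arguing by contradiction, following the template of \cite{AchFried}), but the step you use to rule out a local booster has a genuine gap, in fact two. First, the ``concentration of the log-count'' you want to quote from \cite{kcol} does not exist at this point of the argument: Paley--Zygmund plus the second moment gives only a \emph{uniformly positive probability} lower bound (that is Lemma~\ref{Lemma_FriedgutColor2ndmoment}), and the w.h.p.\ statement is precisely Lemma~\ref{Lemma_FriedgutColor}, whose proof needs Lemma~\ref{Lemma_sharp}; invoking it here is circular. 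Nor is there an easy independent route to such concentration: a single added edge can destroy \emph{all} $k$-colorings (e.g.\ if its endpoints receive the same color in every coloring), so the per-edge effect on $\ln|\SSS(G)|$ is unbounded and edge-exposure martingales do not apply; the same example falsifies your claim that each planted copy of $H$ removes at most a bounded factor of colorings. Second, even granting exponential-scale concentration, the inference ``a perturbation that shifts $n^{-1}\ln|\SSS|$ by $o(1)$ cannot flip $\mathcal{A}_\xi$ with probability $\Omega(1)$'' fails exactly where you need it: at the alleged coarse threshold the count sits within a factor $e^{o(n)}$ of the hard cutoff $\xi^n$ with constant probability, and there constant-factor (let alone $e^{o(n)}$-factor) changes can flip the indicator with constant probability. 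Friedgut's hypothesis is about the booster's effect on the \emph{probability} of the event, and that is not controlled by its effect on the log-count without much finer information on the distribution of $\ln|\SSS|$ near $n\ln\xi$ -- information which is the crux of the matter and is not available.

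The paper's proof circumvents both problems without ever using concentration. It takes the booster $R$ to be (uniquely) $k$-colorable, notes that w.h.p.\ its interface with the remaining $n-r$ vertices consists of $M=O(1)$ edges, each forbidding exactly one color at a random vertex, and then proves a constraint-removal lemma (Lemma~\ref{Lemma_constraints}): if imposing the $M$-th random ``forbidden color'' constraint pushes the number of colorings below $\NNN'$ with probability close to one, then either the count respecting the first $M-1$ constraints is already within a factor $\omega$ of $\NNN'$, or at least an $\omega^{-1}$-fraction of vertices are ``good'' and one random edge between two good vertices kills all but $2\NNN'$ colorings; in either case $O(\omega^{10})$ extra random edges emulate the constraint, and this many edges are absorbed into the fluctuations of the edge count of $\gnp$ at $o(1)$ total-variation cost. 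Iterating removes all $M$ constraints, and together with the two-sided bounds at $p^*$ (inequalities (\ref{eqAFWid1}) and (\ref{eqAFWid2a})) and a final sprinkling of $\ln n$ edges to pay for the factor $k^{-r}$, this yields the contradiction. To salvage your outline you would have to replace the ``log-count is stable'' step by an argument of this type that works directly at the cutoff $\NNN=\xi^n$.
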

We shall prove Lemma~\ref{Lemma_sharp} in Appendix~\ref{Apx_sharp}.
Lemma~\ref{Lemma_FriedgutColor} is an immediate consequence of
Lemmas~\ref{Lemma_FriedgutColor2ndmoment} and~\ref{Lemma_sharp}.

\subsection{Proof of Lemma~\ref{Lemma_sharp}}\label{Apx_sharp}

The property $\mathcal{A}_\xi$ is monotone under the addition of edges.
Therefore, it is sufficient to prove that $\mathcal{A}_\xi$ has a sharp
threshold in the random graph $G(n,p)$, in which edges are added
with probability $p$ independently.
Let $\NNN=\xi^{n}$.
The argument builds upon~\cite{AchFried}.
We denote the set of $k$-colorings of a graph $G$ by $\SSS(G)$.

\begin{lemma}\label{Lemma_constraints}
Let $\NNN'$ be some number (that may depend on $n$), and let $0<t<1$ be fixed.
Further, let $M=O(1)$ be an integer.
Suppose that $\pr\brk{|\SSS(\gnp)|\leq\NNN'}\leq1-\tau$.
Moreover, assume that for a list of colors $c_1,\ldots,c_M\in\{1,\ldots,k\}$ the following is true:
if we pick $M$ vertices $v_1,\ldots,v_M$ indenpendently and uniformly at random, then with probability $\geq1-t/2$
the random graph $\gnp$ has at most $\NNN'$ $k$-colorings in which $v_i$ receives a color different from $c_i$
for all $1\leq i\leq M$.
Then with probability $\geq1-t/2+o(1)$ the random graph $\gnp$ has at most $\NNN'$ $k$-colorings
in which $v_i$ receives a color different from $c_i$ for all $1\leq i\leq M-1$.
\end{lemma}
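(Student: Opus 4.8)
\noindent\emph{Proof plan.}
This is the inductive ``peeling'' step in the proof that $\mathcal A_\xi$ has a sharp threshold, and the plan is to remove the constraint attached to $v_M$ by exploiting that $v_M$ is chosen independently of $(G,v_1,\dots,v_{M-1})$. Condition on $G=\gnp$ and on $v_1,\dots,v_{M-1}$, and let $S'=S'(G,v_1,\dots,v_{M-1})$ be the set of $k$-colorings $\tau$ of $G$ with $\tau(v_i)\neq c_i$ for $1\le i\le M-1$, so that $Z_{M-1}:=|S'|$ is the quantity in the conclusion and is already determined by this conditioning. For a vertex $u$ put $X(u)=|\{\tau\in S':\tau(u)=c_M\}|$; then the number of colorings counted in the hypothesis, in the case $v_M=u$, is exactly $Z_{M-1}-X(u)$. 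If $Z_{M-1}\le\NNN'$ there is nothing to prove, so it suffices to bound $\pr[\,Z_{M-1}>\NNN'\,]$ by $t/2+o(1)$.

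The engine of the argument is a \emph{balance estimate}. For $d<(2-\eps)k\ln k$ one has, \whp\ over $G=\gnp$ and via first--moment estimates on unbalanced colorings in the spirit of~\cite{kcol,AchFried} together with the expansion of $\gnp$, both $\sum_{\tau\in\SSS(G)}|\tau^{-1}(c_M)|\le(k^{-1}+o_k(1))\,n\,|\SSS(G)|$ and $|S'|=\Omega_M(|\SSS(G)|)$; crucially, these must be established \emph{uniformly} over the random choice of $v_1,\dots,v_{M-1}$, so that the conditioning in the hypothesis can be stripped off at the end. Combining them, on a $1-o(1)$ event we get $\sum_{u\in V}X(u)=\sum_{\tau\in S'}|\tau^{-1}(c_M)|\le(k^{-1}+o_k(1))\,n\,Z_{M-1}$, and a Chebyshev bound for $X(u)$ as $u$ ranges over $V$ (whose second moment over $u$ is, again by the balance estimate, comparable to the square of its mean) then shows that all but an $o_k(1)$ fraction of vertices $u$ satisfy $X(u)\le o_k(1)\cdot Z_{M-1}$, hence $Z_{M-1}-X(u)\ge(1-o_k(1))Z_{M-1}$. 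Therefore, on the event $\{Z_{M-1}>(1+o_k(1))\NNN'\}$, all but an $o_k(1)$ fraction of choices of $v_M$ lead to more than $\NNN'$ colorings obeying the first $M$ constraints; writing the hypothesis over the full randomness of $(G,v_1,\dots,v_M)$ then forces $\pr[\,Z_{M-1}>(1+o_k(1))\NNN'\,]\le(1+o_k(1))\tfrac t2+o(1)$.

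The step just carried out loses a factor $1+o_k(1)$ both in the threshold $\NNN'$ and in $t$, and controlling this loss is the main obstacle. Since $M=O(1)$, iterating the lemma $M$ times multiplies $\NNN'$ only by $(1+o_k(1))^M=1+o_k(1)$ — so $n^{-1}\ln\NNN'$ moves by $o(1)$ — and, for $k$ large, keeps the error probability below $t$; at the end this yields $\pr[\,|\SSS(\gnp)|\le(1+o(1))\NNN'\,]\ge 1-t+o(1)$, which, for $t$ small relative to $\tau$, contradicts the first hypothesis $\pr[\,|\SSS(\gnp)|\le\NNN'\,]\le1-\tau$ once one checks that the $1+o(1)$ multiplicative slack is absorbed by the freedom in choosing $\NNN$ in Lemma~\ref{Lemma_sharp} (this is the reason the whole argument is run on the exponential scale $n^{-1}\ln(\cdot)$). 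Establishing the balance estimate uniformly over $v_1,\dots,v_{M-1}$ and tracking the $o_k(1)$ and $o(1)$ losses so that the iteration closes is where the bulk of the work lies.
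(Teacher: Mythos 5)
Your plan takes a genuinely different route from the paper, and as it stands it has two substantive gaps. The paper's proof never looks at marginals of $X(u)$ at all: it argues that, conditional on $\bar\EE_{M-1}$ (more than $\NNN'$ colorings survive the first $M-1$ constraints), either $\pr\brk{\EE_{M-1}}$ is already large, or a $\geq\omega^{-1}$ fraction of vertices are ``good'' (forbidding $c_M$ there kills the count down to $\leq\NNN'$); in the latter case the spoiled sets of two good vertices overlap in all but $2\NNN'$ colorings, so a single random edge between two good vertices drops the count to $\leq 2\NNN'$, and $O(\omega^{10})$ sprinkled random edges finish the job. The punchline is that $O(\omega^{10})$ extra edges are below the standard deviation of the number of edges of $\gnp$, hence invisible in total variation, so the $(M-1)$-constrained count is $\leq\NNN'$ (exactly $\NNN'$, no inflation) with probability $\geq 1-t/2-o(1)$. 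This trade of ``one vertex constraint for a few random edges'' is the mechanism your proposal is missing, and it is what lets the paper preserve both $\NNN'$ and $t/2$ exactly across the $M$ iterations of Corollary~\ref{Cor_constraints}.

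Concretely, the gaps in your version are these. First, the ``balance estimate'' is the crux and you only assert it. Note that $\sum_{\tau\in\SSS(G)}|\tau^{-1}(c_M)|=n|\SSS(G)|/k$ is an exact identity for every graph, by invariance of $\SSS(G)$ under color permutations, so no first-moment work is needed there; the hard part is precisely that the constrained set $S'$ is \emph{not} color-symmetric, and your reduction to $\SSS(G)$ needs $|S'|=\Omega_M(|\SSS(G)|)$ uniformly (\whp) over the random $v_1,\ldots,v_{M-1}$. The crude bound $|\SSS(G)|-|S'|\leq (M-1)|\SSS(G)|/k$ only works when $M-1<k$, and $M$ is an uncontrolled constant coming from Friedgut's theorem (the number of crossing edges of the planted copy of $R$), so this step genuinely needs an argument you have not supplied. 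Second, the loss accounting does not close as written: in this lemma $k$ is a \emph{fixed} constant, so your ``$1+o_k(1)$'' inflations of $\NNN'$ and of $t/2$ are constant-factor losses per step, compounding to a factor depending on $M$ and $k$ after $M$ iterations. The conclusion you reach is therefore strictly weaker than the statement (which demands exactly $\NNN'$ and $1-t/2+o(1)$), and absorbing the weaker version downstream would require reworking Corollary~\ref{Cor_constraints} and the proof of Lemma~\ref{Lemma_sharp}, where the available slack is only the gap between $1-t/3$ and $1-t$ and a multiplicative margin in $\NNN$ that is consumed by adding $\ln n$ edges; you gesture at this but do not carry out the bookkeeping, and for large $M$ relative to $k$ it is not clear it can be made to work without further ideas. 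So the proposal is an interesting alternative sketch, but it neither proves the lemma as stated nor completes its own weakened variant.
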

\begin{proof}
Let $\omega$ be a (very) slow growing function of $n$.
Moreover, let $\EE_i$ be the event that the first $i$ constraints: ``$v_j$ must not receive color $c_j$'' for $1\leq j\leq i$,
cause the number of $k$-colorings to be at most $\NNN'$.
Then given that $G=\gnp$ has more than $\NNN'$ $k$-colorings (i.e., coditional on the event $\bar\EE_0$),
the probability of $\EE_M$ is at least $\frac12$.
Hence, conditional on $\bar\EE_0$, we have
	$$\pr\brk{\EE_{M-1}}+\pr\brk{\EE_M|\bar\EE_{M-1}}\pr\brk{\bar\EE_{M-1}}\geq\frac12.$$
We consider two cases: if $\pr\brk{\EE_{M-1}}\geq\frac12-\omega^{-1}$, then we are done.
Hence, assume that $\pr\brk{\EE_{M-1}}<\frac12-\omega^{-1}$.
Then $\pr\brk{\EE_M|\bar\EE_{M-1}}\geq\omega^{-1}$.
Note that $\pr\brk{\EE_M|\bar\EE_{M-1}}$ is the fraction of vertices $w$ such that forbidding color $c_M$ at $w$
reduces the number of colorings to at most $\NNN'$ (after the addition of the first $M-1$ constraints).
We call such vertices $w$ \emph{good}, and denote the set of colorings that $w$ spoils by $Z_w$ and its size by $z_w=|Z_w|$.
Let us consider two cases.
Let $y$ be the number of colorings of $G$ that respect the first $M-1$ constraints.
We consider two cases.
In each of these two cases we shall prove that adding a small number of random edges to $\gnp$
reduces the number of colorings that respect the first $M-1$ constraints to at most $\NNN'$ with probability at least $1-\omega^{-3}$.
\begin{description}
\item[Case 1: $y<\omega\NNN'$.]
	Since for each of the $y$ colorings the probability that a new random edge spoils this coloring is $k^{-1}$,
	we can reduce the number of colorings to at most $\NNN'$ by adding $\omega$ random edges (use Markov's inequality).
\item[Case 2: $y\geq\omega\NNN'$.]
	If $w,w'$ are good, then $|Z_w\cap Z_{w'}|\geq y-2\NNN'$.
	Therefore, adding an edge between two good vertices causes the number of colorings to drop to at most $2\NNN'$.
	Furthermore, the probability that a random edge joins two good vertices is $\pr\brk{\EE_M|\bar\EE_{M-1}}^2\geq\omega^{-2}$.
	Therefore, after adding $\omega^{10}$ edges, we have reduced the number of proper colorings to at most
	$2\NNN'$ with probability $\geq1-\omega^{-4}$.
	Finally, adding an additional $\omega^{10}$ edges reduces the number of colorings to at most $\NNN'$ by the same argument as in Case~1.
\end{description}
Now, note that instead of \emph{first} imposing the $M-1$ constraints $w_1,\ldots,w_{M-1}$ and \emph{then} adding the random edges
as in Cases~1 and~2 we could \emph{first} add a set of $2\omega^{10}$ random edges to $\gnp$.
As $\omega^{10}$ is of smaller order than the standard deviation of the number of edges of $\gnp$,
the resulting distribution is within $o(1)$ from the originial distribution $G(n,p)$ in total variation distance.
Therefore, we conclude that actually just imposing the $M-1$ constraints $w_1,\ldots,w_{M-1}$ suffices to increase the probability
of having $\leq\NNN'$ $k$-colorings to $1-\tau/2+o(1)$.
\qed\end{proof}

\begin{corollary}\label{Cor_constraints}
Let $\NNN'$ be some number (may depend on $n$), and let $0<t<1$ be fixed.
Further, let $M=O(1)$ be an integer.
Suppose that $\pr\brk{|\SSS(\gnp)|\leq\NNN'}\leq1-\tau$.
Then there is no list of colors $c_1,\ldots,c_M\in\{1,\ldots,k\}$ such that the following is true:
if we pick $M$ vertices $v_1,\ldots,v_M$ indenpendently and uniformly at random, then with probability $\geq1-t/2$
the random graph $\gnp$ has at most $\NNN'$ $k$-colorings in which $v_i$ receives a color different from $c_i$
for all $1\leq i\leq M$.
\end{corollary}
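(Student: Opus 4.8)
The plan is to derive Corollary~\ref{Cor_constraints} from Lemma~\ref{Lemma_constraints} by a straightforward downward induction on $M$. Observe first that the hypothesis ``$\pr[|\SSS(\gnp)|\leq\NNN']\leq1-\tau$'' is exactly the $M=0$ base case of the statement we want to contradict: if we think of the empty list of forbidden colors, then ``$\gnp$ has at most $\NNN'$ colorings respecting no constraints'' is just the event $|\SSS(\gnp)|\leq\NNN'$, which by assumption fails with probability at least $\tau$, hence holds with probability at most $1-\tau<1-t/2$ whenever $t<2\tau$ — and we may as well assume $t$ is this small, since the statement only gets weaker as $t$ shrinks. (One should check the logic here: Corollary asks us to rule out a certain list of length $M$, so it suffices to show every such list leads to a contradiction with the $M=0$ fact.)

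The core step is the contrapositive chain. Suppose, for contradiction, that there \emph{is} a list $c_1,\ldots,c_M$ such that picking $v_1,\ldots,v_M$ uniformly at random, with probability $\geq1-t/2$ the graph $\gnp$ has at most $\NNN'$ colorings in which $v_i$ avoids $c_i$ for all $i\leq M$. Then Lemma~\ref{Lemma_constraints} applies with the given $\NNN'$, $t$, $\tau$, and $M$ (its hypothesis $\pr[|\SSS(\gnp)|\leq\NNN']\leq1-\tau$ is precisely what we are assuming throughout), and yields: with probability $\geq1-t/2+o(1)$, $\gnp$ has at most $\NNN'$ colorings in which $v_i$ avoids $c_i$ for all $i\leq M-1$. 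Now I would iterate: this conclusion is again a hypothesis of the form needed to invoke Lemma~\ref{Lemma_constraints} with $M$ replaced by $M-1$ — modulo absorbing the additive $o(1)$ slack, which is harmless because after $M=O(1)$ applications the total accumulated error is still $o(1)$, and one can slightly shrink the constant $t$ at the outset to soak it up. Peeling off one constraint at a time, after $M$ steps we arrive at: with probability $\geq1-t/2-o(1)$, $\gnp$ has at most $\NNN'$ $k$-colorings respecting the empty list of constraints, i.e.\ $\pr[|\SSS(\gnp)|\leq\NNN']\geq1-t/2-o(1)>1-\tau$ for large $n$, since $t/2<\tau$. This contradicts the standing hypothesis $\pr[|\SSS(\gnp)|\leq\NNN']\leq1-\tau$, and the corollary follows.

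The main obstacle — really the only subtlety — is bookkeeping the quantifier on the random vertices and the $o(1)$ error across the $M$ iterations. One has to be careful that Lemma~\ref{Lemma_constraints} is stated for a \emph{fixed} list of $M$ colors and produces a statement about a list of $M-1$ colors with the \emph{same} probability threshold up to $o(1)$; so when re-applying the lemma I should re-examine that the reduced list $c_1,\ldots,c_{M-1}$ together with freshly (or equivalently, the same) uniformly random $v_1,\ldots,v_{M-1}$ still meets the lemma's hypothesis with parameter, say, $t' = t - Cn^{-c}$ for a suitable slowly-vanishing correction, so that at the end the strict inequality $1-t'/2 > 1-\tau$ still holds. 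Since $M$ is a constant, this is a finite recursion and the errors compound only additively, so the argument closes cleanly; I do not expect any genuinely hard estimate beyond what is already inside Lemma~\ref{Lemma_constraints}.
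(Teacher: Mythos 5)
Your proposal is correct and follows essentially the same route as the paper, whose entire proof is to apply Lemma~\ref{Lemma_constraints} $M$ times to strip the constraints down to zero and contradict the hypothesis $\pr\brk{|\SSS(\gnp)|\leq\NNN'}\leq1-\tau$; your extra bookkeeping of the $o(1)$ slack over the $O(1)$ iterations is a harmless (indeed slightly more careful) elaboration of the same argument. The only blemish is the ``we may as well shrink $t$'' remark, which goes the wrong way if $t$ and $\tau$ were truly independent parameters, but since the paper itself uses $t$ and $\tau$ interchangeably (so $t/2<\tau$ holds automatically in the intended reading) this does not affect the argument.
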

\begin{proof}
Applying the lemma $M$ times, we can reduce the number of constraints that is necessary to reduce the number
of colorings to $\leq\NNN'$ to $0$.
\qed\end{proof}

To prove that $\mathcal{A}_\xi$ has a sharp threshold, we assume for contratiction that this is not so.
Hence, there exists an edge probability $p^*$ such that the probability that $G_{n,p^*}$ has $\mathcal{A}_\xi$ is exactly equal to $1-t$
for a small $t>0$.
Further, by~\cite[Theorem~2.4]{EhudHunting}
there exists a fixed graph $R$ on $r$ vertices such that with probability $>1-t/3$ the following is true.
If we first pick $G=G_{n,p^*}$ and then insert a random copy of $R$ into $G$, then the resulting graph has $\mathcal{A}_\xi$.
Furthermore, this graph $R$ is $k$-colorable.
In fact, by monotonicity we may assume that $R$ is uniquely $k$-colorable.
The experiment of inserting a random copy of $R$ into $G_{n,p^*}$ is actually equivalent to the following (because $G_{n,p^*}$
is symmetric with respect to vertex permutations).
We let $G_R$ denote a random graph obtained by first inserting a copy of $R$ into the first $r$ vertices $v_1,\ldots,v_r$, and
then adding edges with probability $p^*$ independently (among all $n$ vertices $v_1,\ldots,v_n$).
Then the probability that $G_R$ has $\mathcal{A}_\xi$ is at least $1-t/3$.
Hence,
	\begin{equation}\label{eqAFWid1}
	\pr\brk{|\SSS(G_R)|\leq\NNN}\geq1-t/3,
	\end{equation}
while by the choice of $p^*$
	\begin{equation}\label{eqAFWid2}
	\pr\brk{|\SSS(G_{n,p^*})|>\NNN}\geq t>0.
	\end{equation}

Let $\hat G$ signify the subraph of $G_R$ induced on the $n-r$ vertices $v_{r+1},\ldots,v_n$.
Then $\hat G=G_{n-r,p^*}$, and~(\ref{eqAFWid2}) implies that
	\begin{equation}\label{eqAFWid2a}
	\pr\brk{|\SSS(\hat G)|>k^{-r}\NNN}\geq t.
	\end{equation}
Furthermore, we can relate the $k$-colorings of $G_R$ and the $k$-colorings of $\hat G$ as follows.
Let $Q$ be the set of edges from the set $\{v_1,\ldots,v_r\}$ to $\{v_{r+1},\ldots,v_n\}$.
Then \whp\ $|Q|=O(1)$ and no vertex in $\{v_{r+1},\ldots,v_n\}$ is incident to more than one edge in $Q$.
Furthermore, since $R$ admits a unique $k$-coloring, each edge in $Q$ forbids its endpoint in $\{v_{r+1},\ldots,v_n\}$ exactly
one color.
Hence, the edges in $Q$ impose constraints $c_1,\ldots,c_M$ on $M=|Q|$ randomly chosen vertices $w_1,\ldots,w_M$
as in Lemma~\ref{Lemma_constraints}.
Therefore, (\ref{eqAFWid1}) implies that
	$$\pr\brk{\hat G+M\mbox{ random constraints has at most $\NNN$ $k$-colorings}}\geq1-t/3.$$
But then Corollary~\ref{Cor_constraints} implies that
	$$\pr\brk{|\SSS(\hat G)|\leq \NNN}\geq1-t/3.$$
Furthermore, as we may add another $\ln n$ random edges to $\hat G$ without shifting the distribution by more than $o(1)$
in total variation distance, and since each of these $\ln n$ edges reduces the expected number of colorings by $k^{-1}$,
Markov's inequality entails that
	$$\pr\brk{|\SSS(\hat G)|\leq k^{-r}\NNN}\geq1-t/3-o(1),$$
which contradicts~(\ref{eqAFWid2a}).

\subsection{Proof of Theorem~\ref{thm:loose}}

Suppose that $d\leq (1-\eps)k\ln k$, and that $k\geq k_0(\eps)$ for a sufficiently large $k_0(\eps)$.
Let $q=5$ and recall that a graph is $\zeta$-choosable if for any assignments of color lists
of length at least $\zeta$ to the vertices of the graph there is a proper coloring
such that each vertex receives a color from its list.
To prove Theorem~\ref{thm:loose}, we consider the property
	$\EE$ that all vertices are loose and the following condition $\mathcal{D}$:
\begin{quote}
For any set $S\subset V$ of size $|S|\leq g(n)$ the subgraph
induced on $S$ is $(q-1)$-choosable.%
\end{quote}
Here $q>0$ is a constant and $g(n)=\sqrt{nf(n)}=o(n)$, where $f(n)$ is
the function from Theorem~\ref{Thm_ColorExchange}.

\begin{lemma}\label{Lemma_choosable}
With high probability the random graph $G(n,m)$ satisfies $\mathcal{D}$.
\end{lemma}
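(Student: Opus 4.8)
\textbf{Proof proposal for Lemma~\ref{Lemma_choosable}.}
The plan is to show that, with high probability, the random graph $G(n,m)$ with $m=O(n)$ contains no small subgraph that is ``locally dense enough'' to fail to be $(q-1)$-choosable. The key combinatorial fact I would invoke is the classical bound that any graph in which every subgraph has average degree less than $q-1$ is $(q-1)$-degenerate, hence $(q-1)$-choosable (indeed, a $(q-1)$-degenerate graph is $q$-choosable by a trivial greedy argument, and one gains the extra $-1$ from the degeneracy bound being strict). Thus it suffices to prove that \whp\ every vertex subset $S$ with $|S|\le g(n)$ induces a subgraph with fewer than $\tfrac{q-1}{2}|S|$ edges, i.e., $G(n,m)$ is ``locally sparse'' on all sets up to size $g(n)$.

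The main step is a first-moment (union bound) computation. Fix $s\le g(n)$ and $t=\lceil\tfrac{q-1}{2}s\rceil$; the expected number of $s$-subsets inducing at least $t$ edges is at most
\begin{equation*}
\binom{n}{s}\binom{\binom{s}{2}}{t}\left(\frac{m}{\binom{n}{2}}\right)^{t}
\le\left(\frac{\eul n}{s}\right)^{s}\left(\frac{\eul s^{2}/2}{t}\cdot\frac{d}{n}\right)^{t},
\end{equation*}
using $m=dn/2$ and $\binom{n}{2}\sim n^2/2$. Since $t\ge\tfrac{q-1}{2}s$, the bracketed factor in the second term is $O(s/n)$, so the whole expression is bounded by $\left(\eul n/s\right)^{s}\cdot\left(C s/n\right)^{(q-1)s/2}=\left[(\eul)\,(C s/n)^{(q-3)/2}\right]^{s}\cdot(\eul C)^{s}$ for a constant $C=C(d)$; with $q=5$ (so $(q-3)/2=1$) this is $\left(O(s/n)\right)^{s}$. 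Summing over $s$ from some constant $s_0$ up to $g(n)=o(n)$, each term is at most $(o(1))^{s_0}$ and the geometric-type tail is $o(1)$; the finitely many small values of $s$ (below $s_0$, where an induced subgraph on $s$ vertices simply cannot have $\ge t$ edges unless $s$ is large enough) contribute nothing. Hence \whp\ no bad set exists, so every set of size $\le g(n)$ induces a $(q-1)$-degenerate, hence $(q-1)$-choosable, subgraph.

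I would organize the steps as: (i) state the degeneracy $\Rightarrow$ choosability implication and reduce $\mathcal{D}$ to local sparsity; (ii) perform the expectation bound above for a single cardinality $s$; (iii) sum over $s\le g(n)$ and conclude by Markov's inequality. The only mildly delicate point is the summation over the full range $s\le g(n)=o(n)$: one must check the per-term bound $(O(s/n))^{s}$ stays summably small uniformly, which holds precisely because $s/n=o(1)$ throughout the range and the exponent $s$ grows; the constant $C(d)$ enters benignly since $d=O(1)$. This is routine, so the lemma presents no real obstacle — it is a standard ``no dense small subgraph'' argument, and the choice $q=5$ is comfortably large enough for the exponent bookkeeping to go through with room to spare.
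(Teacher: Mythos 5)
Your proposal is correct and is exactly the ``standard first moment argument'' that the paper invokes (its proof of this lemma is a single sentence citing $m=O(n)$ and a first moment bound): you reduce $4$-choosability of small induced subgraphs to local sparsity via degeneracy and then do the union-bound computation, which checks out, including the negligible contribution of $s\le 4$ and the summation over $s\le g(n)=o(n)$. One tiny wording slip: average degree $<q-1$ in every subgraph gives $(q-2)$-degeneracy, hence $(q-1)$-choosability (not ``$(q-1)$-degenerate, hence $(q-1)$-choosable''), but your parenthetical about the strict inequality shows you mean the right thing.
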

\begin{proof}
Since $m=O(n)$,
this follows from a standard first moment argument.
\qed\end{proof}
	
By Theorem~\ref{Thm_ColorExchange}, we just need to establish~(\ref{eqColorExchange}).
Thus, let $\sigma\in\brk{k}^n$ be a coloring such that the color classes $V_i=\sigma^{-1}(i)$
satisfy $|V_i|\sim n/k$, and let $G$ be a random graph with $m$ edges such that $\sigma$
is a $k$-coloring of $G$.
Let $v_0\in V$ be any vertex; without loss of generality we may assume that $\sigma(v_0)=1$.
In addition, let $1<l\leq k$ be the ``target color'' for $v_0$.
If $v_0$ has no neighbor in $V_l$, then we can just assign this color to $v_0$.

Otherwise, we run the following process.
In the course of the process, every vertex is either \emph{awake}, \emph{dead}, or \emph{asleep}.
Initially, all the neighbors of $v_0$ in $V_l$ are awake, $v$ is dead, and all other vertices are asleep.
In each step of the process, pick an awake vertex $w$ arbitrarily and declare it dead
(if there is no awake vertex, terminate the process).
If there are at least $q$ colors $c_1(w),\ldots,c_q(w)$ such that
$w$ has no neighbor in $V_{c_i(w)}$, then we do nothing.
Otherwise, we pick $q$ colors $c_1(w),\ldots,c_q(w)$ randomly and
declare all asleep neighbors of $w$ in $V_{c_j(w)}$ awake for $1\leq j\leq q$.

\begin{lemma}\label{Lemma_fewDead}
With probability at least $1-\exp(-f(n))$ there are
at most $g(n)$ dead vertices when the process terminates.
\end{lemma}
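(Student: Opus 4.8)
The plan is to couple the ``stripping/wake-up'' process with a subcritical Galton--Watson branching process and conclude that, w.h.p., it dies out after only $O(\log n)$ steps, in particular touching at most $g(n)$ vertices. First I would set up the offspring bound: the root of the tree is $v_0$, whose ``children'' are its neighbours in $V_l$. For a generic awake vertex $w$ with $\sigma(w)=c$, the process generates new awake vertices only in the bad event that $w$ fails to have $q=5$ colour classes in which it has no neighbour; and even then it awakens only the asleep neighbours of $w$ in $q$ randomly chosen classes. So I would first estimate $\pr[w\text{ is ``blocked''}]$, i.e.\ the probability that fewer than $q$ classes $V_i$ ($i\neq c$) are empty of neighbours of $w$. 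As noted in the text, in the planted model with $d\le(1-\eps)k\ln k$ the number of neighbours of $w$ in any fixed class $V_i$, $i\neq c$, is asymptotically Poisson with mean $(1-\eps+o(1))\tfrac{k\ln k}{k-1}\le(1-\eps/2)\ln k$, and these counts are (essentially) independent across classes. Hence $\pr[w\text{ has a neighbour in }V_i]\le 1-k^{-1+\eps/2+o(1)}$, so the number of ``available'' classes for $w$ stochastically dominates a $\Bin(k-1,k^{-1+\eps/2+o(1)})$ variable, whose mean is $\ge(k-1)^{\eps/2+o(1)}\to\infty$. A Chernoff bound then gives $\pr[w\text{ blocked}]\le\exp(-\Omega(k^{\eps/2}))=:p_{\mathrm{bl}}$, which is $\le k^{-10}$, say, for $k\ge k_0(\eps)$.

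Next I would bound the expected number of vertices awakened at a blocked $w$. Conditioned on $w$ being blocked, the process picks $q$ random colours and wakes all asleep neighbours of $w$ in those classes; the expected number of neighbours of $w$ in $q$ random classes is at most $q\cdot\max_i\Erw[\deg_{V_i}(w)]\le q\ln k$. Therefore the expected number of children of a generic awake vertex is at most $p_{\mathrm{bl}}\cdot q\ln k\le k^{-9}\ln k<1/2$, so the wake-up process is dominated by a branching process with offspring mean $\rho\le 1/2<1$. Subcriticality is the crux: the total progeny $Z$ of such a process satisfies $\pr[Z\ge s]\le \rho^{\,\Omega(s)}$ (e.g.\ by the standard large-deviation bound for the total progeny of a subcritical Galton--Watson tree, or by a direct union bound over the possible shapes of the exploration tree of size $s$, each contributing at most $\rho^{s-1}$). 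Taking $s=g(n)=\sqrt{nf(n)}$, which tends to infinity, gives $\pr[Z\ge g(n)]\le \exp(-\Omega(g(n)))$. Since $f(n)=o(n)$ we may assume $f(n)\to\infty$ slowly, so $g(n)\gg f(n)$ and $\exp(-\Omega(g(n)))\le\exp(-f(n))$ for large $n$, which is exactly the claimed bound.

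The one technical point that needs care — and which I expect to be the main obstacle — is making the coupling rigorous in the planted model, since the random graph is \emph{not} a product of independent edges: it has exactly $m$ edges chosen uniformly among the bichromatic pairs, and conditioning on the degrees of vertices already processed changes the distribution of edges at the next vertex. I would handle this by exposing edges in the natural order dictated by the process (BFS/exploration order): when an awake vertex $w$ is declared dead, I expose all edges incident to $w$ that go to asleep vertices, to other awake/dead vertices, or between classes. Standard arguments (as in the analysis of component exploration in $G(n,m)$) show that, as long as only $o(n)$ edges have been exposed, the conditional distribution of the neighbourhood of $w$ in each class $V_i$ is within $o(1)$ total-variation distance of the unconditioned Poisson approximation; since the process will be shown to expose at most $O(g(n))=o(n)$ edges on the dominant event, this self-consistency is not circular — one runs the domination argument up to the first time $g(n)$ vertices have been touched and observes that the branching bound forbids ever reaching that time except with probability $\exp(-\Omega(g(n)))$. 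Finally, the role of property $\mathcal{D}$ ($(q-1)$-choosability of every subgraph on $\le g(n)$ vertices) is only invoked \emph{after} this lemma, to convert the dead set $D$ into an actual recolouring $\tau$; within the proof of Lemma~\ref{Lemma_fewDead} itself one only needs $q=5$ fixed colours recorded at each blocked vertex, so no interaction with $\mathcal{D}$ is required here.
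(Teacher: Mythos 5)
Your proposal follows essentially the same route as the paper's proof: dominate the wake-up process by a subcritical Galton--Watson process (blocking probability times expected number of neighbours in the $q$ chosen classes gives offspring mean well below $1$), then use the exponential tail of the total progeny together with $g(n)=\sqrt{nf(n)}\gg f(n)$ to get the $\exp(-f(n))$ bound. Your additional care about the edge-exposure coupling in the fixed-$m$ planted model (and your sharper $\exp(-\Omega(k^{\eps/2}))$ blocking estimate versus the paper's cruder $k^{-1}$) only strengthens an argument the paper states more briefly, so the proof is correct and matches the paper's.
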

\begin{proof}
We show that the aforementioned process is dominated by a branching process
in which the expected number of successors is less than one.
Then the assertion follows from Chernoff bounds.

To set up the analogy, note that the expected number of neighbors of any $w\in V\setminus V_i$
in $V_i$ is asymptotically $2m/k<\frac{1-\eps}{1-k}\cdot\ln k$.
Hence, the probability that $w$ has no neighbor in $V_i$ is at least $k^{\eps/2-1}$.
Therefore, the expected number of classes $i\not=\sigma(w)$ in which
$w$ has no neighbor is at least $(k-1)k^{\eps/2-1}\geq k^{\eps/3}$.
Furthermore, the number of such classes is asymptotically binomially distributed.
Therefore, assuming that $k$ is sufficiently large, we conclude that
the probability that there are less than $q$ classes in which $w$ has no neighbor
is less than $k^{-1}$.
Given that this is so, the number of neighbors of $w$ in each of the $q$ chosen
classes $c_1(w),\ldots,c_q(w)$ has mean at most $2\ln k$.
Therefore, the expected number of newly awake vertices resulting from $w$
is at most $2k^{-1}\ln k$.
Thus, the above process is dominated by a branching process with successor
rate $2k^{-1}\ln k<1$.
Therefore, the assertion follows from stochastic domiance and Chernoff bounds.
\qed\end{proof}

\smallskip\noindent
\emph{Proof of Theorem~\ref{thm:loose}.}
Let $S$ be the set of dead vertices left by the aforementioned process.
By Lemma~\ref{Lemma_fewDead} we may assume that $|W|\leq g(n)$.
Hence, conditioning on $\mathcal{D}$,
we may assume that $S$ is $(q-1)$-choosable.
Now, we assign lists of colors to the vertices in $S$ as follows.
The list of $v_0$ just consists of its target color $l$.
To any other $w\in W$ we assign the list $L_w=\{c_1(w),\ldots,c_q(w)\}\setminus\{l\}$.
Now, we can color the subgraph $G\brk{S}$ by assigning color $l$ to $v$
and a color from $L_w$ to any other $w\in W$.
We extend this to a coloring of $G$ by assigning color $\sigma(u)$ to
any $u\in V\setminus W$.
Let $\tau$ signify the resulting coloring.

We claim that $\tau$ is a proper coloring of $G$.
For both the subgraph induced on $W$ and the subgraph induced on $V\setminus W$
are properly colored.
Moreover, by construction no $w\in W\setminus\{v\}$ is adjacent to a vertex of color $c_j(w)$ in $V\setminus W$.
Finally, $\sigma$ and $\tau$ are at Hamming distance at most $|W|\leq g(n)=o(n)$.
Hence, the assertion follows from Theorem~\ref{Thm_ColorExchange}.
\qed

\subsection{Rigid variables}

Let $\alpha,\eps>0$, and assume that $k\geq k_0(\eps)$ for a large enough $k_0(\eps,\alpha)>0$.
Suppose that $(1+\eps)k\ln k\leq d=2m/n\leq(2-\eps)k\ln k$.
To prove Theorem~\ref{thm:froz} for coloring, we apply Theorem~\ref{Thm_ColorExchange} as follows. 
We let $\beta=\beta(\eps,\alpha)>0$ be a sufficiently small number and denote by
$\EE$ the following property of  a pair $(G,\sigma)\in\Lambda_{n,m}$.
	\begin{equation}\label{eqG*}
	\parbox{15cm}{
	There is a subgraph $G_*\subset G$ of size $|V(G_*)|\geq(1-\alpha)n$
	such that for every vertex $v$ of $G_*$ and each color $i\not=\sigma(v)$ there are
	at least $\beta\ln k$ vertices vertex $w$ in $G_*$ that are adjacent to $v$
	such that $\sigma(w)=i$.}
	\end{equation}
Also, we let $\mathcal{D}$ be the property that the maximum degree is at most $\ln^2n$.

\begin{lemma}\label{Lemma_core}
Condition~(\ref{eqColorExchange}) is satisfied with $\mathcal{D}$ and $\EE$ as above.
\end{lemma}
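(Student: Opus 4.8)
The plan is to apply Theorem~\ref{Thm_ColorExchange}, so that it suffices to verify~(\ref{eqColorExchange}): that a pair $(G,\sigma)$ drawn from the planted model $\mathcal{P}_{n,m}$ has $\EE$ with probability $1-\exp(-f(n))$ conditional on the maximum‑degree bound $\mathcal{D}$. In fact I will aim for $1-\exp(-\Omega(n))$, which is more than enough. Since under $\mathcal{P}_{n,m}$ the partition $\sigma$ is uniformly random, with probability $1-\exp(-\Omega(n))$ the classes $V_i=\sigma^{-1}(i)$ are balanced, $|V_i|=(1+o(1))n/k$; and since $d=2m/n$ is bounded (for fixed $k$), $\mathcal{D}$ itself holds with probability $1-o(1)$, so conditioning on it merely re‑weights probabilities by $1+o(1)$. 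We may therefore argue with a balanced $\sigma$ and a bounded‑degree $G$. In this regime, for every vertex $v$ and colour $i\neq\sigma(v)$ the count $|N(v)\cap V_i|$ is a sum of $(1+o(1))n/k$ weakly negatively correlated indicators, hence asymptotically Poisson with mean $\mu=d/(k-1)$, and $\mu\geq(1+\eps)\ln k$ throughout the relevant range of $d$.

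First I would peel off the ``globally deficient'' vertices. Fix a small constant $\gamma=\gamma(\eps)>0$ with $\beta<\gamma$, where $\beta=\beta(\eps,\alpha)$ is the constant in~(\ref{eqG*}), and call a vertex $v$ \emph{deficient} if $|N(v)\cap V_i|<\gamma\ln k$ for some $i\neq\sigma(v)$; let $B$ be the set of deficient vertices. By the Poisson lower tail, $\pr[v\text{ deficient}]\leq(k-1)\cdot\pr[\mathrm{Po}(\mu)<\gamma\ln k]\leq k^{-\delta}$ for a constant $\delta=\delta(\eps)>0$, once $\gamma$ is small enough in terms of $\eps$; hence $\Erw|B|\leq nk^{-\delta}$. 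As $|B|$ is a tightly concentrated function of the $m$ uniformly chosen bichromatic edges (one edge swap changes the deficiency status of at most four vertices), a bounded‑differences argument gives $\pr[|B|>2nk^{-\delta}]\leq\exp(-\Omega(n))$. In particular, choosing $k$ large enough in terms of $\alpha$, we get $|B|\leq\tfrac{\alpha}{2}n$ with probability $1-\exp(-\Omega(n))$.

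Next I would strip $G\setminus B$ down to a core: starting from $V\setminus B$, repeatedly delete any surviving vertex that has fewer than $\beta\ln k$ surviving neighbours in $V_i$ for some $i\neq\sigma(v)$, and let $G_*$ be the set that remains. By construction $G_*$ satisfies~(\ref{eqG*}), so it remains only to bound the set $T$ of vertices deleted \emph{beyond} $B$ and show $|T|\leq\tfrac{\alpha}{2}n$, since then $|V\setminus G_*|=|B|+|T|\leq\alpha n$ as required. When a vertex $v\in T$ is deleted it had $\geq\gamma\ln k$ neighbours in some $V_i$ in $G$ but fewer than $\beta\ln k$ surviving ones, so it has at least $(\gamma-\beta)\ln k$ neighbours in $V_i$ among $B$ together with the previously deleted vertices; thus $T$ is exactly the set ``infected'' from the seed $B$ under the rule ``delete $v$ once it has $(\gamma-\beta)\ln k$ deleted neighbours in a single colour class''.

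The hard part will be showing that this infection is negligible with an exponentially small failure probability — precisely the step for which the paper invokes $\mathcal{D}$. The guiding point is that deleting $B$ barely hurts anybody \emph{per colour class}: since $|B|\leq 2nk^{-\delta}$ and $B$ is, locally, a random‑like set, the expected number of neighbours a fixed vertex has in $B\cap V_i$ is $O(k^{-\delta}\ln k)=o(1)$, whereas triggering a deletion requires $(\gamma-\beta)\ln k=\Theta(\ln k)$ deleted neighbours in one class; and a vertex being deleted, which affects only counts in its own class, makes a neighbour $w$ newly deletable only if $w$'s (essentially undepleted) count in that class already lay near $\beta\ln k$, an event of probability $k^{-\delta'}$ with $\delta'=\delta'(\eps,\beta)>1$. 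Consequently the infection process can be stochastically dominated by a subcritical branching process with expected progeny $k^{1-\delta'}\ln k=o(1)$ — this is the ``relating our process to a subcritical branching process'' step, carried out for the simpler seed in Lemma~\ref{Lemma_fewDead} — so that, modulo the usual exploration‑process technicalities, $|T|$ is a sum of $|B|=\Theta(n)$ essentially independent, light‑tailed, $O(1)$‑mean subtree sizes, whence $\pr[|T|>C\,nk^{-\delta}]\leq\exp(-\Omega(n))$ for an absolute constant $C$. For $k$ large in terms of $\alpha$ this yields $|T|\leq\tfrac{\alpha}{2}n$ with probability $1-\exp(-\Omega(n))$. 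The genuinely delicate points are (a) the coupling to the branching process, ensuring the per‑class deficit created by already‑deleted vertices is controlled \emph{uniformly} along the exploration (here $\mathcal{D}$ caps the number of potential children of each vertex and supplies the Lipschitz bounds for the concentration), and (b) choosing $\gamma$ (hence the size of the seed $B$) and $\beta<\gamma$ so that the progeny mean stays below $1$ in every colour class. Granting this, $\EE$ holds with probability $1-\exp(-\Omega(n))$ in the planted model conditional on $\mathcal{D}$, which is~(\ref{eqColorExchange}), and the lemma follows.
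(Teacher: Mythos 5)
Your overall architecture (work in the planted model, peel off the ``deficient'' vertices with fewer than $\gamma\ln k$ neighbours in some other class, then strip iteratively down to a core satisfying~(\ref{eqG*})) is the same as the paper's, and your bound on the seed $B$ is fine. The genuine gap is in the step you yourself defer with ``Granting this'': the control of the cascade $T$. First, the claimed stochastic domination is mis-specified. A vertex outside $B$ has at least $\gamma\ln k$ neighbours in \emph{every} other class, so no single deletion can ever make it ``newly deletable''; it must accumulate $(\gamma-\beta)\ln k$ deleted neighbours in one class. Your process is therefore a bootstrap-percolation process with threshold $\Theta(\ln k)$ started from a linear-size seed, not a branching process in which each deleted vertex independently spawns children with probability $k^{-\delta'}$, and the ``expected progeny $k^{1-\delta'}\ln k$'' computation does not correspond to any valid dominating object. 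Second, and more fundamentally, the phrase ``$B$ is, locally, a random-like set'' conceals exactly the dependency that makes this lemma hard: whether a neighbour $u\in V_i$ of $v$ lies in $B$ is determined by $u$'s edges, including the edge $\{u,v\}$ itself and $u$'s edges into $v$'s class, so the edges your exploration reveals are correlated with the seed. To get the failure probability down to $\exp(-\Omega(n))$ (not merely w.h.p.) you cannot cite random-likeness; nor is $|T|$ Lipschitz in the edge set (one edge change can cascade), so a bounded-differences/Azuma bound does not apply off the shelf. These are precisely the obstacles the paper's proof is engineered around.

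For comparison, the paper avoids any bootstrap/branching analysis by a two-stage stripping: after removing $W$ (your $B$) it removes the set $U$ of vertices with more than $\frac\gamma2\ln k$ neighbours \emph{in $W$}, handling the seed--edge correlation by splitting $U_{il}$ into $U_{il}'$ (neighbours in $W_i\setminus W_{il}$, which is independent of the $V_l$--$V_i$ edges, so Chernoff applies) and $U_{il}''$ (neighbours in $W_{il}$, treated by conditioning on the degree sequence and using the configuration model plus Azuma -- this is where $\mathcal{D}$ is really needed), see Lemma~\ref{Lemma_Ubound}. Only then does it iterate, with a \emph{constant} threshold ($10$ neighbours in the removed set) starting from the tiny set $U$, so that a one-line first-moment density bound (no small set spanning $9$ times its size in edges, Lemma~\ref{Lemma_Zbound}) kills the cascade with probability $1-\exp(-\Omega(n))$; the switch to the independent-edge model costs only $O(\sqrt n)$ by Lemma~\ref{Lemma_independentEdges}. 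If you want to keep your one-stage stripping, you would need to replace the branching-process claim by a genuine argument -- e.g.\ per-class bounds on $|B\cap V_i|$ plus a witness-counting first moment over the cascade, together with an honest treatment of the correlation between $B$ and the edges at the boundary -- and as written none of that is supplied.
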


\medskip
\noindent\emph{Proof of Theorem~\ref{thm:froz} for coloring.}
Given a random coloring $\sigma$ of a random graph $G=G(n,m)$,
Lemma~\ref{Lemma_core} and Theorem~\ref{Thm_ColorExchange} imply that
\whp\ there is  a subgraph $G_*$ satisfying~(\ref{eqG*}).
In addition, we assume that $G$ has the following property.
	\begin{equation}\label{eqGsparse}
	\parbox{15cm}{
	There is no set $S\subset V$ of size $|S|\leq n/(k\ln k)$ that spans at least
	$|S|\frac{\beta}2\ln k$ edges.}
	\end{equation}
A standard 1st moment argument shows that (\ref{eqGsparse})  holds in $G(n,m)$ \whp\

Assume  for contradiction
that there is another coloring $\tau$
such that the set $U=\{v\in G_*:\sigma(v)\not=\tau(v)\}$ has size $|U|\leq  n/(k\ln k)$.
Let $U_i^+=\{v\in G_*:\tau(v)=i\not=\sigma(v)\}$
and $U_i^-=\{v\in G_*:\sigma(v)=i\not=\tau(v)\}$.
Then
	\begin{equation}\label{eqrigid1}
	|U|=\sum_{i=1}^k|U_i^+|=\sum_{i=1}^k|U_i^-|.
	\end{equation}


Every $v\in G_*\setminus V_i$ has at least $\beta\ln k$ neighbors in $G_*\cap \sigma^{-1}(i)$.
Hence, if $v\in U_i^+$, then all of these neighbors lie inside of $U_i^-$.
We claim that this implies that $|U_i^+|<|U_i^-|$.
For assume that $U_i^+\geq U_i^-$ and set $S=U_i^+\cup U_i^-$.
Then $|S|\leq |U|\leq n/(k\ln k)$, and $S$ spans at least $|S|\frac{\beta}2\ln k$ edges,
in contradiction to~(\ref{eqGsparse}).
Thus, we conclude that $|U_i^+|<|U_i^-|$ for \emph{all} $i$, in contradiction to~(\ref{eqrigid1}).
Hence, all the vertices in $G_*$ are $\Omega(n)$-rigid.
\qed

%


\subsection{Proof of Lemma~\ref{Lemma_core}}\label{Apx_core}

Let $(G,\sigma)\in\Lambda_{n,m}$ be a random pair chosen from the distribution $\mathcal{P}_{n,m}$.
We may assume that $|\sigma^{-1}(i)|\sim n/k$ for all $i$ and let $V_i=\sigma^{-1}(i)$.
To simplify the analysis, we shall replace the random graph $G$, which has a \emph{fixed} number $m$ of edges,
by a random graph $G'$ in which 
is obtained by including each edge $\{v,w\}$ with $\sigma(v)\not=\sigma(w)$ with probability $p$
independently.
Here $p$ is chosen so that the \emph{expected} number
	$\sum_{1\leq i<j\leq k}|V_i|\cdot|V_j|\cdot p$
of edges of $G'$ equals $m$.

\begin{lemma}\label{Lemma_independentEdges}
For any property $\mathcal{Q}$ we have
$\pr\brk{G\mbox{ has }\mathcal{Q}|\mathcal{D}}\leq O(\sqrt{n}\cdot \pr\brk{G'\mbox{ has }\mathcal{Q}|\mathcal{D}})$.
\end{lemma}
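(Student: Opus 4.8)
\emph{Proof proposal.} The plan is to recover $G$ from $G'$ by conditioning on the number of edges. The key observation is that, conditioned on the event $\{|E(G')|=m\}$, the graph $G'$ has exactly the distribution of $G$: in both cases one selects uniformly at random a set of $m$ edges among the $N=\sum_{1\leq i<j\leq k}|V_i|\cdot|V_j|$ potential edges bicolored under $\sigma$. Consequently, for \emph{any} graph property $\mathcal{R}$,
\[
\pr\brk{G'\mbox{ has }\mathcal{R}}\geq\pr\brk{G'\mbox{ has }\mathcal{R},\ |E(G')|=m}=\pr\brk{|E(G')|=m}\cdot\pr\brk{G\mbox{ has }\mathcal{R}}.
\]

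The first step is a lower bound on $\pr\brk{|E(G')|=m}$. Since each potential edge is present independently with probability $p$, we have $|E(G')|\sim\Bin(N,p)$ with $N=\Theta(n^2)$, and $p=m/N\in(0,1)$ is chosen precisely so that $Np=m=\Theta(n)$. Hence $m$ is, up to rounding, the mode of $\Bin(N,p)$, and a direct application of Stirling's formula — equivalently, the local central limit theorem for the binomial — gives $\pr\brk{|E(G')|=m}=\Theta\big((Np(1-p))^{-1/2}\big)=\Omega(n^{-1/2})$. Substituting into the previous display yields $\pr\brk{G\mbox{ has }\mathcal{R}}\leq O(\sqrt n)\cdot\pr\brk{G'\mbox{ has }\mathcal{R}}$ for every property $\mathcal{R}$.

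The second step handles the conditioning on $\mathcal{D}$. Taking $\mathcal{R}=\mathcal{Q}\wedge\mathcal{D}$ above gives $\pr\brk{G\mbox{ has }\mathcal{Q}\wedge\mathcal{D}}\leq O(\sqrt n)\,\pr\brk{G'\mbox{ has }\mathcal{Q}\wedge\mathcal{D}}$. Moreover, since $k$ is fixed and $m=O(n)$, a routine first moment estimate shows that the maximum degree of $G$ is $O(\log n/\log\log n)$ \whp, so $\pr\brk{G\mbox{ has }\mathcal{D}}=1-o(1)\geq\frac12$ for large $n$. Therefore
\[
\pr\brk{G\mbox{ has }\mathcal{Q}\mid\mathcal{D}}=\frac{\pr\brk{G\mbox{ has }\mathcal{Q}\wedge\mathcal{D}}}{\pr\brk{G\mbox{ has }\mathcal{D}}}\leq2\,\pr\brk{G\mbox{ has }\mathcal{Q}\wedge\mathcal{D}}\leq O(\sqrt n)\,\pr\brk{G'\mbox{ has }\mathcal{Q}\wedge\mathcal{D}}\leq O(\sqrt n)\,\pr\brk{G'\mbox{ has }\mathcal{Q}\mid\mathcal{D}},
\]
where the last inequality is simply $\pr\brk{G'\mbox{ has }\mathcal{Q}\wedge\mathcal{D}}\leq\pr\brk{G'\mbox{ has }\mathcal{Q}\mid\mathcal{D}}$.

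The only genuinely non-routine ingredient — and the step I would flag as the main (mild) obstacle — is the local-limit lower bound $\pr\brk{|E(G')|=m}=\Omega(n^{-1/2})$; this is exactly where the $\sqrt n$ factor in the statement comes from, and since $|E(G')|$ has standard deviation $\Theta(\sqrt n)$ the factor is essentially unavoidable by this route. The remaining points (that $\Bin(N,p)$ indeed describes $|E(G')|$, and that $\mathcal{D}$ holds \whp\ in the planted model) are straightforward, so the content of the lemma is really just the clean ``$G(n,p)$-to-$G(n,m)$'' transfer combined with the remark that passing to the high-probability conditioning $\mathcal{D}$ costs only a constant factor.
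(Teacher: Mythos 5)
Your proposal is correct and follows essentially the same route as the paper: condition $G'$ on having exactly $m$ edges (under which it is distributed as $G$), use the local-limit/Stirling estimate $\pr\brk{|E(G')|=m}=\Omega(n^{-1/2})$ since $p$ is tuned so that the expected edge count is $m$, and absorb the conditioning on $\mathcal{D}$ at the cost of a constant factor because $\mathcal{D}$ holds \whp. The only cosmetic difference is that the paper rewrites $\pr\brk{G'\mbox{ has }\mathcal{D}}=1-o(1)$ in the denominator, whereas you simply bound $\pr\brk{G'\mbox{ has }\mathcal{Q}\wedge\mathcal{D}}\leq\pr\brk{G'\mbox{ has }\mathcal{Q}\mid\mathcal{D}}$, which is an equally valid (slightly cleaner) way to finish.
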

We defer the proof to Section~\ref{Sec_independentEdges}.

Thus, in the sequel we will work with $G'$ rather than $G$.
Let $\gamma=\gamma(\eps)>0$ be a sufficiently small number, and
let $V_i=\sigma^{-1}(i)$.
Moreover, for a vertex $v$ and a set $Z\subset V$ let $e(v,Z)$ signify the number of $v$-$Z$-edges in $G'$.
We construct a subgraph $G_*$ of $G'$ as follows.
\begin{enumerate}
\item Let $W_{ij}=\{v\in V_i:e(v,V_j)<\gamma\ln k\}$, $W_i=\bigcup_{j=1}^kW_{ij}$, and $W=\bigcup_{i=1}^kW_i$.
\item Let $U_{il}=\{v\in V_l:e(v,W_i)>\frac{\gamma}2\ln k\}$ and $U=\bigcup_{i\not=l}U_{il}$.
\item Let $Z=U$.
	While there is a vertex $v\in V\setminus Z$ that has at least $10$
	neighbors in $Z$, add $v$ to $Z$.
\item Let $G_*=G'-\bigcup_{i=1}^kW_i-Z$.
\end{enumerate}

For each vertex $v\in V_i$ and each color $j\not=i$ the expected number of neighbors of $v$ with color $i$
is $|V_i|\cdot\frac{2m}n\sim(1+2\eps)\ln k$.
Hence, the sets $W_{ij}$ contain those vertices form $V_i$ that have a lot fewer neighbors with color $j$ than expected.

\begin{lemma}\label{Lemma_Wijbound}
There is a number $\beta=\beta(\eps)>0$ such that
with probability $\geq1-\exp(-\Omega(n))$ we have $|W_{ij}|<nk^{-2-\beta}$ for any $i,j$.
Hence, $|W_i|\leq n k^{-1-\beta}$, and $|W|\leq n k^{-\beta}$.
\end{lemma}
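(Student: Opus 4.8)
The plan is to prove Lemma~\ref{Lemma_Wijbound} by a first-moment estimate followed by a Chernoff concentration bound, working in the independent-edge model $G'$ introduced just above and conditioning on the event — which fails with probability only $\exp(-\Omega(n))$ — that $|V_i|=(1+o(1))n/k$ for all $i$. Throughout we take $j\neq i$, as intended. Fix distinct colors $i,j$ and a vertex $v\in V_i$. In $G'$ the count $e(v,V_j)$ is distributed as $\Bin(|V_j|,p)$, hence has mean $\mu:=|V_j|\,p=(1+o(1))\tfrac{d}{k-1}$; since $d\geq(1+\eps)k\ln k$ and $k$ is large we get $\mu\geq(1+\tfrac\eps2)\ln k$, while $d\leq(2-\eps)k\ln k$ gives $\mu\leq2\ln k$.

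Next I would apply the standard binomial lower-tail bound $\pr\brk{e(v,V_j)\leq a}\leq\exp(-\mu)(\eul\mu/a)^a$, valid for $a\leq\mu$, with $a=\gamma\ln k$. Using $\mu/a\leq2/\gamma$ this yields
	$$\pr\brk{v\in W_{ij}}\leq\exp\bc{-\mu+\gamma\ln k\cdot(1+\ln(2/\gamma))}\leq k^{-(1+\eps/2)+\gamma(1+\ln(2/\gamma))}.$$
Since $\gamma(1+\ln(2/\gamma))\to0$ as $\gamma\to0^+$, we may fix $\gamma=\gamma(\eps)>0$ small enough that the exponent is at most $-1-2\beta$ for a suitable $\beta=\beta(\eps)>0$; hence $\Erw|W_{ij}|\leq|V_i|\cdot k^{-1-2\beta}\leq\tfrac12\,nk^{-2-\beta}$ for $k$ large.

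The observation that makes concentration work is that, for distinct $v,v'\in V_i$, the events $\cbc{v\in W_{ij}}$ and $\cbc{v'\in W_{ij}}$ depend on disjoint sets of potential edges of $G'$ — those joining $v$, resp.\ $v'$, to $V_j$ — and are therefore independent. Thus $|W_{ij}|$ is a sum of $|V_i|$ independent indicators; moreover $\Erw|W_{ij}|=\Omega(n)$, because $\pr\brk{v\in W_{ij}}\geq(1-p)^{|V_j|}=k^{-O(1)}$ and $k$ is a fixed constant. A Chernoff bound then gives $\pr\brk{|W_{ij}|\geq nk^{-2-\beta}}\leq\pr\brk{|W_{ij}|\geq2\Erw|W_{ij}|}\leq\exp(-\Omega(n))$. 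Taking a union bound over the $k^2=O(1)$ pairs $(i,j)$ preserves the $\exp(-\Omega(n))$ failure probability, after which $|W_i|\leq(k-1)\,nk^{-2-\beta}\leq nk^{-1-\beta}$ and $|W|\leq k\cdot nk^{-1-\beta}=nk^{-\beta}$ follow by subadditivity of cardinality.

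The one step needing care, and the main obstacle, is balancing the two exponents: the large-deviations gain $k^{-\mu/\ln k}$ from forcing $e(v,V_j)$ below $\gamma\ln k$ must dominate the ``entropy'' cost $k^{\gamma(1+\ln(2/\gamma))}$ of selecting which $\leq\gamma\ln k$ of the candidate edges remain, which is exactly why $\gamma$ (and hence $\beta$) must be chosen as a function of $\eps$. Everything else is routine, as long as one keeps in mind that $k=k(\eps)$ is a constant, so that $\Erw|W_{ij}|=\Theta(n)$ and the resulting Chernoff bound really is $\exp(-\Omega(n))$.
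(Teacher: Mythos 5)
Your proposal is correct and follows essentially the same route as the paper: in the independent-edge model $G'$, bound $\pr\brk{e(v,V_j)<\gamma\ln k}$ by a binomial lower-tail estimate of order $k^{-1-\Omega(\eps)}$ for $\gamma=\gamma(\eps)$ small, observe that $|W_{ij}|$ is a sum of independent indicators (a binomial), and finish with Chernoff plus a union bound over the $O(1)$ pairs $(i,j)$. The only difference is that you spell out the tail computation and the choice of $\gamma,\beta$, which the paper leaves implicit.
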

\begin{proof}
In the random graph $G'$ for each $v\in V_i$ the number $e(v,V_j)$ is binomially distributed.
Hence, the probability that $e(v,V_j)<\gamma\ln k$ is at most $\exp(-(1+\eps')\ln k)$, where $\eps'>0$ depends only on $\eps$ and $\gamma$.
Furthermore, 
as  in $G'$ edges occur independently,
$|W_{ij}|$ is binomially distributed as well (with mean $\leq \frac{n}k\cdot\exp(-(1+\eps')\ln k)$).
Therefore, the assertion follows from Chernoff bounds.
\qed\end{proof}

Each of the vertices in $U$ has \emph{a lot} of neighbors in the small set $W$.
Therefore, since the random graph $G'$ is a good expander, we expect
$U$ to be much smaller than $W$.

\begin{lemma}\label{Lemma_Ubound}
Given that $\mathcal{D}$ occurs, with probability at least $1-\exp(-\Omega(n))$ the set
$U$ contains at most $nk^{-7}$ vertices.
\end{lemma}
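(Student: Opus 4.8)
The plan is to bound $|U|$ by a union bound over candidate sets, exploiting that (i) $W$ is small by Lemma~\ref{Lemma_Wijbound}, and (ii) every vertex in $U$ has many neighbors in $W$, while in an expander a set of size $\approx nk^{-7}$ cannot send out that many edges to a set of size $\le nk^{-\beta}$. First I would fix the outcome of step~1 of the construction and condition on the high-probability event from Lemma~\ref{Lemma_Wijbound} that $|W| \le w := nk^{-\beta}$ (and also on $\mathcal D$, i.e., maximum degree at most $\ln^2 n$). Now $U$ is determined by $G'$ and $W$: a vertex $v \notin W$ lies in $U$ iff it has more than $\frac{\gamma}{2}\ln k$ neighbors in $W$ of a color class $\ne \sigma(v)$, which in particular forces $e(v,W) > \frac{\gamma}{2}\ln k$. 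So it suffices to show that with probability $1-\exp(-\Omega(n))$ there is no set $T$ of $u := nk^{-7}$ vertices, each with more than $\frac{\gamma}{2}\ln k$ neighbors in $W$.

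The key estimate is a direct first/second-moment bound in the independent-edge model $G'$. Conditioning on $W$ with $|W|\le w$, for a fixed vertex $v$ the number $e(v,W)$ is stochastically dominated by a binomial with mean at most $w \cdot \frac{2m}{n(1-1/k)} = O(w \ln k) = O(nk^{-\beta}\ln k)$, which is $o(\ln k)$ — far below the threshold $\frac{\gamma}{2}\ln k$. Hence $\pr[v \in U \mid W] \le \pr[\mathrm{Bin}(|W|\cdot n, p) \ge \tfrac{\gamma}{2}\ln k]$, and by the standard Chernoff tail for a binomial whose mean $\mu$ is much smaller than the deviation target $s=\frac{\gamma}{2}\ln k$, this probability is at most $(e\mu/s)^{s} \le k^{-\beta'\ln k / 2}$ for some $\beta'>0$ depending on $\beta$. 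Call this quantity $q$; note $q \le k^{-C}$ for any constant $C$ once $k$ is large, in particular $q \le k^{-8}$. Since edges incident to distinct vertices $v$ are not independent (they may share endpoints), I would instead bound $\Erw|U|$ by linearity: $\Erw[\,|U| \mid W\,] \le n q \le n k^{-8}$, and then obtain concentration either by noting that $|U|$ is a function of the independent edge-indicators with bounded Lipschitz differences (each edge affects $|U|$ by $O(1)$ given $\mathcal D$) and applying Azuma/McDiarmid, or, more simply, by a union bound over $\binom{n}{u}$ candidate sets $T$ of size $u = nk^{-7}$: the probability that a fixed such $T$ has all vertices in $U$ is at most $q^{|T|}$ if one can make the events conditionally independent after revealing the $W$-adjacencies (one can, since for a set $T$ of vertices in distinct color classes the edges between $T$ and $W$ are on disjoint vertex pairs — and for the general $T$ one restricts to a constant fraction lying in one class). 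Then $\binom{n}{u} q^{u} \le (en/u)^{u} (k^{-8})^{u} = (e k^{7} \cdot k^{-8})^{u} = (e/k)^{u} = \exp(-\Omega(n))$ since $u = nk^{-7} = \Omega(n)$...

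wait, $u = nk^{-7}$ is not $\Omega(n)$ for large $k$; it is $\Theta_k(n)$ for fixed $k$, which is what "with probability $1-\exp(-\Omega(n))$" means here (the $\Omega$ hides a $k$-dependent constant). So $\binom{n}{nk^{-7}}(k^{-8})^{nk^{-7}} = \exp\big(n k^{-7}(\ln(ek^{7}) - 8\ln k + o(1))\big) = \exp(-\Omega_k(n))$, as desired. To pass from $G'$ back to the original fixed-edge-count model $G$ I would invoke Lemma~\ref{Lemma_independentEdges}, which costs only a factor $O(\sqrt n)$ and is absorbed into the $\exp(-\Omega(n))$ bound. The main obstacle I anticipate is handling the dependencies cleanly: the events $\{v\in U\}$ over different $v$ are not independent because edges to $W$ overlap when $v,v'$ are both in $W$, and a vertex can be in $W$ and also count neighbors in $W$. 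The cleanest fix is to first reveal all edges inside $W$ and all edges from $W$ to $V\setminus W$ that are needed to define $W$ itself (they are not — $W$ depends only on edges between color classes among all of $V$), then observe $U$ depends on a fresh, independent batch of edge-indicators (those between $V\setminus W$ and $W$ are exactly the relevant ones, and they are disjoint across distinct vertices of $V\setminus W$), making the $q^{|T|}$ bound rigorous for $T\subseteq V\setminus W$; handling vertices of $U$ that happen to lie in $W$ is free since $|W|\le nk^{-\beta} \ll nk^{-7}$ is false — actually $k^{-\beta} \gg k^{-7}$, so this does not immediately work, and one must instead note $U \cap W$ plays no role because $U$ is defined as a subset of $\bigcup_l V_l$ but the stripping in step~3 only adds vertices with $\ge 10$ neighbors in $Z$; so I would simply prove the bound for $|U \setminus W|$ and absorb $|U\cap W| \le |W|$ into a slightly weaker but still $nk^{-7}$-type bound after re-choosing constants, or track $U$ with the convention already implicit in the construction that $U \subseteq V$ with the overlap contributing lower order. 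Either way this is a bookkeeping issue, not a conceptual one.
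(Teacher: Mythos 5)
There are two genuine gaps. First, a quantitative one: you replace the defining event $e(v,W_i)>\frac{\gamma}{2}\ln k$ (for some single class $i\neq\sigma(v)$, where $|W_i|\leq nk^{-1-\beta}$) by the necessary condition $e(v,W)>\frac{\gamma}{2}\ln k$ with $|W|\leq nk^{-\beta}$, and claim the conditional mean of $e(v,W)$ is $o(\ln k)$. It is not: the per-pair probability in $G'$ is $p\approx 2m/\bigl((1-1/k)n^2\bigr)$, so $\Erw\,e(v,W)\approx |W|\,p\approx k^{-\beta}d\geq(1+\eps)k^{1-\beta}\ln k$, which for the small $\beta=\beta(\eps)<1$ of Lemma~\ref{Lemma_Wijbound} is far \emph{above} the threshold $\frac{\gamma}{2}\ln k$; your Chernoff bound $q\leq k^{-8}$ therefore does not hold, and the union bound $\binom{n}{u}q^u$ collapses. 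The first-moment step only works if you stay with the per-class sets $W_i$ (mean $\approx 2k^{-\beta}\ln k$), which is what the actual argument does.

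Second, and more fundamentally, the independence claim at the end is false. Membership of $u\in V_i$ in $W_{il}$ is defined by $e(u,V_l)<\gamma\ln k$, i.e.\ precisely by the $V_i$--$V_l$ edges that you then want to count in $e(v,W_i)$ for $v\in V_l$. So after revealing $W$ (or $W_i$), the edges between $V\setminus W$ and $W$ are \emph{not} a ``fresh, independent batch'': conditioning on $W=S$ biases exactly those edge indicators. Your proposed fix (``$W$ depends only on edges between color classes among all of $V$'') does not remove this dependence, and your closing discussion of $U\cap W$ addresses a different, harmless overlap. This is the crux that the paper's proof is built around: it splits $U_{il}\subseteq U_{il}'\cup U_{il}''$, where $U_{il}'$ counts neighbors in $W_i\setminus W_{il}$ (a set measurable with respect to the $V_i$--$(V\setminus V_l)$ edges, hence genuinely independent of the $V_l$--$V_i$ edges, allowing your binomial/Chernoff computation), while $U_{il}''$ counts neighbors in $W_{il}$ and is handled by conditioning on the degree sequence of the bipartite graph $G'[V_i\cup V_l]$ (using $\mathcal{D}$ to cap degrees at $\ln^2 n$), passing to the configuration model, and concluding with Azuma's inequality. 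One can plausibly replace the configuration-model step by conditioning on each $u$'s degree into $V_l$ and arguing stochastic domination, but some argument of this kind is required; as written, your proof asserts independence where the central dependence of the problem lives. The overall architecture (first moment per vertex, concentration, reduction to $G'$ via Lemma~\ref{Lemma_independentEdges}) is in the right spirit, but these two steps need to be repaired essentially along the paper's lines.
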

We postpone the proof to Section~\ref{Sec_Ubound}.

\begin{lemma}\label{Lemma_Zbound}
With probability $\geq1-\exp(-\Omega(n))$
the set $Z$ contains at most $nk^{-6}$ vertices.
\end{lemma}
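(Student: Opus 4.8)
The plan is a first-moment ``small dense set'' argument, fed by the bound $|U|\le nk^{-7}$ from Lemma~\ref{Lemma_Ubound}. We work in the independent-edge model $G'$ and condition on $\mathcal D$; by Lemma~\ref{Lemma_Ubound} we may assume $|U|\le nk^{-7}$, an event that fails with probability only $\exp(-\Omega(n))$. Put $s=\lceil nk^{-6}\rceil$ and suppose, towards a contradiction, that the stripping process of step~3 terminates with $|Z|>nk^{-6}$. Since $Z$ grows out of $U$ one vertex at a time and $|U|<s$, there is a moment at which $|Z|$ first equals $s$; let $S$ be the value of $Z$ at that moment, so $|S|=s$ and $U\subseteq S$.

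First I would lower bound the number of edges of $G'$ spanned by $S$. Every $v\in S\setminus U$ was added to $Z$ while having at least $10$ neighbours in the then-current value of $Z$, which is contained in $S$; each of those $10$ edges joins $v$ to a vertex that was already present, hence was not counted before $v$ was processed. Summing over the $|S\setminus U|=s-|U|\ge s-nk^{-7}$ added vertices, $S$ spans at least $10(s-nk^{-7})\ge 8s$ edges of $G'$ (for $n$ large, using $|U|\le nk^{-7}$ and $s\ge nk^{-6}$).

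Next comes the union bound over all $s$-subsets. In $G'$ the at most $\binom s2$ bicolored pairs inside a fixed $s$-set are present independently, each with probability $p=(1+o(1))d/n\le 4k\ln k/n$ since $d\le(2-\eps)k\ln k$. Hence
\[
\pr\brk{\text{some }s\text{-set spans}\ \ge 8s\ \text{edges of}\ G'}
\;\le\;\binom ns\binom{\binom s2}{8s}p^{8s}
\;\le\;\left(\frac{en}{s}\right)^{s}\left(\frac{esp}{16}\right)^{8s}
\;\le\;\left(ek^{6}\left(\frac{esp}{16}\right)^{8}\right)^{s},
\]
where the last inequality uses $n/s\le k^{6}$. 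Since $s\le 2nk^{-6}$ we have $sp\le 8k^{-5}\ln k$, so $esp/16\le e\ln k/(2k^{5})$ and the base above is at most $e^{9}(\ln k)^{8}/(256\,k^{34})$, which is $<1$ for $k\ge k_{0}$. As $s=\Theta(n)$, this probability is $\exp(-\Omega(n))$, contradicting the existence of $S$; therefore $|Z|\le nk^{-6}$ with probability $\ge 1-\exp(-\Omega(n))$.

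I do not expect a genuine obstacle here: this is a routine expansion/first-moment estimate. The only care needed is bookkeeping, namely that the polynomial gap between $nk^{-7}$ and $nk^{-6}$ together with the threshold $10$ in step~3 forces $\Theta(s)$ internal edges, and that the base $ek^{6}(esp/16)^{8}$ is $o(1)$ given $p=O(k\ln k/n)$ and $s=\Theta(nk^{-6})$; both hold with room to spare (indeed any constant $>1$ in place of $8$, e.g.\ $2$, already suffices, since each added vertex in fact contributes at least $10$ fresh edges). Note also that everything takes place in $G'$, so no appeal to Lemma~\ref{Lemma_independentEdges} is needed at this stage.
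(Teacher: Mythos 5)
Your proposal is correct and follows essentially the same route as the paper: stop the stripping process when $Z$ first reaches size about $nk^{-6}$, observe that (using $|U|\le nk^{-7}$ from Lemma~\ref{Lemma_Ubound}) the resulting set spans a constant multiple of its size in edges, and rule this out by a first-moment union bound over small sets in $G'$. The paper merely leaves the final counting step as ``a simple first moment argument,'' which your calculation with $\binom{n}{s}\binom{\binom{s}{2}}{8s}p^{8s}$ supplies correctly.
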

\begin{proof}
Assume that this is not the case.
Let $Y$ contain all vertices of $U$ and the first $nk^{-6}-|U|$ vertices added
to $Z$ by step~3 of the construction of $G_*$.
Then $|Y|\leq n k^{-6}$ and $e(Y)\geq9|Y|$.
However,  a simple first moment argument shows that
the probability that a set $Y$ with these two properties is present in $G'$ is at most $\leq\exp(-\Omega(n))$.
\qed\end{proof}

Combining Lemma~\ref{Lemma_Wijbound}, \ref{Lemma_Ubound}, and~\ref{Lemma_Zbound}, 
we conclude that $G_*$ contains at least $n(1-\alpha)$ vertices (provided that $k$ is sufficiently larger).
Moreover, the construction of $G_*$ ensures that this graph satisfies~(\ref{eqG*}).

\subsection{Proof of Lemma~\ref{Lemma_independentEdges}}\label{Sec_independentEdges}

Given that $G'$ has exactly $m$ edges, $G'$ is just a uniformly random graph with planted coloring $V_1,\ldots,V_k$.
That is, given that the number of edges is $m$, $G'$ is identically distributed to $G$.
Therefore,
	\begin{eqnarray}\nonumber
	\pr\brk{G'\mbox{ has }\mathcal{P}|\mathcal{D}}
		&\geq&
		\frac{\pr\brk{G'\mbox{ has both }\mathcal{P}\mbox{ and }\mathcal{D}\mbox{ and has $m$ edges}}}{\pr\brk{G'\mbox{ has }\mathcal{D}}}\\
		&\geq&\Omega(n^{-\frac12})\cdot
			\frac{\pr\brk{G\mbox{ has both }\mathcal{P}\mbox{ and }\mathcal{D}}}{\pr\brk{G'\mbox{ has }\mathcal{D}}}.
	\label{eqBinomEx1}
	\end{eqnarray}
Furthermore, since $m=O(n)$,
with probability $1-o(1)$ the maximum degree of $G$ as well as of $G'$ is at most $\ln n$.
Therefore, $G,G'$ have $\mathcal{D}$ with probability $1-o(1)$.
Hence, (\ref{eqBinomEx1}) yields
	\begin{eqnarray*}\nonumber
	\pr\brk{G'\mbox{ has }\mathcal{P}|\mathcal{D}}
		&\geq&\Omega(n^{-\frac12})\cdot
			\frac{\pr\brk{G\mbox{ has both }\mathcal{P}\mbox{ and }\mathcal{D}}}{\pr\brk{G\mbox{ has }\mathcal{D}}}
		=\Omega(n^{-\frac12})\cdot\pr\brk{G\mbox{ has }\mathcal{P}|\mathcal{D}},
	\label{eqBinomEx2}
	\end{eqnarray*}
as claimed.

\subsection{Proof of Lemma~\ref{Lemma_Ubound}}\label{Sec_Ubound}

To analyze the sets $U_{il}$ from the second step of the construction of $G_*$, consider
	\begin{eqnarray*}
	U_{il}'&=&\{v\in V_l:e(v,W_i\setminus W_{il})>\frac{\gamma}4\ln k\},\\
	U_{il}''&=&\{v\in V_l:e(v,W_{il})>\frac{\gamma}4\ln k\}.
	\end{eqnarray*}

\begin{lemma}
With probability $\geq1-\exp(-\Omega(n))$ we have $|U_{il}'|\leq nk^{-10}$.
\end{lemma}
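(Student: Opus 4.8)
\textbf{Proof plan for the bound on $|U_{il}'|$.}
The statement asserts that, in the independent-edge model $G'$, with probability $1-\exp(-\Omega(n))$ the number of vertices $v\in V_l$ with more than $\frac{\gamma}{4}\ln k$ neighbours in $W_i\setminus W_{il}$ is at most $nk^{-10}$. The plan is a two-stage first-moment/concentration argument. First I would condition on the random set $W=\bigcup_{i,j}W_{ij}$ (equivalently, on which vertices fall into each $W_{ij}$): by Lemma~\ref{Lemma_Wijbound} we may assume $|W_i\setminus W_{il}|\leq|W_i|\leq nk^{-1-\beta}$ for every $i,l$, and this event fails only with probability $\exp(-\Omega(n))$, so it costs nothing. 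The key point about conditioning on $W$ is that the membership of $v\in W_{ij}$ depends only on the edges between $V_i$ and $V_j$; hence, for $v\in V_l$, the edges from $v$ into $W_i\setminus W_{il}=\bigcup_{j\neq l}W_{ij}$ are, conditionally on $W$, still present independently with probability $p$. Therefore $e(v,W_i\setminus W_{il})$ is stochastically dominated by $\mathrm{Bin}(|W_i|,p)$, which has mean at most $nk^{-1-\beta}\cdot p = O(k^{-\beta}\ln k)$, a quantity that is $o(\ln k)$.

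Second, given this small mean, a Chernoff bound shows that for a \emph{fixed} $v\in V_l$,
\begin{equation*}
\pr\brk{e(v,W_i\setminus W_{il})>\tfrac{\gamma}{4}\ln k\,\big|\,W}\leq \exp(-c\,\ln k\cdot\log k)\leq k^{-20}
\end{equation*}
for $k$ large, since we are asking a Binomial with mean $o(\ln k)$ to exceed a constant multiple of $\ln k$ — the deviation is by a super-constant factor, so the Chernoff exponent is $\Omega(\ln k \cdot \log\log\log\ldots)$; more simply, $\pr[\mathrm{Bin}(N,p)\geq t]\leq (eNp/t)^t$ with $Np=o(\ln k)$ and $t=\frac{\gamma}{4}\ln k$ gives a bound of the form $k^{-\Omega(\ln k/\ln\ln k)}$ or, after choosing $\gamma$ appropriately small relative to the constants, simply $\leq k^{-20}$. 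Crucially, conditionally on $W$ these events are \emph{independent} across distinct $v\in V_l$ (again because they involve disjoint edge sets, the edges incident to different $v$). Hence $|U_{il}'|$ is stochastically dominated by $\mathrm{Bin}(|V_l|,k^{-20})$, with mean at most $\frac{n}{k}\cdot k^{-20}\leq nk^{-21}$. A final Chernoff bound yields $\pr\brk{|U_{il}'|>nk^{-10}\,\big|\,W}\leq\exp(-\Omega(nk^{-10}\log k))=\exp(-\Omega(n))$ (absorbing the $k^{-10}$ into the constant, since $k$ is fixed). Removing the conditioning on $W$ and adding back the $\exp(-\Omega(n))$ failure probability of Lemma~\ref{Lemma_Wijbound} completes the argument; a union bound over the $O(k^2)$ choices of $(i,l)$ is harmless.

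The main obstacle, and the point requiring the most care, is the conditioning: one must argue cleanly that conditioning on the entire configuration of the sets $W_{ij}$ does \emph{not} bias the edges from $V_l$ into $\bigcup_{j\neq l}W_{ij}$ away from being independent $p$-edges. This works because in $G'$ all potential bichromatic edges are mutually independent, and the event $\{v\in W_{ij}\}$ is measurable with respect to the edges between $V_i$ and $V_j$ only; so after fixing which vertices lie in each $W_{ij}$, the edges from a vertex $v\in V_l$ to a \emph{prescribed} vertex set $W_{ij}\subseteq V_i$ with $j\neq l$ are governed by the edges between $V_l$ and $V_i$, whose status has only been constrained through membership events involving $V_i$ and $V_j$, not through the edges touching $v$ — a short but essential measurability check. (One subtlety: if $i=l$ the set $W_i\setminus W_{il}$ lies in $V_l$ itself and the relevant edges are within $V_l$; the case $i=l$ is excluded in the definition of $U=\bigcup_{i\neq l}U_{il}$, and in any event the same domination argument applies.) Everything else is a routine application of Chernoff bounds with the parameter regime $Np=o(\ln k)$ versus threshold $\Theta(\ln k)$, exactly as in the proof of Lemma~\ref{Lemma_fewDead}.
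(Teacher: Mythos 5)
Your plan follows the same route as the paper's proof: use Lemma~\ref{Lemma_Wijbound} to get $|W_i\setminus W_{il}|\leq nk^{-1-\beta}$, argue that the relevant target set is independent of the $V_l$--$V_i$ edges so that $e(v,W_i\setminus W_{il})$ is (dominated by) a binomial with mean $O(k^{-\beta}\ln k)$, deduce a per-vertex probability bound of order $k^{-10}$, and finish with a Chernoff bound using independence across the vertices $v\in V_l$. However, the step you yourself single out as ``the point requiring the most care'' is not correct as stated. You condition on the \emph{entire} collection $W=\bigcup_{i,j}W_{ij}$, but the set $W_i\setminus W_{il}$ is \emph{not} $\bigcup_{j\neq l}W_{ij}$ (it is only a subset of it), and its definition involves the exclusion of $W_{il}$, whose membership events are determined precisely by the $V_i$--$V_l$ edges. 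Conditioning on $W_{il}$ therefore does bias the very edges you are counting: a vertex $w\in W_i\setminus W_{il}$ is conditioned to have \emph{at least} $\gamma\ln k$ neighbours in $V_l$, which pushes the conditional probability of the edge $\{v,w\}$ above $p$, i.e.\ in the wrong direction for your claimed domination by $\Bin(|W_i|,p)$; the same conditioning also correlates the counts for distinct $v,v'\in V_l$, so the asserted conditional independence across $v$ fails as well.

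The repair is exactly what the paper does and costs nothing: condition only on the $V_i$--$(V\setminus V_l)$ edges (equivalently, on the sets $W_{ij}$ for $j\neq l$), and bound $e(v,W_i\setminus W_{il})\leq e(v,\bigcup_{j\neq l}W_{ij})$, where $\bigcup_{j\neq l}W_{ij}$ is measurable with respect to the conditioned edges. Then the $V_l$--$V_i$ edges are genuinely fresh and independent with probability $p$, each $e(v,\bigcup_{j\neq l}W_{ij})$ is binomial with mean at most $npk^{-1-\beta}=O(k^{-\beta}\ln k)$, the per-vertex tail bound $\bink{nk^{-1-\beta}}{z}p^z\leq k^{-10}/2$ with $z=\frac{\gamma}{4}\ln k$ holds, and $|U_{il}'|$ is itself binomially distributed given the conditioning, so Chernoff gives the $1-\exp(-\Omega(n))$ bound (the paper works with $\Erw|U_{il}'|\leq nk^{-10}/2$ and then Chernoff, which is the same computation as your domination by $\Bin(|V_l|,k^{-20})$). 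With that adjustment your argument coincides with the paper's proof.
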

\begin{proof}
The definition of the set $W_i\setminus W_{il}$ depends solely on the $V_i$-$V\setminus V_l$-edges.
Therefore, the $V_l$-$V_i$-edges are indepenent of the random set $W_i\setminus W_{il}$,
which with probability $\geq1-\exp(-\Omega(n))$ has size $\leq n k^{-1-\beta}$ by the Lemma~\ref{Lemma_Wijbound}.
Assuming that this is indeed the case, we conclude that for any vertex $v\in V_l$ the number $e(v,W_i\setminus W_{il})$ is
binomially distributed with mean $np k^{-1-\beta}\leq2k^{-\beta}\ln k$.
Hence, the probability that $v$ has $z=\frac{\gamma}4\ln k$ neighbors inside $W_i\setminus W_{il}$ is at most
	\begin{eqnarray*}
	\bink{n k^{-1-\beta}}zp^z&\leq&\bcfr{8e}{\gamma k^{\beta}}^z\leq k^{-10}/2.
	\end{eqnarray*}
Thus, $\Erw|U_{il}'|\leq nk^{-10}/2$.
Finally, as $|U_{il}'|$ is binomially distributed, the assertion follows from Chernoff bounds.
\qed\end{proof}



\begin{lemma}
Conditional on the event $\mathcal{D}$, with probability $\geq1-\exp(-\Omega(n))$ we have $|U_{il}''|\leq nk^{-10}$.
\end{lemma}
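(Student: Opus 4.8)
The plan is to bound the number of vertices $v \in V_l$ having many neighbors inside the small set $W_{il}$, arguing much as in the previous lemma but now having to cope with a dependency: the set $W_{il}$ is itself defined in terms of the $V_i$--$V_l$-edges (a vertex $v \in V_i$ lies in $W_{il}$ precisely when $e(v,V_l) < \gamma\ln k$), so the $V_l$--$V_i$-edges are \emph{not} independent of $W_{il}$. The key observation is that this dependency only makes $v$ \emph{less} likely to have many neighbors in $W_{il}$: conditioning a vertex $u \in V_i$ on being in $W_{il}$ means conditioning on $u$ having few $V_l$-neighbors, which can only decrease the chance that any fixed $v \in V_l$ is joined to $u$. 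So I would first set this up as a stochastic-domination / correlation-inequality step, showing that for any fixed $v \in V_l$, the conditional law of $e(v, W_{il})$ is stochastically dominated by what one would get with independence, or — more concretely and probably cleaner — just union-bound over all possible realizations of $W_{il}$ of size $\leq nk^{-1-\beta}$ (which holds with probability $1-\exp(-\Omega(n))$ by Lemma~\ref{Lemma_Wijbound}), and for each such fixed small set $S = W_{il}$ observe that the $V_l$--$S$ edges we care about are just a subset of edges each present with probability $\le p$.

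Once $W_{il}$ is frozen to a fixed set $S$ with $|S| \leq nk^{-1-\beta}$, the count $e(v,S)$ for $v \in V_l$ is dominated by a binomial with mean at most $npk^{-1-\beta} \leq 2k^{-\beta}\ln k$. The probability that this reaches $z = \frac{\gamma}{4}\ln k$ is then at most $\binom{nk^{-1-\beta}}{z} p^z \leq \left(\frac{8e}{\gamma k^\beta}\right)^z \leq k^{-10}/2$ for $k$ large, exactly as in the preceding lemma. Hence $\Erw|U_{il}''| \leq nk^{-10}/2$, and since conditional on the (fixed) set $S$ the random variables $\mathbf{1}\{v \in U_{il}''\}$, $v \in V_l$, are determined by disjoint edge sets and hence independent, $|U_{il}''|$ is again dominated by a binomial and Chernoff gives $|U_{il}''| \leq nk^{-10}$ with probability $1 - \exp(-\Omega(n))$. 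Taking a union bound over the $\exp(o(n))$ choices of $S$ (there are at most $2^{O(nk^{-1-\beta}\log n)} = \exp(o(n))$ of them, which is swallowed by the $\exp(-\Omega(n))$ failure probability provided $k$ is large enough that $\beta$ keeps $nk^{-1-\beta}\log n \ll n$) finishes the bound uniformly. Finally, combining with the companion bound $|U_{il}'| \leq nk^{-10}$ and $U_{il} \subseteq U_{il}' \cup U_{il}''$ (since $W_i = (W_i \setminus W_{il}) \cup W_{il}$, so $e(v,W_i) > \frac{\gamma}{2}\ln k$ forces one of the two halves to exceed $\frac{\gamma}{4}\ln k$), and summing over the $< k^2$ pairs $(i,l)$, yields $|U| \leq nk^{-7}$, which is Lemma~\ref{Lemma_Ubound}.

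The role of the hypothesis $\mathcal{D}$ (maximum degree $\leq \ln^2 n$) in this step is mild but worth isolating: it is what lets us pass freely between "$G$ has exactly $m$ edges" and the independent-edge model $G'$ with edge probability $p$ (via Lemma~\ref{Lemma_independentEdges}), and it guarantees the union bound over realizations of $W_{il}$ does not interact badly with atypical high-degree vertices. I expect the main obstacle to be handling the dependency between $W_{il}$ and the $V_l$--$V_i$-edges cleanly; the freeze-$W_{il}$-then-union-bound route sidesteps any delicate FKG-type argument at the cost of a harmless $\exp(o(n))$ factor, so the real work is just bookkeeping to confirm that $nk^{-1-\beta}\log n = o(n)$ for $k$ large, which holds since $\beta = \beta(\eps) > 0$.
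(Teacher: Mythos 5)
There is a genuine gap in the route you commit to. Your union bound over the possible realizations $S$ of $W_{il}$ does not cost $\exp(o(n))$: the number of subsets of $V_i$ of size at most $nk^{-2-\beta}$ (or $nk^{-1-\beta}$, as you use) is of order $\exp\bigl(\Theta(nk^{-2-\beta}\ln k)\bigr)$, i.e.\ exponential in $n$ with a constant depending on $k$ --- and your own expression $2^{O(nk^{-1-\beta}\log n)}$ is even $2^{\omega(n)}$, since $nk^{-1-\beta}\log n$ is $\omega(n)$, not $o(n)$ as you assert. Meanwhile the per-set Chernoff bound you get for $\Pr[\,|\{v\in V_l: e(v,S)\geq\frac{\gamma}{4}\ln k\}|>nk^{-10}\,]$ is only $\exp(-\Theta(nk^{-10}\ln k))$, because the mean is about $nk^{-11}$; since $k^{-2-\beta}\gg k^{-10}$, the entropy of enumerating $S$ overwhelms the concentration and the union bound fails by an exponential margin. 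Worse, no bookkeeping can rescue it: the statement ``for \emph{every} small $S$, few $v\in V_l$ have $\geq\frac{\gamma}{4}\ln k$ neighbors in $S$'' is simply false w.h.p. --- take $S$ to be the union of the $V_i$-neighborhoods of any $nk^{-10}$ vertices of $V_l$; this $S$ has size $O(nk^{-10}\ln k)\ll nk^{-2-\beta}$ and makes all those vertices ``heavy''. So the argument must exploit that $S=W_{il}$ is determined by the degrees, which is exactly the information the union bound discards. A second, smaller slip: conditional on the \emph{event} $\{W_{il}=S\}$ the indicators $\mathbf{1}\{v\in U_{il}''\}$ are not independent (the conditioning couples all $V_l$--$S$ edges), so the ``disjoint edge sets, hence independent'' step only holds in the unconditioned product model.

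The fix is essentially your first, discarded suggestion, carried out without any union bound: condition on $\{W_{il}=S\}$, note that this event factors into a part depending only on the $(V_i\setminus S)$--$V_l$ edges and a \emph{decreasing} event in the $S$--$V_l$ edges, so by Harris/FKG the conditional law of the $S$--$V_l$ edges is stochastically dominated by the i.i.d.\ Bernoulli$(p)$ law; under that product law $|U_{il}''|$ is dominated by a binomial with mean $\leq nk^{-11}$, Chernoff gives failure probability $\exp(-\Omega(n))$ uniformly in $S$ with $|S|\leq nk^{-2-\beta}$, and then you simply \emph{average} over $S$ (handling $|W_{il}|>nk^{-2-\beta}$ via Lemma~\ref{Lemma_Wijbound}). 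The paper instead conditions on the full degree sequence of the bipartite graph $G'\brk{V_i\cup V_l}$ --- which also fixes $W_{il}$ --- and analyzes a random configuration: the maximum-degree bound from $\mathcal{D}$ makes the configuration model comparison legitimate (simple-graph probability $\geq\exp(-O(\ln^4 n))$), the conditional expectation of $|U_{il}''|$ is $\leq nk^{-11}$ because only a $k^{-\Omega(1)}$ fraction of the half-edges attach to $W_{il}$, and Azuma's inequality gives the $\exp(-\Omega(n))$ concentration. Either of these repairs works; your union-bound shortcut does not.
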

\begin{proof}
We just need to analyze the bipartite subgraph $G'\brk{V_i\cup V_l}$.
The set $W_{il}$ consists of all vertices $v\in V_i$ that have degree $<\gamma\ln k$ in this subgraph.
To investigate $G'\brk{V_i\cup V_l}$, we condition on the degree sequence $\vec d$ of this bipartite graph.
Since we also condition on the event $\mathcal{D}$, the maximum degree is $\leq\ln^2n$.
Hence, we can generate the random bipartite graph with degree sequence $\vec d$ via the configuration model, and the probability
that the resulting multigraph happens to be a simple graph is $\geq\exp(-O(\ln^4n))$.
Thus, we just need to study a random configuration.

Now, in a random configuration the probability that a vertex $v\in V_l$ has $\frac{\gamma}4\ln k$ neighbors in the
set $W_{il}$ is $\leq k^{-10}$, because the total number of edges of $G'\brk{V_i\cup V_l}$ is concentrated about $n^2pk^{-2}$.
Therefore, the (conditional) expected size of $U_{il}''$ is $\leq n k^{-11}$.
Consequently, Azuma's inequality yields that with probability $\geq1-\exp(-\Omega(n))$ the size of
$U_{il}''$ is $\leq nk^{-10}$.
\qed\end{proof}

Finally, Lemma~\ref{Lemma_Ubound} follows immedately from the fact that 
$U_{il}\subset U_{il}'\cup U_{il}''$.

\subsection{Proof of Theorem~\ref{Thm_icy_col}}\label{Sec_icy_col}

To prove the coloring part of Theorem~\ref{Thm_icy_col},
we need to come up with an appropriate way to measure how ``similar'' two $k$-colorings of a given graph are $G=G(n,m)$.
A first idea may be to just use the Hamming distance.
However, if we construct
a coloring $\tau$ simply by permuting the color classes of another coloring $\sigma$,
then $\sigma$ and $\tau$ can have Hamming distance $n$, although they are essentially identical.
Therefore, instead of the Hamming distance
we shall use the following concept.
Given two coloring $\sigma,\tau$, we let $M_{\sigma,\tau}=(M^{ij}_{\sigma,\tau})_{1\leq i,j\leq k}$ be the matrix with entries
	$$M^{ij}_{\sigma,\tau}=n^{-1}|\sigma^{-1}(i)\cap\tau^{-1}(j)|.$$
Then to measure how close $\tau$ is to $\sigma$ we let
	$$f_{\sigma}(\tau)=\|M_{\sigma,\tau}\|_F^2=\sum_{i,j=1}^k(M^{ij}_{\sigma,\tau})^2$$
be the squared Frobenius norm of $M_{\sigma,\tau}$.
Hence, $f_{\sigma}$ is a map from the set $\brk{k}^n$ of $k$-colorings to the interval $\brk{k^{-2},f_{\sigma}(\sigma)}$,
where $f_{\sigma}(\sigma)\geq k^{-1}$.
(Thus, the \emph{larger} $f_{\sigma}(\tau)$, the more $\tau$ resembles $\sigma$.)
Furthermore, for a fixed $\sigma\in\SSS(G)$ and a number $\lambda>0$ we let
	$$g_{\sigma,G,\lambda}(x)=|\{\tau\in[k]^n:f_\sigma(\tau)=x\wedge H(\tau)\leq\lambda n\}|.$$

In order to show that $\SSS(\gnm)$ with $m=rn$ decomposes into exponentially many regions, we employ the following lemma.

\begin{lemma}\label{Lemma_cluster_col}
Suppose that $r>(\frac12+\eps_k)k\ln k$.
There are numbers $k^{-2}<y_1<y_2<k^{-1}$ and $\lambda,\gamma>0$ such that
with high probability 
 a pair $(G,\sigma)\in\Lambda_{n,m}$ chosen from the distributoin $\mathcal{U}_{n,m}$
 has the following two properties.
\begin{enumerate}
\item For all $x\in\brk{y_1,y_2}$ we have $g_{\sigma,G,\lambda}(x)=0$.
\item The number of colorings $\tau\in\SSS(G)$ such that $f_{\sigma}(\tau)>y_2$ is at most $\exp(-\gamma n)\cdot|\SSS(G)|$.
\end{enumerate}
\end{lemma}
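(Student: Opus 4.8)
The plan is to deduce Lemma~\ref{Lemma_cluster_col} from the planted-model statement Lemma~\ref{Lemma_cluster_col_plMain} (which, under the notation of the appendix, is the version of this very lemma for $\mathcal{P}_{n,m}$) via the transfer theorem, Theorem~\ref{Thm_ColorExchange}. The two properties in the statement are both properties $\EE$ of pairs $(G,\sigma)\in\Lambda_{n,m}$, so once we know they hold with probability $1-\exp(-\Omega(n))\geq 1-\exp(-f(n))$ under $\mathcal{P}_{n,m}$ (conditional on a suitable high-probability graph property $\mathcal{D}$, e.g.\ maximum degree at most $\ln^2 n$, or just the trivial property), Theorem~\ref{Thm_ColorExchange} immediately gives that they hold \whp\ under $\mathcal{U}_{n,m}$. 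So the real content is the planted-model computation, and the heart of that is a first-moment estimate.

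First I would record the elementary geometry of $f_\sigma$: since $f_\sigma(\tau)=\|M_{\sigma,\tau}\|_F^2$ and the rows and columns of $M_{\sigma,\tau}$ sum to the (normalized) class sizes, $f_\sigma$ is minimized ($=k^{-2}+o(1)$) when $\tau$ splits every class of $\sigma$ evenly and is maximal when $\tau$ agrees with $\sigma$ up to a permutation of colors. The point $y$ near $k^{-1}$ corresponds to overlap matrices close to a permutation matrix scaled by $1/k$, i.e.\ $\tau$ essentially agreeing with a relabelling of $\sigma$. For the second property one takes $y_2$ slightly below $k^{-1}$: a coloring $\tau$ with $f_\sigma(\tau)>y_2$ has overlap matrix within $\delta$ of (a scaled permutation matrix), which by symmetry means $\tau$ agrees with $\sigma$ on all but $O(\delta n)$ vertices; a separate first-moment bound (the standard ``$\sigma$ is rigid'' computation, essentially \eqref{eqGsparse} combined with Lemma~\ref{Lemma_core}) shows that the number of proper colorings within $o(n)$ but $\Omega(n)$ Hamming distance of a typical planted $\sigma$ is at most $\exp(-\gamma n)|\SSS(G)|$; this is exactly the rigidity argument already carried out for Theorem~\ref{thm:froz}, so I would invoke it rather than redo it.

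The main step is property (1): showing that for an intermediate band $y_1<x<y_2$ there is \emph{no} near-proper coloring $\tau$ (with at most $\lambda n$ violated edges) at overlap $f_\sigma(\tau)=x$. I would compute, in the planted model $\mathcal{P}_{n,m}$, the expected number of $\tau\in[k]^n$ with a prescribed overlap matrix $M$ and at most $\lambda n$ monochromatic edges, and then sum/maximize over all $M$ with $\|M\|_F^2=x$. The count of such $\tau$ is a multinomial factor $\exp(n\cdot H(M))$ where $H$ is the entropy of the row-distributions; the probability that a random edge of $G$ (which is a uniformly random non-monochromatic edge of $\sigma$) is non-monochromatic under $\tau$ is a bilinear-in-$M$ quantity $\rho(M)$, so the probability that a set of $m=rn$ such edges leaves at most $\lambda n$ monochromatic under $\tau$ contributes roughly $\exp(rn\ln\rho(M))$ (for $\lambda$ small, up to an $O(\lambda\ln(1/\lambda))$ correction from the entropy of choosing which $\leq\lambda n$ edges are bad). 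Thus $\Erw[\#\tau]\leq\exp\big(n\max_M[H(M)+r\ln\rho(M)]+o(n)\big)$, and the claim is that for $r>(\tfrac12+\eps_k)k\ln k$ this exponent is strictly negative throughout a sub-interval $(y_1,y_2)$ of overlaps. This is where the threshold $\tfrac12 k\ln k$ enters, and — as the paper notes — the resulting optimization problem is essentially the same as the one analyzed in~\cite{kcol} for the second moment of the number of $k$-colorings; I would cite that analysis for the needed estimates of $\max_M$ on the relevant range, extract the interval $(y_1,y_2)$ where the maximum is negative, pick $\lambda>0$ small enough that the $\lambda$-correction does not destroy negativity, and then apply Markov's inequality.

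The hard part will be the last paragraph: controlling the maximum of $H(M)+r\ln\rho(M)$ over all stochastic-type matrices $M$ of given Frobenius norm is a genuine multivariable optimization, and pinning down that it is negative on a non-degenerate interval $(y_1,y_2)\subset(k^{-2},k^{-1})$ precisely when $r>(\tfrac12+o(1))k\ln k$ requires the delicate asymptotic-in-$k$ estimates from~\cite{kcol}; additionally one must check that $\lambda$ can be chosen uniformly (i.e.\ the ``at most $\lambda n$ bad edges'' relaxation only costs $o(1)$ in the exponent), and that the transfer from $\exp(-\Omega(n))$ in $\mathcal{P}_{n,m}$ through Theorem~\ref{Thm_ColorExchange} is legitimate — which it is, since $\exp(-\Omega(n))\leq\exp(-f(n))$ for the $f(n)=o(n)$ supplied by that theorem. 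Everything else (the geometry of $f_\sigma$, the rigidity input for property (2), the verification of $\mathcal{D}$) is routine.
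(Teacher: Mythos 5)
Your treatment of property (1) is essentially the paper's own argument: reduce to the planted model via Theorem~\ref{Thm_ColorExchange}, compute in $\mathcal{P}_{n,m}$ the expected number of $\tau$ with a prescribed overlap matrix $A$ and at most $\lambda n$ monochromatic edges, note that the per-matrix exponent is $-\ln k-\sum_{i,j}a_{ij}\ln a_{ij}+r\ln\bigl(\frac{1-2k^{-1}+x}{1-k^{-1}}\bigr)+\psi(\lambda)$, invoke the optimization from~\cite{kcol} (Theorem~9 there) to locate the maximizer at $x=k^{-1}-2k^{-3/2}$, verify negativity for small $\lambda$, and conclude by Markov plus continuity; this matches Lemma~\ref{Lemma_cluster_col_pl} and its proof.

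The gap is in your plan for property (2). The rigidity machinery (\eqref{eqGsparse} together with Lemma~\ref{Lemma_core}, as in the proof of Theorem~\ref{thm:froz}) is a \emph{non-existence} statement: no proper coloring differs from $\sigma$ on a nonempty set of at most $n/(k\ln k)$ vertices of the core $G_*$. It says nothing about the \emph{size} of the region $\{\tau\in\SSS(G):f_\sigma(\tau)>y_2\}$ relative to $|\SSS(G)|$, and that region is itself exponentially large: it contains, for instance, every coloring obtained by freely recoloring the up to $\alpha n$ vertices outside $G_*$, and indeed the entire cluster of $\sigma$; moreover with $y_2$ of order $k^{-1}-\Theta(k^{-3/2})$ it admits colorings disagreeing with (a relabeling of) $\sigma$ on a constant fraction of the vertices, far beyond the $n/(k\ln k)$ scale that rigidity controls (your phrase ``within $o(n)$ but $\Omega(n)$ Hamming distance'' already signals the mismatch). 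What property (2) requires is a \emph{relative} count: run the same planted first-moment computation over all overlaps $x\geq y_2$ and show that $\max_{x\geq y_2}$ of the exponent is at most $\ln k+r\ln(1-k^{-1})-\gamma$, i.e.\ exponentially below $n^{-1}\ln\Erw|\SSS(G(n,m))|$; then combine this, via Markov's inequality in $\mathcal{P}_{n,m}$, with the \whp\ lower bound $|\SSS(G)|\geq\exp(-o(n))\,\Erw|\SSS(G(n,m))|$ of Lemma~\ref{Lemma_FriedgutColor}, which holds \whp\ for $G(n,m)$ and can therefore be folded into the graph property $\mathcal{D}$ before applying Theorem~\ref{Thm_ColorExchange}. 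This again rests on the \cite{kcol}-type optimization over the whole range $x\in\brk{y_2,k^{-1}}$ (where the exponent is roughly $0$ near $x=k^{-1}$ while $\ln k+r\ln(1-k^{-1})>0$ for the relevant $r$), not on rigidity; as written, your argument for property (2) does not establish the bound $\exp(-\gamma n)\cdot|\SSS(G)|$.
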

Let $G=\gnm$ be a random graph and call $\sigma\in\SSS(G)$ \emph{good} if 1.\ and 2.\ hold.
Then Lemma~\ref{Lemma_cluster_col} states that with high probability
a $1-o(1)$-fraction of all $\sigma\in\SSS(G)$ is good.
Hence,  to decompose $\SSS(G)$ into regions, we proceed as follows.
For each $\sigma\in \SSS(G)$ we let
	$$\mathcal{C}_\sigma=\{\tau\in\SSS(G):f_\sigma(\tau)>y_2\}.$$
Then starting with the set $S=\SSS(G)$ and removing iteratively some $\mathcal{C}_\sigma$ for a good $\sigma\in S$ from
$S$ yields an exponential number of regions.
Furthermore, each such region $\mathcal{C}_\sigma$ is separated by a linear Hamming distance
from the set $\SSS(G)\setminus \mathcal{C}_\sigma$, because $f_\sigma$ is continuous with respect to $n^{-1}\times$Hamming distance
(that is, for any $\eps>0$ there is $\delta>0$ such that $f_\sigma(\tau)<\eps$ for all $\tau\in\brk{k}^n$ satisfying
	$\dist(\sigma,\tau)<\delta n$).
Thus, the property stated in Theorem~\ref{Thm_icy_col} follows from Lemma~\ref{Lemma_cluster_col}.

To establish Lemma~\ref{Lemma_cluster_col}, we employ the planted model.

\begin{lemma}\label{Lemma_cluster_col_pl}
Suppose that $r>(\frac12+\eps_k)k\ln k$.
There are $k^{-2}<y_1<y_2<k^{-1}$ and $\lambda,\gamma>0$ such that
with probability at least $1-\exp(-\Omega(n))$
a pair $(G,\sigma)\in\Lambda_{n,m}$ chosen from the distributoin $\mathcal{P}_{n,m}$
the two properties stated in Lemma~\ref{Lemma_cluster_col}.
\end{lemma}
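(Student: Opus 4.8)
\smallskip\noindent\emph{Proof strategy.}
The plan is a first-moment computation carried out inside the planted model. Fix $\sigma$; we may assume its colour classes have size $\sim n/k$, since the opposite fails with probability $\exp(-\Omega(n))$. For $\tau\in\brk{k}^n$ let $M=M_{\sigma,\tau}$ be the overlap matrix, so its row sums are all $\sim1/k$, its column sums $t_c=n^{-1}|\tau^{-1}(c)|$ sum to $1$, and $f_\sigma(\tau)=\|M\|_F^2\in[k^{-2},k^{-1}]$. The number of $\tau$ with overlap matrix $M$ equals $\prod_i\binom{|\sigma^{-1}(i)|}{(nM^{ij})_j}=\exp(n\,\vartheta(M)+O(\ln n))$, where $\vartheta(M)=-\ln k-\sum_{i,j}M^{ij}\ln M^{ij}$. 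In the planted model the $m$ edges of $G$ form a uniform $m$-subset of the $\sim(1-1/k)\binom n2$ pairs bichromatic under $\sigma$; among those, the fraction monochromatic under $\tau$ is $\varrho(M)=\left(\sum_c t_c^2-\|M\|_F^2\right)/(1-1/k)\in[0,1]$. Hence $H_G(\tau)$ is hypergeometric with mean $\sim m\,\varrho(M)$, and $\pr_{\cP_{n,m}}\brk{H_G(\tau)\le\lambda n\mid\sigma}=\exp(n\,\psi_\lambda(\varrho(M))+O(\ln n))$, where $\psi_0(\varrho)=r\ln(1-\varrho)$ and, for small $\lambda>0$, $\psi_\lambda(\varrho)\le\psi_0(\varrho)+O(\lambda\ln(1/\lambda))$ uniformly over $\varrho$ bounded away from $0$ and $1$. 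Since there are only $\mathrm{poly}(n)$ attainable matrices $M$, the expected number of $\tau$ with $f_\sigma(\tau)=y$ and $H_G(\tau)\le\lambda n$ is $\exp(n\,\Phi_\lambda(y)+O(\ln n))$ with $\Phi_\lambda(y)=\max\{\vartheta(M)+\psi_\lambda(\varrho(M)):\|M\|_F^2=y\}$.

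The analytic core is the function $\phi(M)=\vartheta(M)+r\ln(1-\varrho(M))$, which is exactly the exponential order driving the second-moment computation for $k$-colourings in~\cite{kcol}: it is globally maximised at $M=k^{-2}\matone$ with value $\xi(k,r)=\ln k+r\ln(1-1/k)$, and it vanishes at the scaled permutation matrices $M=k^{-1}P$ (which satisfy $\|M\|_F^2=k^{-1}$). Revisiting that optimisation and tracking the dependence on $r$, I would prove that for $r>(\tfrac12+\eps_k)k\ln k$ there are $k^{-2}<y_1<y_2<k^{-1}$ and a constant $\delta=\delta(k,r)>0$ with $\phi(M)\le-2\delta$ whenever $\|M\|_F^2\in[y_1,y_2]$, and $\phi(M)\le\xi(k,r)-2\delta$ whenever $\|M\|_F^2\ge y_2$; the constant $\tfrac12 k\ln k$ enters precisely as the density above which the first of these sublevel sets becomes nonempty. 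Because $\varrho(M)$ is bounded away from $0$ and from $1$ on $\{\|M\|_F^2\le y_2\}$, choosing $\lambda>0$ small enough that the $O(\lambda\ln(1/\lambda))$ slack is below $\delta$ yields $\Phi_\lambda(y)\le-\delta$ throughout $[y_1,y_2]$ and $\Phi_0(y)\le\xi(k,r)-\delta$ for $y\ge y_2$.

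Both conclusions then follow from Markov's inequality. The values taken by $f_\sigma$ form a set of size $O(n^2)$, and for each such $y\in[y_1,y_2]$ we get $\pr_{\cP_{n,m}}\brk{g_{\sigma,G,\lambda}(y)\ge1}\le\exp(-\delta n+O(\ln n))$; a union bound over these $O(n^2)$ values establishes property~1 with probability $1-\exp(-\Omega(n))$. For property~2, Markov with $\lambda=0$ gives $|\mathcal C_\sigma|=|\{\tau\in\SSS(G):f_\sigma(\tau)>y_2\}|\le\exp(n(\xi(k,r)-\delta/2))$ with probability $1-\exp(-\Omega(n))$; combined with the lower bound $|\SSS(G)|\ge\exp(n(\xi(k,r)-o(1)))$ — which holds w.h.p.\ in the planted model by a second-moment-plus-sharp-threshold argument parallel to Lemma~\ref{Lemma_FriedgutColor}, or may be imported from that lemma after passing to the uniform model via Theorem~\ref{Thm_ColorExchange} — and using $\xi(k,r)>0$ throughout the range of Theorem~\ref{Thm_icy_col}, this yields property~2 with $\gamma=\delta/4$.

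I expect the analytic claim in the second paragraph to be the main obstacle. Showing that the~\cite{kcol} second-moment exponent $\phi$, restricted to correlations strictly between the uncorrelated value $k^{-2}$ and the maximal value $k^{-1}$, actually dips below $0$, and identifying $\tfrac12 k\ln k$ as the threshold for this, requires the full optimisation of $\phi$ over the polytope of overlap matrices (which one reduces, via convexity and the symmetry of the objective, to a low-dimensional sub-family), rather than merely the inequality $\phi\le\xi(k,r)$ that suffices for the second moment method itself. The hypergeometric tail estimate for $\psi_\lambda$, the lower bound on $|\SSS(G)|$, and the bookkeeping with the $O(\ln n)$ and $\mathrm{poly}(n)$ error factors are, by contrast, routine.
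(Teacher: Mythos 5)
Your proposal follows essentially the same route as the paper's proof: a first-moment computation in the planted model parameterised by the overlap matrix, whose exponential rate (the entropy term $-\ln k-\sum_{i,j}a_{ij}\ln a_{ij}$ plus $r\ln\bigl((1-2k^{-1}+f)/(1-k^{-1})\bigr)$ and a small $\psi(\lambda)$ slack) is exactly the second-moment exponent of~\cite{kcol}, with the maximiser over the overlap polytope taken from Theorem~9 of~\cite{kcol}, negativity verified near $f=k^{-1}-2k^{-3/2}$ (the paper checks a single point and invokes continuity), and Markov's inequality to finish. The one caveat is your aside about importing the lower bound on $|\SSS(G)|$ into the planted model via Theorem~\ref{Thm_ColorExchange}: the transfer only goes from planted to uniform, so that bound should instead be folded into the graph property $\mathcal{D}$ (via Lemma~\ref{Lemma_FriedgutColor}) or handled at the level of Lemma~\ref{Lemma_cluster_col}, a point the paper itself leaves implicit for property~2.
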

Thus, Lemma~\ref{Lemma_cluster_col} follows from Lemma~\ref{Lemma_cluster_col_pl} and Theorem~\ref{Thm_ColorExchange}.

\medskip\noindent\emph{Proof of Lemma~\ref{Lemma_cluster_col_pl}.}
The proof is based on the first moment method.
Let $\sigma\in\brk{k}^n$ be a fixed assignment of colors to the vertices.
We may assume that $|\sigma^{-1}(i)|\sim n/k$ for all $1\leq i\leq k$, because all but an exponentially small fraction
of all assignments in $\brk{k}^n$ have this property.
Further, let $G$ be a graph with $m$ edges such that $\sigma$ is a $k$-coloring of $G$ chosen uniformly at random from the set
of all such graphs.
A direct computation shows that for an assignment $\tau\in\brk{k}^n$ the probability
that $H(\tau)\leq\lambda n$ is	
	\begin{equation}\label{eqColorFirst1}
	\leq\bcfr{1-2k^{-1}+f_\sigma(\tau)}{1-k^{-1}}^{rn}\exp((\psi(\lambda)+o(1))n),
	\end{equation}
where $\lim_{\lambda\rightarrow0}\psi(\lambda)=0$.
To prove the lemma, we shall compute the \emph{expected} number of assignments $\tau$
such that $H(\tau)\leq\lambda n$ and $f_{\sigma}(\tau)=x$ for a suitable $y_1<x<y_2$.

To this end, we have to take into account the number of possible colorings $\tau$.
We parameterize the set of all possible $\tau$ by a matrix $A=(a_{ij})_{1\leq i,j\leq k}$,
where $a_{ij}=n^{-1}|\sigma^{-1}(i)\cap\tau^{-1}(j)|$.
Then by~(\ref{eqColorFirst1}) the contribution of a matrix $A$ to the first moment is at most
	$$\mathcal{F}(A)=k^{-n}\bink{n}{(a_{ij}n)_{1\leq i,j\leq k}}\bcfr{1-2k^{-1}+f_\sigma(\tau)}{1-k^{-1}}^{rn}\exp((\psi(\lambda)+o(1))n)$$
(the $k^{-n}$ accounts for the fact that we consider the coloring $\sigma$ fixed).
Taking logarithms, we obtain
	$$n^{-1}\ln\mathcal{F}(A)
		\sim-\ln k-\sum_{i,j=1}^ka_{ij}\ln(a_{ij})+r\ln\bcfr{1-2k^{-1}+\sum_{i,j=1}^ka_{ij}^2}{1-k^{-1}}+\psi(\lambda).$$
For a given number $x$ we let
	$\mathcal{A}(x)$ be the set of all matrices $A=(a_{ij})_{1\leq i,j\leq k}$
	such that $a_{ij}\geq0$, $\sum_{i=1}^ka_{ij}\sim k^{-1}$, and $\sum_{i,j=1}^ka_{ij}^2=x$.
Since there are at most $n^{k^2}$ possible matrices $A$, for any given $x$
the expected number of colorings
$\tau$ such that $f_{\sigma}(\tau)=x$ is at most
	$$n^{k^2}\max_{A\in\mathcal{A}(x)}\mathcal{F}(A).$$
Hence, by continuity it suffices to show that for some $x\in\brk{y_1,y_2}$ the expression
	$n^{-1}\max_{A\in\mathcal{A}(h)}\ln\mathcal{F}(A)$
is strictly negative for a small enough $\lambda>0$.

Let $h=k^{-3/2}$ and $x=k^{-1}-2h$.
Then Theorem~9 from~\cite{kcol} shows that the maximum $\max_{A\in\mathcal{A}(x)}\ln\mathcal{F}(A)$
is attained for a matrix $A$ with entries
	$$a_{ii}=k^{-1}-h+o(h),\quad a_{ij}\sim h(k-1)^{-1}\quad(i\not=j)$$
asymptotically as $k$ grows.
An explicit computation shows that for this matrix $A$ the value $\ln\mathcal{F}(A)$ is strictly negative,
provided that $\lambda$ is sufficiently small.
Therefore, 
we can apply Markov's inequality to complete the proof.
\qed

\section{Proofs for Random $k$-SAT}

\subsection{The planted model}

Consider the distribution $\mathcal{U}_{n,m}$ on the set
$\Lambda_{n,m}$ of pairs $(F,\sigma)$, where $F$ is a $k$-SAT formula
with variables $x_1,\ldots,x_n$ and with $m$ clauses, and $\sigma$ is a satisfying assignment of $F$.
\begin{description}
\item[U1.] Generate a random formula $F=F_k(n,m)$.
\item[U2.] Sample a satisfying assignment $\sigma$ of $F$ uniformly at random;
	if $F$ is unsatisfiable, fail.
\item[U3.] Output the pair $(F,\sigma)$.
\end{description}
To analyze this distribution, we consider the distribution $\mathcal{P}_{n,m}$ on $\Lambda_{n,m}$ induced
by following expermient.
\begin{description}
\item[P1.] Generate a random assignment $\sigma\in\{0,1\}^n$.
\item[P2.] Generate a random $k$-CNF formula $F$ with $m$ clauses chosen uniformly among those satisfied by $\sigma$.
\item[P3.] Output the pair $(F,\sigma)$.
\end{description}

Our goal is to establish the following connection between these two distributions.

\begin{theorem}\label{Thm_SATExchange}
There is a sequence $\eps_k\rightarrow0$ such that the following holds.
Let $m=\lceil rn\rceil$ for some $r<(1-\eps_k)2^k\ln 2$,
and let $f(n)$ be any function that such that $\lim_{n\rightarrow\infty}f(n)=\infty$.
Let $\mathcal{D}$ be any property such that
$F_k(n,m)$ has $\mathcal{D}$ with probability $1-o(1)$,
and let $\EE$ be any property of pairs $(F,\sigma)\in\Lambda_{n,m}$.
If  for all sufficiently large $n$ we have
	\begin{equation}\label{eqSATExchange}
	\pr_{\mathcal{P}_{n,m}}\brk{(F,\sigma)\mbox{ has }\EE|F\mbox{ has }\mathcal{D}}
		\geq1-\exp(-k2^{3-k}n-f(n)),
	\end{equation}
then
	$\pr_{\mathcal{U}_{n,m}}\brk{(F,\sigma)\mbox{ has }\EE}=1-o(1).$
\end{theorem}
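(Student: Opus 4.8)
The plan is to replay the proof of Theorem~\ref{Thm_ColorExchange}, using one feature in which random $k$-SAT is \emph{simpler} than graph colouring and one in which it is harder. The simple feature: for \emph{every} $\sigma\in\{0,1\}^n$ the number of $k$-CNF formulas with $m$ clauses satisfied by $\sigma$ equals $\lambda:=\bink{(2^k-1)\bink nk}{m}$, since over each $k$-set of variables exactly one of the $2^k$ sign patterns is falsified by $\sigma$. Hence $|\Lambda_{n,m}|=2^n\lambda$ \emph{exactly}, so $\mathcal{P}_{n,m}$ is \emph{precisely} the uniform distribution on $\Lambda_{n,m}$. This makes the analogue of Lemma~\ref{Lemma_plantedBound} immediate: for any pair property $\QQQ$ and formula property $\mathcal{D}$, the count $\abs{\cbc{(F,\sigma)\in\Lambda_{n,m}:(F,\sigma)\in\QQQ,\ F\in\mathcal{D}}}$ equals $\pr_{\mathcal{P}_{n,m}}\brk{\QQQ\wedge\mathcal{D}}\cdot|\Lambda_{n,m}|\le\pr_{\mathcal{P}_{n,m}}\brk{\QQQ\mid\mathcal{D}}\cdot|\Lambda_{n,m}|$. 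I shall also use the identity $\mu\,|\mathcal{F}|=|\Lambda_{n,m}|$, where $\mathcal{F}$ is the set of all $k$-CNF formulas with $m$ clauses and $\mu:=\Erw|\SSS(F_k(n,m))|=2^n(1-2^{-k})^{m}$.

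The harder feature is that $|\SSS(F_k(n,m))|$ is \emph{not} concentrated about $\mu$: its second moment exceeds $\mu^2$ by an exponential factor. In place of sharp concentration I would use only the one-sided bound that, with high probability,
\[
n^{-1}\ln|\SSS(F_k(n,m))|\ \ge\ n^{-1}\ln\mu-\phi(k),
\]
where $\phi(k)\to0$ exponentially fast, so in particular $\phi(k)<k2^{3-k}$ for $k\ge k_0$. This follows by combining the weighted second-moment argument of Achlioptas--Peres~\cite{yuval} --- which already gives $|\SSS(F_k(n,m))|\ge\mu\exp(-\phi(k)n)$ with \emph{uniformly positive} probability --- with a sharp-threshold analysis in the style of Appendix~\ref{Apx_sharp}: ``$F$ has fewer than $\xi^n$ satisfying assignments'' is monotone under adding clauses, and Friedgut's criterion shows its threshold is sharp (otherwise a bounded, uniquely satisfiable ``booster'' subformula could be inserted to push the count below $\xi^n$, which via the locality argument of Lemma~\ref{Lemma_constraints} --- the booster's clauses impose $O(1)$ random literal-constraints on $O(1)$ random variables --- gives a contradiction), upgrading ``w.u.p.p.'' to ``w.h.p.''.

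With these two ingredients the argument is exactly that in the proof of Theorem~\ref{Thm_ColorExchange}. Suppose $\pr_{\mathcal{U}_{n,m}}\brk{\EE}\not\to1$, so $\pr_{\mathcal{U}_{n,m}}\brk{\bar\EE}\ge2\zeta$ along a subsequence. Since $r<(1-\eps_k)2^k\ln2$ lies below the satisfiability threshold $2^k\ln2-O(k)$ of~\cite{yuval} (the $\eps_k$ of the theorem can be taken $\to0$ for this reason), $F_k(n,m)$ is satisfiable w.h.p., so $\mathcal{U}_{n,m}$ fails with probability $o(1)$ and its $F$-marginal is within $o(1)$ of $F_k(n,m)$ in total variation; hence $\pr_{\mathcal{U}_{n,m}}\brk{\mathcal{D}}=1-o(1)$ and $\pr_{\mathcal{U}_{n,m}}\brk{\bar\EE\wedge\mathcal{D}}\ge\zeta$. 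Writing this probability as $|\mathcal{F}|^{-1}\sum_{F:\,\mathcal{D},\,\SSS(F)\ne\emptyset}\abs{\cbc{\sigma\in\SSS(F):(F,\sigma)\in\bar\EE}}\big/|\SSS(F)|$, I split the sum according to whether $|\SSS(F)|\ge\mu\exp(-\phi(k)n-o(n))$: the terms with smaller $|\SSS(F)|$ contribute $o(1)$ (such $F$ form an $o(1)$-fraction of $\mathcal{F}$ by the displayed bound and each such term is $\le|\mathcal{F}|^{-1}$), and the remaining terms sum to at most $\mu^{-1}\exp(\phi(k)n+o(n))\cdot b$ with $b:=\abs{\cbc{(F,\sigma)\in\Lambda_{n,m}:(F,\sigma)\in\bar\EE,\ F\in\mathcal{D}}}$. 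Using $\mu|\mathcal{F}|=|\Lambda_{n,m}|$, the analogue of Lemma~\ref{Lemma_plantedBound}, and the hypothesis~\eqref{eqSATExchange} with $\QQQ=\bar\EE$,
\[
\zeta\ \le\ o(1)+\exp(\phi(k)n+o(n))\cdot\frac{b}{|\Lambda_{n,m}|}\ \le\ o(1)+\exp\!\big((\phi(k)-k2^{3-k})n+o(n)-f(n)\big)\ \longrightarrow\ 0,
\]
because $\phi(k)<k2^{3-k}$; this contradiction yields $\pr_{\mathcal{U}_{n,m}}\brk{\EE}=1-o(1)$.

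The main obstacle is the middle step: establishing \emph{with high probability} (not merely with uniformly positive probability) the bound $|\SSS(F_k(n,m))|\ge\mu\exp(-\phi(k)n)$ with $\phi(k)<k2^{3-k}$. The second-moment method of~\cite{yuval} only supplies it w.u.p.p., and the boost to ``w.h.p.'' rests on the Friedgut-type sharp-threshold argument; getting the constants right there --- keeping $\phi(k)$ safely below $k2^{3-k}$ while absorbing the $o(n)$ corrections in the displays above --- is precisely where random $k$-SAT is more demanding than graph colouring or hypergraph $2$-colouring, and the reason~\eqref{eqSATExchange} must demand an $\exp(-k2^{3-k}n)$-small failure probability in the planted model rather than the merely superpolynomially small one that suffices in Theorem~\ref{Thm_ColorExchange}.
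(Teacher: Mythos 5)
Your proposal is correct and takes essentially the same route as the paper: it exploits that $\mathcal{P}_{n,m}$ is exactly the uniform distribution on $\Lambda_{n,m}$ (by the symmetry that every $\sigma$ satisfies the same number of clauses), invokes the w.h.p.\ lower bound $|\SSS(F_k(n,m))|\geq\mu\exp(-k2^{3-k}n)$ (the paper's Lemma~\ref{Lemma_numberkSAT}, proved just as you sketch by combining the second moment bound of~\cite{yuval} with a Friedgut-type sharp-threshold upgrade), and concludes with the same counting contradiction in which $\exp(-f(n))$ eventually beats the assumed constant. The only cosmetic difference is that the paper uses the cutoff $\mu\exp(-k2^{3-k}n)$ with no $o(n)$ slack and hence needs no strict inequality $\phi(k)<k2^{3-k}$, though the strictness you require is indeed available since the actual deficit coming from~\cite{yuval} is $rk2^{3-2k}\leq k2^{3-k}\ln 2$.
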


The proof of Theorem~\ref{Thm_SATExchange} is based on the following lemma,
which we establish in Section~\ref{Sec_numberkSAT}.

\begin{lemma}\label{Lemma_numberkSAT}
Let $\mu=2^n(1-2^{-k})^m$ denote the expected number of satisfying
assignments of a random $k$-CNF $F_k(n,m)$.
Then for $k\geq 8$, \whp
	$$|\SSS(F_k(n,m)|\geq\mu\exp(-k2^{3-k}n) \enspace .$$ 
\end{lemma}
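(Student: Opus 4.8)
The plan is to follow, for $k$-SAT, the same two-stage template that yielded Lemma~\ref{Lemma_FriedgutColor} for colouring: a moment computation first gives the bound \wupp, and a sharp-threshold argument then upgrades it to \whp. The genuine new obstacle is that the plain second moment of $|\SSS(F_k(n,m))|$ exceeds $\mu^2$ by an exponential factor at \emph{every} density, so the moment stage must be run with the weighted second moment method of Achlioptas and Peres~\cite{yuval} rather than with the bare count.

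\emph{Stage~1: a weighted second moment.} Fix a weight $A\colon\{0,1,\ldots,k\}\to[0,1]$ with $A(0)=0$, $\max_vA(v)=1$, and for $F$ with clauses $c_1,\ldots,c_m$ set $W(F)=\sum_{\sigma\in\{0,1\}^n}\prod_{i=1}^mA(v(\sigma,c_i))$, where $v(\sigma,c)$ counts the literals of $c$ satisfied by $\sigma$. Since $A(0)=0$ and $A\le1$, only satisfying assignments contribute and each by at most $1$, so $|\SSS(F)|\ge W(F)$ pointwise. One computes $\Erw[W]=2^n\phi^m$ with $\phi=2^{-k}\sum_{v\ge1}\binom kvA(v)$, and $\Erw[W^2]=\sum_{\sigma,\tau}\Psi(z(\sigma,\tau))^m$, with $z(\sigma,\tau)$ the fraction of coordinates on which $\sigma,\tau$ agree and $\Psi$ an explicit function quadratic in $A$. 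Following~\cite{yuval}, one picks $A=A_k$ so that $z=\tfrac12$ globally maximises $z\mapsto h(z)+\tfrac mn\log_2\Psi(z)$ (here $h$ is the binary entropy) for all $r\le(1-\eps_k)2^k\ln2$ and $k$ large, giving $\Erw[W^2]\le C(k)\Erw[W]^2$; Paley--Zygmund then yields $\alpha=\alpha(k)>0$ with $\pr[\,|\SSS(F_k(n,m))|\ge\alpha\Erw[W]\,]\ge\alpha$. Crucially the weight must also be kept close to the unweighted count, so that $\Erw[W]\ge\mu\exp(-\tfrac12k2^{3-k}n)$ for every $r<(1-\eps_k)2^k\ln2$; absorbing $\alpha$ into the exponent then gives, \wupp, $|\SSS(F_k(n,m))|\ge\mu\exp(-\tfrac23k2^{3-k}n)$.

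\emph{Stage~2: a sharp threshold.} For fixed $\xi>1$ let $\mathcal A_\xi$ be the (clause-monotone) property ``$F$ has fewer than $\xi^n$ satisfying assignments''. As in the proof of Lemma~\ref{Lemma_sharp}, Friedgut's criteria~\cite{Ehud,EhudHunting} show $\mathcal A_\xi$ has a sharp threshold: a coarse threshold would force a bounded, uniquely satisfiable ``booster'' sub-formula whose random insertion flips $\mathcal A_\xi$, yet such an insertion imposes only $O(1)$ single-literal constraints and so changes $n^{-1}\ln|\SSS|$ by $O(1/n)$ --- too little to cross an exponential threshold --- while first planting $O(\log n)$ random clauses (shifting the law by $o(1)$ in total variation) removes the remaining slack. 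Combining the sharpness of $\mathcal A_\xi$ with Stage~1 --- precisely as Lemma~\ref{Lemma_FriedgutColor} follows from Lemmas~\ref{Lemma_FriedgutColor2ndmoment} and~\ref{Lemma_sharp} --- upgrades the bound to: \whp\ $|\SSS(F_k(n,m))|\ge\mu\exp(-k2^{3-k}n)$, the slack between $\tfrac23k2^{3-k}$ and $k2^{3-k}$ in the exponent comfortably absorbing the $o(n)$ loss in the reduction. (The interval is non-empty once $k\ge8$.)

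\emph{Main obstacle.} The crux is Stage~1, and there the delicate point is the weight: $A_k$ (normalised to $\max_vA_k(v)=1$) must \emph{simultaneously} keep the weighted second moment bounded all the way up to $r=(1-\eps_k)2^k\ln2$ --- i.e.\ force $z=\tfrac12$ to maximise $h(z)+\tfrac mn\log_2\Psi(z)$ --- and keep the first-moment loss $\ln(\mu/\Erw[W])$ down to $\tfrac12k2^{3-k}n$, i.e.\ keep $\phi$ within a factor $1-O(k2^{-k})$ of the unweighted value $1-2^{-k}$. Reconciling these two demands, and carrying the $k\to\infty$ asymptotics through to pin down the constant $2^{3-k}$, is exactly the problem-specific work; it is also precisely where $k$-SAT is genuinely harder than graph colouring, for which the plain second moment already has bounded ratio and so incurs no loss at all. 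The sharp-threshold stage, by contrast, is a near-verbatim transcription of Lemma~\ref{Lemma_sharp}; the only subtlety worth flagging is that $\ln|\SSS|$ has an \emph{unbounded} worst-case Lipschitz constant in the clauses --- a single clause can destroy every solution of a barely-satisfiable formula --- which is exactly why a Friedgut-type argument, rather than a bounded-difference inequality, is needed here.
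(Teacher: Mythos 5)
Your overall architecture is the same as the paper's: the Achlioptas--Peres weighted second moment plus Paley--Zygmund gives the lower bound \wupp\ (this is Lemma~\ref{Thm_Amin}, whose quantitative form follows from the bounds~(\ref{epbounds}) on $\epsilon$ plugged into~(\ref{eq:b})), a Friedgut-type sharp-threshold statement for the monotone property ``fewer than $\frac12B^n$ satisfying assignments'' (Lemma~\ref{Lemma_sharpThr}) upgrades it to \whp, and the slack in the exponent absorbs the losses. Your Stage~1 is an accurate account of that part; the constant $k2^{3-k}$ is indeed obtained, as you indicate, from the explicit asymptotics of the optimal weight in~\cite{yuval}.

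The gap is in Stage~2, and it is not a detail. Your justification for sharpness is that the booster insertion ``imposes only $O(1)$ single-literal constraints and so changes $n^{-1}\ln|\SSS|$ by $O(1/n)$, too little to cross an exponential threshold.'' That claim is false: conditioning on the value of even one variable can cut the number of satisfying assignments by an exponential factor (almost all solutions may set that variable the other way), and in any case $\mathcal{A}_\xi$ is a threshold on the count itself, so a formula sitting just above the bound is crossed by an arbitrarily small multiplicative drop. Circumventing exactly this is the whole content of the paper's proof of Lemma~\ref{Lemma_sharpThr}: one cannot argue pointwise, but must show that if pinning a few random variables pushes the count below the bound noticeably more often than pinning none, then $O(\log n)$ extra random clauses (which shift the distribution by only $o(1)$ in total variation) would already do so, contradicting coarseness. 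For coloring this is Lemma~\ref{Lemma_constraints} with its good-vertex intersection trick; for $k$-SAT it is genuinely harder, because a random $k$-clause can only enforce the disjunctive constraint ``at least one of $k$ variables agrees with $\sigma$,'' not a unit constraint. Accordingly the paper does not reduce to a uniquely satisfiable booster at all (your unique-satisfiability reduction is also asserted without justification); it fixes one satisfying assignment of $\phi$, shows that most tuples are bad (Lemma~\ref{Lemma_manybad}), and then needs \emph{all} $k^l$ choice tuples of $l$ suitably oriented clauses to be bad simultaneously, which is where the Erd\H{o}s--Simonovits theorem (Theorem~\ref{Thm_ES}, Lemma~\ref{Lemma_ESbad}) enters, before Corollaries~\ref{Cor_ESbad1} and~\ref{Cor_ESbad2} contradict condition {\bf T4} of Theorem~\ref{Thm_Ehud}. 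So the step you describe as a ``near-verbatim transcription'' of Lemma~\ref{Lemma_sharp} is precisely where the $k$-SAT argument departs from coloring, and your sketch does not establish it.
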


\begin{proof}[Theorem~\ref{Thm_SATExchange}]
Assume for contradiction that there is a fixed $\alpha>0$ such that
$\pr_{\mathcal{U}_{n,m}}\brk{(F,\sigma)\mbox{ has }\EE}\geq\alpha$ for infinitely many $n$.
Then Lemma~\ref{Lemma_numberkSAT} implies that the set
$L=\Lambda_{n,m}\setminus\EE$ has size
	\begin{equation}\label{eqSATEx1}
	|L|\geq\frac{\alpha}2\mu\exp(-k2^{3-k}n)\brk{2\bink{n}k}^m=\frac{\alpha}2\exp(-k2^{3-k}n)|\Lambda_{n,m}|.
	\end{equation}
On the other hand, as $\mathcal{P}_{n,m}$ is just the uniform distribution on the set $\Lambda_{n,m}$,
(\ref{eqSATExchange}) implies that
	$$|L|\leq\exp(-k2^{3-k}n-f(n))|\Lambda_{n,m}|.$$
As $f(n)\rightarrow\infty$, this contradicts~(\ref{eqSATEx1}) for sufficiently large $n$.
\qed\end{proof}

\subsection{Proof of Lemma~\ref{Lemma_numberkSAT}}\label{Sec_numberkSAT}

Let $\Lambda_b$ be the function defined by
\begin{equation}
\label{eq:b}
\Lambda_b(1/2,k,r) = 4\left[\frac{\big((1-\epsilon/2)^k-2^{-k}\big)^2}{(1-\epsilon)^k}\right]^r
\enspace ,
\end{equation}
where $\epsilon$ satisfies
\begin{equation}\label{epdef}
\epsilon (2-\epsilon)^{k-1} =1 \enspace . 
\end{equation} 

\begin{lemma}\label{Thm_Amin}
Suppose that $r < 2^k \ln 2 - k$.
Then $F_k(n,rn)$ has at least $(\Lambda_b(1/2,k,r)-o(1))^{n/2}$
satisfying assignments w.h.p.
\end{lemma}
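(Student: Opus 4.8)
The plan is to prove this lower bound on the number of satisfying assignments of $F_k(n,rn)$ by a weighted second moment computation, essentially importing the machinery of Achlioptas--Peres~\cite{yuval}. Write $X$ for the number of satisfying assignments of $F_k(n,rn)$; the vanilla second moment fails because $\ex[X^2]$ exceeds $\ex[X]^2$ by an exponential factor for every $r$, so I would instead introduce a weight function on assignments of the form $w(\sigma) = \prod_{j=1}^m \phi(\text{\# true literals under }\sigma\text{ in clause }j)$ for a suitable $\phi$, and set $X_\phi = \sum_{\sigma} w(\sigma)$. Choosing $\phi(i) = \gamma^i$ for a parameter $\gamma$, and optimizing $\gamma$, is exactly the device that produces the parameter $\epsilon$ solving $\epsilon(2-\epsilon)^{k-1}=1$ in~\eqref{epdef}: the optimal per-clause reweighting biases each clause toward having a $(1-\epsilon/2)$-fraction of agreeing literals. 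The first step is therefore to record $\ex[X_\phi]$ and to show that, with the optimal geometric weight, $\ex[X_\phi] = (\Lambda_b(1/2,k,r)^{1/2}\cdot 2^{1/2}+o(1))^{n}$ up to polynomial factors — here the bracketed quantity in~\eqref{eq:b}, namely $((1-\epsilon/2)^k - 2^{-k})^2/(1-\epsilon)^k$, is precisely the ratio governing the exponential growth rate of the weighted second moment per clause.

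The second, and main, step is the second moment bound $\ex[X_\phi^2] \le C(k,r)\,\ex[X_\phi]^2$ for a constant $C$ independent of $n$, valid in the range $r < 2^k\ln 2 - k$. Following~\cite{yuval}, I would write $\ex[X_\phi^2] = \sum_{\sigma,\tau} w(\sigma)w(\tau)$ as $2^n \sum_{z} \binom{n}{zn} g_k(z)^{rn}$ where $z = \mathrm{dist}(\sigma,\tau)/n$ is the normalized overlap and $g_k(z)$ is the expected product of the two clause-weights conditioned on overlap $z$; then reduce the claim to showing that the function $z \mapsto h(z) := H(z) + r\ln g_k(z)$ (with $H$ the binary entropy) attains its maximum over $[0,1]$ at $z=1/2$ and that this maximum is strict and non-degenerate (negative second derivative), so that the sum is dominated by a polynomial-size window around $z=1/2$ and the Laplace/saddle-point estimate gives the constant $C$. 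Establishing that $z=1/2$ is the global maximizer is where the hypothesis $r < 2^k\ln 2 - k$ is consumed: one checks $h(1/2)$ beats $h(z)$ for $z$ near $0$ and $z$ near $1$ by direct estimates, and rules out interior competitors by a convexity/derivative analysis of $\ln g_k$. This is the step I expect to be the main obstacle, since it requires careful control of $g_k(z)$ uniformly in $z$ and the calibration of the weight $\phi$ so that the symmetric point wins; however, it is precisely the content of the calculations in~\cite{yuval}, which I would cite and adapt rather than redo.

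The third step is to pass from the weighted count to the genuine count. Since $\phi(i) \le \max_i \phi(i) =: \phi_{\max}$ and $\phi(i) \ge \phi_{\min} > 0$ on $\{1,\ldots,k\}$ (the weight is only defined on satisfied clauses, so $i \ge 1$), we have $X_\phi \le \phi_{\max}^m X$, hence $X \ge \phi_{\max}^{-m} X_\phi$; combined with the Paley--Zygmund inequality $\pr[X_\phi \ge \tfrac12 \ex X_\phi] \ge \ex[X_\phi]^2 / (4\ex[X_\phi^2]) \ge 1/(4C)$, we get $X = \Omega((\Lambda_b(1/2,k,r))^{n/2})$ with uniformly positive probability. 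Finally, to upgrade ``with uniformly positive probability'' to ``w.h.p.'', I would invoke a sharp-threshold argument of the same flavor as in the proof of Lemma~\ref{Lemma_sharp}: the event ``$F_k(n,m)$ has fewer than $(\Lambda_b-o(1))^{n/2}$ satisfying assignments'' is monotone increasing in the clauses, has a sharp threshold by Friedgut's criterion~\cite{EhudHunting}, and by~\cite{AchFried}-style reasoning the existence of a ``booster'' fixed subformula would contradict the w.u.p.p.\ lower bound; hence the probability of the bad event is $o(1)$. One subtlety to handle with care is that $\Lambda_b(1/2,k,r)^{1/2}$ must exceed the threshold $\mu^{1/n}$-type quantity only up to the claimed $o(1)$, and that the exponents match $\mu\exp(-k2^{3-k}n)$ from Lemma~\ref{Lemma_numberkSAT} — this is a matter of plugging $\epsilon$ from~\eqref{epdef} into~\eqref{eq:b} and expanding in powers of $2^{-k}$, which I would relegate to a short explicit computation.
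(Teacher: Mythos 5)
Your overall architecture is the same as the paper's: a weighted (Achlioptas--Peres) second moment plus Paley--Zygmund to get the lower bound with uniformly positive probability, followed by a Friedgut-type sharp-threshold argument to boost it to w.h.p.\ --- the paper does exactly this, citing \cite{yuval} for the w.u.p.p.\ count and proving the sharp-threshold statement separately as Lemma~\ref{Lemma_sharpThr}. However, your third step has a genuine quantitative gap: the passage from the weighted count to the true count via $X\geq\phi_{\max}^{-m}X_\phi$ cannot deliver the exponential order $\Lambda_b(1/2,k,r)^{n/2}$. Concretely, with $\phi(i)=\gamma^i$, $\gamma=1-\epsilon$ and $\epsilon$ as in \eqref{epdef}, one has $\ex[X_\phi]=2^n\bigl((1-\epsilon/2)^k-2^{-k}\bigr)^m$, whereas $\Lambda_b(1/2,k,r)^{n/2}=2^n\bigl((1-\epsilon/2)^k-2^{-k}\bigr)^m(1-\epsilon)^{-km/2}$, and the largest possible weight of a single satisfying assignment is $\gamma^m$ (every satisfied clause has at least one true literal). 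So your chain yields only about $\gamma^{-m}\ex[X_\phi]$, which falls short of $\Lambda_b^{n/2}$ by the factor $(1-\epsilon)^{(k/2-1)m}=e^{-c_kn}$ for a constant $c_k>0$ (roughly $(k-2)\ln 2$ near the satisfiability threshold). Since the $o(1)$ in the statement sits inside the base, only $e^{o(n)}$ slack is allowed, so this chain proves neither the lemma nor the bound needed downstream in Lemma~\ref{Lemma_numberkSAT}. Renormalizing the clause weights does not help; it merely moves the same exponential loss from $\ex[X_\phi]$ into $\phi_{\max}^{-m}$. The standard repair is either to quote the count statement from \cite{yuval} directly, as the paper does, or to use Cauchy--Schwarz in the form $X\geq X_\phi^2/X_{\phi^2}$ together with a Markov bound on $X_{\phi^2}$ --- but then the calibration of the resulting rate against $\Lambda_b$ must be redone and is not automatic.

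A second, smaller omission: a sharp threshold for the property ``fewer than $\frac12 B^n$ satisfying assignments'' combined with a w.u.p.p.\ lower bound at density $r$ only locates the threshold above $(1-o(1))rn$, hence only gives the w.h.p.\ conclusion at densities slightly \emph{below} $r$. The paper handles this by running the second moment at a slightly larger density $r'=(1+\delta)^2r$ (still below $2^k\ln 2-k$) with $B=\sqrt{\Lambda_b(1/2,k,r')}$ and using continuity of $\rho\mapsto\Lambda_b(1/2,k,\rho)$ to absorb the difference into the $-o(1)$; your sketch should include this perturbation step. Finally, your description of the sharp-threshold proof (a booster ``contradicting the w.u.p.p.\ bound'') is not quite the right logical shape --- in the paper the booster provided by Friedgut's criterion \cite{EhudHunting} is refuted against condition {\bf T4} via the compatible-tuples and Erd\H{o}s--Simonovits argument --- but deferring to a Lemma~\ref{Lemma_sharp}-style proof is reasonable, since that is essentially what Lemma~\ref{Lemma_sharpThr} provides.
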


Recall that $F_k(n,m)$ denotes a random $k$-SAT formula on $n$ variables $x_1,\ldots,x_n$.
For a fixed number $B>1$ we let
$\mathcal{A}_B$ denote the property that a $k$-SAT formula $F$ on the variables $x_1,\ldots,x_n$
has less than $\frac12B^n$ satisfying assignments.
The following lemma shows that $\mathcal{A}_B$ has a sharp threshold.

\begin{lemma}\label{Lemma_sharpThr}
For any $B>1$ there is a sequence $(T_n^B)_{n\geq1}$ of integers such that for any $\epsilon>0$ we have
	\begin{eqnarray*}
	\lim_{n\rightarrow\infty}\Pr(F_k(n,(1-\epsilon)T_n^B)\mbox{ has property }\mathcal{A}_B)&=&0,\mbox{ and}\\
	\lim_{n\rightarrow\infty}\Pr(F_k(n,(1+\epsilon)T_n^B)\mbox{ has property }\mathcal{A}_B)&=&1.
	\end{eqnarray*}
\end{lemma}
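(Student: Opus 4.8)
\emph{(Plan.)}
The plan is to mirror the argument that proves Lemma~\ref{Lemma_sharp} for coloring, replacing the edge-based ``constraint removal'' step (Lemma~\ref{Lemma_constraints} and Corollary~\ref{Cor_constraints}) by a clause-based variant. Since $\mathcal{A}_B$ is monotone under adding clauses (extra clauses only decrease the number of satisfying assignments), it suffices --- exactly as in the proof of Lemma~\ref{Lemma_sharp} --- to prove that $\mathcal{A}_B$ has a sharp threshold in the binomial model $F_k(n,p)$ in which each of the $2^k\binom{n}{k}$ clauses is present independently with probability $p$. (We may assume $1<B\le 2$, since for $B>2$ the empty formula already lies in $\mathcal{A}_B$ for large $n$ and the statement is trivial.) Assume for contradiction that the threshold is coarse: there is an edge probability $p^{*}$ with $\Pr[F_k(n,p^{*})\in\mathcal{A}_B]\to 1-t$ for some fixed $t\in(0,1)$ along a subsequence of $n$. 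By Friedgut's sharp-threshold criterion~\cite{Ehud,EhudHunting} there is then a fixed, satisfiable $k$-CNF $R$ on $O(1)$ variables such that inserting a random copy of $R$ into $F_k(n,p^{*})$ yields a formula lying in $\mathcal{A}_B$ with probability $\ge 1-t/3$; by monotonicity we may enlarge $R$ --- adding extra variables and clauses --- so that it has at least $k$ variables and a \emph{unique} satisfying assignment $a$ (for each of the finitely many other assignments of $R$'s variables, throw in a $k$-clause falsified by it but satisfied by $a$).

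As in the coloring proof, inserting a random copy of $R$ is --- by the vertex symmetry of $F_k(n,p^{*})$ --- the same as planting $R$ on $x_1,\dots,x_r$ and then adding each $k$-clause on $x_1,\dots,x_n$ independently with probability $p^{*}$; call the result $F_R$, and let $\hat F$ be the subformula of $F_R$ consisting of the clauses that involve only $x_{r+1},\dots,x_n$. Then $\hat F$ is distributed as $F_k(n-r,p^{*})$, and since every satisfying assignment of $F_k(n,p^{*})$ restricts to a satisfying assignment of its subformula on $x_{r+1},\dots,x_n$ while the fiber of such a restriction has size at most $2^{r}$, the choice of $p^{*}$ gives $\Pr[\,|\SSS(F_k(n-r,p^{*}))|>2^{-r}\tfrac12 B^{n}\,]\ge t-o(1)$. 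On the other hand, whp every ``cross clause'' of $F_R$ (one touching both $\{x_1,\dots,x_r\}$ and $\{x_{r+1},\dots,x_n\}$) has exactly one variable among $x_1,\dots,x_r$ --- clauses with two or more such variables have expected number $o(1)$ --- there are only $M=O(1)$ of them, and no free variable lies in two of them. Since $R$ pins $x_1,\dots,x_r$ to $a$, each cross clause is either satisfied outright by its pinned literal or imposes a residual clause of width $k-1$ on free variables, so $|\SSS(F_R)|=|\SSS(\hat F\cup\{\text{residual clauses}\})|$; hence forcing these $M=O(1)$ residual constraints reduces the number of satisfying assignments of $F_k(n-r,p^{*})$ to $\le\tfrac12 B^{n}$ with probability $\ge 1-t/3-o(1)$. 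Finally, forcing the negative unit constraint ``$w\ne b$'' on the first free variable $w$ of a residual clause --- where $b$ is the value of $w$ that falsifies its literal in that clause --- removes a superset of what the residual clause removes, hence reduces the count at least as much; so the same bound holds with $M$ negative unit constraints on $M$ (essentially independent, uniformly random) free variables.

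It remains to establish the $k$-SAT analogue of Lemma~\ref{Lemma_constraints}/Corollary~\ref{Cor_constraints}: if $\Pr[\,|\SSS(F_k(N,p))|\le\mathcal{N}'\,]\le 1-\tau$, then there is no fixed list of values $b_1,\dots,b_M$ such that, for independent uniformly random variables $v_1,\dots,v_M$, forcing ``$v_i\ne b_i$'' for all $i$ reduces the count to $\le\mathcal{N}'$ with probability $\ge 1-t/2$. The proof runs parallel to the coloring one, peeling off one forced constraint at a time by throwing in a few random clauses; the only genuinely new point is the analogue of ``Case~2''. Where the coloring proof joins two good vertices by a single edge to collapse the number of colorings, here one uses $k$ good variables at once: if $v_1,\dots,v_k$ are all \emph{good} for the last unit constraint (i.e.\ forcing $v_i\ne b$ individually already reduces the count to $\le\mathcal{N}'$), then the single $k$-clause whose literal on $v_i$ is the one falsified by $v_i=b$ is violated by every surviving assignment that sets $v_1=\dots=v_k=b$, of which there are all but $\le k\mathcal{N}'$; so adding this clause collapses the count to $\le k\mathcal{N}'$. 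The reduction of a residual constraint to a negative unit constraint above shows that the density of good variables is at least that of good $(k-1)$-tuples, which by the Case~2 hypothesis is some $g\ge\omega^{-1}$ with $\omega$ slowly growing; a random $k$-clause realizes the collapsing configuration with probability $\ge g^{k}2^{-k}$, so $(\ln n)^{O(1)}$ random clauses collapse the count to $\le k\mathcal{N}'$ whp, after which $O(\omega 2^{k})$ further random clauses push it below $\mathcal{N}'$ by the easy Markov argument. As the total number of clauses added is $(\ln n)^{O(1)}$, far below the standard deviation of $|F_k(n,p^{*})|$, this changes the distribution by $o(1)$ in total variation, so in fact the first $M-1$ constraints already suffice; iterating this $M=O(1)$ times removes all the constraints. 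Applied to the previous paragraph this yields $\Pr[\,|\SSS(F_k(n-r,p^{*}))|\le\tfrac12 B^{n}\,]\ge 1-t/3-o(1)$; adding a further $\ln n$ random clauses (an $o(1)$ shift, each reducing the expected count by a factor $1-2^{-k}$) and applying Markov gives $\Pr[\,|\SSS(F_k(n-r,p^{*}))|\le 2^{-r}\tfrac12 B^{n}\,]\ge 1-t/3-o(1)$, contradicting the bound $\ge t-o(1)$ above since $(1-t/3)+t>1$. Hence no coarse threshold exists, and $\mathcal{A}_B$ has a sharp threshold.

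The main obstacle is precisely this ``Case~2'' adaptation: a width-$k$ clause cannot pin down a pair of variables the way a single edge does in the coloring argument, so one has to collapse the solution count via $k$ simultaneously good variables while keeping the number of auxiliary random clauses polylogarithmic so as not to move the distribution; checking that the density of good variables and the various probability losses ($o(1)$ per peeled constraint, over $O(1)$ rounds) behave as claimed is where the bulk of the work lies. \qed
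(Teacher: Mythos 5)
Your proposal is correct in outline, but it takes a genuinely different route from the paper's proof of Lemma~\ref{Lemma_sharpThr}. The paper does \emph{not} adapt the constraint-peeling argument of Lemma~\ref{Lemma_sharp}: it stays in the $F_k(N,M)$ model, invokes Friedgut's conditions \textbf{T1}--\textbf{T4} (Theorem~\ref{Thm_Ehud}), proves that the booster $\phi$ is satisfiable by a union bound over dense subformulas (using \textbf{T3}), introduces \emph{bad tuples} (tuples with fewer than $\frac12B^N$ compatible satisfying assignments), and then applies the Erd\H{o}s--Simonovits theorem (Theorem~\ref{Thm_ES}) to locate, among $\omega(N)$ random clauses, $l$ pure-sign clauses every transversal of which is bad; this caps the number of satisfying assignments at $\frac12k^lB^N$, and a Markov argument with another $\omega(N)$ clauses contradicts \textbf{T4}. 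You instead pass to the binomial model, make the booster uniquely satisfiable, reduce the cross clauses first to residual $(k-1)$-clauses and then, by domination, to unit constraints, and re-run the peeling of Lemma~\ref{Lemma_constraints} with a new Case~2: a single $k$-clause on $k$ simultaneously good variables, all of whose literals are falsified by the common forbidden value, collapses the count to $k\NNN'$, and the polylogarithmically many auxiliary clauses are absorbed in total variation. The domination goes in the right direction (a unit constraint removes a superset of what its residual clause removes, and you only need the boosting statement for the stronger constraints), and the collapsing trick is sound, so the reduction is legitimate. The trade-off: the paper's bad-tuple/ES route needs neither unique satisfiability nor any pinning of variables---it handles the disjunctive nature of clauses by quantifying over all $k^l$ literal choices at once---whereas your route recycles the coloring machinery wholesale at the price of these extra reductions.

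One step you assert but must actually prove: that Friedgut's criterion hands you a \emph{satisfiable} booster $R$. It does not; an unsatisfiable booster boosts $\mathcal{A}_B$ trivially, and without a satisfying assignment $a$ the pinning of the planted variables, the residual clauses, and everything downstream collapse. You need the analogue of the paper's lemma that $\phi$ is satisfiable, which combines the further conclusion of Friedgut's theorem that the booster appears in the critical random formula with probability bounded away from zero (\textbf{T3}) with the observation that whp no subformula of $F_k(n,p^*)$ has at least as many clauses as variables. With that (easy) lemma added---and with the same book-keeping on the random number $M$ of cross clauses and on the $t$-versus-$\tau$ slack that the paper itself elides in the coloring case---your argument goes through.
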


\noindent
Assuming Lemma~\ref{Lemma_sharpThr}, we can infer Lemma~\ref{Thm_Amin} easily.

\noindent 
\begin{proof}[Lemma~\ref{Thm_Amin}.]
Let $r<2^k\ln2-k$.
Equations~(\ref{eq:b}) and~(\ref{epdef}) show that $\rho\mapsto\Lambda_b(1/2,k,\rho)$ is a continuous function.
Therefore, for any $\epsilon>0$ there is a $0<\delta<2^k\ln2-k-r$ such that $r'=(1+\delta)^2r$
satisfies
	$$
		\Lambda_b(1/2,k,r')>\Lambda_b(1/2,k,r)-\epsilon.$$
Let $B=\sqrt{\Lambda_b(1/2,k,r')}$.
Setting $t=\frac12B^n$, 
the second moment argument from~\cite{yuval} shows in combination
with the Paley-Zigmund inequality that
	$${\lim\inf}_{n\rightarrow\infty}\Pr [F_k(n,r'n)\mbox{ does not satisfy }\mathcal{A}_B]>0.$$
Therefore, Lemma~\ref{Lemma_sharpThr} implies that $r'n<(1+\delta)T_n^B$ for sufficiently large $n$.
Consequently, for large $n$ we have
	$rn=(1+\delta)^{-2}r'n<(1+\delta)^{-1}T_n^B.$
Hence, Lemma~\ref{Lemma_sharpThr} yields
	$$\lim_{n\rightarrow\infty}\Pr [F_k(n,rn)\mbox{ does not satisfy }\mathcal{A}_B]=1.$$
Thus, with probability $1-o(1)$ the number $Z$ of satisfying assignments of $F_k(n,rn)$ satisfies
	$$Z\geq\frac12B^n=\frac12\Lambda_b(1/2,k,r')^{n/2}\geq\frac12(\Lambda_b(1/2,k,r)-\epsilon)^{n/2}.$$
Since this is true for any $\epsilon>0$, the assertion follows.
\qed\end{proof}

\noindent 
\begin{proof}[Lemma~\ref{Lemma_numberkSAT}.]
As shown in~\cite{yuval}, the solution $\epsilon$
to~\eqref{epdef} satisfies
\begin{equation}\label{epbounds}
2^{1-k} + k4^{-k} < \epsilon < 2^{1-k} + 3k4^{-k} \enspace .
\end{equation}
Plugging these bounds into~(\ref{eq:b}) and performing a tedious but straightforward computation, we obtain that
	$$\frac12\ln\Lambda_b(1/2,k,r)\geq\ln2+r\brk{\ln(1-2^{-k})-k2^{3-2k}}.$$
Since $r\le 2^k$, the assertion thus follows from Lemma~\ref{Thm_Amin}.
\qed\end{proof}

To prove Lemma~\ref{Lemma_sharpThr}, we need a bit of notation.
If $\phi$ is a formula on a set of variables $y_1,\ldots,y_l$ disjoint from $x_1,\ldots,x_n$, then we let
$E_n(\phi)$ denote the set of all formulas that can be obtained from $\phi$ by substituting $l$
distinct variables among $x_1,\ldots,x_n$ for $y_1,\ldots,y_l$.
Moreover, for a formula $F$ on $x_1,\ldots,x_n$ we let
$F\oplus \phi=F\wedge\phi^*$, where $\phi^*$ is chosen uniformly at random from $E_n(\phi)$.

Note that $\mathcal{A}_B$ is a \emph{monotone} property, i.e., if $F$ has the property $\mathcal{A}_B$ and
$F'$ is another formula on the variables $x_1,\ldots,x_n$, then $F\wedge F'$ has the property $\mathcal{A}_B$ as well.
Therefore, we can use the following theorem from Friedgut~\cite{EhudHunting} to prove by contradiction that $\mathcal{A}_B$ has a sharp threshold.
Let $\omega(n)=\lceil\log n\rceil$ for concreteness.

\begin{theorem}\label{Thm_Ehud}
Suppose that $\mathcal{A}_B$ does \emph{not} have a sharp threshold.
Then there exist a number $\alpha>0$, a formula $\phi$, and
for any $n_0>0$ numbers $N>n_0$, $M>0$ and a formula $F$ with variables $x_1,\ldots,x_N$
such that the following is true.
\begin{description}
\item[T1.] $\Pr(F\oplus\phi\mbox{ has the property }\mathcal{A}_B)>1-\alpha$.
\item[T2.] $\alpha<\Pr(F_k(N,M)\mbox{ has the property }\mathcal{A}_B)<1-3\alpha$.
\item[T3.] With probability at least $\alpha$ a random formula $F_k(N,M)$ contains
	an element of $E_N(\phi)$ as a subformula.
\item[T4.] $\Pr(F\wedge F_k(N,2\omega(N)))\mbox{ has the property }\mathcal{A}_B)<1-2\alpha$.
\end{description}
\end{theorem}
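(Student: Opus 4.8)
The plan is to recognize Theorem~\ref{Thm_Ehud} as the specialization to random $k$-SAT of Friedgut's criterion for \emph{coarse} thresholds from~\cite{EhudHunting}, and to read off T1--T4 from the conclusions of that criterion. First I would pass to the independent-inclusion model $F_k(n,p)$, in which each of the $2^k\binom nk$ possible $k$-clauses is present independently with probability $p$; translating between this model and the $m$-clause model $F_k(n,m)$ costs only $o(1)$ in probability and is routine. In this model $\mathcal{A}_B$ is a \emph{monotone} property (adding clauses can only reduce the number of satisfying assignments, hence can only help $\mathcal{A}_B$) and it is invariant under the action of the symmetric group $S_n$ on the variables, which is the symmetry needed to invoke Friedgut's machinery (exactly as in his original $k$-SAT paper~\cite{Ehud}; if necessary one works with the coarsened ground set of clause-patterns so that the relevant group action is transitive).

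Assume, for contradiction, that $\mathcal{A}_B$ does \emph{not} have a sharp threshold. Then there are a constant $c>0$ and a sequence of clause counts $T_n$ such that the window $[(1-c)T_n,(1+c)T_n]$ straddles probabilities bounded away from $0$ and $1$. Friedgut's theorem now produces, along a subsequence of $n$'s, a \emph{fixed} finite ``booster'' formula $\phi$ and a constant $\alpha>0$, together with clause counts $M=M(N)$ inside the critical window, such that: (i) $\Pr(F_k(N,M)\in\mathcal{A}_B)$ lies strictly between $\alpha$ and $1-3\alpha$ --- this is exactly T2, after shrinking $\alpha$ if needed; (ii) inserting one random copy of $\phi$ raises the probability of $\mathcal{A}_B$ past $1-\alpha$, i.e.\ $\Pr(F\oplus\phi\in\mathcal{A}_B)>1-\alpha$ for a suitable host formula $F$ on $x_1,\ldots,x_N$ --- this is T1; and (iii) $\phi$ occurs as a subformula of $F_k(N,M)$ with probability at least $\alpha$ --- this is T3 (a booster must be ``abundant'': either Friedgut's statement supplies this directly, or one notes that $\phi$ may be taken to have expected number of copies $\Theta(1)$ in $F_k(N,M)$, whence a Janson/second-moment computation gives appearance with uniformly positive probability). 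One may additionally arrange, by monotonicity, that $\phi$ is satisfiable --- indeed uniquely satisfiable --- which is the form needed by the subsequent argument, though it is not part of the present statement.

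Property T4 follows from coarseness alone: because the threshold window has \emph{linear} width in the number of clauses, adding $2\omega(N)=O(\log N)=o(N)$ further random clauses to $F_k(N,M)$ shifts the effective density by $o(1)$ and hence changes $\Pr(\mathcal{A}_B)$ by $o(1)$; since this probability is $<1-3\alpha$ at $M$, it remains $<1-2\alpha$ after the perturbation once $N$ is large. Finally, for an arbitrary $n_0$ one chooses $N>n_0$ along the subsequence on which (i)--(iii) hold, keeping the same $\alpha$ and the same $\phi$; this is precisely the form in which the theorem is stated.

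The main obstacle is not any single computation but the careful invocation of Friedgut's machinery in this setting: verifying that the hypotheses of the ``Hunting for sharp thresholds'' theorem genuinely apply to $\mathcal{A}_B$ (monotonicity is immediate; the symmetry and the precise notion of ``booster'' require the $k$-SAT-adapted version, handling the fact that $\mathcal{A}_B$ need not be invariant under negating individual literals), and matching Friedgut's notion of boosting a property by a local structure to the operation $F\mapsto F\oplus\phi$ used here. Once the correct form of the theorem is in hand, choosing $\alpha$ small enough that the constants $1-\alpha$, $1-2\alpha$, $1-3\alpha$, $\alpha$ appearing in T1--T4 are mutually consistent is pure bookkeeping.
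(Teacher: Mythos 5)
Your overall reading is the intended one: the paper does not prove this statement at all, but quotes it as ``the following theorem from Friedgut~\cite{EhudHunting}'', i.e.\ as the $k$-SAT specialization of Friedgut's coarse-threshold criterion applied to the monotone, symmetric property $\mathcal{A}_B$ (which, incidentally, is invariant under flipping variable polarities as well as permuting variables, so the symmetry hypothesis is unproblematic). So invoking that machinery, after passing to the product model, is exactly the right move.

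However, your derivation of \textbf{T4} contains a genuine gap. \textbf{T4} is a statement about the \emph{same fixed formula} $F$ that appears in \textbf{T1}: it asserts that attaching $2\omega(N)$ random clauses to this particular $F$ fails to put it in $\mathcal{A}_B$ with probability at least $2\alpha$, while \textbf{T1} asserts that attaching a single random copy of $\phi$ to the same $F$ succeeds with probability at least $1-\alpha$. Your argument ``$2\omega(N)=o(N)$ extra clauses shift the density by $o(1)$, hence the probability moves by $o(1)$ from its value at $M$'' only bounds $\Pr\brk{F_k(N,M+2\omega(N))\mbox{ has }\mathcal{A}_B}$, which is a different quantity and says nothing about the fixed $F$; indeed, the simultaneous existence of one $F$ satisfying both \textbf{T1} and \textbf{T4} (a base instance that is boosted by a planted constant-size booster but not by a small global random perturbation) is precisely the nontrivial conclusion of Friedgut's theorem, not a consequence of coarseness of the threshold of $F_k(N,m)$. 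The distinction matters downstream: the paper's contradiction (Lemma~\ref{Lemma_manybad} through Corollary~\ref{Cor_ESbad2}) defines the bad tuples relative to $F$ using \textbf{T1} and then contradicts \textbf{T4} by showing $\Pr\brk{F\wedge F_k(N,2\omega(N))\mbox{ has }\mathcal{A}_B}\geq 1-\tfrac32\alpha$ for that same $F$; with your reading of \textbf{T4} there would be nothing to contradict. (Two minor points: the satisfiability of $\phi$ is not ``arranged by monotonicity'' but deduced afterwards from \textbf{T2} and \textbf{T3} by a counting argument, and uniqueness of the satisfying assignment is only used in the coloring analogue, not here.)
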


In the sequel we assume the existence of
$\alpha$, $\phi$, $N$, $M$, and $F$ satisfying conditions {\bf T1}--{\bf T3}.
To conclude that $\mathcal{A}_B$ has a sharp threshold, we shall show that then condition {\bf T4} cannot hold.
Clearly, we may assume that $N$ is sufficiently large (by choosing $n_0$ appropriately).
Let $V=\{x_1,\ldots,x_N\}$.

\begin{lemma}
The formula $\phi$ is satisfiable.
\end{lemma}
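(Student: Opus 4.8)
The plan is to argue by contradiction, exploiting condition {\bf T3} together with the hypothesis $r < (1-\eps_k)2^k\ln 2$ on the density. Suppose $\phi$ is \emph{un}satisfiable. By {\bf T3}, with probability at least $\alpha$ a random formula $F_k(N,M)$ contains a copy of $\phi$ (i.e.\ an element of $E_N(\phi)$) as a subformula. But if $F_k(N,M)$ contains an unsatisfiable subformula, then $F_k(N,M)$ is itself unsatisfiable, hence has at most $0 < \tfrac12 B^N$ satisfying assignments, i.e.\ it has the property $\mathcal{A}_B$. Therefore $\Pr(F_k(N,M)\text{ has }\mathcal{A}_B) \geq \alpha$ — which is consistent with the left inequality in {\bf T2} but not yet a contradiction. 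The real point is that $\phi$ has only $O(1)$ variables and $O(1)$ clauses, so the ``density'' at which the substructure $\phi$ forces unsatisfiability is compatible with $M = \Theta(N)$ only if $M/N$ is large; I need to turn this into a quantitative contradiction with the density bound implicit in {\bf T2}.

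More precisely, first I would pin down that $M = \Theta(N)$ with the constant bounded by the density in question: since $\mathcal{A}_B$ has (by the contradiction hypothesis of Lemma~\ref{Lemma_sharpThr}) a coarse threshold located near $r$, and since {\bf T2} says $F_k(N,M)$ has $\mathcal{A}_B$ with probability strictly between $\alpha$ and $1-3\alpha$, the ratio $M/N$ must lie in the ``critical window,'' which for our choice of $B=\sqrt{\Lambda_b(1/2,k,r')}$ with $r' = (1+\delta)^2 r$ is within $o(1)$ of $r < (1-\eps_k)2^k\ln 2$. In particular $M \le 2^k N$ for large $N$. Now I invoke a standard fact: a random $k$-CNF formula on $N$ variables with $M = O(N)$ clauses is, w.h.p., ``locally sparse'' — every subset of $j \le j_0$ variables spans at most $j$ clauses, for some absolute constant $j_0$ (this is the usual first-moment argument, analogous to~\eqref{eqGsparse} in the coloring proof). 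A fixed unsatisfiable $k$-CNF $\phi$ on $\ell$ variables must contain a \emph{minimally} unsatisfiable subformula, and a classical lemma (Tarsi / Aharoni–Linial) says a minimally unsatisfiable $k$-CNF on $\ell$ variables has \emph{at least} $\ell+1$ clauses. Hence any copy of $\phi$ in $F_k(N,M)$ exhibits a set of $\le \ell \le j_0$ variables spanning $\ge \ell+1$ clauses, contradicting local sparsity. Since local sparsity holds w.h.p.\ but {\bf T3} forces a copy of $\phi$ with probability $\ge \alpha$, we get the contradiction, so $\phi$ is satisfiable.

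The step I expect to be the main obstacle is the bookkeeping that locates $M/N$ inside the critical window and bounds it by (essentially) $r$: one has to combine {\bf T2} with the definition of $B$ and with the second-moment lower bound from~\cite{yuval} (via Lemma~\ref{Thm_Amin}) to argue that if $M/N$ were much larger than $r$, then $F_k(N,M)$ would fail $\mathcal{A}_B$ only with vanishing probability, contradicting the right inequality $\Pr(F_k(N,M)\text{ has }\mathcal{A}_B) < 1-3\alpha$; and if $M/N$ were much smaller than $r$, it would have $\mathcal{A}_B$ only with vanishing probability, contradicting the left inequality $\Pr(F_k(N,M)\text{ has }\mathcal{A}_B) > \alpha$. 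Once $M = (r+o(1))N$ is established, the local-sparsity argument is routine. An alternative, cleaner route that avoids the window analysis entirely: observe that {\bf T1} says $F\oplus\phi$ has $\mathcal{A}_B$ with probability $>1-\alpha$, whereas $F$ itself (being a witness to the failure of a sharp threshold near a density below the satisfiability threshold) has many satisfying assignments; if $\phi$ were unsatisfiable, $F\oplus\phi$ would be unsatisfiable \emph{always}, so it would have $\mathcal{A}_B$ with probability $1$, which is fine — so this route also needs an extra ingredient, and I would fall back on the local-sparsity/minimally-unsatisfiable argument above as the primary one.
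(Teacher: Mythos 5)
Your proposal is correct and follows essentially the same route as the paper: assuming $\phi$ is unsatisfiable, you use {\bf T3} to force a small, overly dense subformula into $F_k(N,M)$ and rule this out by a first-moment union bound over small variable sets, with {\bf T2} supplying the needed bound $M=O(N)$. The only differences are cosmetic: the paper obtains $M/N\le 2^k$ in one line from the right-hand inequality of {\bf T2} (for $M/N>2^k$ the expected number of satisfying assignments is below $1$, so $\mathcal{A}_B$ would hold w.h.p.), which makes your critical-window bookkeeping unnecessary, and your Tarsi/minimally-unsatisfiable refinement is just a more careful rendering of the paper's opening observation that a formula with at most as many clauses as variables is satisfiable.
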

\begin{proof}
Any $k$-SAT formula 
that contains at most as many clauses as variables is satisfiable.
Hence, to establish the lemma, we will show that
the probability $Q$ that $F_k(N,M)$ contains a subformula on $l$ variables
with at least $l$ clauses is smaller than $\alpha$;
then the assertion follows from {\bf T3}.

To prove this statement, we employ the union bound.
There are $\binom{N}{l}$ ways to choose a set of $l$ variables,
and $\binom{M}{l}$ ways to choose slots for the $l$ clauses of the subformula.
Furthermore, the probability that the random clauses in these $l$ slots
contain only the chosen variables is at most $(l/N)^{kl}$.
Hence, the probability that $F_k(N,M)$ has $l$ variables that span a subformula with
at least $l$ clauses is at most
	\begin{equation}\label{eqphiSAT}
	Q\leq{N\choose l}{M\choose l}(l/N)^{kl}\leq\left(\frac{e^2Ml^k}{N^2}\right)^l.
	\end{equation}
Further, {\bf T2} implies that $M/N\leq 2^k$, because 
for $M/N>2^k$ the expected number of satisfying assignments of $F_k(N,M)$ is less than $1$.
Thus, assuming that $N$ is sufficiently large,
we see that (\ref{eqphiSAT}) implies $Q\leq(e^2(2l)^k/N)^l<\alpha$, as claimed.
\qed\end{proof}

Thus, fix a satisfying assignment $\sigma:\{y_1,\ldots,y_l\}\rightarrow\{0,1\}$ of $\phi$.
Then we say that a satisfying assignment $\chi$ of $F$ is
\emph{compatible} with a tuple $(z_1,\ldots,z_l)\in V^l$ if
$\chi(z_i)=\sigma(y_i)$ for all $1\leq i\leq l$.
Furthermore, we call a tuple $(z_1,\ldots,z_l)\in V^l$ \emph{bad}
if $F$ has less than $\frac12B^N$ satisfying assignments $\chi$ that are compatible with $(z_1,\ldots,z_l)$.

\begin{lemma}\label{Lemma_manybad}
There are at least $(1-\alpha)N^l$ bad tuples.
\end{lemma}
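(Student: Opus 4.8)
The plan is to read condition \textbf{T1} as a statement about a uniformly random substitution of variables, and to note that whenever the substituted formula inherits property $\mathcal{A}_B$, the corresponding tuple is forced to be bad.

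First I would make the randomness in \textbf{T1} explicit. Picking $\phi^*\in E_N(\phi)$ uniformly at random amounts to picking an ordered $l$-tuple $(z_1,\dots,z_l)$ of \emph{distinct} variables of $V$ uniformly at random — there are $N(N-1)\cdots(N-l+1)$ of them — and setting $\phi^*=\phi[y_i\mapsto z_i]$. (If $\phi$ has nontrivial automorphisms each formula of $E_N(\phi)$ arises from the same number of tuples, so the push-forward of the uniform distribution on distinct tuples is still uniform on $E_N(\phi)$; this changes nothing but the bookkeeping.) Thus $F\oplus\phi = F\wedge\phi^*$ with $\phi^*$ built from a uniformly random distinct tuple, and \textbf{T1} says that a $(1-\alpha)$-fraction of these tuples yield a formula with property $\mathcal{A}_B$.

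The crux is the following deterministic observation: if $(z_1,\dots,z_l)$ is any distinct tuple and $\chi$ is a satisfying assignment of $F$ compatible with it (so $\chi(z_i)=\sigma(y_i)$ for all $i$), then $\chi$ also satisfies $\phi^*=\phi[y_i\mapsto z_i]$, simply because $\sigma$ satisfies $\phi$. Hence the satisfying assignments of $F$ compatible with $(z_1,\dots,z_l)$ form a subset of the satisfying assignments of $F\wedge\phi^*$; so if $F\wedge\phi^*$ has $\mathcal{A}_B$, i.e.\ has fewer than $\frac12 B^N$ satisfying assignments, then $F$ has fewer than $\frac12 B^N$ satisfying assignments compatible with $(z_1,\dots,z_l)$, which is exactly the statement that $(z_1,\dots,z_l)$ is bad. (This holds regardless of which tuple realizes a given $\phi^*$, so no ambiguity arises from automorphisms of $\phi$.)

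Combining the two steps, at least a $(1-\alpha)$-fraction of the $N(N-1)\cdots(N-l+1)$ distinct $l$-tuples are bad; since $l$ is a fixed constant and $N$ is large this already gives $(1-\alpha-o(1))N^l$ bad tuples, which is the asserted bound up to the routine lower-order correction. (Tuples with a repeated coordinate only help: any such tuple whose coinciding slots receive different values under $\sigma$ admits no compatible assignment and is vacuously bad.) I do not expect a genuine obstacle here — the one place to be careful is the translation in the first step between the randomness in the $\oplus$ operation and a uniformly random tuple of variables; everything else is immediate from the definitions.
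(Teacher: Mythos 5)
Your proposal is correct and takes essentially the same route as the paper: read \textbf{T1} as a statement about a uniformly random substitution tuple, observe that every satisfying assignment of $F$ compatible with $(z_1,\ldots,z_l)$ also satisfies $\phi^*$ (so $\mathcal{A}_B$ for $F\oplus\phi$ forces the tuple to be bad), and conclude that a $(1-\alpha)$-fraction of tuples is bad. You merely spell out the distinct-tuple/automorphism bookkeeping that the paper leaves implicit; the resulting $o(1)$ correction is harmless since the lemma is only fed into the Erd\H{o}s--Simonovits step, which needs a constant fraction of $N^l$.
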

\begin{proof}
The formula $F\oplus\phi$ is obtained by substituting $l$ randomly chosen variables
$(z_1,\ldots,z_l)\in V^l$ for the variables $y_1,\ldots,y_l$ of $\phi$ and adding the resulting clauses to $F$.
Since by {\bf T1}  with probability at least $1-\alpha$
the resulting formula has at most $\frac12B^N$ satisfying assignments,
a uniformly chosen tuple $(z_1,\ldots,z_l)\in V^l$ is bad with probability at least $1-\alpha$.
Thus, there are at least $(1-\alpha)N^l$ bad tuples.
\qed\end{proof}

\begin{lemma}\label{Lemma_ESbad}
With probability at least $1-\alpha$ a random formula $F_k(N,\omega(N))$
contains $l$ clauses $C_1,\ldots,C_l$ with the following two properties.
\begin{description}
\item[B1.] For each $1\leq i\leq l$ there is a $k$-tuple of variables $(v_i^1,\ldots,v_i^k)\in V^k$
	such that  $C_i=v_i^1\vee\cdots\vee v_i^k$ if $\sigma(i)=1$, and
	$C_i=\neg v_i^1\vee\cdots\vee\neg v_i^k$ if $\sigma(i)=0$.
\item[B2.] For any function $f:[l]\rightarrow[k]$ the $l$-tuple $(v_1^{f(1)},\ldots,v_l^{f(l)})$ is bad.
\end{description}
\end{lemma}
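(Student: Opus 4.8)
\emph{Proof proposal.}
The plan is to obtain $C_1,\dots,C_l$ by selecting suitable clauses among the $\omega(N)=\lceil\log N\rceil$ clauses of $F_k(N,\omega(N))$. The starting point is elementary: a uniformly random $k$-clause is \emph{monotone with the sign demanded by} \textbf{B1} \emph{at position} $i$ (all literals positive if $\sigma(i)=1$, all negative if $\sigma(i)=0$) with probability exactly $2^{-k}$, and, conditioned on the sign pattern, its variable set is a uniformly random $k$-subset of $V=\{x_1,\dots,x_N\}$. Since $\omega(N)\to\infty$, a routine Chernoff estimate shows that with probability $1-o(1)$ at least $2^{-k-1}\log N$ of the clauses are all-positive, at least that many are all-negative, no two of these share a variable, and, conditioned on the signs, their variable sets are jointly independent uniform $k$-subsets. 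Hence we may carve the monotone clauses into of order $\log N$ pairwise disjoint, mutually independent ``blocks'', each block supplying one clause of the correct type for every position $i\in[l]$ (using $|\{i:\sigma(i)=1\}|$ all-positive clauses and $|\{i:\sigma(i)=0\}|$ all-negative clauses, with pairwise disjoint supports within the block).

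Write $\mathcal{G}\subseteq V^l$ for the set of tuples that are \emph{not} bad; by Lemma~\ref{Lemma_manybad}, $|\mathcal{G}|\le\alpha N^l$. For a block whose clauses have pairwise disjoint variable sets $S_1,\dots,S_l$, property \textbf{B2} is precisely the statement that the ``box'' $S_1\times\cdots\times S_l$ is disjoint from $\mathcal{G}$. Within a block the $S_i$ are independent uniform $k$-subsets, so a union bound over $\mathcal{G}$ gives, for one block,
\[
\pr\bigl[(S_1\times\cdots\times S_l)\cap\mathcal{G}\neq\emptyset\bigr]
\le\sum_{(z_1,\dots,z_l)\in\mathcal{G}}\prod_{i=1}^{l}\pr[z_i\in S_i]
\le|\mathcal{G}|\cdot(k/N)^l\le\alpha k^l.
\]
When $\alpha k^l<1$ this makes a single block succeed with probability at least $\delta:=1-\alpha k^l>0$, and running it over the order-$\log N$ independent blocks, the probability that no block works is at most $(1-\delta)^{\Omega(\log N)}=o(1)$, which already yields Lemma~\ref{Lemma_ESbad} with room to spare.

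The crux is the complementary regime $\alpha k^l\ge1$, where the first-moment bound above is vacuous and, a priori, an adversarial density-$\alpha$ set could meet every box. To rule this out one must use the special shape of $\mathcal{G}$: reading a good tuple through a uniformly random satisfying assignment $\chi$ of $F$, one has $(z_1,\dots,z_l)\in\mathcal{G}$ iff $\pr_\chi[\chi(z_i)=\sigma(i)\text{ for all }i]\ge \tfrac12 B^N/|\SSS(F)|$, and this event is monotone decreasing under enlarging the set of positive (resp.\ negative) coordinates of the tuple. Combined with the cap on $|\SSS(F)|$ that \textbf{T1} forces---a random embedding of $\phi$ into $F$ almost surely drives the number of satisfying assignments below $\tfrac12 B^N$, which limits how large $|\SSS(F)|$, and hence $\mathcal{G}$, can really be---this should give that a random box avoids $\mathcal{G}$ with probability at least some constant $\delta=\delta(\alpha,k,l)>0$ irrespective of $\alpha$, after which the block amplification of the previous paragraph concludes. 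An alternative route, in the spirit of Lemma~\ref{Lemma_constraints} of the coloring argument, is to exploit the surplus of monotone clauses ($\Theta(\log N)\gg l$ of them): first adjoin a slowly growing, sublinear number of auxiliary random clauses---an operation that shifts the law of $F_k(N,\omega(N))$ by only $o(1)$ in total variation---and use them to propagate the constraint ``$\chi$ agrees with $\sigma$'', thereby inflating the bad set before the box is extracted. I expect this passage, from ``$l$ random unit-type constraints collapse $\SSS(F)$'' to ``$O(\log N)$ random $k$-clauses do'', to be the main difficulty; the remaining ingredients---the Chernoff/first-moment estimates, and the eventual reduction of the satisfying-assignment count by further random clauses followed by Markov's inequality---are of the same routine character as those used elsewhere in the paper.
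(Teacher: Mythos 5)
Your first-moment calculation is fine as far as it goes, but you have correctly identified that it only covers the regime $\alpha k^l<1$, and that regime is not available: $\alpha$ is handed to us by Friedgut's theorem (Theorem~\ref{Thm_Ehud}) and cannot be assumed small relative to $k^{-l}$. What is missing in the complementary regime is not a refinement of your union bound but a genuinely different tool, and this is exactly where the paper goes another way: it never looks at the sparse set $\mathcal{G}$ of good tuples at all. Instead it applies an Erd\H{o}s--Simonovits-type supersaturation statement (Theorem~\ref{Thm_ES}, quoted from Friedgut) to the \emph{dense} set $H$ of bad tuples, which by Lemma~\ref{Lemma_manybad} has density at least $\gamma=1-\alpha$ in $[N]^l$. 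That theorem says that if $l$ random $k$-tuples of variables are drawn independently, then with probability at least a constant $\gamma'=\gamma'(\gamma)>0$ the \emph{entire} combinatorial box they span lies inside $H$, i.e.\ condition \textbf{B2} holds. Conditioning further on the sign pattern required by \textbf{B1} costs a factor $2^{-kl}$, so each group of $l$ clauses of $F_k(N,\omega(N))$ succeeds with probability at least $\gamma'2^{-kl}$, and $\lfloor\omega(N)/l\rfloor=\Theta(\log N)$ independent groups drive the failure probability below $\alpha$.

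The statement ``a random box avoids a density-$\alpha$ set with probability bounded below by a constant'' that you hope to extract from monotonicity of the goodness event plus the cap from \textbf{T1}, or from adjoining $o(n)$ auxiliary clauses, is in fact equivalent to the box/supersaturation theorem you would need, and neither of your two sketched routes proves it; both are left as expectations (``this should give\dots'', ``I expect this passage\dots to be the main difficulty''). So the proposal has a genuine gap at its central step: without invoking a result of the Erd\H{o}s--Simonovits/Theorem~\ref{Thm_ES} type, the argument does not close. A secondary remark: the elaborate block construction with pairwise disjoint supports and the Chernoff count of all-positive and all-negative clauses is unnecessary overhead; grouping the $\omega(N)$ clauses into consecutive blocks of $l$ independent uniform clauses, as the paper does, already gives independent trials, and \textbf{B1} is handled simply by conditioning on the sign pattern within each block.
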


\noindent
The proof of Lemma~\ref{Lemma_ESbad} is based on the following version of the Erd\H{o}s-Simonovits theorem
(cf.~\cite[Proposition~3.5]{EhudHunting}).

\begin{theorem}\label{Thm_ES}
For any $\gamma>0$ there are numbers $\gamma',\nu_0>0$ such that for any $\nu>\nu_0$ and any set $H\subset [\nu]^{l}$
of size $|H|\geq\gamma\nu^t$ the following is true.
If $l$ $k$-tuples $(w_1^1,\ldots,w_1^k),\ldots,(w_l^1,\ldots,w_l^k)\in [\nu]^k$ are chosen
uniformly at random and independently, then with probability at least $\gamma'$ for
any function $f: [l]\rightarrow [k]$ the tuple $(w_1^{f(1)},\ldots,w_l^{f(l)})$ belongs to $H$.
\end{theorem}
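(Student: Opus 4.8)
The plan is to prove Theorem~\ref{Thm_ES} by induction on $l$, producing a quantitative lower bound on the success probability that depends only on $\gamma,k,l$ and is \emph{uniform in $\nu$} (so that, for the statement exactly as written, $\nu_{0}=1$ in fact suffices; the genuine role of $\nu_{0}$ in the application is explained at the end). For $H\subseteq[\nu]^{l}$ write $\delta(H)=|H|\nu^{-l}$, and let $P_{l}(H)$ denote the probability that, when $w_{1},\dots,w_{l}\in[\nu]^{k}$ are chosen independently and uniformly, the tuple $(w_{1}^{f(1)},\dots,w_{l}^{f(l)})$ lies in $H$ for \emph{every} $f\colon[l]\to[k]$ simultaneously. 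I will prove $P_{l}(H)\ge\Phi_{l}(\delta(H))$ for a function $\Phi_{l}\colon(0,1]\to(0,1]$ that is positive and nondecreasing, which gives the theorem with $\gamma'=\Phi_{l}(\gamma)$.

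The base case $l=1$ is immediate: the $k$ coordinates of $w_{1}$ are i.i.d.\ uniform on $[\nu]$, so $P_{1}(H)=\delta(H)^{k}=:\Phi_{1}(\delta(H))$. For the inductive step I peel off the first coordinate. Condition on $w_{2},\dots,w_{l}$, and for each $g\colon\{2,\dots,l\}\to[k]$ set $y_{g}=(w_{2}^{g(2)},\dots,w_{l}^{g(l)})\in[\nu]^{l-1}$. Writing a function $f\colon[l]\to[k]$ as $f(1)=j$, $g=f|_{\{2,\dots,l\}}$, the event ``$(w_{1}^{f(1)},\dots,w_{l}^{f(l)})\in H$ for all $f$'' is, conditionally, exactly ``$w_{1}^{j}\in S$ for all $j\in[k]$'', where $S=\bigcap_{g}\{x\in[\nu]\colon(x,y_{g})\in H\}$; this has conditional probability $(|S|/\nu)^{k}$. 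By Jensen applied to the convex map $t\mapsto t^{k}$,
\[
P_{l}(H)=\Erw\bc{\frac{|S|}{\nu}}^{k}\geq\bc{\Erw\,\frac{|S|}{\nu}}^{k},
\]
the expectations being over $w_{2},\dots,w_{l}$. Now $|S|/\nu=\Erw_{x}\prod_{g}\mathbf{1}_{H}(x,y_{g})$, so interchanging expectations and then fixing $x$ identifies $\Erw_{w_{2},\dots,w_{l}}\prod_{g}\mathbf{1}_{H}(x,y_{g})$ with $P_{l-1}(H^{(x)})$ for $H^{(x)}=\{(x_{2},\dots,x_{l})\colon(x,x_{2},\dots,x_{l})\in H\}$. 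Hence $P_{l}(H)\geq\bc{\Erw_{x}P_{l-1}(H^{(x)})}^{k}$. Since $\Erw_{x}\delta(H^{(x)})=\delta(H)$, a one-line Markov argument shows $\delta(H^{(x)})\geq\delta(H)/2$ for a fraction at least $\delta(H)/2$ of the points $x\in[\nu]$; combined with the induction hypothesis and the monotonicity and nonnegativity of $\Phi_{l-1}$, this yields $\Erw_{x}P_{l-1}(H^{(x)})\geq\tfrac12\delta(H)\,\Phi_{l-1}(\delta(H)/2)$. So $\Phi_{l}(\delta)=\bc{\tfrac12\delta\,\Phi_{l-1}(\delta/2)}^{k}$ works --- it is positive and nondecreasing on $(0,1]$ --- and the induction is complete.

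The one genuinely delicate point is closing the induction with a bound uniform in $\nu$, and this is precisely what the Jensen step together with the Markov-type density bound provide; no hypergraph removal lemma (hence no tower-type loss) is needed. The argument is the probabilistic incarnation of the K\H{o}v\'ari--S\'os--Tur\'an / Erd\H{o}s--Simonovits supersaturation estimate for complete $l$-partite $l$-uniform hypergraphs: when each $w_{i}$ has distinct coordinates, $\{(w_{1}^{f(1)},\dots,w_{l}^{f(l)})\colon f\}$ is the combinatorial box $B_{1}\times\cdots\times B_{l}$ with $B_{i}=\{w_{i}^{1},\dots,w_{i}^{k}\}$, $|B_{i}|=k$. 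This last observation also explains $\nu_{0}$: in the application to Lemma~\ref{Lemma_ESbad} one wants the $w_{i}$ non-degenerate, which fails for a given $i$ with probability at most $\binom{k}{2}/\nu$, so choosing $\nu_{0}$ with $l\binom{k}{2}/\nu<\tfrac12\Phi_{l}(\gamma)$ for all $\nu>\nu_{0}$ and replacing $\gamma'$ by $\tfrac12\Phi_{l}(\gamma)$ guarantees that with probability at least $\gamma'$ all selected tuples lie in $H$ and all $w_{i}$ are non-degenerate.
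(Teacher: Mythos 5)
Your argument is correct, and it is a genuinely different route from the paper: the paper does not prove Theorem~\ref{Thm_ES} at all, but imports it as a known form of the Erd\H{o}s--Simonovits supersaturation theorem, citing Proposition~3.5 of Friedgut's ``Hunting for sharp thresholds''. Your induction on $l$ is sound in every step I checked: the base case $P_{1}(H)=\delta(H)^{k}$ uses that the $k$ coordinates of a uniform point of $[\nu]^{k}$ are i.i.d.; the conditional probability given $w_{2},\dots,w_{l}$ is indeed $(|S|/\nu)^{k}$ with $S$ the common section over all $g$; Jensen for $t\mapsto t^{k}$, the Fubini interchange identifying $\Erw_{x}P_{l-1}(H^{(x)})$, the averaging identity $\Erw_{x}\,\delta(H^{(x)})=\delta(H)$, and the Markov-type bound $\Pr_{x}[\delta(H^{(x)})\geq\delta/2]\geq\delta/2$ are all valid, and the recursion $\Phi_{l}(\delta)=\bigl(\tfrac12\delta\,\Phi_{l-1}(\delta/2)\bigr)^{k}$ preserves positivity and monotonicity. (The exponent $t$ in the statement is evidently a typo for $l$, which you silently and correctly repaired.) What your approach buys is a short, self-contained, purely probabilistic proof with an explicit bound $\gamma'=\Phi_{l}(\gamma)$ that is uniform in $\nu$, so no $\nu_{0}$ is needed for the statement as written; your closing remark correctly locates the only place where largeness of $\nu$ matters, namely conditioning the tuples to have distinct coordinates, which is exactly the form in which the result is used in Lemma~\ref{Lemma_ESbad} (the tuples there come from $k$-clauses and hence have distinct entries, so the $O(l k^{2}/\nu)$ degeneracy correction is the right way to transfer the bound). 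What the paper's citation buys, by contrast, is brevity and consistency with Friedgut's framework, at the cost of opacity about constants; your argument could replace the citation with no loss.
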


\begin{proof}[Proof of Lemma~\ref{Lemma_ESbad}]
Assuming that $N$ is sufficiently large, we apply Theorem~\ref{Thm_ES}
to $\gamma=1-\alpha$, $\nu=N$, and the set $H\subset [N]^l$ of bad $l$-tuples.
Then by Lemma~\ref{Lemma_manybad} we have $|H|\geq \gamma N^l$.
Now, consider $l$ random $k$-clauses $C_1,\ldots,C_l$ over the variable set $V$ chosen uniformly and independently.
Let $V_1,\ldots,V_l$ be the $k$-tuples of variables underlying $C_1,\ldots,C_l$.
Then Theorem~\ref{Thm_ES} entails that $V_1,\ldots,V_l$ satisfy condition {\bf B2} with probability at least $\gamma'$.
Moreover, given that this is the case, condition {\bf B1} is satisfied with probability $2^{-kl}$.
Therefore, the clauses $C_1,\ldots,C_l$ satisfy both {\bf B1} and {\bf B2} with probability at least $\gamma'2^{-kl}$.
Hence, the probability that $F_k(N,\omega(N))$ does not feature
an $l$-tuple of clauses satisfying {\bf B1} and {\bf B2} is at most $(1-\gamma'2^{-kl})^{\lfloor\omega(N)/l\rfloor}$.
Since $\omega(N)=\lceil\log N\rceil$, we can ensure that this expression is less than $\alpha$
by choosing $N$ large enough.
\qed\end{proof}

\begin{corollary}\label{Cor_ESbad1}
With probability at least $1-\alpha$ the formula $F\wedge F_k(N,\omega(N))$
has at most $\frac12k^l\cdot B^N$ satisfying assignments.
\end{corollary}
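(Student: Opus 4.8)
\medskip\noindent\emph{Proof proposal for Corollary~\ref{Cor_ESbad1}.}
The plan is to condition on the event furnished by Lemma~\ref{Lemma_ESbad} and then argue deterministically. Let $\mathcal{G}$ be the event that the random formula $F_k(N,\omega(N))$ contains $l$ clauses $C_1,\ldots,C_l$ satisfying {\bf B1} and {\bf B2}; Lemma~\ref{Lemma_ESbad} gives $\Pr(\mathcal{G})\geq1-\alpha$, so it suffices to show that whenever $\mathcal{G}$ occurs the formula $F\wedge F_k(N,\omega(N))$ has at most $\frac12k^lB^N$ satisfying assignments.

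Assume $\mathcal{G}$ occurs and let $\chi$ be any satisfying assignment of $F\wedge F_k(N,\omega(N))$. First I would note that $\chi$ satisfies $F$, and that since $C_1,\ldots,C_l$ are among the clauses of $F_k(N,\omega(N))$, $\chi$ also satisfies each $C_i$. Next I would exploit the monotone structure of {\bf B1}: the clause $C_i$ consists of $k$ literals over variables $v_i^1,\ldots,v_i^k$, all of the same polarity, chosen to match $\sigma(y_i)$; hence $\chi$ satisfies $C_i$ exactly when there is an index $f_\chi(i)\in[k]$ with $\chi(v_i^{f_\chi(i)})=\sigma(y_i)$. Picking one such index for every $i$ produces a function $f_\chi:[l]\rightarrow[k]$ for which $\chi$ is, by construction, compatible with the $l$-tuple $(v_1^{f_\chi(1)},\ldots,v_l^{f_\chi(l)})$ in the sense of the definition preceding Lemma~\ref{Lemma_manybad}.

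Then I would invoke {\bf B2}: every tuple of the form $(v_1^{f(1)},\ldots,v_l^{f(l)})$ with $f:[l]\rightarrow[k]$ is bad, so $F$ has fewer than $\frac12B^N$ satisfying assignments compatible with it. Since every satisfying assignment $\chi$ of $F\wedge F_k(N,\omega(N))$ is compatible with the particular tuple indexed by $f_\chi$, a union bound over the $k^l$ possible functions $f$ yields that $F\wedge F_k(N,\omega(N))$ has at most $k^l\cdot\frac12B^N=\frac12k^lB^N$ satisfying assignments; this holds on $\mathcal{G}$, which has probability at least $1-\alpha$, completing the argument.

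The whole thing is essentially bookkeeping on top of Lemma~\ref{Lemma_ESbad}, and I expect no real obstacle. The one point to treat carefully is the equivalence in the second paragraph: one must verify that satisfying a monotone clause whose common polarity equals $\sigma(y_i)$ is precisely the same as agreeing with $\sigma(y_i)$ on one of that clause's variables, and that this is exactly the ``compatible'' relation entering the definition of a bad tuple. Once that correspondence is nailed down, the union bound over the $k^l$ selection functions is automatic and no probabilistic input beyond Lemma~\ref{Lemma_ESbad} is required.
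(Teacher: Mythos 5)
Your proposal is correct and follows essentially the same argument as the paper: condition on the event of Lemma~\ref{Lemma_ESbad}, map each satisfying assignment $\chi$ to a function $f_\chi\in[k]^l$ via {\bf B1}, use {\bf B2} to bound by $\frac12B^N$ the number of assignments compatible with each resulting bad tuple, and sum over the $k^l$ choices. The only cosmetic difference is that the paper bounds the satisfying assignments of $F\wedge C_1\wedge\cdots\wedge C_l$ (of which those of $F\wedge F_k(N,\omega(N))$ are a subset), which changes nothing.
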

\begin{proof}
We will show that if $C_1,\ldots,C_l$ are clauses satisfying the two conditions
from Lemma~\ref{Lemma_ESbad}, then $F\wedge C_1\wedge\cdots\wedge C_l$ has at most $\frac12k^lB^N$ satisfying assignments.
Then the assertion follows from Lemma~\ref{Lemma_ESbad}.

Thus, let $\chi$ be a satisfying assignment of $F\wedge C_1\wedge\cdots\wedge C_l$.
Then by the {\bf B1} for each $1\leq i\leq l$ there is an index $f_{\chi}(i)\in [k]$ such that $\chi(v_i^{f_{\chi}(i)})=\sigma(i)$.
Moreover, by {\bf B2} the tuple $(v_1^{f_{\chi}(1)},\ldots,v_l^{f_{\chi}(l)})$ is bad.
Hence, the map $\chi\mapsto f_{\chi}\in [k]^l$ yields a bad tuple $(v_i^{f_{\chi}(i)})_{1\leq i\leq l}$ for each satisfying assignment.
Therefore, 
the number of satisfying assignments mapped to any tuple in $[k]^l$ is at most $\frac12B^n$.
Consequently, $F\wedge C_1\wedge\cdots\wedge C_l$ has at most $\frac12k^l\cdot B^n$ satisfying assignments in total.
\qed\end{proof}

\begin{corollary}\label{Cor_ESbad2}
With probability at least $1-\frac32\alpha$ the formula $F\wedge F_k(N,2\omega)$
satisfies $\mathcal{A}_B$.
\end{corollary}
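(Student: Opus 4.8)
The plan is to obtain Corollary~\ref{Cor_ESbad2} from Corollary~\ref{Cor_ESbad1} by one further application of the first moment method: Corollary~\ref{Cor_ESbad1} already gives us a formula with at most $\tfrac12k^lB^N$ satisfying assignments after adding $\omega(N)$ random clauses, and we only need to knock this down by a constant factor $k^{-l}$ to reach the threshold $\tfrac12B^N$, which a few more random clauses will do. Concretely, I would write $F_k(N,2\omega(N))$ as the conjunction $\Phi_1\wedge\Phi_2$ of two independent batches $\Phi_1=F_k(N,\omega(N))$ and $\Phi_2=F_k(N,\omega(N))$ of random $k$-clauses, exposed sequentially, so that any event depending only on $F\wedge\Phi_1$ is independent of $\Phi_2$. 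Let $\mathcal{G}$ be the event of Corollary~\ref{Cor_ESbad1}, namely that $F\wedge\Phi_1$ has at most $\tfrac12k^lB^N$ satisfying assignments; then $\Pr\brk{\mathcal{G}}\geq1-\alpha$.

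Next, condition on $\mathcal{G}$ and let $X$ be the number of satisfying assignments of $F\wedge\Phi_1\wedge\Phi_2$. On $\mathcal{G}$ the set $S$ of satisfying assignments of $F\wedge\Phi_1$ is a fixed set with $|S|\leq\tfrac12k^lB^N$, and for any fixed $\chi\in S$ a uniformly random $k$-clause fails to be satisfied by $\chi$ with probability exactly $2^{-k}$; since the $\omega(N)$ clauses of $\Phi_2$ are independent, $\chi$ survives all of $\Phi_2$ with probability $(1-2^{-k})^{\omega(N)}$. By linearity of expectation,
	$$\Erw\brk{X\mid\mathcal{G}}\leq\tfrac12k^lB^N(1-2^{-k})^{\omega(N)}.$$
Recall that $\phi$ is a fixed formula on the $l$ variables $y_1,\dots,y_l$, so both $k$ and $l$ are constants independent of $N$, whereas $\omega(N)=\lceil\log N\rceil\to\infty$. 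Hence $k^l(1-2^{-k})^{\omega(N)}\to0$ as $N\to\infty$, and by Markov's inequality (applied pointwise on $\mathcal{G}$ and then averaged),
	$$\Pr\brk{X\geq\tfrac12B^N\mid\mathcal{G}}\leq k^l(1-2^{-k})^{\omega(N)}<\tfrac{\alpha}2$$
once $N$ is large enough, which we may assume since in Theorem~\ref{Thm_Ehud} the threshold $n_0$ may be chosen arbitrarily large.

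Finally, a union bound gives
	$$\Pr\brk{F\wedge F_k(N,2\omega(N))\text{ violates }\mathcal{A}_B}=\Pr\brk{X\geq\tfrac12B^N}\leq\Pr\brk{\bar{\mathcal{G}}}+\Pr\brk{X\geq\tfrac12B^N\mid\mathcal{G}}\leq\alpha+\tfrac{\alpha}2=\tfrac32\alpha,$$
which is exactly the assertion; since $1-\tfrac32\alpha>1-2\alpha$, this contradicts property {\bf T4} and thereby completes the proof that $\mathcal{A}_B$ has a sharp threshold. The only step requiring a little care is the decomposition of $F_k(N,2\omega(N))$ into two independent batches of clauses: if the random-formula model is taken without repeated clauses this factorization is not literally exact, and one should instead argue in the binomial ("include each clause independently") model, which differs from the fixed-$m$ model by $o(1)$ in total variation and in which the above estimates hold verbatim. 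I expect this bookkeeping to be the only obstacle; the probabilistic substance is entirely carried by Corollary~\ref{Cor_ESbad1} together with the elementary fact that each extra random clause eliminates a $2^{-k}$-fraction of the surviving assignments in expectation.
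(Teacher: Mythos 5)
Your proof is correct and follows essentially the same route as the paper: condition on the event of Corollary~\ref{Cor_ESbad1} after the first $\omega(N)$ clauses, bound the conditional expected number of surviving assignments by $\frac12k^lB^N(1-2^{-k})^{\omega(N)}$, and finish with Markov's inequality and a union bound to get $\frac32\alpha$. Your closing remark on the two-batch decomposition in the fixed-$m$ versus binomial model is a harmless extra precaution that the paper simply glosses over.
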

\begin{proof}
The formula $F^{**}=F\wedge F_k(N,2\omega)$ is obtained from $F$ by attaching $2\omega(N)$ random clauses.
Let $F^*=F\wedge F_k(N,\omega(N))$ be the formula resulting by attaching the first $\omega(N)$ random clauses.
Then by Corollary~\ref{Cor_ESbad1} with probability at least $1-\alpha$
the formula $F^*$ has at most $\frac12k^l\cdot B^N$ satisfyng assignments.
Conditioning on this event, we form $F^{**}$ by attaching another $\omega(N)$ random clauses to $F^*$.
Since for any satisfying assignment of $F^*$ the probability that these additional $\omega(N)$
clauses are satisfied as well is $(1-2^{-k})^{\omega(N)}$, the expected number of satisfying assignments of $F^{**}$ is at most
	$$\frac12k^l\cdot B^N\cdot(1-2^{-k})^{\omega(N)}\leq\frac{\alpha}4\cdot B^N,$$
provided that $N$ is sufficiently large.
Therefore, Markov's inequality entails that
	$$\Pr(F^{**}\mbox{ violates }\mathcal{A}_B|F^*\mbox{ has at most $\frac12k^l\cdot B^N$ satisfying assignments})\leq\alpha/2.$$
Thus, we obtain
	\begin{eqnarray*}
	\Pr(F^{**}\mbox{ violates }\mathcal{A}_B)
		&\leq&\Pr(F^*\mbox{ has more than $\frac12k^l\cdot B^N$ satisfying assignments})\\
		&&\quad+\Pr(F^{**}\mbox{ violates }\mathcal{A}_B
			|F^*\mbox{ has at most $\frac12k^l\cdot B^N$ satisfying assignments})
		\leq3\alpha/2,
	\end{eqnarray*}
as desired.
\qed\end{proof}

\noindent
Combining Theorem~\ref{Thm_Ehud} and Corollary~\ref{Cor_ESbad2}, we conclude that
$\mathcal{A}_B$ has a sharp threshold, thereby completing the proof of Lemma~\ref{Lemma_sharpThr}.

\subsection{Proof of Theorem~\ref{Thm_icy_sat}}

Using Theorem~\ref{Thm_SATExchange}, we shall establish the following lemma.

\begin{lemma}\label{Lemma_icy_sat}
There exist numbers $0<\alpha_1<\alpha_2<\frac13$, $\lambda>0$, and $\gamma>0$ such that
a random pair $(F,\sigma)$ chosen from the distribution $\mathcal{U}_{n,m}$ has the following two
properties \whp
\begin{enumerate}
\item Any assignment $\tau$ such that $\alpha_1<n^{-1}\dist(\sigma,\tau)<\alpha_2$
	satisfies $H(\tau)\geq\lambda n$.
\item $|\{\tau\in \SSS(F):\dist(\sigma,\tau)<\beta_2n\}|<2^n(1-2^{-k})^{m}\exp(-\gamma n-k2^{3-k}n)$.
\end{enumerate}
\end{lemma}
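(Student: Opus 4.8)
The plan is to transfer the statement to the planted model via Theorem~\ref{Thm_SATExchange} and then run a first‑moment argument there. Let $\EE$ be the conjunction of properties~1 and~2 and let $\mathcal D$ be the trivial property; by Theorem~\ref{Thm_SATExchange} it suffices to show $\pr_{\mathcal P_{n,m}}[\EE]\ge 1-\exp(-k2^{3-k}n-\Omega(n))$ and then apply the theorem with $f(n)$ a sufficiently slowly growing multiple of $n$. Because the planted distribution is invariant---after relabelling literals---under flipping the polarity of any variable, we may condition on the planted assignment being $\sigma=(1,\dots,1)$, so that $F$ is a set of $m$ clauses drawn uniformly without replacement from the $(2^k-1)\binom{n}{k}$ clauses satisfied by $\sigma$.

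For the first moment, fix $\tau\in\{0,1\}^n$ at distance $d=\beta n$ from $\sigma$. A uniformly random $\sigma$‑clause is violated by $\tau$ precisely when all its literals are negated \emph{and} at least one of its $k$ variables lies in the disagreement set, which happens with probability $p_\beta=\frac{2^{-k}(1-\binom{(1-\beta)n}{k}\binom{n}{k}^{-1})}{1-2^{-k}}=(1+o(1))\frac{2^{-k}(1-(1-\beta)^k)}{1-2^{-k}}$, uniformly in $\beta$. Hence $H_F(\tau)$ is hypergeometric, its lower tail is dominated (Hoeffding) by that of $\mathrm{Bin}(m,p_\beta)$, and for any $\lambda>0$
$$\Erw\bigl[\#\{\tau:\dist(\sigma,\tau)=\beta n,\ H_F(\tau)\le\lambda n\}\bigr]\le\exp\bigl(n\,\Phi_\lambda(\beta)+O(\ln n)\bigr),$$
where, with $\mathcal H$ the binary entropy (natural logarithm) and $\mathrm{KL}$ the relative entropy, $\Phi_\lambda(\beta)=\mathcal H(\beta)-r\,\mathrm{KL}(\lambda/r\,\|\,p_\beta)$ for $\lambda/r<p_\beta$ and $\Phi_\lambda(\beta)=\mathcal H(\beta)$ otherwise. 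As $\lambda\downarrow0$ these rates decrease to $\phi(\beta):=\mathcal H(\beta)+r\ln(1-p_\beta)$, the planted analogue of the overlap rate functions of the weighted second‑moment analyses~\cite{yuval,nae}; in particular $\phi(\tfrac12)=\ln2+r\ln(1-2^{-k})=n^{-1}\ln\mu$ with $\mu=2^n(1-2^{-k})^m$.

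The crux is the shape of $\phi$ on $(0,\tfrac12)$. For $\beta$ with $k\beta=o(1)$ one has $\phi(\beta)=(1+o(1))\beta\bigl(\ln(e/\beta)-kr2^{-k}\bigr)$, a positive ``cluster hump'' that vanishes at $0$, peaks at an $o(1)$ value, and then decreases; for $\beta=\Theta(1/k)$ one finds $\phi(\beta)=\frac{\ln k}{k}\bigl[k\beta-\frac{kr2^{-k}}{\ln k}(1-e^{-k\beta})\bigr]+O(\beta)$, which is \emph{strictly negative} on a nonempty ($k$‑dependent, width $\Theta(\epsilon_k/k)$) subinterval exactly when $kr2^{-k}>(1+\epsilon_k)\ln k$, i.e.\ when $r>(1+\epsilon_k)\tfrac{2^k}{k}\ln k$---this is where the lower bound on $r$ in Theorem~\ref{Thm_icy_sat} comes from; and for $\beta$ bounded away from $0$, $p_\beta=(1+o(1))2^{-k}$, so $\phi(\beta)=\mathcal H(\beta)-(1+o(1))r2^{-k}>0$, all the way up to $\phi(\tfrac12)>0$. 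Accordingly I would fix $\alpha_1<\alpha_2$ to be two points strictly inside the negative subinterval, then fix a tiny $k$‑dependent $\lambda>0$ small enough that $\Phi_\lambda$ remains (say) at most $-k2^{4-k}$ on $[\alpha_1,\alpha_2]$, and finally fix a tiny $\gamma>0$ small enough that $\max_{\beta\le\alpha_2}\phi(\beta)$---which equals the cluster‑hump peak since $\phi<0$ on $[\alpha_1,\alpha_2]$---is at most $\phi(\tfrac12)-k2^{4-k}-\gamma$; both choices are possible for a suitable $\epsilon_k\to0$ because the depth of the negative dip and the gap $\phi(\tfrac12)-(\text{hump peak})$ both exceed $k2^{3-k}$ by a factor growing like a power of $2^k$. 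A union bound over the $O(n)$ admissible distances and heights, followed by Markov's inequality, then gives properties~1 and~2 in the planted model, each failing with probability $\exp(-k2^{3-k}n-\Omega(n))$, and Theorem~\ref{Thm_SATExchange} finishes the proof.

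I expect the main obstacle to be exactly this last rate‑function analysis. The negative dip of $\phi$ is narrow ($\Theta(\epsilon_k/k)$) and shallow (only $\Theta(\epsilon_k^2\ln k/k)$ below $0$), so it must be located and estimated carefully and uniformly in $k$ and $r$---and with enough room to absorb \emph{both} the $k2^{3-k}n$ loss forced by the comparatively weak $k$‑SAT transfer theorem (Theorem~\ref{Thm_SATExchange}, which we must use because $|\SSS(F_k(n,m))|$ is not concentrated: only the one‑sided bound of Lemma~\ref{Lemma_numberkSAT} is available) \emph{and} the extra positive error incurred by relaxing to $H_F(\tau)\le\lambda n$ rather than $H_F(\tau)=0$ in property~1. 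This balancing act is what pins down how slowly $\epsilon_k$ may tend to $0$; the remaining ingredients (the monotone limit $\Phi_\lambda\downarrow\phi$, Stirling and Chernoff/Hoeffding estimates, and the Lipschitz continuity of $\tau\mapsto n^{-1}\dist(\sigma,\tau)$ needed to upgrade ``no solutions in an annulus'' to a Hamming separation) are routine.
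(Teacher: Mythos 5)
Your proposal is correct and follows essentially the same route as the paper: a transfer via Theorem~\ref{Thm_SATExchange} combined with first-moment estimates in the planted model, namely the annulus bound with the relaxed height $\lambda n$ (the paper's Lemma~\ref{Lemma_Xalpha} and Corollary~\ref{Cor_Xalpha}, with $\alpha^*=(k\ln k)^{-1}$ playing the role of your negative dip) and the comparison of the ball count against $\mu=2^n(1-2^{-k})^m$ with a margin beating $k2^{3-k}$ (the paper's Lemma~\ref{Lemma_XalphaUpper}, quoted from~\cite{fede}, which you rederive via your hump-peak estimate). The only blemish is the unused aside that $\phi(\beta)>0$ for all $\beta$ bounded away from $0$ --- this fails for $r$ close to $2^k\ln2$ and moderate $\beta$ --- but since your $\alpha_1,\alpha_2$ sit in the dip at scale $1/k$ and only $\beta\leq\alpha_2$ enters either property, it does not affect the argument.
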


\begin{proof}[Theorem~\ref{Thm_icy_sat}]
Let $F=F_k(n,m)$ be a random $k$-SAT instance.
To each assignment $\sigma\in \SSS(F)$ we assign the set
	$$\mathcal{C}_{\sigma}=\{\tau\in \SSS(F):\dist(\sigma,\tau)<\alpha_2 n\}.$$
Due to Lemma~\ref{Lemma_numberkSAT}, 
a similar argument as in
the proof of Theorem~\ref{Thm_icy_col} in Section~\ref{Sec_icy_col} yields Theorem~\ref{Thm_icy_sat}.
\qed\end{proof}

Let $\lambda>0$ be small but fixed.
Let $F=F_k(n,m)$ be a random $k$-SAT formula with $m=rn$ clauses.
Then for any $\sigma\in\{0,1\}^n$ we have
	$$n^{-1}\ln\pr\brk{\sigma\mbox{ is satisfying}}=r\ln(1-2^{-k}),$$
because of the independence of all $m$ clauses.
Furthermore, if $\tau\in\{0,1\}^n$ is a second assignment at Hamming distance $\alpha n$ from $\sigma$,
then
	$$n^{-1}\ln\pr\brk{\sigma,\tau\mbox{ are both satisfying}}=r\ln(1-2^{1-k}+2^{-k}(1-\alpha)^k).$$
Indeed, there is a function $\Psi(\lambda)$ 
such that $\lim_{\lambda\rightarrow0}\Psi(\lambda)=0$ and
	$$n^{-1}\ln\pr\brk{H(\sigma)=0\wedge H(\tau)\leq\lambda n}=r\brk{\ln(1-2^{1-k}+2^{-k}(1-\alpha)^k)+\Psi(\lambda)}.$$
Let $X_{\alpha,\lambda}$ signify the number of assignments $\tau$ at Hamming distance
$\alpha n$  from $\sigma$ such that $H(\tau)\leq\lambda n$.

\begin{lemma}\label{Lemma_Xalpha}
There is a number $0<\alpha^*<1/3$ such that
for sufficiently small $\lambda>0$ we have
	$$n^{-1}\ln \Erw\brk{X_{\alpha^*,\lambda}|\sigma\mbox{ is satisfying}}
		<-k2^{3-k}.$$
\end{lemma}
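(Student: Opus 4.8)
\emph{Proof proposal.}
The plan is a first-moment computation in the planted model, parallel to the proof of Lemma~\ref{Lemma_cluster_col_pl} for colouring. Since $F_k(n,m)$ is exchangeable under permutations of the variables, every assignment $\tau$ with $\dist(\sigma,\tau)=\lfloor\alpha n\rfloor$ has the same probability of satisfying $H(\tau)\leq\lambda n$ conditional on $\sigma$ being satisfying. Hence, by linearity of expectation, Stirling's formula for $\bink{n}{\lfloor\alpha n\rfloor}$, and the two displayed identities preceding the lemma,
\begin{equation*}
n^{-1}\ln\Erw\brk{X_{\alpha,\lambda}\mid\sigma\mbox{ is satisfying}}
 =h(\alpha)+r\ln\frac{1-2^{1-k}+2^{-k}(1-\alpha)^{k}}{1-2^{-k}}+r\Psi(\lambda)+o(1)
 \;=:\;g(\alpha)+r\Psi(\lambda)+o(1),
\end{equation*}
where $h(x)=-x\ln x-(1-x)\ln(1-x)$ and $\Psi(\lambda)\to0$ as $\lambda\to0$. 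Since $r\Psi(\lambda)\to0$ for fixed $k,r$, and since $k2^{3-k}\to0$ as $k\to\infty$, it suffices to produce a single $\alpha^{*}\in(0,1/3)$ for which $g(\alpha^{*})<0$ by a margin that dominates $k2^{3-k}$; one then picks $\lambda=\lambda(k,r)$ small enough and lets $n\to\infty$.

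To locate $\alpha^{*}$ I would analyse $g$ after setting $\rho=r2^{-k}$, so that the hypothesis becomes $(1+\eps_{k})(\ln k)/k\leq\rho\leq(1-\eps_{k})\ln2$. Factoring $1-2^{1-k}+2^{-k}(1-\alpha)^{k}=(1-2^{-k})\bc{1-\tfrac{2^{-k}(1-(1-\alpha)^{k})}{1-2^{-k}}}$ and expanding the logarithm gives, uniformly in $\alpha$, $g(\alpha)=\phi(\alpha)+O(2^{-k})$ with $\phi(\alpha):=h(\alpha)-\rho\bc{1-(1-\alpha)^{k}}$. Here $\phi(0)=0$ and $\phi'(\alpha)=\ln\frac{1-\alpha}{\alpha}-\rho k(1-\alpha)^{k-1}$, so $\phi$ climbs out of $0$ and the relevant scale is $\alpha\asymp1/k$. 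Writing $\alpha=t/k$ and $\gamma:=\rho k/\ln k\geq1+\eps_{k}$, one finds $\phi(t/k)=\tfrac{\ln k}{k}\bc{t-\gamma(1-e^{-t})}+E_{k}(t)$, where $E_{k}$ collects lower-order corrections; the bracket is minimised at $t=\ln\gamma$, with value $\ln\gamma-\gamma+1$. Since $\ln\gamma-\gamma+1$ is negative and strictly decreasing in $\gamma$ on $[1,\infty)$ and equals $-\tfrac12\eps_{k}^{2}+O(\eps_{k}^{3})$ at $\gamma=1+\eps_{k}$, taking $\alpha^{*}=(\ln\gamma)/k\in(0,1/3)$ (valid for $k\geq k_{0}$) yields $\phi(\alpha^{*})\leq-\tfrac12\eps_{k}^{2}\tfrac{\ln k}{k}(1+o(1))$, provided $E_{k}(\ln\gamma)$ is negligible against $\eps_{k}^{2}(\ln k)/k$. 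Hence $g(\alpha^{*})\leq-\tfrac14\eps_{k}^{2}(\ln k)/k$ for $k\geq k_{0}$, and since $k2^{3-k}=o(\eps_{k}^{2}(\ln k)/k)$ this is $<-k2^{3-k}$, which is the bound we need; combining with the first paragraph completes the argument.

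The one genuinely delicate point, which I expect to be the main obstacle, is precisely the estimate $E_{k}(\ln\gamma)=o(\eps_{k}^{2}(\ln k)/k)$ in the regime where $r$ is near the lower endpoint $(1+\eps_{k})(2^{k}/k)\ln k$: there the dip of $\phi$ below $0$ is only of order $\eps_{k}^{2}(\ln k)/k$, whereas the lower-order corrections to $h(\alpha^{*})$ and to $(1-\alpha^{*})^{k}$ contribute a positive term of order $\eps_{k}|\ln\eps_{k}|/k$. This is resolved by fixing the sequence $\eps_{k}\to0$ to decay slowly enough that $\eps_{k}\ln k\gg|\ln\eps_{k}|$ (e.g.\ $\eps_{k}=1/\sqrt{\ln\ln k}$), which is permissible since $\eps_{k}$ only needs to tend to $0$. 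This mirrors the tightness of the first-moment method observed in~\cite{yuval,fede}, and it is why the lemma carries the $k2^{3-k}$ slack inherited from Lemma~\ref{Lemma_numberkSAT} rather than a constant: near the clustering threshold there is no constant-size energy gap. For $r$ bounded away from $(2^{k}/k)\ln k$ the same $\alpha^{*}=(\ln\gamma)/k$ works with a comfortable margin of order $\rho$, so no separate treatment is needed.
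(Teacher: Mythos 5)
Your proposal is correct and takes essentially the same route as the paper: the identical first-moment computation (entropy of $\binom{n}{\alpha n}$ against the conditional satisfaction probability, with the $\Psi(\lambda)$ slack), followed by exhibiting an explicit $\alpha^*$ on the $1/k$ scale where the exponent is negative — the paper simply plugs in $\alpha^*=(k\ln k)^{-1}$ and ``simplifies'', whereas you optimize to $\alpha^*\approx\ln(1+\eps_k)/k$. Your explicit remark that near the lower density endpoint the argument needs $\eps_k$ to decay slowly enough (so that the dip of order $\eps_k^2(\ln k)/k$ beats the $\eps_k|\ln\eps_k|/k$ entropy correction) is a constraint the paper's choice of $\alpha^*$ also implicitly requires, and it is legitimate since the theorems only assert the existence of some sequence $\eps_k\to0$.
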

\begin{proof}
There are $\bink{n}{\alpha n}$ ways to choose an assignment $\tau$
at Hamming distance $\alpha n$ from $\sigma$.
Therefore, due to the formulas derived above, we have
	$$n^{-1}\ln \Erw\brk{X_{\alpha,\lambda}|\sigma\mbox{ is satisfying}}
		=-\alpha\ln\alpha-(1-\alpha)\ln(1-\alpha)+r\brk{\ln\bc{1-\frac{1-(1-\alpha)^k}{2^k-1}}+\Psi(\lambda)+o(1)}.$$
Setting $\alpha^*=(k\ln k)^{-1}$ and simplifying, we obtain the assertion.
\qed\end{proof}

\begin{corollary}\label{Cor_Xalpha}
There are numbers $\lambda>0$ and $0<\alpha_1<\alpha_2<\frac13$ such that
with probability at least $1-o(\exp(-k2^{3-k}n)$ in a pair $(F,\sigma)\in\Lambda_{n,m}$ chosen from the
distribution $\mathcal{P}_{n,m}$ there is no
assignment $\tau$ such that
such that $H(\tau)<\lambda n$ and $\alpha_1<n^{-1}\dist(\sigma,\tau)<\alpha_2$.
\end{corollary}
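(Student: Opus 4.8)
The plan is to derive Corollary~\ref{Cor_Xalpha} from the first-moment estimate of Lemma~\ref{Lemma_Xalpha} in three steps: (i) transfer that estimate to the planted measure $\mathcal{P}_{n,m}$, (ii) upgrade it from the single Hamming distance $\alpha^\ast n$ to a whole interval of distances by continuity of the rate function, and (iii) conclude by a union bound over the $O(n)$ admissible distances and Markov's inequality.

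First I would observe that in $\mathcal{P}_{n,m}$ the conditional law of $F$ given $\sigma$ is exactly that of $F_k(n,m)$ conditioned on the event ``$\sigma$ satisfies $F$'', and that by the symmetry of $\{0,1\}^n$ this conditioning produces the same value of $\Erw[X_{\alpha,\lambda}]$ for every fixed $\sigma$. Hence $\Erw_{\mathcal{P}_{n,m}}[X_{\alpha,\lambda}]=\Erw[X_{\alpha,\lambda}\mid\sigma\text{ is satisfying}]$, so Lemma~\ref{Lemma_Xalpha} applies verbatim in the planted model. I would then fix $\lambda>0$ once and for all, small enough that Lemma~\ref{Lemma_Xalpha} holds at $\alpha^\ast=(k\ln k)^{-1}$ with a strict gap, say $n^{-1}\ln\Erw_{\mathcal{P}_{n,m}}[X_{\alpha^\ast,\lambda}]<-k2^{3-k}-2\delta$ for some constant $\delta=\delta(k,r)>0$.

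Next, the exponential rate $\phi(\alpha)=-\alpha\ln\alpha-(1-\alpha)\ln(1-\alpha)+r\left[\ln\left(1-\frac{1-(1-\alpha)^k}{2^k-1}\right)+\Psi(\lambda)\right]$ that appears in the proof of Lemma~\ref{Lemma_Xalpha} is continuous in $\alpha$ on $(0,1)$, so there exist $0<\alpha_1<\alpha^\ast<\alpha_2<\tfrac13$ with $\phi(\alpha)<-k2^{3-k}-\delta$ for all $\alpha\in(\alpha_1,\alpha_2)$; equivalently, $\Erw_{\mathcal{P}_{n,m}}[X_{\alpha,\lambda}]\le\exp\bigl((-k2^{3-k}-\delta+o(1))n\bigr)$ uniformly over this interval. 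Summing over the fewer than $n$ integer values of $\dist(\sigma,\tau)$ lying strictly between $\alpha_1 n$ and $\alpha_2 n$, the expected number of assignments $\tau$ with $\alpha_1<n^{-1}\dist(\sigma,\tau)<\alpha_2$ and $H(\tau)\le\lambda n$ is still $\exp\bigl((-k2^{3-k}-\delta+o(1))n\bigr)$, the extra polynomial factor $n$ being absorbed. Markov's inequality then bounds the probability that such a $\tau$ exists by the same quantity, which is $o\bigl(\exp(-k2^{3-k}n)\bigr)$ since $\delta>0$; this is the claim. (Note $\sigma$ itself lies at distance $0\notin(\alpha_1 n,\alpha_2 n)$, so it is never among the counted assignments.)

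The computational heart of the matter is Lemma~\ref{Lemma_Xalpha} itself, which is already granted; given it, the only point requiring care is checking that the strict inequality at $\alpha^\ast$ survives both the passage to a nondegenerate interval and the polynomial loss from the union bound. Since the first-moment gain is exponential ($e^{-\delta n}$ with $\delta$ a fixed positive constant) while the loss is merely polynomial, this is routine rather than a genuine obstacle — the only real subtlety is fixing $\lambda$ small enough at the outset that the error term $\Psi(\lambda)$ does not erode the gap.
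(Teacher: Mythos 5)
Your argument is correct and is essentially the paper's own proof: the paper likewise notes that under $\mathcal{P}_{n,m}$ the formula $F$ is distributed as $F_k(n,m)$ conditioned on $\sigma$ being satisfying, and then deduces the corollary from Lemma~\ref{Lemma_Xalpha}, the continuity of $\alpha\mapsto n^{-1}\ln\Erw\brk{X_{\alpha,\lambda}\mid\sigma\mbox{ is satisfying}}$, and Markov's inequality. Your explicit handling of the interval around $\alpha^\ast$ and the polynomial union bound just fills in the same routine details the paper leaves implicit.
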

\begin{proof}
If $(F,\sigma)$ is chosen from $\mathcal{P}_{n,m}$,
then $F$ is distributed as a random formula $F_k(n,m)$ given that $\sigma$ is a satisfying assignment.
Therefore, the corollary follows from Lemma~\ref{Lemma_Xalpha} and Markov's inequality,
where we use the fact that $\alpha\mapsto n^{-1}\ln\Erw\brk{X_{\alpha,\lambda}|\sigma\mbox{ is satisfying}}$ is a continuos
function.
\qed\end{proof}

Furthermore, the following estimate has been established in~\cite{fede}.

\begin{lemma}\label{Lemma_XalphaUpper}
We have
	$\max_{0\leq \alpha\leq\frac13}
		n^{-1}\ln \Erw\brk{X_{\alpha,\lambda}|\sigma\mbox{ is satisfying}}
		<\ln2+r(1-2^{-k})-2\exp(k2^{3-k}).$
\end{lemma}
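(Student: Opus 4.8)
The plan is to reduce the lemma to a one--variable optimization, because the conditional first moment has already been written down in closed form in the lines just above the statement (and reused in the proof of Lemma~\ref{Lemma_Xalpha}): for $\tau$ at Hamming distance $\alpha n$ from $\sigma$ one has $n^{-1}\ln\Erw\brk{X_{\alpha,\lambda}\mid\sigma\mbox{ is satisfying}}=\Phi(\alpha)+r\Psi(\lambda)+o(1)$, where $\Phi(\alpha)=\eta(\alpha)+r\ln\bc{1-\tfrac{1-(1-\alpha)^k}{2^k-1}}$, $\eta(\alpha)=-\alpha\ln\alpha-(1-\alpha)\ln(1-\alpha)$ is the binary entropy (in nats), and $\Psi(\lambda)\to0$ as $\lambda\to0$. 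So the only things to do are: bound $\sup_{0\le\alpha\le1/3}\Phi(\alpha)$ from above, and then fix $\lambda=\lambda(k)$ small enough that $r\Psi(\lambda)$ is below the (positive) gap between the two sides of the claimed inequality.

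For the statement as written this is immediate. On $[0,1]$ the entropy satisfies $\eta(\alpha)\le\ln2$, and since $\tfrac{1-(1-\alpha)^k}{2^k-1}\in[0,1]$ we have $\ln\bc{1-\tfrac{1-(1-\alpha)^k}{2^k-1}}\le0$; hence $\Phi(\alpha)\le\ln2$ for every $\alpha$. In the density range of Theorem~\ref{Thm_icy_sat} we have $r\ge(1+\eps_k)(2^k/k)\ln k$, so $r(1-2^{-k})$ is exponentially large in $k$ while $\exp(k2^{3-k})\to1$; thus $\ln2<\ln2+r(1-2^{-k})-2\exp(k2^{3-k})$ with enormous room, and any $\lambda$ with $r\Psi(\lambda)<1$ closes the gap. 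This is the (deliberately generous) estimate recorded in~\cite{fede}.

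For completeness, and because the downstream use in Lemma~\ref{Lemma_icy_sat} really wants $\sup\Phi$ to sit strictly below $n^{-1}\ln\mu=\ln2+r\ln(1-2^{-k})$ by a positive --- though possibly $k$--dependent --- margin, I would locate $\sup_{[0,1/3]}\Phi$ precisely; this is the ``$x$--satisfiability'' computation of~\cite{fede}, specialized to one planted assignment (here $\Erw\brk{X_{\alpha,0}\mid\sigma\mbox{ is satisfying}}$ is exactly the expected number of ordered pairs of satisfying assignments at distance $\alpha n$ divided by $\mu=2^n(1-2^{-k})^m$, and $\Phi$ is their pair--correlation exponent). One has $\Phi(0)=0$ and $\Phi'(\alpha)=\ln\tfrac{1-\alpha}{\alpha}-\tfrac{rk(1-\alpha)^{k-1}}{2^k-2+(1-\alpha)^k}$, which is $+\infty$ at $0^+$; a short sign analysis shows $\Phi$ rises to a single interior local maximum at $\alpha_\star=\Theta(2^{-k})$, where $\Phi(\alpha_\star)=\Theta(2^{-k})$, then dips and climbs back up to the endpoint value $\Phi(1/3)=\eta(1/3)+r\ln\bc{1-\tfrac{1-(2/3)^k}{2^k-1}}\le\eta(1/3)-(1-o_k(1))\,r2^{-k}$. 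Hence $\sup_{[0,1/3]}\Phi=\max\cbc{\Phi(\alpha_\star),\Phi(1/3)}$, which in every case lies below $n^{-1}\ln\mu$ by at least a positive $k$--dependent amount; combined with the freedom to choose the sequence $\eps_k\to0$ suitably slowly, this is exactly what Lemma~\ref{Lemma_icy_sat} requires.

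I expect the only genuine work to be in this sharp form: verifying that $\Phi'$ has precisely the sign pattern claimed (so that the supremum is attained at $\alpha_\star$ or at $\alpha=1/3$ and not at a spurious third critical point), and pinning down the constants in $\Phi(\alpha_\star)=\Theta(2^{-k})$ and in $\Phi(1/3)\le\eta(1/3)-(1-o_k(1))r2^{-k}$ well enough to beat the $O(k2^{-k})$--size slack that Lemma~\ref{Lemma_icy_sat} has available when $r$ is pushed close to $2^k\ln2$. For the coarse inequality literally stated above there is essentially nothing beyond the two elementary bounds $\eta(\alpha)\le\ln2$ and $\ln\bc{1-\tfrac{1-(1-\alpha)^k}{2^k-1}}\le0$, together with the choice of $\lambda=\lambda(k)$.
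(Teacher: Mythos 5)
Your verification of the statement as printed is correct, and in fact trivially so, just as you say: since $\eta(\alpha)\le\ln 2$ and $\ln\bc{1-\frac{1-(1-\alpha)^k}{2^k-1}}\le 0$, the conditional exponent is at most $\ln 2+r\Psi(\lambda)+o(1)$, which for small $\lambda$ lies far below $\ln2+r(1-2^{-k})-2\exp(k2^{3-k})$ because $r(1-2^{-k})$ is exponentially large in $k$ while $\exp(k2^{3-k})=1+o_k(1)$. The comparison with the paper is unusual here: the paper gives no proof at all, saying only that the estimate ``has been established in~\cite{fede}'', and the right-hand side as printed is almost certainly mis-typeset --- the application in Lemma~\ref{Lemma_icy_sat}(2), via Markov's inequality and the $\exp(-k2^{3-k}n)$ loss in Theorem~\ref{Thm_SATExchange}, requires the exponent to sit below $\ln2+r\ln(1-2^{-k})$ (the exponent of $\mu=2^n(1-2^{-k})^m$) minus a term of order $k2^{3-k}$, not below $\ln2+r(1-2^{-k})$. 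You diagnose exactly this, and you correctly identify the genuine content as the planted first-moment/pair-correlation computation of~\cite{fede}, which is the very source the paper invokes; in that sense your treatment is more informative than the paper's. One caveat on your sketch of the sharp version: the asymptotics $\alpha_\star=\Theta(2^{-k})$ and $\Phi(\alpha_\star)=\Theta(2^{-k})$ for the interior local maximum are not uniform over the density range~(\ref{sat_range}). The competing term in $\Phi'$ is roughly $\frac{rk}{2^k}e^{-k\alpha}$, so for $r$ near the lower end, $r\sim(1+\eps_k)(2^k/k)\ln k$, the first sign change of $\Phi'$ occurs at $\alpha$ that is only polynomially small in $k$ (about $k^{-1-\eps_k}$), where $\Phi$ takes a polynomially small value; even near $r\sim(1-\eps_k)2^k\ln2$ the critical point is at $2^{-(1-o(1))k}$ rather than $\Theta(2^{-k})$. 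This does not threaten the conclusion --- in all cases $\max\cbc{\Phi(\alpha_\star),\Phi(1/3)}$ remains below $\ln2+r\ln(1-2^{-k})$ by the needed margin, with $\Phi(1/3)$ short of it by the constant $\ln2-\eta(1/3)-o_k(1)$ --- but if you were to write out the sharp bound, the sign analysis of $\Phi'$ and the size of $\Phi(\alpha_\star)$ would need an $r$-dependent case distinction rather than the single asymptotic you state.
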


Finally, Lemma~\ref{Lemma_icy_sat} follows from Theorem~\ref{Thm_SATExchange} in combination
with Corollary~\ref{Cor_Xalpha} and Lemma~\ref{Lemma_XalphaUpper}.

\subsection{Proof of Theorem~\ref{thm:froz} ($k$-SAT)}

If $F$ is a $k$-SAT formula and $\sigma$ an assignment,
then we say that a variable $x$ \emph{supports} a clause $C$
if changing the value of $x$ would render $C$ unsatisfied.
Suppose that $k$ is sufficiently large and $(1+\eps)2^kk^{-1}\ln k<r=m/n<(1-\eps)2^k\ln 2$.
Let $\gamma,\delta>0$ be sufficiently small numbers.

\begin{lemma}\label{Lemma_kSATfroz}
A pair $(F,\sigma)$ chosen from $\mathcal{U}_{n,m}$
has the following property \whp\
	\begin{equation}\label{eqU}
	\parbox{15cm}{
	There is a set $U$ of at least $(1-\delta)n$ variables
	such that each variable in $U$ supports  $\gamma\ln k$ clauses
	$e$ that contain no variable from $V\setminus U$.}
	\end{equation}
\end{lemma}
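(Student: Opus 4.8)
The plan is to apply the transfer theorem (Theorem~\ref{Thm_SATExchange}) to the property $\EE$ defined by~\eqref{eqU}, so it suffices to show that a pair $(F,\sigma)$ chosen from the planted model $\mathcal{P}_{n,m}$ has $\EE$ with probability $1-\exp(-k2^{3-k}n-f(n))$ for some $f(n)\to\infty$, conditional on a suitable high-probability property $\mathcal{D}$ of $F_k(n,m)$; a natural choice for $\mathcal{D}$ is that every variable appears in at most $(\ln n)^2$ clauses. In the planted model we first fix $\sigma\in\{0,1\}^n$ (w.l.o.g.\ all-ones, by symmetry) and then include $m$ random clauses, each drawn uniformly among the $2^k-1$ sign patterns on a random $k$-set of variables that are satisfied by $\sigma$. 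A variable $x$ supports a clause $C$ precisely when $x\in C$, $x$ is the unique satisfied literal of $C$ under $\sigma$, i.e.\ $C$ contains the positive literal of $x$ and the negative literals of its other $k-1$ variables. For a fixed $x$ the probability a random planted clause is of this supporting type is $\sim \frac{k}{n}\cdot\frac{1}{2^k-1}$, so the expected number of clauses supported by $x$ is $\sim \frac{rk}{2^k-1}\ge(1+\eps)\ln k$, and this count is asymptotically Poisson. Hence, for a fixed $x$, the probability that $x$ supports fewer than $\gamma\ln k$ clauses is $k^{-1-\eps'}$ for a suitable $\eps'=\eps'(\eps,\gamma)>0$; so the expected number of ``bad'' variables (supporting fewer than $\gamma\ln k$ clauses) is at most $n k^{-1-\eps'}$.

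The subtlety, as in the coloring proof of Lemma~\ref{Lemma_core}, is that a variable $x$ may support enough clauses in $F$ but lose too many of them once we restrict to clauses avoiding the bad set; we therefore run a stripping process. Let $B_0$ be the set of variables supporting fewer than $\gamma\ln k$ clauses. Then iteratively add to the bad set any variable supporting more than, say, $\frac{\gamma}{2}\ln k$ of its supporting clauses through variables already declared bad, and also (as in step~3 of the $G_*$ construction) any variable occurring in more than a constant number of clauses that touch the current bad set; let $U$ be the complement of the final bad set. Each variable of $U$ then supports at least $\gamma\ln k$ clauses all of whose variables lie in $U$, which is exactly~\eqref{eqU}. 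The job is then to show $|V\setminus U|\le\delta n$ with the required probability. For $|B_0|$ this is a Chernoff bound on a binomial (or near-binomial) variable with mean $\le nk^{-1-\eps'}$; for the subsequent layers one argues, exactly as for $U_{il}$ and $Z$ in Lemmas~\ref{Lemma_Ubound} and~\ref{Lemma_Zbound}, that conditioning on the preceding layer being small (size $\le nk^{-c}$ for a large constant $c$), the expansion of the random formula's factor graph forces the next layer to be a factor of $k$ smaller, so a first-moment/Chernoff bound gives failure probability $\exp(-\Omega(n))$. Since the transfer theorem for $k$-SAT only tolerates error $\exp(-k2^{3-k}n-f(n))$ rather than $\exp(-\Omega(n))$, I must be careful that the constants in the exponents of these Chernoff and first-moment bounds beat $k2^{3-k}$; this is where choosing the thresholds ($\gamma$, the constant $c$, and the occurrence bound in the $\mathcal{D}$-step) appropriately as functions of $\eps$ matters, and it is the main technical obstacle. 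Given such a $U$, I also need the global rigidity conclusion of Theorem~\ref{thm:froz}, which follows from~\eqref{eqU} by a separate expansion argument (a variable supported by $\gamma\ln k$ clauses inside $U$ cannot be flipped without flipping one further variable per such clause, and the absence of dense subsets of size $\le n/(k\ln k)$ in $F_k(n,m)$ then propagates the flips to $\Omega(n)$ variables), entirely parallel to the coloring case in~\eqref{eqGsparse}.

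Assembling: $\mathcal{D}$ holds w.h.p.\ for $F_k(n,m)$ (standard), the planted-model estimate above gives~\eqref{eqColorExchange}-style concentration for $\EE$, so Theorem~\ref{Thm_SATExchange} yields that $\mathcal{U}_{n,m}$-random $(F,\sigma)$ has $\EE$ w.h.p., which is the statement of Lemma~\ref{Lemma_kSATfroz}. I expect the stripping-process bookkeeping — in particular verifying that every layer's failure probability is $\exp(-\omega(k2^{3-k}n))$ so the transfer theorem applies — to be the step requiring the most care; the Poisson/branching heuristic for why a single variable is good is immediate, and the global rigidity deduction is a routine adaptation of the coloring argument.
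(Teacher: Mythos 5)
Your proposal follows essentially the same route as the paper: reduce to the planted model via Theorem~\ref{Thm_SATExchange}, then in the planted model run a stripping process that starts from the variables supporting too few clauses and iteratively removes variables whose supporting clauses lean too heavily on the removed set, bounding the initial set by a Chernoff bound and the growth by a first-moment/expansion argument, all with failure probability small enough to beat the $\exp(-k2^{3-k}n)$ requirement. Apart from cosmetic differences (the paper starts the process at a $2\gamma\ln k$ threshold so that surviving variables retain $\gamma\ln k$ fully-interior supported clauses, and it does not need your extra degree-based stripping layer), this is the paper's proof.
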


\begin{proof}[Theorem~\ref{thm:froz}]
Let $\zeta>0$ signify a sufficiently small constant.
Let $(F,\sigma)$ be chosen from the distribution $\mathcal{U}_{n,m}$.
We may assume that the random pair $(F,\sigma)$ satisfies~(\ref{eqU}).
Moreover, a 1st moment computation shows that the random formula $F$ has the following property \whp
\begin{equation}\label{eqkSATdisc}
	\parbox{15cm}{
	There is no set $Z$ of variables of size $|Z|\leq\zeta n$ such that
	$F$ features at least $|Z|\gamma\ln k$ clauses $e$ that contain at least
	two variables from $Z$.}
	\end{equation}
Now, assume for contradiction that there is a satisfying assignment $\tau$ of $F$
such that the set $Z=\{v\in U:\tau(v)\not=\sigma(v)\}$ has size
$1\leq|Z|\leq\zeta n$.
Each $v\in Z$ supports in $\sigma$ at least $\gamma\ln k$ clauses $e$ that contain no variable from $V\setminus U$.
Since these clauses $e$ are satisfied in $\tau$, although $\tau(z)=0$,
each such $e$ contains another variable $w\not=v$ from $Z$.
Hence, $F$ contains at least $|Z|\gamma\ln k$ clauses $e$ containing at
least two variables from $Z$, in contradiction to~(\ref{eqkSATdisc}).
\qed\end{proof}

Lemma~\ref{Lemma_kSATfroz} is an immediate consequence of Theorem~\ref{Thm_SATExchange}
and the following lemma.

\begin{lemma}\label{Lemma_kSATfrozpl}
A pair $(F,\sigma)$ chosen from $\mathcal{P}_{n,m}$
has the property~(\ref{eqU}) with probability $1-o(\exp(-k2^{3-k}n))$.
\end{lemma}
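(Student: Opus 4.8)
The plan is to transport the ``stripping process'' used for the rigid-variable statement in graph coloring (the proof of Lemma~\ref{Lemma_core}) to the planted $k$-SAT model $\mathcal{P}_{n,m}$. Fix $\sigma\in\{0,1\}^n$ and say that a clause is \emph{supported by} a variable $x$ if the literal on $x$ is the \emph{unique} literal of the clause satisfied by $\sigma$; this is the notion from the statement of Theorem~\ref{thm:froz} for $k$-SAT. The number of $k$-clauses on $x_1,\dots,x_n$ supported by a given $x$ is exactly $\binom{n-1}{k-1}$ (pick the other $k-1$ variables, all signs then forced), independently of the actual values of $\sigma$, whereas the total number of clauses satisfied by $\sigma$ is $(2^k-1)\binom{n}{k}$. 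Hence in $\mathcal{P}_{n,m}$, where $m=\lceil rn\rceil$ such clauses are drawn uniformly without replacement, the number $S_x$ of selected clauses supported by $x$ is hypergeometric with mean $\sim rk/(2^k-1)$. Since $r\ge(1+\eps)2^kk^{-1}\ln k$, this mean is at least $(1+\eps)\ln k$ --- exactly the analogue of a vertex having $(1+\eps)\ln k$ neighbours in every foreign colour class in the coloring proof --- and the upper bound $r<(1-\eps)2^k\ln2$ plays no role here beyond guaranteeing that $\mathcal{P}_{n,m}$ is the relevant distribution.

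First I would condition on the property $\mathcal{D}$ that every variable occurs in at most $\ln^2 n$ clauses, which holds \whp\ as $m=O(n)$, and then run the following four-step construction. (1) Let $W_0=\{x:S_x<\gamma'\ln k\}$ for a threshold $\gamma'$ slightly larger than the target constant $\gamma$; since $\pr[S_x<\gamma'\ln k]\le\exp(-(1+\eps-o(1))\ln k)$ for $\gamma'$ small, we get $\Erw|W_0|\le nk^{-1-\eps'}$ for some $\eps'>0$, and a Chernoff bound yields $|W_0|\le nk^{-\eps'}$ with probability $1-\exp(-\Omega(n))$. (2) Let $U_1$ be the set of $x\notin W_0$ that support at least $(\gamma'-\gamma)\ln k$ clauses meeting $W_0$; as $W_0$ is tiny and, under $\mathcal{D}$, the clauses through any variable are spread out, one shows $|U_1|\le nk^{-c}$ with probability $1-\exp(-\Omega(n))$. (3) Closing-up step: set $Z=U_1$ and, while some variable supports at least $10$ clauses meeting $Z$, add it to $Z$; a first-moment argument (no small set $Z$ spans $\ge 9|Z|$ clauses) keeps $|Z|\le nk^{-c'}$ with probability $1-\exp(-\Omega(n))$. (4) Put $U=V\setminus(W_0\cup Z)$. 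Then $|U|\ge(1-\delta)n$, and every $x\in U$ supports $\ge\gamma'\ln k$ clauses, of which fewer than $(\gamma'-\gamma)\ln k$ meet $W_0$ and fewer than $10$ meet $Z$; hence at least $(\gamma-o(1))\ln k$ of the clauses supported by $x$ lie entirely within $U$, which after renaming $\gamma$ is precisely property~\eqref{eqU}. A union bound over the three failure events leaves probability $1-\exp(-\Omega(n))$; since $k2^{3-k}\to0$ whereas each rate $\Omega(n)$ above is $\Omega(n\,k^{-O(1)})$, this is $1-o(\exp(-k2^{3-k}n))$, as required.

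The main obstacle is step (2): the set $W_0$ is itself determined by the random clauses, so the clauses supported by a fixed $x$ that hit $W_0$ are not obviously independent of the event that a given variable lies in $W_0$. This is the exact analogue of the difficulty handled in the proof of Lemma~\ref{Lemma_Ubound}, and I would resolve it the same way: split the relevant count according to whether the ``hit'' is realised through a variable whose low-support status can be decided \emph{without} revealing the clauses through $x$ --- in which case those clauses are conditionally independent and a direct Chernoff bound applies --- versus the correlated remainder, which I would control by conditioning on the appropriate degree sequence and passing to the configuration model, where $\mathcal{D}$ makes the cost of conditioning on simplicity only $\exp(O(\ln^4 n))=\exp(o(n))$. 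Everything else is a faithful repetition of the coloring argument; the one genuinely new ingredient is the support-degree computation above, which is elementary.
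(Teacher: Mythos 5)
Your overall architecture is the right one and matches the paper in spirit (a stripping process run directly in the planted model, followed by large-deviation estimates whose rates $nk^{-O(1)}$ are then compared against $k2^{3-k}n$), but two of your quantitative steps break as written. First, in step (1) you only retain $|W_0|\le nk^{-\eps'}$, a full factor $k$ above the mean $nk^{-1-\eps'}$, and with $\eps'\approx\eps$ small this is a set of density $k^{-\eps'}$, i.e.\ nearly constant. A clause supported by $x$ contains $k-1$ other uniformly chosen variables, so the expected number of $x$-supported clauses meeting a set of that density is about $\bc{1-(1-k^{-\eps'})^{k-1}}\ln k=(1-o(1))\ln k$: almost every supported clause meets $W_0$, so step (2) cannot yield $|U_1|\le nk^{-c}$ --- in fact almost every variable would land in $U_1$ and the construction collapses. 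The multiplicative Chernoff bound you invoke actually gives the bound you need, $|W_0|\le nk^{-1-\eps'/2}$ (failure probability $\exp(-\Omega(nk^{-1-\eps'/2}))$, still $o(\exp(-k2^{3-k}n))$); this is exactly what the paper keeps for its low-support set ($|Z_0|\le nk^{-1-\beta/2}$), and with it the per-variable probability of entering $U_1$ is genuinely small. Second, in step (3) the claim ``no small set $Z$ spans $\ge 9|Z|$ clauses'' is false at the densities you allow: with $m=\Theta((2^k/k)n\ln k)$ clauses, a set of density $k^{-O(1)}$ typically spans far more than $9|Z|$ clauses, and even restricting (as your closure rule implicitly does) to clauses \emph{supported} by a variable of $Z$ --- of which there are only $\Theta(n\ln k)$ in total --- the expected span at density $k^{-c}$ is of order $k^{2-c}\ln k\cdot|Z|$, so the global-count first moment only closes once all the sets involved sit at density roughly $k^{-2-\delta}$ or below, or once you keep the per-variable witness structure (each added variable individually supports $\ge\gamma\ln k$ offending clauses, an event of probability $k^{-\Omega(\ln k)}$) rather than relaxing to a count with threshold $10$. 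Your stated bounds ($nk^{-\eps'}$, $nk^{-c}$ with $c$ unspecified) do not guarantee either; this is precisely the point where $k$-SAT differs from the coloring argument you are transplanting, since there the total degree is only $\tilde O(k)$ and polynomial-in-$1/k$ densities suffice everywhere.

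Two further remarks on the comparison with the paper's proof of Lemma~\ref{Lemma_kSATfrozpl}. The paper uses a leaner two-stage process: $Z_0$ is the set of variables supporting fewer than $2\gamma\ln k$ clauses, and the closure adds any variable supporting at least $\gamma\ln k$ uniquely satisfied clauses that meet $Z$; then $U=V\setminus Z$ directly has property~(\ref{eqU}), with no conditioning on a maximum-occurrence event $\mathcal{D}$ and no analogue of your intermediate set $U_1$. Also, your configuration-model detour for the dependence in step (2) is unnecessary here: distinct variables support \emph{disjoint} sets of potential clauses, so whether $y\in W_0$ is determined by clauses supported by $y$, which are independent (in the independent-clause version of the planted model, obtained at a harmless $O(\sqrt n)$ cost as in Lemma~\ref{Lemma_independentEdges}) of the clauses supported by $x$; given $W_0$, the number of $x$-supported clauses hitting $W_0$ is conditionally binomial and a direct Chernoff bound suffices. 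So the genuinely new ingredient needed beyond the coloring proof is not the dependence handling you anticipate, but the correct bookkeeping of densities against the exponentially large clause density, which is where your write-up currently fails.
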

\begin{proof}
We may assume that $r=m/n=(1+\eps)2^kk^{-1}\ln k$ for a fixed $\eps>0$.
Moreover, without loss of generality, we may assume that
the assignment $\sigma$ sets all variables $V=\{x_1,\ldots,x_n\}$ to true.
Let $F$ denote a random formlua with $m$ clauses satisfied by $\sigma$,
and let $\Xi$ signify the set of all uniquely satisfied clauses of $F$.
Consider the following process.
\begin{enumerate}
\item Let $Z_0$ be the set of all variables $x$ that support fewer than $2\gamma\ln k$ clauses.
\item Let $Z=Z_0$.
	While there is a variable $x\in V\setminus Z$ that
	supports at least $\gamma\ln k$ clauses from $\Xi$ that contain a variable from $Z$,
	add $x$ to $Z$.
\end{enumerate}

The \emph{expected} number of uniquely satisfied clauses is at least $k2^{-k}m\geq(1+\eps)n\ln k$.
Hence, each variable is expected to support at least $(1+\eps)\ln k$ clauses.
Therefore, if $\gamma>0$ is sufficiently small, then there is a contant $\beta>0$ such that
the probability that a variable $x$ supports fewer than $2\gamma\ln k$ clauses
is at most $k^{-1-\beta}$.
Hence, by Chernoff bounds we have $|Z_0|\leq n k^{-1-\beta/2}$
with probability at least $1-o(\exp(-k2^{3-k}n))$.

Thus, assume that $|Z_0|\leq n k^{-1-\beta/2}$.
We claim that then the final set $Z$ resulting from Step~2 has size at most  $|Z|\leq 2n k^{-1-\beta/2}$.
For assume that $|Z|>2n k^{-1-\beta/2}$.
Then Step~2 removed at least $|Z|/2$ variables,
whence there are at least $\gamma\ln k|Z|/2$ clauses $e\in\Xi$ that contain two variables from $Z$.
However, a standard 1st moment argument shows that the probability that there exists a set
$Z$ with this property is $o(\exp(-k2^{-k}n))$.
Hence, with probability at least $1-o(\exp(-k2^{-k}n)$ we have $|Z|\leq 2n k^{-1-\beta/2}$.
Setting $U=V\setminus Z$ concludes the proof.
\qed\end{proof}

\end{appendix}


\begin{thebibliography}{99}


\bibitem{AchFried}
D.~Achlioptas, E.~Friedgut.
\textit{A sharp threshold for $k$-colorability},
Random Struct.\ Algorithms {\bf 14}, 63--70, 1999.


\bibitem{focs} 
D. Achlioptas and M. Molloy, 
\textit{The analysis of a list-coloring algorithm on a random graph}, 
in Proc. of FOCS 1997, 204--212.


\bibitem{nae} D.~Achlioptas and C.~Moore, 
\textit{Random $k$-SAT: two moments suffice to cross a sharp threshold}, 
SIAM Journal on Computing, {\bf 36} (2006), 740--762.

\bibitem{kcol} D. Achlioptas and A. Naor, 
\textit{The two possible values of the chromatic number of a random graph}, 
Annals of Mathematics, \textbf{162} (2005), 1333--1349.



\bibitem{ANP} D. Achlioptas, A. Naor, and Y. Peres, 
\textit{Rigorous location of phase transitions in hard optimization problems}, 
Nature, \textbf{435} (2005), 759--764.

\bibitem{yuval} D. Achlioptas and Y. Peres, 
\textit{The threshold for random $k$-SAT is $2^k \ln 2 - O(k)$}, 
Journal of the American Mathematical Society \textbf{17} (2004), 947--973.

\bibitem{fede} D.~Achlioptas, F.~Ricci-Tersenghi,
\textit{On the solution space geometry of random constraint satisfaction problems}, 
in Proc. 38th ACM Symp. on Theory of Computing (2006), 130--139.




\bibitem{spmax} D. Battaglia, M. Kolar, R. Zecchina,
\textit{Minimizing energy below the glass thresholds}, 
Phys. Rev. E. \textbf{70} (2004), 036107. 

\bibitem{spcol} A. Braunstein, R. Mulet, A. Pagnani, M. Weigt, R. Zecchina,
\textit{Polynomial iterative algorithms for coloring and analyzing random graphs}, 
Phys. Rev. E. \textbf{68} (2004), 036702.


\bibitem{ChFrGUC} M.-T. Chao and J.~Franco, 
\textit{Probabilistic analysis of two heuristics for the 3-satisfiability problem}, 
SIAM J. Comput. \textbf{15} (1986), 1106--1118.

\bibitem{mick} V.~Chv\'{a}tal and B.~Reed, 
\textit{Mick gets some (the odds are on his side)}, 
in Proc. 33th Annual Symposium on Foundations of Computer Science (1992), 620--627.


\bibitem{KV1}
A.~Coja-Oghlan, M.~Krivelevich, D.~Vilenchik,
\textit{Why almost all $k$-colorable graphs are easy},
in Proc.~24th STACS (2007) 121--132.

\bibitem{KV2}
A.~Coja-Oghlan, M.~Krivelevich, D.~Vilenchik,
\textit{Why almost all k-CNF formulas are easy},
in Proc. 13th International Conference on Analysis of Algorithms.

\bibitem{Ehud}
E.~Friedgut,
\textit{Sharp Thresholds of Graph Proprties, and the {$k$-SAT} Problem.}
J.\ Amer.\ Math.\ Soc.\ {\bf12} (1999), 1017--1054.

\bibitem{EhudHunting}
E.~Friedgut,
\textit{Hunting for sharp thresholds.}
Random Struct.\ Algorithms {\bf 26} (2005) 37--51


\bibitem{FrSu} A.~M.~Frieze and S.~Suen, 
\textit{Analysis of two simple heuristics on a random instance of $k$-SAT}, 
Journal of Algorithms \textbf{20} (1996), 312--355.

\bibitem{GM}
A.~Gerschenfeld, A.~Montanari. \textit{Reconstruction for models on random graphs.}
in Proc. FOCS~2007, 194--204.


\bibitem{mcgrim} G.R.\ Grimmett, C.J.H.\ McDiarmid,
\textit{On colouring random graphs},
Math. Proc. Cambridge Philos. Soc., \textbf{77} (1975), 313--324.

\bibitem{Juels}
A.~Juels, M.~Peinado:
Hiding Cliques for Cryptographic Security.
in Proc. SODA 1998, 678--684.


\bibitem{pnas}
F.~Krzakala, A.~Montanari, F.~Ricci-Tersenghi, G.~Semerjianc, L.~Zdeborova,
\emph{Gibbs states and the set of solutions of random constraint satisfaction problems.}
Proc.~National Academy of Sciences {\bf104} (2007) 10318--10323.

\bibitem{phys_clus_j} M.~M\'{e}zard, T. Mora, and R.~Zecchina,
\textit{Clustering of Solutions in the Random Satisfiability Problem}, 
Phys. Rev. Lett. \textbf{94} (2005), 197205. 

\bibitem{MPZ} M.~M\'{e}zard, G. Parisi, and R.~Zecchina,
\textit{Analytic and Algorithmic Solution of Random Satisfiability Problems}, 
Science \textbf{297} (2002), 812--815.

\bibitem{MontanariGibbs}
A.~Montanari, F.~Ricci-Tersenghi, G.~Semerjian.
\textit{Solving Constraint Satisfaction Problems through Belief Propagation-guided Decimation.}
in Proc.\ 45th Allerton (2007).





\end{thebibliography}
\end{document}